\theoremstyle{definition}
\newtheorem{thm}{Theorem}[subsection]
\newtheorem{prop}[thm]{Proposition}
\newtheorem{cor}[thm]{Corollary}
\newtheorem{con}[thm]{Conjecture}
\newtheorem{lem}[thm]{Lemma}
\newtheorem{ex}[thm]{Example}
\numberwithin{equation}{subsection}
\def\cal#1{\text{$\mathcal{#1}$}}
\def\ord#1^#2{#1$^{\text{#2}}$}
\def\lie#1{\mathfrak{#1}}
\def\tlie#1{\tilde{\mathfrak{#1}}}
\def\wt{{\rm wt}}
\def\supp{{\rm supp}}
\def\qch{{\rm qch}}
\def\opl_#1^#2{\text{\scriptsize$\bigoplus\limits_{\text{\normalsize$#1$}}^{\text{\normalsize$#2$}}$}}
\def\otm_#1^#2{\text{\scriptsize$\bigotimes\limits_{\text{\footnotesize$#1$}}^{\text{\footnotesize$#2$}}$}}
\def\wcal#1{{\mbox{$\widetilde{\cal #1}$}}}
\def\bs#1{\boldsymbol{#1}}
\def\endd{\hfill$\diamond$}
\begin{document}

\title[On the primality   of totally ordered $q$-factorization graphs]{On the primality  of totally ordered\\ $q$-factorization graphs}
\author[A. Moura and C. Silva]{Adriano Moura and Clayton  Silva}

\address{Departamento de Matemática, Universidade Estadual de Campinas, Campinas - SP - Brazil, 13083-859.}
\email{aamoura@ime.unicamp.br}
\email{ccris22@gmail.com}

\thanks{This work was developed as part of the Ph.D. project of the second author, which was supported by a PICME grant.
	The work of the first author was partially supported by CNPq grant 304261/2017-3 and Fapesp grant 2018/23690-6. }

\begin{abstract}
We introduce the combinatorial notion of a $q$-fatorization graph intended as a tool to study and express results related to the  classification of prime simple  modules for quantum affine algebras. These are directed graphs equipped with three decorations: a coloring and a weight map on vertices, and an exponent map on arrows (the exponent map can be seen as a weight map on arrows). Such graphs do not contain oriented cycles and, hence, the set of arrows induces a partial order on the set of vertices. In this first paper on the topic, beside setting the theoretical base of the concept, we establish several criteria for deciding whether or not a tensor product of two simple modules is a highest-$\ell$-weight module and use such criteria to prove, for type $A$, that a simple module whose $q$-factorization graph has a totally ordered vertex set is prime.
\end{abstract}

\maketitle

\section{Introduction}

The simple finite-dimensional modules for an affine Kac-Moody algebra  $\tilde{\mathfrak{g}}$ were classified by Chari and Pressley \cite{cha:intrep,cp:loopgroups} in terms of tensor products of simple evaluation modules, which  are built from simple finite-dimensional $\mathfrak{g}$-modules. Moreover, the factorization of such simple $\tilde{\mathfrak{g}}$-modules in terms of evaluation modules is unique, up to permutation of the factors. In fact, the finite-dimensional simple evaluation modules are exactly the finite-dimensional prime simple $\tilde{\mathfrak{g}}$-modules, that is, those that cannot be factored as a non-trivial tensor product.

As in the classical case, the simple finite-dimensional modules for the associated Drinfeld-Jimbo quantum group $U_q(\tilde{\mathfrak{g}})$ were also classified by Chari and Pressley \cite{cp:qaa,cp:book}. However, in this context, the classification was described in terms of their highest-$\ell$-weights (or Drinfeld polynomials), with no mention to prime simple modules, except in the case the underlying finite-dimensional simple Lie algebra $\lie g$ is of type $A_1$. In that case, the simple prime modules are again evaluation modules and every simple module can be uniquely expressed as a tensor product of prime ones (up to reordering). Thus, the question about finding a description of the simple modules in terms of tensor products of prime ones beyond rank one has intrigued the specialists since the early days of the study of the finite-dimensional representation theory of quantum affine algebras. The situation is indeed much more complicated since evaluation modules exist only for type $A$ but, even in that case, it is known \cite{cp:fact} that there are  prime simple modules which are not evaluation modules. The classification of prime simple  modules remains open after more than three decades since the early works on the topic.

As further studies were made, several examples of families of  prime simple modules started to appear in the literature such as the Kirillov-Reshetikhin (KR) modules or, more generally, minimal affinizations \cite{cha:minr2}, and certain snake modules \cite{muyou:path} (which contain the examples in the aforementioned \cite{cp:fact}). However, the most important advent related to this topic was a theory introduced by Hernandez and Leclerc \cite{hele:cluster} connecting the finite-dimensional representations of quantum affine algebras to cluster algebras. In particular, as a consequence of their main conjecture, in principle, all real prime simple modules can be computed using the machinery of cluster mutations since they correspond to the cluster variables of certain explicitly prescribed cluster algebras. A real module is a simple module whose tensor square is also simple (only the trivial module is real in the classical setting, but they abound in the quantum setting).  However, describing all cluster variables is not exactly a simple task in general. The combinatorics of cluster mutations for type $A$ was rephrased in \cite{cdfl} in tableau-theoretic language and the resulting algorithm could be used to produce examples of prime, real, and non-real modules. Most of the HL conjecture was proved for simply laced $\lie g$ in \cite{qin}, which built up on \cite{nak:cluster}. We refer to the survey \cite{hele:clustersur} for an account on the status of the conjecture and the related literature. The papers \cite{bc,bcm,cdm:DemD,dll:clusna} have explicitly identified prime modules in certain HL subcategories and provided alternate proofs for parts of the HL conjecture. See also \cite{baku,kkop:moncat2,naoi:sw} for  recent developments related to HL subcategories as well as \cite{cmy} for a study of primality from a homological perspective. 

The motivation for the present work is the problem of classifying the Drinfeld polynomials whose associated simple modules are prime and similarly for real modules. We will focus here on the former, leaving our first answers regarding the latter to appear in \cite{mosi}. 
As it is clear from the above considerations, this is a difficult problem, so our goal is to gradually obtain general results towards such classification. In this sense, most of the original results of the present paper consist of criteria for deciding whether certain tensor products are highest-$\ell$-weight modules or not. We use such criteria for proving the main result of the present paper, \Cref{t:toto}, as well as the main results of \cite{mosi}. Such criteria allowed us to expand the number of examples  of families of prime and real simple modules compared to the existing literature.  In particular, they recover the primality and reality of minimal affinizations for all types and, for type $A$, the primality of snake modules arising from prime snakes, skew representations, and certain minimal affinizations by parts.

In order to describe Drinfeld polynomials which correspond to simple prime modules in an efficient manner, we propose a graph theoretical language based on the notion of $q$-factorization. The notion of $q$-factorization is already present in the literature and is based on the solution of this classification for $\lie g$ of rank one. More precisely, for each simple root of $\lie g$, one considers the subalgebra of $U_q(\tlie g)$ generated by the corresponding loop-like generators and then the associated restriction of the Drinfeld polynomial. Since this subalgebra is of type $A_1^{(1)}$, this restricted polynomial can then be factorized according to the decomposition of the associated simple module as a tensor product of prime modules. Each of these factors is said to be a $q$-factor of the original Drinfeld polynomial $\bs\pi$. Using the $q$-factorization, we define a decorated oriented graph $G(\bs\pi)$ which we call the $q$-factorization graph of $\bs\pi$. The set of vertices of $G(\bs\pi)$ is the multiset of $q$-factors ($q$-factors with multiplicities give rise to as many vertices). Each vertex is given two decorations: a ``color'' (the simple root which originated the vertex) and a ``weight'' (the degree of the polynomial).
Given two vertices $\bs\omega$ and $\bs\omega'$, $G(\bs\pi)$ contains the arrow 
\begin{equation*}
	\begin{tikzcd}
		\bs\omega \arrow[r] & \bs\omega'
	\end{tikzcd} 
\end{equation*}
if and only if the tensor product $L_q(\bs\omega)\otimes L_q(\bs\omega')$ of the associated simple modules is reducible and highest-$\ell$-weight. Since $L_q(\bs\omega)$ is a KR module for every vertex $\bs\omega$ and KR modules are real, it follows that  $G(\bs\pi)$ has no loops.  Moreover, if a tensor product of KR modules is not highest-$\ell$-weight, the tensor product in the opposite order is. Hence, the determination of the arrows is equivalent to the solution of the problem of classifying the reducible tensor products of KR modules. Such classification gives rise to a decoration for the arrows:  a positive integer which we call the exponent of the arrow. We recall that the roots of the polynomial $\bs\omega$ form a $q$-string. Let us say $a$ is the center of such string and similarly let $a'$ be the center of the string associated to $\bs\omega'$. Let us say that $\bs\omega$ is $i$-colored and has weight $r$ while $\bs\omega'$ is $j$-colored and has weight $s$. Then, there exists a finite set of positive integers $\mathscr R_{i,j}^{r,s}$ such that  $L_q(\bs\omega)\otimes L_q(\bs\omega')$ is reducible and highest-$\ell$-weight if and only if $a=a'q^m$ for some $m\in\mathscr R_{i,j}^{r,s}$. This number $m$ is then defined to be the exponent of the arrow.  We visually express this set of data by the picture
\begin{equation*}
	\begin{tikzcd}
		\stackrel{r}{i} \arrow[r,"m"] & \stackrel{s}{j}
	\end{tikzcd} 
\end{equation*}
If $G(\bs\pi)$ is connected, this data determines $\bs\pi$ uniquely up to uniform shift of all centers. Primeness and reality of the underlying simple modules are independent of such shift. Thus, the classification of prime simple  modules can be rephrased as a classification of such decorated graphs. For instance, the result for $\lie g$ of type $A_1$ can be phrased as: $L_q(\bs\pi)$ is prime if and only if $G(\bs\pi)$ has a single vertex. Also, for general $\lie g$, if $G(\bs\pi)$ has two vertices, then $L_q(\bs\pi)$ is prime if and only if $G(\bs\pi)$ is connected. This is not true in general: although $G(\bs\pi)$ is connected if $L_q(\bs\pi)$ is prime (\Cref{p:primeconect}), the converse is far from true. Henceforth, we say $G(\bs\pi)$ is prime  if $L_q(\bs\pi)$ is prime. We remark that, by definition, $G(\bs\pi)$ has no oriented cycles and, therefore, the structure of arrows induce a natural partial order on the set of vertices of $G(\bs\pi)$. For instance, in the above picture, $\bs\omega\succ\bs\omega'$.

A precise description of the elements belonging to $\mathscr R_{i,j}^{r,s}$ can be read off the results of \cite{ohscr:simptens} for nonexceptional $\lie g$ as well as for type $G$. Some of our results were proved without using such precise description and, hence, they are proved for all types. For instance, the main result of \cite{cp:fact} describes a family of prime simple modules for type $A_2$. In the graph language that we are introducing here, this can be simply described by saying that  $G(\bs\pi)$ is prime if it is an oriented line (all arrows in the same direction):
\begin{equation*}
	\begin{tikzcd}
		\circ \arrow[r] & \circ \arrow[r] & \cdots 
	\end{tikzcd} 
\end{equation*}
In \Cref{gencp}, we prove that this is true for all $\lie g$. Even for type $A_2$, this does not cover all prime simple modules. For instance, one of the main results we present in \cite{mosi} characterize all nonoriented lines with three vertices which are prime for type $A$.  Another fact we prove here for all $\lie g$ concerns the case that $G(\bs\pi)$ is a tree, i.e., there are no (nonoriented) cycles. In that case, we prove that, if $G(\bs\pi)$ is prime, then every connected subgraph of $G(\bs\pi)$ is also prime.  This not true if $G(\bs\pi)$ is not a tree, and we give a counter example in \cite{mosi}, which is a paper dedicated to the study of several results concerning trees. 

Beside the collection of criteria for deciding whether certain tensor products are highest-$\ell$-weight modules or not, the main result of the present paper (\Cref{t:toto}) states that, if $\lie g$ is of type $A$, $L_q(\bs\pi)$ is prime if $G(\bs\pi)$ is a totally ordered graph, i.e., the partial order on the set of vertices is a total order. In particular, this is the case if $G(\bs\pi)$ is a tournament, i.e., if any pair of vertices is linked by an arrow.  Thus, for type $A$, \Cref{t:toto} is a strong generalization of the aforementioned \Cref{gencp}. After solving the purely combinatorial problem of classifying all the totally ordered $q$-factorization graphs, \Cref{t:toto} would then provide an explicit family of simple prime modules. We do not address this combinatorial problem here beyond type $A_2$, restricting ourselves to presenting a family of examples of $q$-factorization graphs  with arbitrary number of vertices for type $A$ which are  afforded by tournaments in \Cref{ex:tour}. For type $A_2$, \Cref{p:totopartial} implies that a totally ordered $q$-factorization graph must be a tree and, hence, we are back to the context of \cite{cp:fact} and \Cref{gencp}.

The reason \Cref{t:toto} is proved only for type $A$ is that, differently from the proof of \Cref{gencp}, the argument used here explicitly utilizes the description of the sets  $\mathscr R_{i,j}^{r,s}$. Therefore, if the same approach is to be used for other types,  a case-by-case analysis would have to be employed. Thus, we leave the analysis for other types to appear elsewhere. 

The paper is organized as follows. In \Cref{ss:graph}, we review the basic terminology and notation about directed graphs which we shall use, while in \Cref{ss:cuts} we recall the concept of cuts of a graph as well as the definitions of special types of graphs such as trees and tournaments. The basic notation about classical and quantum affine algebras is fixed in \Cref{ss:clalg}, while the notions of Drinfeld polynomials, $\ell$-weights and $q$-factorization is reviewed in \Cref{ss:lwl}. This is sufficient to formalize the first part of the definition of $q$-factorization graphs. Thus, in \Cref{ss:prefact}, we define the concept of pre-factorization graph. 
\Cref{ss:hopf} closes  \Cref{s:prel} by collecting some basic general facts about Hopf algebras and their representations theory.

The second part of the definition of $q$-factorization graphs, given in \Cref{ss:fact}, concerns the sets $\mathscr R_{i,j}^{r,s}$, which are explained, alongside the definition of prime modules, in \Cref{ss:primem}. The required representation theoretic background for these subsections is reviewed in Sections \ref{ss:repth} and \ref{ss:tpdu}. The statements of our main results and conjectures are presented in \Cref{ss:main}, while \Cref{ss:ex} brings a few illustrative examples such as the aforementioned family of tournaments. The other two examples interpret the notions of snake and skew modules from the perspective of $q$-factorization graphs. 

\Cref{s:hlwc} brings the statements and proofs of the several criteria for deciding whether certain tensor products are highest-$\ell$-weight modules or not. Its several subsections split them by the nature of the statements. Perhaps it is worth calling attention to those criteria which are most  used or play more crucial roles in the proof of \Cref{t:toto} as well as in the proofs of the main results from \cite{mosi}: \Cref{l:hlwquot},  \Cref{p:killhlw}, and \Cref{p:killdualhlw}. 

\Cref{ss:toto} is completely dedicated to the proof of \Cref{t:toto}. In \Cref{ss:removeA}, we continue the trend initiated in \Cref{ss:remove}, this time with specific conditions for type $A$. Namely, these sections establish criteria for removing a vertex from a $q$-factorization graph in a way that its tensor product with another graph remains associated to a highest-$\ell$-weight tensor product. We continue  by  collecting a few technical lemmas concerned with arithmetic relations among the elements of $\mathscr R_{i,j}^{r,s}$ in \Cref{ss:arith}.  The key technical part of the proof of \Cref{t:toto} is \Cref{l:totordopparrow}. All the criteria are then brought together to finalize the proof in \Cref{ss:ptoto}.

\section{Preliminaries}\label{s:prel}

Throughout the paper, let $\mathbb C$ and  $\mathbb Z$ denote the sets of complex numbers and integers, respectively. Let also $\mathbb Z_{\ge m} ,\mathbb Z_{< m}$, etc. denote the obvious subsets of $\mathbb Z$. Given a ring $\mathbb A$, the underlying multiplicative group of units is denoted by $\mathbb A^\times$. 
The symbol $\cong$ means ``isomorphic to''. We shall use the symbol $\diamond$ to mark the end of remarks, examples, and statements of results whose proofs are postponed. The symbol \qedsymbol\ will mark the end of proofs as well as of statements whose proofs are omitted.

\subsection{Directed Graphs}\label{ss:graph} In this section we fix notation regarding the basic concepts of graph theory.

A directed graph is a pair $G = (\mathcal V_G,\mathcal A_G)$ where $\mathcal V_G$ is a set and $\mathcal A_G$ is a subset of $\mathcal V_G\times \mathcal V_G$ such that
$$(v,v')\in \mathcal A_G \ \Rightarrow\ (v',v)\notin \mathcal A_G.$$
We will typically simplify notation and write $\mathcal V$ and $\mathcal A$ instead of $\mathcal V_G$ and $\mathcal A_G$.
An element of $\mathcal V$ is called a vertex and an element $(v,v')$ of $\mathcal A$ is called an arrow from $v$ to $v'$. We shall also say $v'$ is the head of the arrow $(v,v')$ while $v$ is its  tail. Given $a\in \mathcal A$, we write $t_a$ for its tail end $h_a$ for its head.  
As usual, the picture
\begin{equation*}
	\begin{tikzcd}
		v \arrow[r] & v'
	\end{tikzcd} 
\end{equation*}
will mean that $(v,v')\in \mathcal A$. A loop in $G$ is an element $a\in \mathcal A$ such that  $t_a=h_a$. We will only consider graphs with no loops, so, henceforth, this is implicitly assumed. We also assume $G$ is finite, i.e., $\mathcal V$ is a finite set.

Given a subset $\mathcal V'$ of $\mathcal V$, the subgraph $G'=G_{\mathcal V'}$ of $G$ associated to $\mathcal V'$ is the pair $(\mathcal V',\mathcal A')$ with $$\mathcal A'=\{a\in \mathcal A:t_a,h_a\in \mathcal V'\}.$$ In terms of pictures, $G_{\mathcal V'}$ is obtained from $G$ by deleting the elements of $\mathcal V\setminus \mathcal V'$ as well as all the arrows starting at or heading to an element of  $\mathcal V\setminus \mathcal V'$. It will often be convenient to write $G\setminus\mathcal V'$ instead of $G_{\mathcal V'}$.

Let $\mathscr P(\mathcal V)$ be the power set of $\mathcal V$ and $\pi:\mathcal A\to \mathscr P(\mathcal V)$ be given by $\pi(a)=\{t_a,h_a\}$. The (non-directed) graph associated to $G$ is the pair $(\mathcal V,\mathcal E)$ where $\mathcal E=\pi(\mathcal A)$. The elements of $\mathcal E$ will be referred to as edges. By a  (non-directed) path  of length $m\in\mathbb Z_{\ge 0}$ in $G$, we mean  a sequence $\rho=e_1,\dots,e_m$ of edges in $\mathcal E$ such that  
\begin{equation*}
	\#(e_j\cap e_{j+1})=1  \quad\text{for all}\quad 1\le j<m \quad\text{and}\quad e_{j-1}\cap e_j\cap e_{j+1}=\emptyset  \quad\text{for all}\quad 1< j<m .
\end{equation*}
This is equivalent to saying that there exists an underlying sequence of vertices $v_1,\dots,v_{m+1}$ such that $e_j = \{v_j,v_{j+1}\}$ for all $1\le j\le m$. This sequence is unique if $m>1$. If $v_1=v_{m+1}$, we say $\rho$ is a cycle based on $v_1$. In that case, if $m=\min\{j>1: v_j=v_1\}$, we say $\rho$ is an $m$-cycle. Note there does not exist $m$-cycles for $m\le 2$.

We shall often write $\rho =e_1\cdots e_m$ instead of  $\rho=e_1,\dots,e_m$ and set $\ell(\rho)=m$. We also write $e\in\rho$ to mean that $e=e_j$ for some $1\le j\le m$. 
Suppose  $\rho'=e_1'\dots e_{m'}'$ is another path such that $e_m\cap e'_1\ne \emptyset $ and either
\begin{equation*}
	e_m = e'_1 \quad\text{or}\quad e_{m-1}\cap e_m\cap e'_1 = \emptyset = e_m\cap e'_1\cap e'_2. 
\end{equation*}
Then, the sequence obtained from $e_1\cdots e_me'_1\cdots e'_{m'}$ after successive deletion of any appearance of a substring of the form $ee, e\in\mathcal E$, is a path which we denote by $\rho*\rho'$. The path $\rho^- := e_m\cdots e_1$ will be referred to as the reverse path of $\rho$. In particular, $\rho*\rho^-$ is the empty sequence.

If $m=\ell(\rho)>1$, 
\begin{equation*}
	v\in e_1\setminus e_2, \qquad\text{and}\qquad v'\in e_m\setminus e_{m-1},  
\end{equation*}
we say $\rho$ is a is a path from $v$ to $v'$. If $\ell(\rho)=1$, say, $\rho=e_1=\pi(a)$ for some $a\in\mathcal A$,  $\rho$ can be regarded as a path from $t_a$ to $h_a$ and vice versa.  We let $\mathscr P_{v,v'}$ be the set of all paths from $v$ to $v'$ and $\mathscr P_G$ be the set of all paths in $G$.
If $\rho\in \mathscr P_{v,v'}$ and $\rho'\in \mathscr P_{v',v''}$,  then $\rho*\rho'\in \mathscr P_{v,v''}$.

A subpath $\rho'$ of $\rho$ is subsequence such that 
\begin{equation*}
	e_i,e_j\in\rho' \quad\text{with}\qquad i<j \qquad\Rightarrow\qquad e_k\in\rho' \quad\text{for all}\quad i\le k\le j.
\end{equation*}
We say $\rho$ is a simple path if no  subpath is a cycle. If $\rho=e_1\cdots e_m$ is a path from $v$ to $v', e_j=\pi(a_j)$, and $m>1$, the signature of $\rho$ is the element $\sigma_\rho=(s_1,\dots,s_m)\in\mathbb Z^m$ given by 
\begin{equation*}
	s_1 = \begin{cases} -1,& \text{if } v=t_{a_1},\\ 1,& \text{if } v=h_{a_1},\end{cases} \qquad\text{and}\qquad 
	s_{j+1} = \begin{cases} s_j,& \text{if } t_{a_{j+1}}=h_{a_j} \text{ or } t_{a_j}=h_{a_{j+1}},\\ -s_j,& \text{otherwise,}\end{cases} 
\end{equation*}
for all $1\le j<m$.  If $m=1$, the signature will be $1$ or $-1$ depending on whether it is regarded as a path from $h_{a_1}$ to $t_{a_1}$ or the other way round, respectively. We shall say $\rho$ is monotonic or directed if $s_i=s_j$ for all $1\le i,j\le m$. In that case, if $s_j=1$ for all $1\le j\le m$, we say it is increasing. Otherwise, it is decreasing.  If $\rho$ is increasing, we set $h_\rho = h_{a_1}$ and $t_{\rho}=t_{a_m}$. If it is decreasing, then $t_\rho = t_{a_1}$ and $h_{\rho}=h_{a_m}$. If $s_{j+1}=-s_j$ for all $1\le j<m$, we say $\rho$ is alternating.  Clearly, $\sigma_{\rho^-}=(-s_m,\dots,-s_1)$. We shall refer to a monotonic cycle as an oriented cycle.  
We will denote by $\mathscr{P}^+_{v,v'}$ (resp. $\mathscr{P}^-_{v,v'}$) be the set of increasing (resp. decreasing) monotonic paths from $v$ to $v'$. For instance
\begin{equation*}
	\begin{tikzcd}
		v_1 \arrow[r,"a_1"] & v_2 \arrow[r,"a_2"] & v_3 \in \mathscr{P}^-_{v_1,v_3}
	\end{tikzcd} \quad\text{while}\quad
	\begin{tikzcd}
		v_1 & \arrow[swap,l,"a_1"]  v_2 & \arrow[swap,l,"a_2"]  v_3 \in \mathscr{P}^+_{v_1,v_3}.
	\end{tikzcd} 
\end{equation*}
On the other hand, 
\begin{tikzcd}
	v_1  \arrow[r,"a_1"] & v_2 & \arrow[swap,l,"a_2"]  v_3 \in \mathscr{P}{v_1,v_3},
\end{tikzcd} 
but is neither in $\mathscr{P}^+_{v_1,v_3}$ nor in $\mathscr{P}^-_{v_1,v_3}$.

A graph $G$ is said to be connected if, for every pair of vertices $v\ne v'$, there exists a path from $v$ to $v'$. 
If $G$ is connected, we can consider the distance function $d:\mathcal V\to\mathbb Z$ defined by, $d(v,v)=0$ for all $v\in \mathcal V$ and
\begin{equation*}
	d(v,v') = \min\{\ell(\rho): \rho\text{ is a path from } v \text{ to }v'\} \qquad\text{if}\qquad v\ne v'.
\end{equation*}
If $d(v,v')=1$ we say $v$ and $v'$ are adjacent.  Also, for two subsets $\mathcal V_1,\mathcal V_2\subseteq \mathcal V$, define
\begin{equation*}
	d(\mathcal V_1,\mathcal V_2) = \min\{d(v_1,v_2):v_1\in \mathcal V_1, v_2\in \mathcal V_2\}.
\end{equation*}
Set $d(v,v')=\infty$ if $v$ and $v'$ belong to distinct connected components.

\begin{ex}
	The following path $\rho=e_1\cdots e_5$ from $v$ to $v'$ has signature $(1,-1,-1,1,-1)$ and contains the $3$-cycle $e_2e_3e_4$. The circles denote other arbitrary elements in $\mathcal V$.
\begin{equation}\label{e:pathex}
	\begin{tikzcd}
		 & v'  &  \circ \\
		v &  \arrow[l,"a_1"]  \arrow[u,"a_5"] \circ \arrow[ur,"a_4"] \arrow[swap,dr,"a_2"]\\
		    &   & \circ \arrow[swap,uu,"a_3"]
	\end{tikzcd} 
\end{equation}
Note $d(v,v')\le 2$ since $a_1a_5$ is a path from $v$ to $v'$. The subpath $e_2e_3$ is decreasing, while $e_3e_4e_5$ is an alternating subpath. The path $e_1e_5$ is alternating while $e_3e_2$ is increasing, but they are not subpaths of $\rho$. 
\endd
\end{ex}

Every path gives rise to a subgraph associated to the set
\begin{equation*}
	\mathcal V^\rho = \{v\in \mathcal V: v\in e\text{ for some } e\in \rho \}.
\end{equation*}
Given $v\in \mathcal V$, set $\mathcal A_v=\{v'\in\mathcal V:d(v,v')=1\}$, 
\begin{equation}\label{e:adjset}
	\mathcal A_v^1 = \{v'\in\mathcal A_v: (v',v)\in\mathcal A \}, \qquad\text{and}\qquad \mathcal A_v^{-1} = \{v'\in\mathcal A_v: (v,v')\in\mathcal A \}.
\end{equation}
The valence of $v$ is defined as $\#\mathcal A_v$. If this number is $0$, we say $v$ is an isolated vertex, if it is $1$, we say $v$ is monovalent, and if it is at least $3$, we say $v$ is multivalent.   Set
\begin{equation*}
	\mathring{G} = \{v\in\mathcal V: \#\mathcal A_v>1\} \qquad\text{and}\qquad \partial G = G\setminus\mathring{G}.
\end{equation*}
Elements of $\partial G$ will be referred to as boundary vertices while those of $\mathring{G}$ will be referred to as inner vertices. A vertex $v$ is said to be a source if there are no incoming arrows towards it or, equivalently,
\begin{equation*}
	\mathcal A_v \subseteq \mathcal A_v^{-1},
\end{equation*}
while it is a sink if
\begin{equation*}
	\mathcal A_v \subseteq \mathcal A_v^1.
\end{equation*}
In particular, isolated vertices are sinks and sources at the same time and a non-isolated vertex cannot be a sink and a source concomitantly. We will say a vertex  is extremal if it is either a sink or a source. Note the middle circle in \eqref{e:pathex} is a source, the upper one is a sink, and the lower one is neither.

\subsection{Cuts and Special Kinds of Graphs}\label{ss:cuts} A cut of a directed graph $G$ is a pair of subgraphs $(G',G'')$ such that
\begin{equation*}
	\mathcal V = \mathcal V'\sqcup\mathcal V'', \quad \mathcal A'=\{a\in\mathcal A: h_a,t_a\in\mathcal V'\}, \quad\text{and}\quad \mathcal A''=\{a\in\mathcal A: h_a,t_a\in\mathcal V''\}.
\end{equation*}
The set
\begin{equation*}
	\mathcal A\setminus  (\mathcal A'\cup\mathcal A'')
\end{equation*}
is called the associated cut-set. Note the cut can be recovered from its cut-set if $G$ is connected. Elements of the cut-set are said to cross the cut. An element  $a\in\mathcal A$ is said to be a bridge if the number of connected components of $(\mathcal V,\mathcal A\setminus\{a\})$ is larger than that of $G$. If $G$ is connected, this is equivalent to saying that $\{a\}$ is the cut-set of a cut. We shall say a cut $(G',G'')$ is connected if both $G'$ and $G''$ are connected.

A connected graph with no cycles is said to be a tree. We shall refer to a tree with no multivalent vertex as a line.
We will say $G$ is a monotonic line if $\mathcal V=\mathcal V^\rho$ for some simple monotonic path $\rho$. 
Note every tree with more than one vertex has at least two monovalent vertices, a fact which is false in general, as seen in the following examples. 
\begin{equation}\label{e:linefus}
	\begin{tikzcd}[row sep=small]
	&	\circ \arrow[dl]\arrow[dr]&  \\ 
	\circ \arrow[dr]& \arrow[l] \circ& \arrow[l]\circ\\
	& \circ \arrow[ur]& 
	\end{tikzcd} \qquad
	\begin{tikzcd}[row sep=small]
		&	\circ \arrow[dl]\arrow[dr]&  \\ 
		\circ \arrow[dr]& & \arrow[ll]\circ\\
		& \circ \arrow[ur]& 
	\end{tikzcd} \qquad
	\begin{tikzcd}[column sep=small, row sep=small]
		&	\circ \arrow[dl]\arrow[dr]&  \\
		\circ & & \circ\\
		\circ\arrow[u] \arrow[rr]& & \circ \arrow[u]
	\end{tikzcd}
\end{equation}
Note that, in the first two graphs, the subgraphs obtained by removing the upper vertex are formed by directed cycles, while the cycle corresponding to the subgraphs obtained by removing the lower vertex are not. Note also that an arrow $a$ is a bridge if and only if it is not contained in a cycle. In particular, the above graphs are bridgeless. A forest is a graph whose connected components are trees or, equivalently, every arrow is a bridge. We also recall that a tournament is a graph whose underlying set of edges is complete, i.e., $\{v,v'\}\in\mathcal E$ for every $v,v'\in\mathcal V, v\ne v'$. In that case, the underlying non-directed graph is said to be complete. None of the above graphs is a tournament, but the middle one is missing only one arrow to become a tournament. 

Let us record some elementary properties of trees. 

\begin{lem}\label{l:disctree}
	The following are equivalent for a graph $G$. 
	\begin{enumerate}[(i)]
		\item $G$ is a tree.
		\item  $G$ is connected and the graph obtained by removing any edge has two connected components.
		\item $\#\mathscr P_{v,v'}=1$ for all vertices $v,v'\in G$. \hfil\qed
	\end{enumerate}	
\end{lem}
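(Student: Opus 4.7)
The plan is to establish the cycle of implications (i) $\Rightarrow$ (iii) $\Rightarrow$ (ii) $\Rightarrow$ (i), since each of these is the direction in which the conclusion is slightly more than a direct rephrasing.

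For (i) $\Rightarrow$ (iii), connectedness of $G$ yields $\mathscr P_{v,v'}\ne\emptyset$, so the content is uniqueness. Suppose towards contradiction that $\rho,\rho'\in \mathscr P_{v,v'}$ are distinct. Consider the concatenation $\rho*(\rho')^-$, which is a closed walk based at $v$ after the successive $ee$-deletions described in \Cref{ss:graph}; I would argue that this reduced sequence is nonempty (otherwise $\rho$ and $\rho'$ would have identical underlying edge sequences). Passing to the underlying vertex sequence, let $j$ be the first index at which $\rho$ and $\rho'$ disagree on vertices and let $k\ge j$ be the first index thereafter at which the vertices reagree; the portion of the closed walk between these indices provides a minimal repeated-vertex subsequence which, after trimming, yields an $m$-cycle in $G$ in the precise sense of \Cref{ss:graph}. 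This contradicts (i).

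For (iii) $\Rightarrow$ (ii), connectedness is immediate. Fix an arrow $a\in\mathcal A$ with $e=\pi(a)=\{v,v'\}$; the singleton path $e$ is one element of $\mathscr P_{v,v'}$ and hence, by (iii), the only one, so removing $e$ disconnects $v$ from $v'$. To see that the resulting graph has exactly two components, I would use (iii) again: for any $u\in\mathcal V$, the unique path from $u$ to $v$ cannot use $e$ twice (the no-backtracking condition $\#(e_j\cap e_{j+1})=1$ combined with minimality would otherwise give a shorter alternative path and so a second element of $\mathscr P_{u,v}$), so either the whole path avoids $e$ and places $u$ in the component of $v$, or $e$ occurs exactly once, in which case the initial segment places $u$ in the component of $v'$. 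Hence removing $e$ leaves exactly two components.

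Finally, (ii) $\Rightarrow$ (i): $G$ is connected by hypothesis, so I must rule out cycles. If $\rho=e_1\cdots e_m$ were an $m$-cycle based at $w$, with underlying vertices $w=v_1,\dots,v_m,v_{m+1}=w$, then removing the edge $e_1=\{v_1,v_2\}$ leaves $e_2\cdots e_m$ as a path from $v_2$ to $v_1$ in $G\setminus\{e_1\}$, so this vertex-deleted graph is still connected and therefore has the same number of connected components as $G$, contradicting (ii). This closes the cycle of implications.

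The main technical hurdle is the careful bookkeeping in (i) $\Rightarrow$ (iii): the paper's notion of path allows repeated non-adjacent edges and vertices, and paths are composed via a non-trivial $ee$-reduction, so turning the formal inequality $\rho\ne\rho'$ into an honest $m$-cycle (with the minimality clause $m=\min\{j>1:v_j=v_1\}$ satisfied) requires locating the first divergence-reconvergence of the two vertex sequences and verifying that the corresponding induced subpath is not degenerate. The remaining implications are then formal consequences of the same path-manipulation principles.
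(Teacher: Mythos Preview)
The paper does not supply a proof of this lemma: the trailing \qedsymbol\ in the statement is the paper's convention for results whose proofs are omitted as standard. So there is nothing to compare against, and the relevant question is only whether your argument is correct. It is.

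Two small simplifications are available. In (i)$\Rightarrow$(iii), once you have that $\rho*(\rho')^-$ is a nonempty element of $\mathscr P_{v,v}$, you already have a cycle in the sense of \Cref{ss:graph}, and this alone contradicts (i); the additional extraction of an $m$-cycle via first divergence/reconvergence is not needed, since a tree is defined as having no cycles, not merely no $m$-cycles. In (iii)$\Rightarrow$(ii), your argument that removal of an edge yields \emph{exactly} two components is correct but can be shortened: once you know that removing $e$ disconnects its endpoints, observe that reinserting a single edge into $G\setminus\{e\}$ can decrease the number of connected components by at most one, so connectedness of $G$ forces $G\setminus\{e\}$ to have at most two components. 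Finally, in (ii)$\Rightarrow$(i) you write ``vertex-deleted'' where you mean ``edge-deleted''.
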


%
%

In light of (iii) of the above lemma, given vertices $v,v'$ in a tree, we denote by $[v,v']$ the set of vertices of the unique element of $\mathscr P_{v,v'}$. In particular, $[v',v]=[v,v']$. Evidently, if $v\ne v'$,   $\#([v,v']\cap\mathcal A_v)= 1$.
Given $m\in\mathbb Z_{> 0}$, set
\begin{equation}
	\mathcal A_v^{\pm m}=\{v'\in\mathcal V: d(v,v')=m \text{ and } [v,v']\cap\mathcal A_{v'}^{\mp 1} \neq\emptyset\}.
\end{equation}
This clearly coincides with the sets defined in \eqref{e:adjset} when $m=1$.   Set also $\mathcal A_v^0=\{v\}$ and
\begin{equation}
	\mathcal A_v^\pm = \bigcup_{m\in\mathbb Z_{>0}} \mathcal A_v^{\pm m}.
\end{equation}

\begin{lem}\label{l:elemtree}
	Assume $G$ is a tree.
	
	\begin{enumerate}[(a)]
		\item $\partial G\ne\emptyset, \#\partial G=1$ iff $G$ is a singleton, and $\#\partial G=2$ iff $G$ is a nontrivial path.
		\item If $H$ is a subgraph, then $H$ is a tree. Moreover, if $H$ is connected and proper, $\partial G\setminus \mathcal V_H\ne\emptyset$.
		\item  If $H$ is a connected subgraph and $k=\#\mathcal V_G-\#\mathcal V_H$, there exist $v_1,\dots,v_k\in\mathcal V_G$ such that
		$v_j\in\partial (G\setminus \{v_i:i<j\})$ and $H=G\setminus\{v_i:1\le i\le k\}$.
		\item If $\mathcal V_1\cup\mathcal V_2$ is a nontrivial partition of $\mathcal V_G$ such that $G_{\mathcal V_i}$ is connected for $i=1,2$, there exists unique $(v_1,v_2)\in\mathcal V_1\times\mathcal V_2$ such that $d(v_1,v_2)=1$. 
		\item For all $v\in\mathcal V$, the sets $\mathcal A_v^m, m\in\mathbb Z$ are disjoint and $\mathcal V = \mathcal A_v^+\cup\mathcal A_v^0\cup\mathcal A_v^-$.\hfil\qed
	\end{enumerate}
\end{lem}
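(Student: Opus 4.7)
The plan is to treat the five parts in order, exploiting at every step the uniqueness of paths in a tree furnished by \Cref{l:disctree}(iii).

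For (a), the singleton case is immediate. Assuming $\#\mathcal V>1$, I would extract a simple path $\rho=e_1\cdots e_m$ of maximal length in $G$ and argue its two endpoints must be monovalent: if an endpoint had a second neighbor $u$, then either $u$ is not on $\rho$, so $\rho$ could be extended (contradicting maximality), or $u\in\mathcal V^\rho$, yielding a cycle (contradicting that $G$ is a tree). This shows $\#\partial G\geq 2$ whenever $\#\mathcal V>1$, so $\#\partial G=1$ forces $G$ to be a singleton. For the characterization of $\#\partial G=2$, I would use the standard tree identity $\sum_{v\in\mathcal V}\#\mathcal A_v = 2(\#\mathcal V-1)$. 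Since boundary vertices have valence at most $1$ and inner vertices at least $2$, the equation forces every inner vertex to have valence \emph{exactly} $2$ when $\#\partial G=2$, hence no multivalent vertex, so $G$ is a line; the converse is immediate.

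For (b), any non-directed cycle inside a subgraph $H$ would already be a cycle in $G$, so $H$ is automatically cycle-free, and when connected it is a tree. For the ``moreover'' part (the case $\mathcal V_H=\emptyset$ reduces directly to (a)), I would pick $v\in\mathcal V_G\setminus\mathcal V_H$ maximizing $d(v,\mathcal V_H)$, let $v'$ be the unique neighbor of $v$ lying on the unique path from $v$ to the (unique) closest $w\in\mathcal V_H$, and observe that any second neighbor $v''\ne v'$ of $v$ must, by uniqueness of paths, satisfy $d(v'',w)=1+d(v,w)$ for every $w\in\mathcal V_H$, contradicting maximality of $d(v,\mathcal V_H)$. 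Hence $v$ is monovalent in $G$, so $v\in\partial G\setminus\mathcal V_H$. Part (c) is then a straightforward induction on $k=\#\mathcal V_G-\#\mathcal V_H$: at step $j$ apply (b) to the tree $G\setminus\{v_i:i<j\}$ to pick the required boundary vertex $v_j$ outside $\mathcal V_H$.

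For (d), existence is immediate: the unique path from any $v_1\in\mathcal V_1$ to any $v_2\in\mathcal V_2$ must cross from $\mathcal V_1$ to $\mathcal V_2$ along some edge. Uniqueness is the delicate point and is the main obstacle. Suppose two distinct edges $\{u_1,u_2\}$ and $\{w_1,w_2\}$ with $u_i,w_i\in\mathcal V_i$ cross the cut. Since $G_{\mathcal V_1}$ and $G_{\mathcal V_2}$ are connected, I can concatenate a path from $u_1$ to $w_1$ inside $G_{\mathcal V_1}$, the edge $\{w_1,w_2\}$, a path from $w_2$ to $u_2$ inside $G_{\mathcal V_2}$, and the edge $\{u_2,u_1\}$, yielding a closed walk that uses the two crossing edges exactly once and otherwise stays within a single side of the partition. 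The key verification is that the two crossing edges cannot be cancelled against each other, so after reducing $ee$-substrings the walk still supports a genuine cycle in $G$, contradicting \Cref{l:disctree}(iii).

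Finally, (e) follows directly from path uniqueness: for any $v'\ne v$, the unique $v$-to-$v'$ path $v=v_0,v_1,\ldots,v_m=v'$ has a uniquely determined final edge $\{v_{m-1},v'\}$, which in $G$ carries a uniquely determined orientation (one arrow or the other, never both, as $G$ has no loops and arrows are unambiguously oriented). This orientation places $v'$ in exactly one of $\mathcal A_v^{m}$ or $\mathcal A_v^{-m}$ and in no other $\mathcal A_v^{m'}$, giving both the disjointness across all $m\in\mathbb Z$ and the covering $\mathcal V=\mathcal A_v^+\cup\mathcal A_v^0\cup\mathcal A_v^-$.
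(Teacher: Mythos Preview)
The paper does not supply a proof of this lemma; it is stated with a terminal \qedsymbol\ as an elementary fact whose verification is left to the reader. So there is no ``paper's own proof'' to compare against, and your write-up is effectively filling that gap.

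Your arguments are sound and follow the standard route. A few small points worth tightening: in (b), the first sentence of the lemma is slightly imprecise as stated (an induced subgraph of a tree is a forest, not necessarily a tree), and you implicitly correct this by adding ``when connected''. Still in (b), your maximality argument silently uses two facts that deserve a word: that the closest point of the connected subgraph $H$ to $v$ is \emph{unique} (which follows because the unique $w_1$--$w_2$ path in the tree lies in $H$ and passes through the branch point of the $v$--$w_1$ and $v$--$w_2$ paths), and that the hypothetical second neighbor $v''$ is not itself in $\mathcal V_H$ (which follows from that same uniqueness). In (d), rather than tracking cancellation of $ee$-substrings, it is cleaner to observe directly that the concatenation of the $u_1$--$w_1$ path in $G_{\mathcal V_1}$, the edge $\{w_1,w_2\}$, and the $w_2$--$u_2$ path in $G_{\mathcal V_2}$ is a \emph{simple} path (no vertex repeats, since the three pieces live in $\mathcal V_1$, the crossing edge, and $\mathcal V_2$ respectively), and this simple $u_1$--$u_2$ path is distinct from the single edge $\{u_1,u_2\}$, contradicting \Cref{l:disctree}(iii). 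With these cosmetic adjustments your proof is complete.
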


We will be interested in graphs with no oriented cycles. In that case, the set of arrows $\mathcal A$  induces a partial order on $\mathcal V$ by the transitive extension of the strict relation
\begin{equation*}
	h_a\prec t_a \quad\text{for}\quad a\in\mathcal A.
\end{equation*} 
Note
\begin{equation}
	\mathcal P_{v,v'}^+ \ne\emptyset \ \Leftrightarrow\ v \prec v' \quad\text{and}\quad  \mathcal P_{v,v'}^- \ne\emptyset   \ \Leftrightarrow\ v' \prec v.
\end{equation}
Set $D(v,v')=0$ if $v=v'$,
\begin{equation}
	D(v,v') = \min\{\ell(\rho): \rho\in\mathscr{P}^+_{v,v'}\} \ \text{if}\  v \prec v', \quad D(v,v') = -\min\{\ell(\rho): \rho\in\mathscr{P}^-_{v,v'}\} \ \text{if}\  v' \prec v,
\end{equation}
and $D(v,v')=\infty$ if $v$ and $v'$ are not comparable by $\preceq$. Given $m\in\mathbb Z$, set
\begin{equation}\label{e:Neibor}
	\mathcal N^m_G(v) = \{v\in\mathcal V: D(v,v')=m\} \quad\text{and}\quad \mathcal N^\pm_G(v) = \bigcup_{m\in\mathbb Z_{>0}} \mathcal N_G(v)^{\pm m}.
\end{equation}
If no confusion arises, we simplify notation and write $\mathcal N^m(v)$ and $\mathcal N^\pm(v)$.

We shall say $G$ is a totally ordered graph if $\preceq$ is a total order on $\mathcal V$. The following lemma is easily established.

\begin{lem}\label{l:totop}\hfil
	\begin{enumerate}[(a)]
		\item Every totally ordered graph is connected  and has a unique sink and a unique source.
		\item If $G$ is a  totally ordered graph and $v\in\mathcal V$ is an extremal vertex, the subrgraph associated to $\mathcal V\setminus\{v\}$ is also totally ordered.
		\item A totally ordered tree is an monotonic line.
		\item Every tournament with no oriented cycles is totally ordered.\qed
	\end{enumerate}
\end{lem}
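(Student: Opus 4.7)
The plan is to work through the four parts in order, leveraging two basic observations: the convention $h_a\prec t_a$ makes arrows point from larger elements to smaller ones, and the standing absence of oriented cycles keeps $\preceq$ antisymmetric.

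For (a), any finite totally ordered set admits a unique minimum $v_*$ and a unique maximum $v^*$. If $v_*$ had an outgoing arrow $(v_*,v')\in\mathcal A$, then $v'\prec v_*$, contradicting minimality; hence $v_*$ is a sink, and dually $v^*$ is a source. Uniqueness of each is just uniqueness of the min/max. Isolated vertices are ruled out once $\#\mathcal V\ge 2$, since they would be incomparable with every other vertex. Connectedness is then immediate: for distinct $v,v'$ with, say, $v\prec v'$, the transitive-closure definition of $\preceq$ supplies a chain of arrows $v'\to u_1\to\cdots\to v$, which yields in particular an undirected path.

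For (b), suppose $v$ is a sink; the source case is symmetric. For distinct $v_1,v_2\in\mathcal V\setminus\{v\}$ with $v_1\prec v_2$ in $G$, any witnessing arrow-chain $v_2=u_0\to u_1\to\cdots\to u_k=v_1$ cannot visit $v$ as an internal $u_j$ (since $v$ has no outgoing arrow) nor as $u_k$ (since $v\ne v_1$). Hence the chain survives in the subgraph $G\setminus\{v\}$, so $v_1\prec v_2$ still holds there. Since the restriction of a total order is a total order, (b) follows.

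For (c), by \Cref{l:elemtree}(a) a tree $G$ with $\#\mathcal V>1$ has at least two monovalent vertices. Each monovalent vertex is either a sink or a source (its single incident arrow is either outgoing or incoming), and by (a) each role is filled exactly once, bounding the count by two. Hence $\#\partial G=2$, and \Cref{l:elemtree}(a) identifies $G$ with a nontrivial path, that is, a line. It remains to check monotonicity. If two consecutive arrows along the line had opposite orientation, their shared vertex would be an inner sink (both arrows incoming) or an inner source (both outgoing), violating the uniqueness guaranteed by (a). Thus all arrows align and $G$ is a monotonic line; the single-vertex case is trivial.

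For (d), any two distinct vertices of a tournament are joined by an arrow and are therefore directly comparable under $\preceq$; antisymmetry is granted by the oriented-cycle-free hypothesis, so $\preceq$ is a total order. I expect (c) to be the principal obstacle, since it must combine the sink/source analysis of (a) with the boundary description of trees from \Cref{l:elemtree}(a) and then separately rule out interior direction reversals along the line. The remaining three parts reduce to short, direct arguments about directed chains.
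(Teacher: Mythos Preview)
Your proof is correct. The paper itself omits the proof entirely, labeling the lemma as ``easily established'' and closing with \qedsymbol, so there is no argument to compare against; you have simply supplied what the authors left implicit.

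One small remark on (a): the sentence ``Uniqueness of each is just uniqueness of the min/max'' is a little compressed. You showed that the minimum is a sink, but for uniqueness you also need that every sink is the minimum. This is implicit in your connectedness argument (a sink $w\ne v_*$ would satisfy $v_*\prec w$, hence lie at the tail of some arrow), so the logic is sound, but spelling it out in one line would make the proof self-contained. Your handling of (b)--(d) is clean; in particular, the two-step argument for (c)---first bounding $\#\partial G$ via the sink/source count from (a), then ruling out internal direction reversals by the same count---is exactly the right decomposition. The closing paragraph of meta-commentary (``I expect (c) to be the principal obstacle\dots'') should be deleted from a final write-up.
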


Only the last graph in \eqref{e:linefus} does not contain an directed cycle so $\preceq$ is defined, but it is not totally ordered. The following are examples of totally ordered graphs:
\begin{equation*}
	\begin{tikzcd}[column sep=small,row sep=small]
			\circ \arrow[rr]\arrow[dr] & & \circ \arrow[dl] \\ 
		&  \circ \arrow[d]& \\
		& \circ & 
	\end{tikzcd} \qquad
	\begin{tikzcd}[column sep=small,row sep=small]
		&	\circ \arrow[dl]\arrow[dr]&  \\ 
		\circ \arrow[dr]& & \arrow[ll]\circ \arrow[dl]\\
		& \circ & 
	\end{tikzcd} \qquad
	\begin{tikzcd}[column sep=small,row sep=small]
		\circ \arrow[rr] \arrow[dd]& & \circ \arrow[dd]\\ \\
		\circ & & \circ \arrow[ll]
	\end{tikzcd}
\end{equation*}

\subsection{Classical and Quantum Algebras}\label{ss:clalg}
Let $I$ be the set of nodes of a  finite-type connected Dynkin diagram. By regarding $I$ as the set of vertices of the undirected graph whose  edges are the sets of adjacent nodes of the diagram, we can use the notions of graph theory from the previous sections. By abuse of language, we refer to any subset $J$ of $I$ as a subdiagram (subgraph). In particular,  we have defined $d(i,j)$ and $[i,j]$ for all $i,j\in I$ as well as $\partial J$ and $\mathring{J}$ for any $J\subseteq I$. Let also $\bar J$ be the minimal connected sudiagram of $I$ containing $J$. This is well defined since $I$ is a tree.

Let $\lie g$ be the simple Lie algebra over $\mathbb C$ corresponding to the given Dynkin diagram, fix a Cartan subalgebra $\lie h$ and a set of positive roots $R^+$ and let
$\lie g_{\pm \alpha},\alpha\in R^+$, and $\lie g=\lie n^-\oplus\lie h\oplus\lie n^+$ be the associated root spaces and triangular decomposition.
The simple roots will be denoted by $\alpha_i$, the fundamental weights by
$\omega_i$, $i\in I$, while $Q,P,Q^+,P^+$ will denote the root and weight
lattices with corresponding positive cones, respectively.
Let also $h_\alpha\in\lie h$  be the co-root associated to $\alpha\in R^+$. If $\alpha=\alpha_i$ is simple, we often simplify notation and write
$h_i$. Let $C = (c_{i,j})_{i,j\in I}$ be the Cartan matrix of $\lie g$, i.e., $c_{i,j}=\alpha_j(h_i)$, and $d_i, i\in I$, be such that $d_ic_{i,j}=d_jc_{j,i}, i,j\in I$. The Weyl group is denoted by $\cal W$  and its longest element by $w_0$.  We also denote by $w_0$ the involution on $I$ induced by $w_0$ and set $i^*=w_0(i)$. The dual Coxeter number and the lacing number of $\lie g$ will be denoted by $h^\vee$ and $r^\vee$, respectively. In particular, $r^\vee=\max\{d_i:i\in I\}$.

For a subdiagram $J\subseteq I$,  let $\lie g_J$ be the subalgebra of $\lie g$ generated by the corresponding simple root vectors, $\lie h_J=\lie h\cap\lie g_J$, and so on. Let also $Q_J$ be the subgroup of $Q$ generated by $\alpha_j, j\in J$, $Q^+_J=Q^+\cap Q_J$, and
$R^+_J=R^+\cap Q_J$. 
Given $\lambda\in P$, let $\lambda_J$ denote the
restriction of $\lambda$ to $\lie h_J^*$. 
If $\mu\in P$ and $J\subseteq I$, define also 
$${\rm supp}(\mu)=\{i\in I:\mu(h_i)\ne 0\}.$$

For a Lie algebra $\lie a$ over $\mathbb C$, let $\tlie a=\lie a\otimes  \mathbb
C[t,t^{-1}]$ be its loop algebras and identify $\lie a$ with the subalgebra $\lie a\otimes 1$.
Then, $\tlie g = \tlie n^-\oplus \tlie h\oplus \tlie
n^+$ and $\tlie h$ is an abelian subalgebra.

Let $\mathbb F$ be an algebraically closed field of characteristic zero, fix $q\in\mathbb F^\times$ which is not a root of $1$, and set $q_i=q^{d_i}, i\in I$. Let also $U_q(\lie g)$ and $U_q(\tlie g)$ be the associated Drinfeld-Jimbo quantum groups over $\mathbb F$. We use the notation as in  \cite[Section 1.2]{Moura}.  In particular, the Drinfeld loop-like generators of $U_q(\tlie g)$ are denoted by $x_{i,r}^\pm, h_{i,s}, k_i^{\pm 1}, i\in I, r,s\in\mathbb Z, s\ne 0$. Also, $U_q(\lie g)$ is the subalgebra of $U_q(\tlie g)$ generated by $x_i^\pm = x_{i,0}^\pm, k_i^{\pm 1}, i\in I$, and the subalgebras $U_q(\lie n^\pm), U_q(\lie h), U_q(\tlie n^\pm), U_q(\tlie h)$   are defined in the expected way. 

Given $J\subseteq I$, let $U_q(\lie a_J)$, with $\lie a=\lie g, \tlie g,\tlie h$, etc., be the respective quantum groups associated to $\lie a_J$. Let also $U_q(\lie a)_J$ be the the subalgebra of $U_q(\tlie g)$ generated by the generators corresponding to $J$. It is well known that there is an algebra isomorphism
\begin{equation*}
	U_q(\lie a)_J\cong U_{q_J}(\lie a_J) \qquad\text{where}\qquad q_J=q^{d_J} \qquad\text{with}\qquad d_J = \min\{d_j:j\in J\}.
\end{equation*}
This is a Hopf algebra isomorphism only if $\lie a\subseteq\lie g$. We shall always implicitly identify $U_q(\lie a)_J$ with $U_{q_J}(\lie a_J)$ without further notice. When $J=\{j\}$ is a singleton, we simply write  $U_q(\lie a)_j$ instead of $U_q(\lie a)_{\{j\}}$, and so on.

\subsection{The $\ell$-Weight Lattice}\label{ss:lwl}
The $\ell$-weight lattice of $U_q(\tlie g)$ is the multiplicative group $\mathcal P$ of $n$-tuples of rational
functions $\bs\varpi = (\bs\varpi_i(u))_{i\in I}$ with values
in $\mathbb F$  such that $\bs\varpi_i(0)=1$ for all $i\in I$. The elements of the submonoid $\mathcal P^+$ of
$\mathcal P$ consisting of $n$-tuples of polynomials will be referred to as dominant $\ell$-weights or Drinfeld polynomials. 
If $\bs\pi,\bs\omega\in\mathcal P^+$ satisfy $\bs\pi\bs\omega^{-1}\in\mathcal P^+$, we shall say $\bs\omega$ divides $\bs\pi$ and write $\bs\omega|\bs\pi$.

Given
$a\in\mathbb F^\times$ and $\mu\in P$, let $\bs{\omega}_{\mu,a}\in\mathcal P$ be the element whose $i$-th rational function is
\begin{equation*}
	(1-au)^{\mu(h_i)}, \quad i\in I.
\end{equation*}
In the case that $\mu=\omega_i$ for some $i$, we simplify notation and write $\bs\omega_{i,a}$. Since  $\mathcal P$ is a (multiplicative) free abelian group on the set $\{\bs{\omega}_{i,a}:i\in I,a\in\mathbb F^\times\}$, there exists a unique  group homomorphism  $\wt:\mathcal P \to P$ determined by setting $\wt(\bs\omega_{i,a})=\omega_i$. Set
\begin{equation*}
	\supp(\bs\varpi) = \supp(\wt(\bs\varpi)), \quad\bs\varpi\in\mathcal P.
\end{equation*}
There exists an injective map $\mathcal P\to (U_q(\tlie h))^*$ (see \cite{Moura}) and, hence, we identify $\mathcal P$  with its image in $(U_q(\tlie h))^*$. 

Given $i\in I, a\in\mathbb F^\times, m\in\mathbb Z_{\ge 0}$,  define $q_i=q^{d_i}$ and
\begin{equation*}
	\bs\omega_{i,a,r} = \prod_{p=0}^{r-1} \bs\omega_{i,aq_i^{r-1-2p}}.
\end{equation*}
Note that $\wt(\bs\omega_{i,a,r}) = r\omega_i$. 
We shall refer to Drinfeld polynomials of the form $\bs\omega_{i,a,r}$ as polynomials of Kirillov-Reshetikhin (KR) type. 
Every Drinfeld polynomial can be written uniquely as a product of KR type polynomials such that, for every two factors supported at $i$, say $\bs\omega_{i,a,r}$ and $\bs\omega_{i,b,s}$, the following holds
\begin{equation}\label{defqfact}
	\frac{a}{b} \ne q_i^{r+s-2p} \qquad\text{for all}\qquad 0\le p<\min\{r,s\}. 
\end{equation}
Such factorization is said to be the  $q$-factorization of $\bs\pi$ and the corresponding factors are called the $q$-factors of $\bs\pi$.
By abuse of language, whenever we mention the set of $q$-factors of $\bs\pi$ we actually mean the associated multiset of $q$-factors counted with multiplicities in the $q$-factorization. We shall say that $\bs\pi,\bs\pi'\in\mathcal P^+$ have dissociate $q$-factorizations if the set of $q$-factors of $\bs\pi\bs\pi'$ is the union of the sets of $q$-factors of $\bs\pi$ and $\bs\pi'$. It will also be convenient to work with factorizations in KR type polynomials which not necessarily satisfy \eqref{defqfact}. Such factorization will be referred to as pseudo $q$-factorizations and the associated factors as the corresponding pseudo $q$-factors.

For $\bs\varpi\in\mathcal P$ and $J\subseteq I$, let $\bs\varpi_J$ be the associated
$J$-tuple of rational functions and let $\cal
P_J=\{\bs\varpi_J:\bs\varpi\in \mathcal P\}$. Similarly define $\cal
P_J^+$. Notice that $\bs\varpi_J$ can be regarded as an element of
the $\ell$-weight lattice of $U_q(\tlie g)_J$. Let $\pi_J:\mathcal P\to
\cal P_J$ denote the map $\bs\varpi\mapsto\bs\varpi_J$. If $J=\{j\}$
is a singleton, we write $\pi_j$ instead of $\pi_J$.

Given $i\in I, a\in\mathbb F^\times$, the following elements are known as simple $\ell$-roots:
\begin{equation}
	\bs\alpha_{i,a} = (\bs\omega_{i,aq_i,2})^{-1}\prod_{j\ne i} \bs\omega_{j,aq_i,-c_{j,i}}.
\end{equation}
The subgroup of $\mathcal P$ generated by them is called the $\ell$-root lattice of $U_q(\tlie g)$ and will be denoted by $\mathcal Q_q$. Let also $\cal Q_q^+$ be the submonoid generated by the simple $\ell$-roots. Quite clearly, $\wt(\bs\alpha_{i,a})=\alpha_i$. Define a partial order on $\mathcal P$ by
$$\bs\varpi\le\bs\omega \quad{if}\quad \bs\omega\bs\varpi^{-1}\in\cal Q_q^+.$$

\subsection{Pre-Factorization Graphs}\label{ss:prefact}
Given a set $I$, an $I$-coloring of a graph $G=(\mathcal V,\mathcal A)$ is a function $c:\mathcal V\to I$. Given an $I$-coloring $c$ and $i\in I$, let $\mathcal V_i=\{x\in \mathcal V:c(x)=i\}$. 
By a colored graph we will mean an oriented graph $G$ with a choice of coloring $c:\mathcal V\to I$. 

We shall also decorate the vertices and arrows of graphs by positive integers. We will refer to a function $\lambda:\mathcal V\to \mathbb Z_{>0}$ as a weight  and to a function $\epsilon:\mathcal A\to \mathbb Z_{>0}$ as an exponent on $G$. The number $\epsilon(a)$ will be referred to as the exponent of $a$. We will always assume  that $\epsilon$ satisfies the following compatibility condition:
\begin{equation}\label{e:expcomp}
	\epsilon_\rho = \epsilon_{\rho'} \qquad\text{for all}\qquad \rho,\rho'\in\mathscr P_{v,v'}, v,v'\in \mathcal V,
\end{equation}
where, if $\rho=e_1\cdots e_m$  is  such that $\sigma_\rho=(s_1,\dots,s_m)$ and $e_j=\pi(a_j)$,
\begin{equation*}
	\epsilon_\rho := \sum_{j=1}^m s_j\epsilon(a_j).
\end{equation*}
Evidently, $\epsilon_{\rho^-}=-\epsilon_{\rho}$ and one easily checks  $\epsilon_{\rho*\rho'} = \epsilon_{\rho} + \epsilon_{\rho'}$. 
Set
\begin{equation}\label{e:incpaths}
	\mathscr P^+_G = \{\rho\in\mathscr P_G:\epsilon_\rho>0\} \quad\text{and}\quad \mathscr P^-_G = \{\rho\in\mathscr P_G:\epsilon_\rho<0\}.
\end{equation}

We shall refer to the data $(G,c,\lambda,\epsilon)$ formed by a colored oriented graph, a weight $\lambda$, and an exponent $\epsilon$ on $G$  as a pre-factorization graph. We shall abuse of language and simply say $G$ is a pre-factorization graph. We locally illustrate the structures maps of  a pre-factorization graph with the following picture  
\setlength{\unitlength}{.4cm}
\begin{equation*}
	\begin{tikzcd}
		\stackrel{r}{i} \arrow[r,"m"] & \stackrel{s}{j}
		\end{tikzcd} 
\end{equation*}
where $i$ and $j$ are the colors at the corresponding vertices, $r$ and $s$ are their associated weights and $m$ is the exponent associated to the given arrow.
The following is an obvious consequence of \eqref{e:expcomp}.

\begin{lem}\label{l:nocycle}
	If $G$ is a pre-factorization graph, then $G$ contains no oriented cycles.\hfill\qed
\end{lem}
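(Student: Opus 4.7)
The plan is to derive a contradiction directly from the compatibility condition \eqref{e:expcomp} together with the positivity of the exponent map $\epsilon$. Suppose for contradiction that $\rho=e_1\cdots e_m$ is an oriented cycle based at some vertex $v$, so that $\rho\in\mathscr P_{v,v}$ and the signature $\sigma_\rho=(s_1,\dots,s_m)$ has all entries equal to a common sign $s\in\{\pm 1\}$.

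The first step is to observe that $\rho^-$ also lies in $\mathscr P_{v,v}$, since reversing a cycle based at $v$ still begins and ends at $v$. Applying \eqref{e:expcomp} to the pair $(\rho,\rho^-)$ gives $\epsilon_\rho=\epsilon_{\rho^-}$; combined with the identity $\epsilon_{\rho^-}=-\epsilon_\rho$ (noted right after the definition of signature, and immediate from $\sigma_{\rho^-}=(-s_m,\dots,-s_1)$), this forces $\epsilon_\rho=0$. On the other hand, writing $e_j=\pi(a_j)$ and using monotonicity,
\begin{equation*}
	\epsilon_\rho \;=\; \sum_{j=1}^{m} s_j\,\epsilon(a_j) \;=\; s\sum_{j=1}^{m}\epsilon(a_j),
\end{equation*}
and since $\epsilon$ takes values in $\mathbb Z_{>0}$ with $m\ge 1$, the right-hand side is nonzero. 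This contradiction completes the argument.

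There is no substantive obstacle here: the only point requiring a moment of care is unwinding the definitions to confirm that both $\rho$ and $\rho^-$ qualify as elements of $\mathscr P_{v,v}$ in the sense demanded by \eqref{e:expcomp}. The lemma is essentially an immediate corollary of the way pre-factorization graphs were set up, with the compatibility condition \eqref{e:expcomp} serving precisely to rule out oriented cycles.
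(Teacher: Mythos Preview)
Your proof is correct and is precisely the argument the paper intends: the statement is marked with \qedsymbol\ immediately after noting it is an obvious consequence of \eqref{e:expcomp}, and your write-up simply unpacks that implication by comparing $\epsilon_\rho$ with $\epsilon_{\rho^-}$ for a putative oriented cycle $\rho\in\mathscr P_{v,v}$.
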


In particular, only the last graph in \eqref{e:linefus} can be equipped with a pre-factorization graph structure.
If $I$ is as in \Cref{ss:clalg} and $G$ is a connected pre-factorization graph, for each choice of $(v_0,a)\in \mathcal V\times\mathbb F^\times$, we can associate a Drinfeld polynomial  as follows. Define
\begin{equation}
	a_{v_0}=a \quad\text{and}\quad a_{v}=aq^{\epsilon_\rho}  \quad\text{if}\quad \rho\in\mathscr P_{v_0,v}.
\end{equation}
Condition \eqref{e:expcomp} guarantees this is well-defined. Then, define
\begin{equation}\label{e:polytograph}
	\bs\pi_{G,v_0,a}=\prod_{v\in \mathcal V} \bs\omega_{c(v),a_v,\lambda(v)}.
\end{equation}
One can easily check that, 
\begin{equation}
	\bs\pi_{G,v_0,a'q^{-\epsilon_\rho}} = \bs\pi_{G,v_0',a'} \qquad\text{for all}\qquad (v_0',a')\in \mathcal V\times\mathbb F^\times,\ \rho\in\mathscr P_{v_0,v_0'}.
\end{equation}
Therefore, up to a uniform modification on the centers of the factors in the right-hand side of \eqref{e:polytograph}, the definition is independent of the choice of $(v_0,a)$. We will often write $\bs\pi_G$ to shorten notation when the knowledge of precise centers is not relevant. 

\begin{ex}\label{ex:pigamma}
	Assume $\lie g$ is of type $A_2$, so $I=\{1,2\}$, and consider the following pre-factorization graph
	\begin{equation*}
		\begin{tikzcd}
			\stackrel{2}{1} \arrow[r,"3"] & \stackrel{2}{2} & \stackrel{1}{1}\arrow[swap,l,"4"]
		\end{tikzcd} 
	\end{equation*}
	If we select the middle vertex to define $\bs\pi=\bs\pi_G$, we get
	\begin{equation*}
		\bs\pi = \bs\omega_{2,a,2}\ \bs\omega_{1,aq^3,2}\ \bs\omega_{1,aq^4}
	\end{equation*}
	Note that, in this case, the factors in \eqref{e:polytograph} are the $q$-factors of $\bs\pi$. However, this may not be the case as the following trivial example shows:
	\begin{equation*}
		\begin{tikzcd}
			\stackrel{1}{1} \arrow[r,"2"] & \stackrel{1}{1}
		\end{tikzcd} 
	\end{equation*}
	In this case, if we choose the first vertex as the base for the definition, the factors in \eqref{e:polytograph} are $\bs\omega_{1,a}$ and $\bs\omega_{1,aq^2}$,  which combine to form a single $q$-factor.
	\endd 
\end{ex} 

\subsection{Hopf Algebra Facts}\label{ss:hopf} We recall some general facts about Hopf algebras (see for instance \cite{egno} and references therein). 

Given a Hopf algebra $\mathcal H$ over $\mathbb F$, its category $\mathcal C$ of finite-dimensional representations is an abelian monoidal category and we denote the (right) dual of a module  $V$ by $V^*$. More precisely, the action of $\mathcal H$ of $V^*$ is given by
\begin{equation}\label{rightdual}
	(hf)(v) = f(S(h)v) \quad\text{for}\quad h\in\mathcal H, f\in V^*, v\in V.
\end{equation}
The evaluation map $V^*\otimes V\to \mathbb F$ is a module map, where $\mathbb F$ is regarded as the trivial module by using the counit map.
Moreover if, $v_1,\dots, v_n$ is a basis of $V$ and $f_1, \dots, f_n$ is the corresponding dual basis, there exists a unique a homomorphism of modules
\begin{equation*}
	\mathbb F\to V\otimes V^*, \qquad 1\mapsto \sum_{i=1}^n v_i\otimes f_i,
\end{equation*}
called the coevaluation map. We denote the evaluation and coevaluation maps associated to a module $V$ by $\operatorname{ev}_V$ and $\operatorname{coev}_V$, respectively, or simply by ${\rm ev}$ and ${\rm coev}$ if no confusion arises. In particular, 
\begin{equation*}
	\operatorname{Hom}_{\mathcal H}(\mathbb F, V\otimes V^*) \ne 0 \qquad\text{and}\qquad \operatorname{Hom}_{\mathcal H}(V^*\otimes V, \mathbb F)\ne 0.
\end{equation*}
If the antipode is invertible, the notion of left dual module is obtained by replacing $S$ by $S^{-1}$ in \eqref{rightdual}. The left dual of $V$ will be denoted by $^*V$ and we have
\begin{equation*}
	^*(V^*)\cong (^*V)^* \cong V.
\end{equation*}

Given $\mathcal H$-modules $V_1,V_2,V_3$, we have
\begin{equation}\label{e:frobrec}
	\begin{aligned}
		\operatorname{Hom}_{\mathcal C}(V_1\otimes V_2, V_3)\cong \operatorname{Hom}_{\mathcal C}(V_1, V_3\otimes V_2^*), \quad
		\operatorname{Hom}_{\mathcal C}(V_1, V_2\otimes V_3)\cong \operatorname{Hom}_{\mathcal C}(V_2^*\otimes V_1, V_3),
	\end{aligned}
\end{equation}
and 
\begin{equation}\label{e:dualtp}
	(V_1\otimes V_2)^*\cong V_2^*\otimes V_1^*.
\end{equation}
For instance, an isomorphism for the first statement in \eqref{e:frobrec} is given by 
\begin{equation*}
	f\mapsto (f\otimes\operatorname{id}_{V_2^*})\circ(\operatorname{id}_{V_1}\otimes\operatorname{coev}_{V_2})\circ\gamma_{V_1}
\end{equation*}
and has the inverse 
\begin{equation*}
	g\mapsto\gamma'_{V_3}\circ(\operatorname{id}_{V_3}\otimes\operatorname{ev}_{V_2})\circ(g\otimes\operatorname{id}_{V_2}),
\end{equation*}
where $\gamma_V:V\rightarrow V\otimes\mathbb{F}$ and $\gamma'_V:V\otimes\mathbb{F}\rightarrow V$ are the canonical maps.
Note also that every short exact sequence
\begin{equation*}
	0\to V_1\to V_2\to V_3\to 0
\end{equation*}
gives rise to another short exact sequence of the form
\begin{equation}\label{e:dualses}
	0\to V_3^*\to V_2^*\to V_1^*\to 0.
\end{equation}

We shall use the following lemma in the same spirit as in \cite{kkko} (a proof can also be found in \cite{sil}).

\begin{lem}\label{l:subtens}
	Let $V_1,V_2,V_3\in\mathcal C$ and suppose $M$ is a submodule of $V_1\otimes V_2$ and $N$ is a submodule of $V_2\otimes V_3$ such that $$M\otimes V_3\subseteq V_1\otimes N.$$
	Then, there exists a submodule $W$ of $V_2$ such that 
	\begin{equation*}
		M\subseteq V_1\otimes W\qquad\textrm{and}\qquad W\otimes V_3\subseteq N.
	\end{equation*}
	Similarly, if $V_1\otimes N\subseteq M\otimes V_3,$ there exists a submodule $W$ of $V_2$ such that 
	\begin{equation*}
		N\subseteq W\otimes V_3\qquad\textrm{and}\qquad V_1\otimes W\subseteq M.
	\end{equation*}\qed
\end{lem}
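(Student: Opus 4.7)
The plan is to exploit the rigidity of the category $\mathcal{C}$ to build $W$ as the image of a suitable partial evaluation map, and then verify the two inclusions via the triangle (zig-zag) identities for duals together with the given hypothesis.

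For the first statement, I would use the right dual $V_1^{*}$ and set
\[
W := (\operatorname{ev}_{V_1} \otimes \operatorname{id}_{V_2})(V_1^{*} \otimes M) \subseteq V_2,
\]
which is automatically a submodule since the displayed map is $\mathcal{H}$-linear. To verify $M \subseteq V_1 \otimes W$, I would apply the triangle identity
\[
\operatorname{id}_{V_1 \otimes V_2} = (\operatorname{id}_{V_1} \otimes \operatorname{ev}_{V_1} \otimes \operatorname{id}_{V_2}) \circ (\operatorname{coev}_{V_1} \otimes \operatorname{id}_{V_1 \otimes V_2})
\]
to the inclusion $M \hookrightarrow V_1 \otimes V_2$, observing that the second arrow factors through $V_1 \otimes W$ by construction. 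To verify $W \otimes V_3 \subseteq N$, I would tensor the hypothesis on the left by $V_1^{*}$ to obtain $V_1^{*} \otimes M \otimes V_3 \subseteq V_1^{*} \otimes V_1 \otimes N$, and then apply $\operatorname{ev}_{V_1} \otimes \operatorname{id}_{V_2 \otimes V_3}$: the left side maps onto $W \otimes V_3$ while the right side maps onto $N$.

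For the second statement, I would run the symmetric argument using the left dual ${}^{*}V_3$ in place of $V_1^{*}$. Specifically, I would define $W$ as the image of $N \otimes {}^{*}V_3$ under $\operatorname{id}_{V_2}$ tensored with the left-dual evaluation $V_3 \otimes {}^{*}V_3 \to \mathbb{F}$, then deduce $N \subseteq W \otimes V_3$ from the corresponding triangle identity applied to $N \hookrightarrow V_2 \otimes V_3$, and finally deduce $V_1 \otimes W \subseteq M$ from the hypothesis tensored on the right by ${}^{*}V_3$.

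The argument is a formal manipulation valid in any rigid monoidal category, so the main point requiring care is structural rather than computational: recognizing that the two statements are not strictly each other's dual, but instead require analogous arguments with opposite-handed duals. The right dual of $V_1$ suffices for the first statement because one is peeling off a tensor factor on the left of $M$, whereas one needs the left dual of $V_3$ for the second statement so that the evaluation can land on the right tensor factor of $N$. The invertibility of the antipode of $\mathcal{H}$, which is tacit in the quantum affine setting, ensures that both duals are available.
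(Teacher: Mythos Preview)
Your proof is correct. Note that the paper itself omits the proof of this lemma, citing \cite{kkko} and \cite{sil} instead; your argument via evaluation and coevaluation maps together with the zig-zag identities is precisely the standard one found in \cite{kkko} (Lemma 3.10 there), so there is no meaningful difference to report.
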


\begin{lem}\label{c:nonzeromorph}
	Let $V_1,V_2,V_3, L_1,L_2$ be $\mathcal H$-modules and assume  $V_2$ is simple. If
	\begin{equation*}
		\varphi_1:L_1\rightarrow V_1\otimes V_2\quad\textrm{and}\quad\varphi_2: V_2\otimes V_3\rightarrow L_2
	\end{equation*}
	are nonzero homomorphisms, the composition
	$$L_1\otimes V_3 \xrightarrow{\varphi_1\otimes \operatorname{id}_{V_3}} V_1\otimes V_2\otimes V_3 \xrightarrow{\operatorname{id}_{V_1}\otimes \varphi_2} V_1\otimes L_2
	$$
	does not vanish. Similarly, if 
	\begin{equation*}
		\varphi_1:V_1\otimes V_2\rightarrow L_1\quad\textrm{and}\quad\varphi_2:L_2\rightarrow V_2\otimes V_3
	\end{equation*}
	are nonzero homomorphisms, the composition
	$$V_1\otimes L_2\xrightarrow{\operatorname{id}_{V_1}\otimes\varphi_2} V_1\otimes V_2\otimes V_3 \xrightarrow{\varphi_1\otimes\operatorname{id}_{V_3}} L_1\otimes V_3$$
	does not vanish.
\end{lem}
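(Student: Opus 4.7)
The strategy is to argue both statements by contradiction, using Lemma \ref{l:subtens} to translate a vanishing composition into an inclusion of submodules, at which point the simplicity of $V_2$ forces one of the $\varphi_i$ to be zero.

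For the first assertion, suppose the composition $(\operatorname{id}_{V_1} \otimes \varphi_2) \circ (\varphi_1 \otimes \operatorname{id}_{V_3})$ vanishes. Put $M = \operatorname{Im}(\varphi_1) \subseteq V_1 \otimes V_2$ and $N = \ker(\varphi_2) \subseteq V_2 \otimes V_3$; both are $\mathcal H$-submodules. Since we work over a field, $\ker(\operatorname{id}_{V_1} \otimes \varphi_2) = V_1 \otimes N$, so the vanishing hypothesis rewrites as $M \otimes V_3 \subseteq V_1 \otimes N$. The first half of Lemma \ref{l:subtens} then produces a submodule $W$ of $V_2$ with $M \subseteq V_1 \otimes W$ and $W \otimes V_3 \subseteq N$. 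Simplicity of $V_2$ forces $W = 0$ (whence $M = 0$, and therefore $\varphi_1 = 0$) or $W = V_2$ (whence $N = V_2 \otimes V_3$, and therefore $\varphi_2 = 0$). Either alternative contradicts the hypothesis.

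The second assertion is handled symmetrically. Set $M = \ker(\varphi_1)$ and $N = \operatorname{Im}(\varphi_2)$. Using $\ker(\varphi_1 \otimes \operatorname{id}_{V_3}) = M \otimes V_3$, vanishing of $(\varphi_1 \otimes \operatorname{id}_{V_3}) \circ (\operatorname{id}_{V_1} \otimes \varphi_2)$ translates to $V_1 \otimes N \subseteq M \otimes V_3$, and the second half of Lemma \ref{l:subtens} supplies $W \subseteq V_2$ with $N \subseteq W \otimes V_3$ and $V_1 \otimes W \subseteq M$. Simplicity of $V_2$ again forces $W \in \{0, V_2\}$, and each case contradicts the nonvanishing of one of the $\varphi_i$. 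As an alternative route, this half can be deduced from the first by dualizing through \eqref{e:dualtp} and \eqref{e:dualses}, together with the fact that duality preserves simplicity.

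The only delicate point is correctly recognizing the kernel/image inclusions that make Lemma \ref{l:subtens} applicable; once that setup is in place, there is no substantial obstacle.
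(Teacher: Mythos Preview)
Your proof is correct and follows essentially the same approach as the paper's: both set $M=\operatorname{Im}(\varphi_1)$ and $N=\ker(\varphi_2)$, rewrite the vanishing assumption as $M\otimes V_3\subseteq V_1\otimes N$, invoke Lemma~\ref{l:subtens}, and use the simplicity of $V_2$ to force a contradiction. You additionally spell out the second case via the dual half of Lemma~\ref{l:subtens} (the paper simply says it is similar) and note the duality alternative, but the core argument is identical.
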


\begin{proof}
	We will write down the details for the first claim only, as the second can be proved similarly. Assume  
	\begin{equation*}
		(\operatorname{id}_{V_1}\otimes\varphi_2)\circ(\varphi_1\otimes\operatorname{id}_{V_3})=0,
	\end{equation*}
	i.e., $$\operatorname{Im}(\varphi_1)\otimes V_3=\operatorname{Im}(\varphi_1\otimes\operatorname{id}_{V_3})\subseteq\operatorname{Ker}(\operatorname{id}_{V_1}\otimes\varphi_2)=V_1\otimes\operatorname{Ker}(\varphi_2).$$   \Cref{l:subtens} implies there exists a submodule $W\subseteq V_2$ such that 
	\begin{equation*}
		\operatorname{Im}(\varphi_1)\subseteq V_1\otimes W\quad\textrm{and}\quad W\otimes V_3\subseteq\operatorname{Ker}(\varphi_2).
	\end{equation*}
	Since $V_2$ is simple, either $W=0$ or $W=V_2$. If $W=0$, then $\operatorname{Im}(\varphi_1)\subseteq V_1\otimes W=0$, which is a contradiction, since  $\varphi_1$ is nonzero. On the other hand, if $W=V_2$, it follows that $V_2\otimes V_3=\operatorname{Ker}(\varphi_2)$, yielding a contradiction, since $\varphi_2$ is nonzero.	
\end{proof}

\section{Representation Theory and $q$-Factorization Graphs}

We start this section reviewing the relevant representation theoretic background for our purposes. This will lead to the main definition of the paper: that of $q$-factorization graphs. We then state the main results of the paper and end the section with a few illustrative examples.

\subsection{Finite-Dimensional Representations}\label{ss:repth}
Let $\cal C$ be the category of all finite-dimensional (type-$1$) weight modules of $U_q(\lie g)$. Thus, a finite-dimensional $U_q(\lie g)$-module $V$ is in $\cal C$ if 
\begin{equation*}
	V=\bigoplus_{\mu\in P}^{} V_\mu \qquad\text{where}\qquad  V_\mu=\{v\in V: k_iv=q_i^{\mu(h_i)}v \text{ for all } i\in I\}.
\end{equation*}
The following theorem summarizes the basic facts about $\cal C$.

\begin{thm}\label{t:ciuqg} Let $V$ be an object of $\cal C$. Then:
	\begin{enumerate}[(a)]
		\item $\dim V_\mu = \dim V_{w\mu}$ for all $w\in\cal W$.
		\item $V$ is completely reducible.
		\item For each $\lambda\in P^+$, the $U_q(\lie g)$-module $V_q(\lambda)$ generated by a vector $v$ satisfying
		$$x_i^+v=0, \qquad k_iv=q^{\lambda(h_i)}v, \qquad (x_i^-)^{\lambda(h_i)+1}v=0,\quad\forall\ i\in I,$$
		is irreducible and finite-dimensional. If $V\in\cal C$ is
		irreducible, then $V$ is isomorphic to $V_q(\lambda)$  for some
		$\lambda\in P^+$.
			\hfill\qedsymbol
	\end{enumerate}
\end{thm}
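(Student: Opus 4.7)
The plan is to follow the standard template for proving the $q$-analog of the classical highest weight theorem, closely mirroring the classical Lie algebra case but with the Weyl group action being replaced by Lusztig's braid group symmetries.

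For part (c), I would first construct $V_q(\lambda)$ as the quotient of the Verma module $M_q(\lambda) = U_q(\lie g)/J_\lambda$, where $J_\lambda$ is the left ideal generated by the relations $x_i^+$, $k_i - q^{\lambda(h_i)}$. Using the triangular decomposition $U_q(\lie g) = U_q(\lie n^-)U_q(\lie h)U_q(\lie n^+)$, $M_q(\lambda)$ is a free $U_q(\lie n^-)$-module on the highest weight vector $v_\lambda$, and the quotient by the submodule generated by $(x_i^-)^{\lambda(h_i)+1}v_\lambda$ for $i \in I$ yields a module on which all quantum Serre-type relations involving the divided powers are compatible; one then checks via the quantum rank-one calculation in $U_{q_i}(\lie{sl}_2)_i$ that each element $x_i^\pm$ acts locally nilpotently on this quotient. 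Invoking complete reducibility for $U_{q_i}(\lie{sl}_2)$-representations (proved separately in rank one by explicit computation with the quantum Casimir), this shows $V_q(\lambda)$ is an integrable weight module with finite-dimensional weight spaces; a standard argument bounding the weights inside the convex hull of $\mathcal W\lambda$ shows $\dim V_q(\lambda) < \infty$. For the classification, given any simple $V \in \mathcal C$, finite-dimensionality of $V$ plus nilpotency considerations on $x_i^+$ force the existence of a nonzero vector annihilated by all $x_i^+$; its weight $\lambda$ lies in $P^+$ because $(x_i^-)^{n+1}v = 0$ must hold for some $n$ (by finite-dimensionality) combined with the rank-one computation forcing $n = \lambda(h_i)$; then universality of $V_q(\lambda)$ yields $V \cong V_q(\lambda)$.

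For part (a), I would invoke Lusztig's braid group operators $T_i$ on integrable modules in $\mathcal C$, defined as explicit formal sums involving divided powers of $x_i^\pm$ and $k_i$. These operators are well-defined $\mathbb F$-linear automorphisms on any object of $\mathcal C$ because the $x_i^\pm$ act locally nilpotently by the rank-one theory; moreover, $T_i$ maps $V_\mu$ isomorphically to $V_{s_i\mu}$. Composing along any reduced expression of $w \in \mathcal W$ yields the required dimension equality.

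For part (b), the cleanest route is to verify that $\mathcal C$ has enough projectives (or equivalently, prove the quantum analogue of Weyl's theorem directly): it suffices to show $\operatorname{Ext}^1_{\mathcal C}(V_q(\lambda), V_q(\mu)) = 0$ for all $\lambda, \mu \in P^+$. Standard central character / linkage arguments reduce this to the case $\lambda = \mu$, and here one may use the quantum Casimir of $U_q(\lie g)$ (which acts by distinct scalars on non-isomorphic $V_q(\lambda)$ via the quantum Harish-Chandra homomorphism), together with the fact that in an integrable weight module, any highest weight vector must generate a summand (this last statement follows from part (a) plus the rank-one case by an induction on the height of $\wt(V) \setminus \{\lambda\}$ relative to the dominance order).

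The main obstacle I expect is a conceptual rather than technical one: all three parts rely crucially on the reduction to $U_{q_i}(\lie{sl}_2)$ via the subalgebras $U_q(\lie g)_i$ (which fit into the identification $U_q(\lie g)_i \cong U_{q_i}(\lie{sl}_2)$ noted in \Cref{ss:clalg}), and on ensuring that the hypothesis that $q$ is not a root of unity is used uniformly — precisely where it is needed is in (i) the invertibility of the $q$-integers $[n]_{q_i}$ appearing in the rank-one complete reducibility proof and the definition of $T_i$, and (ii) the separation of central characters via the Harish-Chandra homomorphism. Since all of these results are standard and available in the reference \cite{Moura} cited in \Cref{ss:clalg}, I would simply record them as a compiled statement with appropriate citations rather than reproving everything from scratch.
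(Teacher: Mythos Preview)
Your sketch is correct and outlines the standard argument, but note that the paper does not prove this theorem at all: it is stated with a terminal \qedsymbol, indicating that the proof is omitted as a well-known result (the relevant reference being \cite{Moura}, cited in \Cref{ss:clalg}). Your own final paragraph anticipates exactly this, so in effect you and the paper agree that the right move is to record the statement with a citation rather than reprove it.
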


If $J\subseteq I$ we shall denote by $V_q(\lambda_J)$ the simple
$U_q(\lie g)_J$-module of highest weight $\lambda_J$. Since $\cal
C$ is semisimple, it is easy to see that, if $\lambda\in P^+$ and
$v\in V_q(\lambda)_\lambda$ is nonzero, then $U_q(\lie g)_Jv\cong
V_q(\lambda_J)$.

Let $\wcal C$ the category of all finite-dimensional $\ell$-weight modules of $U_q(\tlie g)$.
Thus, a finite-dimensional $U_q(\tlie g)$-module $V$ is in $\wcal C$ if
$$V=\bigoplus_{\bs\varpi\in\mathcal P}^{} V_{\bs\varpi}$$
where
$$v\in V_{\bs\varpi} \quad\Leftrightarrow\quad \exists\ k\gg 0 \quad\text{s.t.}\quad (\eta-\bs\varpi(\eta))^kv=0 \quad\text{for all}\quad \eta\in U_q(\tlie h).$$
$V_{\bs\varpi}$ is called the $\ell$-weight space of $V$ associated to $\bs\varpi$. Note that if $V\in\wcal C$, then $V\in\cal C$ and
\begin{equation*}
	V_\mu = \bigoplus_{\bs\varpi:\wt(\bs\varpi)=\mu}^{}
	V_{\bs\varpi}.
\end{equation*}
If $V\in \wcal C$, the qcharacter of $V$ is the following element of the group ring $\mathbb Z[\mathcal P]$:
\begin{equation*}\label{e:qchtp}
	\qch(V) = \sum_{\bs\varpi\in\mathcal P} \dim(V_{\bs\varpi})\bs\varpi.
\end{equation*}

A nonzero vector $v\in V_{\bs\varpi}$ is
said to be a highest-$\ell$-weight vector if
$$\eta v=\bs\varpi(\eta)v \quad\text{for every}\quad \eta\in U_q(\tlie h)
\quad\text{and}\quad x_{i,r}^+v=0 \quad\text{for all}\quad i\in I, r\in\mathbb Z.$$
$V$ is said to be a highest-$\ell$-weight module if it is generated by a
highest-$\ell$-weight vector.
Evidently, every highest-$\ell$-weight module has a maximal proper submodule and, hence, a unique irreducible quotient. In particular, 
if two simple modules are highest-$\ell$-weight, then they are isomorphic if and only if the highest $\ell$-weights are the same. This is also equivalent to saying that they have the same qcharacter. The following was proved in \cite{cp:qaa}.

\begin{thm}
	Every simple object of $\wcal C$ is a highest-$\ell$-weight module. There exists a simple object of $\wcal C$ of highest $\ell$-weight $\bs\pi$ if and only if $\bs\pi\in\mathcal P^+$.\hfill\qedsymbol
\end{thm}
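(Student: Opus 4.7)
The plan is to follow the classical highest-weight playbook, with the rank-one case $U_q(\tlie{sl}_2)$ as the essential reduction tool. For the first assertion, let $V$ be a simple object of $\wcal C$ and view it as an object of $\cal C$. Pick $\lambda\in P$ maximal (with respect to the standard partial order) among those $\mu$ with $V_\mu\ne 0$; then necessarily $\lambda\in P^+$. Since each $x_{i,r}^+$ raises the $U_q(\lie h)$-weight by $\alpha_i$, we have $x_{i,r}^+V_\lambda=0$ for all $i\in I$ and $r\in\mathbb Z$. Refining by $\ell$-weights, write $V_\lambda=\bigoplus_{\bs\varpi:\wt(\bs\varpi)=\lambda} V_{\bs\varpi}$ and choose some $\bs\varpi$ with $V_{\bs\varpi}\ne 0$. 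On this finite-dimensional generalized eigenspace the commutative algebra $U_q(\tlie h)$ acts with a single generalized character $\bs\varpi$, so the pairwise commuting nilpotent operators $\eta-\bs\varpi(\eta)$, $\eta\in U_q(\tlie h)$, share a nonzero common kernel. Any nonzero $v$ in this kernel is a highest-$\ell$-weight vector, and simplicity of $V$ forces $V=U_q(\tlie g)v$.

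For the \emph{only if} half of the second assertion, assume $L_q(\bs\varpi)$ is a simple object of $\wcal C$ with highest-$\ell$-weight vector $v$. For each $i\in I$, the subalgebra $U_q(\tlie g)_i\cong U_{q_i}(\tlie{sl}_2)$ acts on the cyclic submodule $U_q(\tlie g)_i v$, which is a finite-dimensional highest-$\ell$-weight module with highest $\ell$-weight $\pi_i(\bs\varpi)=\bs\varpi_i$. The rank-one classification, which is a direct computation using the Drinfeld presentation of $U_q(\tlie{sl}_2)$ combined with the relations $(x_i^-)^{n+1}w=0$ forced by finite-dimensional $U_q(\lie g)_i$-representation theory, forces $\bs\varpi_i$ to be a polynomial with $\bs\varpi_i(0)=1$; letting $i$ vary yields $\bs\varpi\in\mathcal P^+$. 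For the \emph{if} direction, given $\bs\pi\in\mathcal P^+$ one produces a finite-dimensional highest-$\ell$-weight module with highest $\ell$-weight $\bs\pi$: first build the fundamental modules $L_q(\bs\omega_{i,a})$ (via evaluation representations in type $A$ and via more intricate constructions in general), then realize the desired simple module as the irreducible quotient of the cyclic submodule generated by a highest-$\ell$-weight vector in a suitable ordered tensor product of fundamentals whose $\ell$-weights multiply to $\bs\pi$. Uniqueness of the highest $\ell$-weight simple quotient then defines $L_q(\bs\pi)$.

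The main technical obstacle is the existence half: a Verma-style construction immediately produces a simple $U_q(\tlie g)$-module of highest $\ell$-weight $\bs\pi$ for any $\bs\pi\in\mathcal P$, but proving that the resulting simple object is finite-dimensional precisely when $\bs\pi\in\mathcal P^+$ requires the explicit construction of the fundamental representations together with a nontrivial argument that a cyclic submodule of a tensor product of them realizes the prescribed Drinfeld polynomial. This step does not follow formally from the defining relations of $U_q(\tlie g)$ and is where the bulk of the work in \cite{cp:qaa} lies.
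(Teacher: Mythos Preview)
The paper does not supply its own proof of this theorem: it is stated with a terminal \qedsymbol\ and attributed to \cite{cp:qaa}, so there is no in-text argument to compare against. Your outline is essentially the strategy of \cite{cp:qaa}: maximal-weight argument plus a common-eigenvector extraction for $U_q(\tlie h)$ to get a highest-$\ell$-weight vector, rank-one reduction for the necessity of $\bs\pi\in\mathcal P^+$, and construction via tensor products of fundamentals for sufficiency. You are also right to flag that the existence of finite-dimensional fundamentals in types other than $A$ is the genuinely nontrivial input; this is exactly where \cite{cp:qaa} (and subsequent work) does the real labor, and your sketch correctly identifies it as not following from the relations alone.
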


It follows that $\qch(V)$ completely determines the irreducible factors of $V$. We shall denote by $L_q(\bs\pi)$ any representative of the isomorphism class of simple modules with highest $\ell$-weight $\bs\pi$.  For $J\subseteq I$, we shall denote by $L_q(\bs\pi_J)$ the simple
$U_q(\tlie g)_J$-module of highest weight $\bs\pi_J$.  

If $V$ is a highest-$\ell$-weight module with highest-$\ell$-weight vector $v$ and $J\subset I$, we let $V_J$ denote the $U_q(\tlie g)_J$-submodule of $L_q(\bs{\pi})$ generated by $v$. Evidently, if $\bs\pi$ is the highest-$\ell$-weight of $V$, then $V_J$ is highest-$\ell$-weight with highest $\ell$-weight $\bs\pi_J$. Moreover, we have the following well-known facts:

\begin{equation}\label{e:weightsJ}
	V_J = \bigoplus_{\eta\in Q_J^+} V_{\wt(\bs\pi)-\eta}  = \bigoplus_{\eta\in Q_J} V_{\wt(\bs\pi)+\eta}.
\end{equation}

\begin{lem}\label{sl32sl2}
	If $V$ is simple, so is $V_J$.\hfill\qed
\end{lem}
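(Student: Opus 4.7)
\emph{Proof plan.} Since $v$ is a highest-$\ell$-weight vector for $U_q(\tlie g)$, it is annihilated by all the Drinfeld generators $x_{j,r}^+$ with $j\in J$ and is a joint $U_q(\tlie h)_J$-eigenvector of $\ell$-weight $\bs\pi_J$. Thus $V_J=U_q(\tlie g)_J\cdot v$ is a cyclic highest-$\ell$-weight $U_q(\tlie g)_J$-module of highest $\ell$-weight $\bs\pi_J$, with $L_q(\bs\pi_J)$ as its unique simple quotient. The plan is to show that every nonzero $U_q(\tlie g)_J$-submodule $W\subseteq V_J$ contains $v$, from which the simplicity of $V_J$ follows at once.

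Being finite-dimensional, $W$ contains a simple $U_q(\tlie g)_J$-submodule, and hence a $J$-highest-$\ell$-weight vector $w$. By \eqref{e:weightsJ}, $\wt(w)\in\wt(\bs\pi)+Q_J$, and since every weight of $V$ is bounded above by $\wt(\bs\pi)$, this forces $\wt(\bs\pi)-\wt(w)\in Q^+\cap Q_J=Q_J^+$. If this difference vanishes, then $w\in V_{\wt(\bs\pi)}=\mathbb{F}\,v$ (by the one-dimensionality of the top weight space), so $v\in W$ and the proof is complete. Otherwise $\wt(\bs\pi)-\wt(w)\in Q_J^+\setminus\{0\}$, and the simplicity of $V$ as a $U_q(\tlie g)$-module produces a weight-homogeneous $u\in U_q(\tlie g)$ of weight $\wt(\bs\pi)-\wt(w)$ with $uw=v$; it remains to derive a contradiction from this.

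Using the triangular decomposition $U_q(\tlie g)=U_q(\tlie n^-)\,U_q(\tlie h)\,U_q(\tlie n^+)$ together with a PBW ordering of $U_q(\tlie n^+)$ in which the generators $x_{j,r}^+$, $j\in J$, appear rightmost, every element of $U_q(\tlie g)$ is a linear combination of ordered monomials $u^- u^0\,u^{+,\neg J}\,u^{+,J}$, where $u^{+,J}$ is a product of $x_{j,r}^+$'s with $j\in J$ and $u^{+,\neg J}$ involves only $x_{i,r}^+$'s with $i\notin J$. Any monomial with nontrivial $u^{+,J}$ annihilates $w$, so the action of $U_q(\tlie g)$ on $w$ is spanned by the images of monomials with $u^{+,J}=1$. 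For any such monomial, the $\alpha_j$-coefficient ($j\in J$) of its weight equals minus the number of its $x_{j,r}^-$ factors and is thus non-positive, so the $Q_J$-projection of its weight lies in $-Q_J^+$, contradicting $\wt(u)\in Q_J^+\setminus\{0\}$. The main technical ingredient---and the expected principal obstacle---is the right-biased PBW decomposition for $U_q(\tlie n^+)$, which is a standard consequence of Beck's PBW theorem (or can be extracted directly from the Drinfeld relations); granting it, the remainder of the proof is pure weight bookkeeping.
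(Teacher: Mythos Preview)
The paper omits the proof (the \qedsymbol\ indicates a well-known fact), so let me compare your argument to the standard one and point out a genuine gap.

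Your overall strategy---take a $J$-highest-$\ell$-weight vector $w$ in an arbitrary nonzero submodule and show it must be a scalar multiple of $v$---is exactly right. The problem is the PBW step. The decomposition you claim, namely that every element of $U_q(\tlie n^+)$ is a linear combination of products $u^{+,\neg J}u^{+,J}$ where $u^{+,\neg J}$ is a product of the \emph{simple} generators $x_{i,r}^+$ with $i\notin J$, is false. Already in the classical loop algebra for $\lie g=\lie{sl}_3$ with $J=\{1\}$, the root vector $e_{\alpha_1+\alpha_2}\otimes t^c$ does not lie in $\langle e_2\otimes t^s\rangle\cdot\langle e_1\otimes t^s\rangle$; a quick PBW computation shows the weight-$(\alpha_1+\alpha_2)$ part of that product is spanned by $\{(e_1\otimes t^a)(e_2\otimes t^b)-e_{\alpha_1+\alpha_2}\otimes t^{a+b}\}$, which never isolates $e_{\alpha_1+\alpha_2}\otimes t^c$. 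Beck's PBW theorem gives an ordered basis in \emph{root vectors}, and one can arrange a convex order so that root vectors for $R_J^+$ come last---but then $u^{+,\neg J}$ is a product of root vectors for $\alpha\in R^+\setminus R_J^+$, and such $\alpha$ may well have positive $\alpha_j$-coefficients for $j\in J$ (e.g.\ $\alpha_1+\alpha_2$ above). So your subsequent weight count (``the $\alpha_j$-coefficient \dots\ is non-positive'') breaks down as well.

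The standard argument bypasses PBW entirely with a one-line observation: if $i\notin J$, then $x_{i,r}^+w$ has weight $\wt(w)+\alpha_i=\wt(\bs\pi)-\eta+\alpha_i$ with $\eta\in Q_J^+$, and since the $\alpha_i$-coefficient of $\eta-\alpha_i$ is $-1$, this weight is not in $\wt(\bs\pi)-Q^+$ and hence not a weight of $V$; so $x_{i,r}^+w=0$. Thus $w$ is a highest-$\ell$-weight vector for the \emph{full} algebra $U_q(\tlie g)$, and simplicity of $V$ forces $w\in\mathbb F v$. This is both simpler and avoids the technical obstacle you flagged.
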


\subsection{Tensor Products and Duality for $U_q(\tlie g)$-modules}\label{ss:tpdu}
It is well known that $U_q(\tlie g)$ is a Hopf algebra with invertible antipode. 
For the proof of following proposition, see \cite[Propositions 1.5 and 1.6]{cha:minr2} (part (b) has not been proved there, but the proof is similar to that of part (c)).

\begin{prop}\label{p:Cartaninv}\hfill\\\vspace{-10pt}
	\begin{enumerate}[(a)]
		\item Given $a\in \mathbb C^\times$, there exists a unique Hopf algebra automorphism $\tau_a$ of $U_q(\tlie g)$ such that
		$$\tau_a(x_{i,r}^{\pm})=a^rx_{i,r}^{\pm},\quad \tau_a(h_{i,s})=a^rh_{i,s},\quad \tau_a(k_{i}^{\pm})=k_{i}^{\pm}, \quad i\in I,\ r,s\in\mathbb Z, s\ne 0.$$
		
		\item There exists a unique Hopf algebra automorphism $\sigma$ of $U_q(\tlie g)$ such that
		$$\sigma(x_{i,r}^{\pm})= x_{i^*,r}^{\pm},\quad \sigma(h_{i,s})= h_{i^*,s},\quad \sigma(k_{i}^{\pm})=k_{i^*}^{\pm}, \quad i\in I,\ r,s\in\mathbb Z, s\ne 0.$$
		
		\item There exists a unique algebra automorphism $\kappa$ of $U_q(\tlie g)$ such that
		$$\kappa(x_{i,r}^{\pm})=-x_{i,-r}^{\mp},\quad \kappa(h_{i,s})=-h_{i,-s},\quad \kappa(k_{i}^{\pm1})=k_{i}^{\mp1}, \quad i\in I,\ r,s\in\mathbb Z, s\ne 0.$$
		Moreover $(\kappa\otimes \kappa)\circ \Delta=\Delta^{\text{op}}\circ \kappa$,
		where $\Delta^{\text{op}}$ is the opposite comultiplication of $U_q(\tlie g)$.\footnote{The automorphism $\kappa$ is most often denoted by $\hat\omega$ in the literature and its restriction to $U_q(\lie g)$, typically denoted by $\omega$, is referred to as the Cartan automorphism of $U_q(\lie g)$. We chose to modify the notation to avoid visual confusion with our most often used symbol for a Drinfeld polynomial: $\bs\omega$.}\hfil\qed
	\end{enumerate}
\end{prop}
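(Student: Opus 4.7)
The plan is to verify each map preserves the defining relations of $U_q(\tlie g)$ using the two available presentations: the Drinfeld--Jimbo (DJ) presentation, in which the Hopf algebra structure is transparent on generators, and the Drinfeld new realization, in which the loop-like generators $x_{i,r}^\pm, h_{i,s}, k_i^{\pm 1}$ appearing in the statement are primitive. Beck's isomorphism relates the two, and the strategy throughout is to construct each map in whichever presentation makes the verification easy and then transfer.

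For part (a), every defining relation in the Drinfeld new realization is homogeneous with respect to the loop grading $\deg(x_{i,r}^\pm) = r$, $\deg(h_{i,s}) = s$, $\deg(k_i^{\pm1}) = 0$; scaling each generator by $a$ to its degree therefore preserves every relation and yields an algebra automorphism $\tau_a$. To check Hopf compatibility, I would translate to the DJ generators, where $\tau_a$ is trivial on the finite Chevalley part and simply rescales the affine node generators $E_0, F_0$ by explicit powers of $a$; preservation of $\Delta(E_i) = E_i\otimes 1 + K_i\otimes E_i$ and its analogues is then immediate.

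For part (b), the involution $i\mapsto i^*$ is a graph automorphism of the finite Dynkin diagram because $c_{i^*,j^*}=c_{i,j}$, and it extends by fixing the affine node to an automorphism of the untwisted affine diagram. Any diagram automorphism induces a Hopf algebra automorphism of the DJ presentation by permuting $E_i, F_i, K_i^{\pm1}$ along its action; since Beck's isomorphism is built canonically from the root data, it intertwines this automorphism with the map described in the statement on Drinfeld generators. The verification thus reduces to the Dynkin diagram fact and the naturality of Beck's construction.

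For part (c), $\kappa$ is the loop incarnation of the quantum Chevalley involution $E_i\mapsto -F_i$, $F_i\mapsto -E_i$, $K_i^{\pm1}\mapsto K_i^{\mp1}$, a standard algebra automorphism of the DJ presentation that intertwines $\Delta$ with $\Delta^{\text{op}}$ (the order reversal arises because swapping $E\leftrightarrow F$ reverses the roles of the two tensor factors in $\Delta(E_i) = E_i\otimes 1 + K_i\otimes E_i$). The asserted formulas on the Drinfeld new generators follow by applying $\kappa$ to Beck's commutator realizations of $x_{i,r}^\pm$ and $h_{i,s}$ inside the DJ presentation and tracking the interplay with the underlying braid-group action. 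This last step is the principal technical obstacle: one must argue that the combined effect is to send $x_{i,r}^\pm \mapsto -x_{i,-r}^\mp$ and $h_{i,s}\mapsto -h_{i,-s}$. The negation $r\mapsto -r$ reflects that $\kappa$ exchanges the positive and negative halves of the triangular decomposition of $U_q(\tlie g)$, which in the loop grading is implemented by negation of the degree, while the overall sign is the direct quantum analogue of the classical sign in $e_i\mapsto -f_i$.
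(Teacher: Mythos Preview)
The paper does not supply its own proof of this proposition: it states the result with a \qed and refers the reader to \cite[Propositions 1.5 and 1.6]{cha:minr2}, noting only that part (b) is analogous to part (c). So there is no in-paper argument to compare against; what you have written is a reasonable reconstruction of the standard proof strategy, and it is in the same spirit as the cited reference.

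Your outline is sound in structure. For (a), the loop-grading homogeneity of the Drinfeld new relations is exactly the right observation for the algebra part, and the Hopf check via the DJ presentation is the standard route. For (b), the diagram-automorphism argument is correct; the point that $i\mapsto i^*$ extends to the affine diagram fixing the affine node is justified by the $-w_0$-invariance of the highest root. For (c), you correctly flag the transfer through Beck's isomorphism as the crux, and your heuristic explanation of the sign and degree-reversal is accurate.

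The only caution is that what you have is a proof \emph{plan} rather than a proof: the phrase ``one must argue that'' in part (c) signals the real work. Making that step rigorous requires an explicit computation of how the Chevalley involution interacts with Lusztig's braid group operators $T_i$ (one shows $\kappa T_i = T_i^{-1}\kappa$ up to the appropriate normalization), and then tracking this through Beck's inductive construction of the $x_{i,r}^\pm$. This is genuinely technical and is precisely why the paper defers to the literature rather than reproducing it.
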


Given $\bs\pi\in\cal P^+$, define $\bs\pi^{\tau_a}\in \cal P^+$ by $\bs\pi^{\tau_a}_i(u)=\bs\pi_i(au).$ One easily checks that the pullback
$L_q(\bs\omega)^{\tau_a}$ of $L_q(\bs\pi)$ by $\tau_a$ satisfies
\begin{equation}\label{e:tau_a}
	L_q(\bs\pi)^{\tau_a}\cong L_q(\bs\pi^{\tau_a}).
\end{equation}
Define also $\bs\pi^\sigma,\bs\pi^*\in\mathcal P^+$ by
\begin{equation}\label{e:dualDpolidef}
	\bs\pi_i^\sigma(u) = \bs\pi_{i^*}(u) \quad\text{for}\quad i\in I, \quad\text{and}\quad \bs\pi^* = (\bs\pi^\sigma)^{\tau_{q^{-r^\vee h^\vee}}} = (\bs\pi^{\tau_{q^{-r^\vee h^\vee}}})^\sigma.
\end{equation}
It is well-known that 
\begin{equation}\label{e:dualDpoli}
	L_q(\bs\pi)^* \cong L_q(\bs\pi^*).
\end{equation}

We denote by $V^\sigma$ and $V^\kappa$ the pull-back of $V$ by $\sigma$ and $\kappa$, respectively. In particular,
\begin{equation}\label{e:tppb}
	(V_1\otimes V_2)^\sigma\cong V_1^{\tau_a}\otimes V_2^{\tau_a}, \quad (V_1\otimes V_2)^\sigma\cong V_1^\sigma\otimes V_2^\sigma, \quad\text{and}\quad (V_1\otimes V_2)^\kappa\cong V_2^\kappa\otimes V_1^\kappa.
\end{equation}
Also, for any short exact sequence
\begin{equation*}
	0\to V_1\to V_2\to V_3\to 0,
\end{equation*}
we have  short exact sequences
\begin{equation}\label{e:cinvses}
	0\to V_1^f\to V_2^f\to V_3^f\to 0 \qquad\text{with}\qquad f=\tau_a,\sigma,\kappa.
\end{equation}
Moreover, if $\bs\pi\in\cal P^+$ with $\bs\pi_i(u) = \prod_j (1-a_{i,j}u)$,
where $a_{i,j}\in \mathbb F$, and $\bs\pi^-\in\cal P^+$ is defined by 
$\bs\pi^-_i(u) = \prod_j (1-a_{i,j}^{-1}u)$, we have
\begin{equation}\label{e:kappapb}
	V_q(\bs{\pi})^\sigma \cong V_q(\bs\pi^\sigma) \quad\text{and}\quad V_q(\bs{\pi})^\kappa \cong V_q(\bs\pi^\kappa)
	\quad\text{where}\quad \bs{\pi}^\kappa = (\bs{\pi}^-)^*.
\end{equation}

It was proved in \cite{fr:qchar} that
\begin{equation}\label{e:multqchar}
	\qch(V\otimes W) = \qch(V)\qch(W).
\end{equation}
In particular, we have:

\begin{prop}\label{sinter}
	Let $\bs{\pi}, \bs{\varpi}\in\mathcal{P}^+$. Then, $L_q(\bs{\pi})\otimes L_q(\bs{\varpi})$ is simple if and only if $L_q(\bs{\varpi})\otimes L_q(\bs{\pi})$ is simple and, in that case, $L_q(\bs{\pi})\otimes L_q(\bs{\varpi}) \cong L_q(\bs\pi\bs\varpi)\cong L_q(\bs{\varpi})\otimes L_q(\bs{\pi})$. 	\hfil\qed
\end{prop}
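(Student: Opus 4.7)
The plan is to reduce both claims to a single observation: $L_q(\bs\pi)\otimes L_q(\bs\varpi)$ always contains a highest-$\ell$-weight submodule whose unique simple quotient is $L_q(\bs\pi\bs\varpi)$, and the analogous statement holds with the factors swapped. Granted this, simplicity of either tensor product forces it to coincide with such a submodule and hence to be isomorphic to $L_q(\bs\pi\bs\varpi)$, which settles the isomorphism claim. For the ``iff'' in the simplicity assertion, I would invoke \eqref{e:multqchar} together with the commutativity of $\mathbb Z[\mathcal P]$ to conclude
\begin{equation*}
\qch(L_q(\bs\pi)\otimes L_q(\bs\varpi))=\qch(L_q(\bs\varpi)\otimes L_q(\bs\pi)).
\end{equation*}
Since $\qch$ is additive on short exact sequences and the qcharacters of nonisomorphic simples are linearly independent in $\mathbb Z[\mathcal P]$ (the highest $\ell$-weight of a simple is the unique maximal $\ell$-weight, in the order $\le$ on $\mathcal P$, appearing in its qcharacter), the two tensor products share the same composition factors with multiplicities; in particular, one is simple iff the other is.

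To justify the key observation, I would use multiplicativity of $\qch$ to express the dimension of the $\bs\pi\bs\varpi$-$\ell$-weight space of $L_q(\bs\pi)\otimes L_q(\bs\varpi)$ as $\sum_{\bs\omega_1\bs\omega_2=\bs\pi\bs\varpi}\dim L_q(\bs\pi)_{\bs\omega_1}\dim L_q(\bs\varpi)_{\bs\omega_2}$. Only $(\bs\omega_1,\bs\omega_2)=(\bs\pi,\bs\varpi)$ contributes, since the constraints $\bs\omega_1\le\bs\pi$ and $\bs\omega_2\le\bs\varpi$ together with $\bs\omega_1\bs\omega_2=\bs\pi\bs\varpi$ force $\bs\pi\bs\omega_1^{-1}=\bs\omega_2\bs\varpi^{-1}$ to lie in $\mathcal Q_q^+\cap(\mathcal Q_q^+)^{-1}=\{1\}$ (the intersection is trivial as $\mathcal Q_q^+$ is the free abelian monoid on the simple $\ell$-roots). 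So this $\ell$-weight space is one-dimensional, and its weight $\wt(\bs\pi)+\wt(\bs\varpi)$ is the maximum weight appearing in the tensor product, since each factor is supported in weights at most $\wt(\bs\pi)$ and $\wt(\bs\varpi)$, respectively. Hence any nonzero $v$ in this $\ell$-weight space lies in the top weight space and is annihilated by every $x_{i,r}^+$, these operators shifting the weight by $\alpha_i$ into an unavailable weight space. Thus $v$ is a highest-$\ell$-weight vector of $\ell$-weight $\bs\pi\bs\varpi$, and $U_q(\tlie g)v$ is highest-$\ell$-weight with simple quotient isomorphic to $L_q(\bs\pi\bs\varpi)$.

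The potential obstacle is that one might be tempted to argue via the coproduct of the Drinfeld loop generators $x_{i,r}^\pm$, which has no usable closed form. The approach above sidesteps this by relying only on three formal inputs already available from the preceding subsections: one-dimensionality of the highest $\ell$-weight space of a simple module (which sits inside its top weight space), multiplicativity of $\qch$ on tensor products, and additivity of $\qch$ on short exact sequences.
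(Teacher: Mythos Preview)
Your proposal is correct and follows essentially the same approach the paper intends: the proposition is stated immediately after \eqref{e:multqchar} with a \qedsymbol\ and no explicit proof, the phrase ``In particular'' signaling that it is meant as a direct consequence of the multiplicativity of $\qch$, which is precisely the engine of your argument. Your write-up simply fleshes out the details the paper leaves implicit (equality of composition multiplicities via equal qcharacters, and identification of the simple module via the one-dimensional top $\ell$-weight space).
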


It turns out that the determination of the simplicity of tensor products can be reduced to that of two-fold tensor products. This is the main result of \cite{Hernandez1}:

\begin{thm}\label{irred}
	If $S_1,\cdots, S_n$ are simple $U_q(\tlie g)$-modules, the tensor product $$S_1\otimes\cdots\otimes S_n$$ is simple if, and only if, $S_i\otimes S_j$ is simple for all $1\leq i<j\leq n$.\hfil\qed
\end{thm}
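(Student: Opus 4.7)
The two directions have rather different character. The ``only if'' direction is a clean qcharacter-plus-submodule argument, while the ``if'' direction is the technical heart and I would prove it by induction on $n$, reducing to a simultaneous cyclicity/cocyclicity assertion.

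\emph{Only if.} Write $S_k=L_q(\bs\pi_k)$ and $\bs\pi=\bs\pi_1\cdots\bs\pi_n$. The tensor $v_1\otimes\cdots\otimes v_n$ of highest-$\ell$-weight vectors is itself a highest-$\ell$-weight vector of $\ell$-weight $\bs\pi$, so simplicity of $S_1\otimes\cdots\otimes S_n$ forces $S_1\otimes\cdots\otimes S_n\cong L_q(\bs\pi)$, and \eqref{e:multqchar} yields $\prod_k\qch(S_k)=\qch(L_q(\bs\pi))$. For any permutation $\sigma$ the same identity reads $\qch\bigl(\bigotimes_k S_{\sigma(k)}\bigr)=\qch(L_q(\bs\pi))$; since qcharacters of distinct simples are linearly independent and $L_q(\bs\pi)$ always occurs as a composition factor of the permuted product (generated by the tensor of highest-$\ell$-weight vectors), the permuted product is itself isomorphic to $L_q(\bs\pi)$, hence simple. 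Fixing $i<j$ and choosing $\sigma$ with $\sigma(1)=i,\sigma(2)=j$, a proper nonzero submodule $M\subsetneq S_i\otimes S_j$ would produce the proper nonzero submodule $M\otimes S_{\sigma(3)}\otimes\cdots\otimes S_{\sigma(n)}$ of the simple permuted product---a contradiction.

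\emph{If.} Induct on $n$; the case $n=2$ is tautological. For $n\ge 3$, applying the inductive hypothesis to each $(n-1)$-element subcollection yields in particular $T:=S_2\otimes\cdots\otimes S_n\cong L_q(\bs\pi_T)$ with $\bs\pi_T=\bs\pi_2\cdots\bs\pi_n$, and similarly $T^*\cong S_n^*\otimes\cdots\otimes S_2^*$ is simple. The task reduces to showing $S_1\otimes T$ is simple. Since the $\ell$-weight space $(S_1\otimes T)_{\bs\pi_1\bs\pi_T}$ is one-dimensional (spanned by $v_1\otimes v_T$), $L_q(\bs\pi_1\bs\pi_T)$ occurs as composition factor of $S_1\otimes T$ with multiplicity exactly one; it therefore suffices to establish both (i) cyclicity---$S_1\otimes T=U_q(\tlie g)\cdot(v_1\otimes v_T)$---and (ii) cocyclicity---$\soc(S_1\otimes T)\cong L_q(\bs\pi_1\bs\pi_T)$. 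Any proper nonzero submodule of $S_1\otimes T$ would by (ii) contain the simple socle, hence a nonzero vector of $\ell$-weight $\bs\pi_1\bs\pi_T$, hence (by one-dimensionality) the generator $v_1\otimes v_T$, and by (i) would then be all of $S_1\otimes T$---a contradiction.

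Claim (ii) is obtained from a separate instance of (i) by duality: \eqref{e:dualtp} gives $(S_1\otimes T)^*\cong T^*\otimes S_1^*$, \eqref{e:dualDpoli} identifies the duals $S_i^*$ as simples, and \Cref{sinter} together with the pairwise hypothesis yields pairwise simplicity of the tuple $(S_n^*,\ldots,S_1^*)$; applying (i) to the tuple $(S_1^*,S_n^*,\ldots,S_2^*)$ gives cyclicity of $T^*\otimes S_1^*$, which via \eqref{e:dualses} is equivalent to cocyclicity of $S_1\otimes T$. The core of the argument is thus claim (i). My plan is to write $T=S_2\otimes T'$ with $T'=S_3\otimes\cdots\otimes S_n$ simple, identify $S_1\otimes T\cong L_q(\bs\pi_1\bs\pi_2)\otimes T'$, and run an inner induction on the number of factors of $T$; at each step \Cref{l:subtens} and \Cref{c:nonzeromorph} are used to transfer cyclicity of $L_q(\bs\pi_1\bs\pi_2)$ on $v_1\otimes v_2$ across the simple $T'$, forcing any submodule containing the generator to exhaust the tensor product. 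The main obstacle is precisely this iterated bookkeeping: controlling the successive submodules across the factors of $T'$ requires all pairwise-simplicity hypotheses involving $S_1$ and each factor of $T'$, and this is the technical content of Hernandez's original argument.
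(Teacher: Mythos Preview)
The paper does not prove this statement: it is quoted from \cite{Hernandez1} and marked with \qedsymbol\ to indicate the proof is omitted. So there is no ``paper's own proof'' to compare against here; what I can do is assess your sketch on its own terms.

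Your \emph{only if} argument is correct and clean. It is in fact close in spirit to the argument the paper later gives for the analogous highest-$\ell$-weight statement (\Cref{cyc}): reorder, pass to a quotient or submodule, and use that qcharacters control the composition factors.

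Your \emph{if} direction, however, is not a proof but a plan---and you say so yourself in the last sentence. Two concrete issues. First, the duality step is slightly garbled: applying claim (i) to the tuple $(S_1^*,S_n^*,\ldots,S_2^*)$ yields cyclicity of $S_1^*\otimes T^*$, not of $T^*\otimes S_1^*$; you would need (i) for an arbitrary ordering, which is fine once (i) is actually established, but the indexing should be fixed. Second, and more substantially, the ``inner induction'' you describe---replacing $S_1\otimes S_2$ by $L_q(\bs\pi_1\bs\pi_2)$ and iterating---does not close: after the replacement you need pairwise simplicity of $L_q(\bs\pi_1\bs\pi_2)$ with each remaining $S_j$, which is \emph{not} part of the hypothesis and does \emph{not} follow from the tools you cite (\Cref{l:subtens}, \Cref{c:nonzeromorph}). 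Your final sentence correctly identifies this as ``the technical content of Hernandez's original argument''; that argument uses normalized $R$-matrices and a specialization technique rather than the submodule-transfer mechanism you sketch, and there is no known way to bypass it with the elementary lemmas available in this paper. So as written, the \emph{if} half remains a gap.
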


Given a connected subdiagram $J$, since $U_q(\tlie g)_J$ is not a sub-coalgebra of $U_q(\tlie g)$, if $M$ and $N$ are $U_q(\tlie g)_J$-submodules of $U_q(\tlie g)$-modules $V$ and $W$, respectively, it is in general not true that $M\otimes N$ is a $U_q(\tlie g)_J$-submodule of $V\otimes W$.  Recalling that we have an algebra isomorphism $U_q(\tlie g)_J\cong U_{q_J}(\tlie g_J)$, we shall denote by $M\otimes_J N$ the $U_q(\tlie g)_J$-module obtained by using the coalgebra structure from $ U_{q_J}(\tlie g_J)$. The next result describes a special situation on which $M\otimes N$ is a submodule isomorphic to $M\otimes_J N$. Recall the notation defined in the paragraph preceding \Cref{sl32sl2}.

\begin{prop}[{\cite[Proposition 2.2]{cp:minsl}}]\label{p:subJtp}
	Let $V$ and $W$ be finite-dimensional highest-$\ell$-weight modules with highest $\ell$-weights $\bs\pi,\bs\varpi\in\mathcal P^+$, respectively, and let $J\subseteq I$ be a connected subdiagram. Then, $V_J\otimes W_J$ is a $U_q(\tlie g)_J$-submodule of $V\otimes W$ isomorphic to $V_J\otimes_J W_J$ via the identity map.\hfill\qed
\end{prop}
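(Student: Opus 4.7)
The subtlety of the statement is that while $U_q(\tlie g)_J$ is algebra-isomorphic to $U_{q_J}(\tlie g_J)$, this is not a coalgebra isomorphism for the loop part; thus the ambient coproduct $\Delta$ restricted to $U_q(\tlie g)_J$ may differ from the internal coproduct $\Delta_J$ of $U_{q_J}(\tlie g_J)$. The content of the proposition is that, despite this discrepancy, the two actions agree on the specific subspace $V_J\otimes W_J$ of $V\otimes W$. My plan is to establish stability and compatibility simultaneously by constructing a map $\phi\colon V_J\otimes_J W_J \to V\otimes W$ defined by $\phi(x\otimes_J y)=x\otimes y$, and to show that $\phi$ is a $U_{q_J}(\tlie g_J)$-module homomorphism with image exactly $V_J\otimes W_J$.

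The map $\phi$ is evidently a linear isomorphism onto $V_J\otimes W_J$, so the claim reduces to checking $U_{q_J}(\tlie g_J)$-linearity. Let $v\in V$ and $w\in W$ be the highest-$\ell$-weight vectors, so that $V_J=U_q(\tlie g)_J\cdot v$ and $W_J=U_q(\tlie g)_J\cdot w$, and $v\otimes_J w$ generates $V_J\otimes_J W_J$ as a highest-$\ell$-weight $U_{q_J}(\tlie g_J)$-module. Since $v\otimes w$ is a highest-$\ell$-weight vector in $V\otimes W$ (as $\Delta$ is compatible with the highest-$\ell$-weight structure and both $v$ and $w$ are killed by every $x_{i,r}^+$), it suffices to verify for every generator $u$ of $U_q(\tlie g)_J$ that $\Delta(u)(v\otimes w) = \phi\bigl(\Delta_J(u)(v\otimes_J w)\bigr)$, and then extend by the generating property together with the fact that both module structures factor through the same ring of operators on $v\otimes w$.

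For the Chevalley generators $x_j^\pm, k_j^{\pm 1}$ with $j\in J$ the two coproducts literally coincide (the restriction $U_q(\lie g)_J\cong U_{q_J}(\lie g_J)$ is a Hopf isomorphism), so the check is immediate. The main obstacle is the verification for the Drinfeld loop generators $x_{j,r}^\pm$ with $r\neq 0$ and $h_{j,s}$ with $j\in J$: the coproduct of such elements inside $U_q(\tlie g)$ is substantially more intricate than inside $U_{q_J}(\tlie g_J)$, producing ``extra'' tensor summands whose factors may involve Drinfeld generators indexed by simple roots outside $J$. The heart of the argument consists of showing that each such difference term annihilates $v\otimes w$: any surviving positive Drinfeld generator immediately kills one of the highest-$\ell$-weight factors, while a negative component indexed outside $\lie g_J$ would force the paired factor to shift weight outside $\wt(\bs\pi)+\wt(\bs\varpi)-Q_J^+$, contradicting the weight description in \eqref{e:weightsJ} unless it vanishes on $v$ or $w$. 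Once this vanishing is verified term by term, $\phi$ is a $U_{q_J}(\tlie g_J)$-module isomorphism onto $V_J\otimes W_J$, yielding both parts of the proposition simultaneously.
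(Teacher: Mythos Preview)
Your reduction to checking the intertwining property on the single vector $v\otimes w$ is the gap. To say that $\phi$ is a $U_q(\tlie g)_J$-module map you need, for every generator $u$ and for \emph{every} $z\in V_J\otimes_J W_J$, that $\Delta(u)$ and $\Delta_J(u)$ agree on $\phi(z)$. Checking this only at $z=v\otimes_J w$ and invoking ``the generating property'' does not close the argument: once you move away from $v\otimes w$ by applying, say, $x_{j,r}^-$, the resulting element $\sum x_i\otimes y_i$ has tensor factors that are no longer highest-$\ell$-weight vectors, so your two vanishing mechanisms (``a positive Drinfeld generator kills one of the highest-$\ell$-weight factors'' and ``\dots unless it vanishes on $v$ or $w$'') no longer apply. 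In other words, the extension step is exactly where the substance lies, and you have not supplied it.

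The argument in \cite{cp:minsl} (which the paper cites rather than reproduces) proceeds differently: one first establishes the weight-space identity
\[
V_J\otimes W_J=\bigoplus_{\eta\in Q_J^+}(V\otimes W)_{\wt(\bs\pi)+\wt(\bs\varpi)-\eta},
\]
which is precisely \eqref{e:sJs} used in the proof of \Cref{c:sJs}. One then shows, using the known triangularity of the Drinfeld coproduct, that the correction terms in $\Delta(u)-\Delta_J(u)$ for $u\in U_q(\tlie g)_J$ send any element of $V_J\otimes W_J$ into tensor factors whose individual weights fall outside $\wt(\bs\pi)-Q_J^+$ or $\wt(\bs\varpi)-Q_J^+$; by \eqref{e:weightsJ} applied to $V$ and $W$ separately, such weight spaces vanish. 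This is a weight argument on arbitrary elements of $V_J\otimes W_J$, not a highest-weight-vector argument, and that distinction is exactly what your proposal is missing.
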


\begin{cor}\label{c:sJs}
	In the notation of \Cref{p:subJtp}, if $V\otimes W$ is highest-$\ell$-weight, so is $V_J\otimes W_J$. Moreover, if $V\otimes W$ is simple, so is $V_J\otimes W_J$.
\end{cor}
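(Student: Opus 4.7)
The plan is to identify the natural highest-$\ell$-weight generator of $V\otimes W$---namely $v\otimes w$, where $v$ and $w$ are the highest-$\ell$-weight vectors of $V$ and $W$---and then use a weight-grading argument to identify the $U_q(\tlie g)_J$-submodule it generates with $V_J\otimes W_J$. Once this is done, both assertions follow quickly by combining \Cref{p:subJtp} with \Cref{sl32sl2}.

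Concretely, I would first record the standard fact that $v\otimes w$ is a highest-$\ell$-weight vector of $V\otimes W$ of $\ell$-weight $\bs\pi\bs\varpi$ (this uses the triangular form of the coproduct on the Drinfeld loop generators together with multiplicativity of $\ell$-weights under tensor products) and spans the one-dimensional top weight space $(V\otimes W)_{\wt(\bs\pi)+\wt(\bs\varpi)}=V_{\wt(\bs\pi)}\otimes W_{\wt(\bs\varpi)}$. Hence, when $V\otimes W$ is highest-$\ell$-weight, its cyclic generator must lie in this one-dimensional top $\ell$-weight space, so up to scalar it is $v\otimes w$, and the triangular decomposition $U_q(\tlie g)=U_q(\tlie n^-)U_q(\tlie h)U_q(\tlie n^+)$ yields
\[
V\otimes W=U_q(\tlie n^-)(v\otimes w).
\]

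The core step is the claim $V_J\otimes W_J=U_q(\tlie g)_J(v\otimes w)$ inside $V\otimes W$. The inclusion $\supseteq$ is immediate from \Cref{p:subJtp}. For the reverse inclusion, pick a weight vector $x\in V_J\otimes W_J$ and write $x=u(v\otimes w)$ with $u\in U_q(\tlie n^-)_{-\eta}$. By \eqref{e:weightsJ}, applied to $V_J\otimes_J W_J$ and transported to $V\otimes W$ via \Cref{p:subJtp}, the element $\eta$ lies in $Q_J^+$. Expanding $u$ in a PBW basis of $U_q(\tlie n^-)$ and using that the simple roots form a $\mathbb Z_{\ge 0}$-basis of $Q^+$, every monomial occurring with nonzero coefficient must be built out of root vectors supported on $R_J^+$; that is, $u\in U_q(\tlie n^-)_J$. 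This proves the claim, and after identification with $V_J\otimes_J W_J$ via \Cref{p:subJtp}, shows that $V_J\otimes_J W_J$ is the highest-$\ell$-weight $U_{q_J}(\tlie g_J)$-module generated by $v\otimes w$ with $\ell$-weight $\bs\pi_J\bs\varpi_J$.

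For the simplicity assertion, assume $V\otimes W$ is simple, hence in particular highest-$\ell$-weight. By the previous step, $V_J\otimes W_J$ is precisely the $U_q(\tlie g)_J$-submodule $(V\otimes W)_J$ generated by the highest-$\ell$-weight vector $v\otimes w$ of $V\otimes W$. Applying \Cref{sl32sl2} to the simple module $V\otimes W$ then yields that $(V\otimes W)_J=V_J\otimes W_J$ is simple, and hence so is $V_J\otimes_J W_J$. The step I expect to be trickiest is the PBW/weight-matching argument: one must invoke a PBW-type theorem for $U_q(\tlie n^-)$ in the affine setting and verify that the decomposition respects the $Q$-grading, though the underlying point---that positivity in $Q^+$ together with $\eta\in Q_J$ forces all root vectors appearing to be indexed by $J$---is elementary.
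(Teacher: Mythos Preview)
Your argument is correct and essentially identical to the paper's: both identify $V_J\otimes W_J$ with $\bigoplus_{\eta\in Q_J^+}(V\otimes W)_{\wt(\bs\pi)+\wt(\bs\varpi)-\eta}$ (the paper records this as \eqref{e:sJs}, extracted from the proof of \Cref{p:subJtp}), then observe that any monomial $x_{i_1,r_1}^-\cdots x_{i_l,r_l}^-$ of weight $-\eta$ with $\eta\in Q_J^+$ must have all $i_k\in J$, and finally deduce simplicity from \Cref{sl32sl2}. Two small remarks: you do not need a PBW theorem, only the fact that $U_q(\tlie n^-)$ is generated as an algebra by the $x_{i,r}^-$; and \eqref{e:weightsJ} should be applied to $V$ and $W$ separately and then tensored, rather than directly to $V_J\otimes_J W_J$, which is not yet known to be highest-$\ell$-weight at that point.
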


\begin{proof}
	As shown in the proof of  \Cref{p:subJtp}, we have
	\begin{equation}\label{e:sJs}
		V_J\otimes W_J = \opl_{\eta\in Q_J^+}^{} (V\otimes W)_{\wt(\bs\pi)+\wt(\bs\varpi)-\eta}.
	\end{equation}
	Thus, if $V\otimes W$ is highest-$\ell$-weight, any nonzero vector in $V_J\otimes W_J$  is a linear combination of vectors of the form
	$x_{i_1,r_1}^-\cdots x_{i_l,r_l}^-(v\otimes w)$
	for some $l\ge 0, i_k\in I, r_k\in\mathbb Z, 1\le k\le l$. But the weight of such vector is 
	\begin{equation*}
		\wt(\bs\pi)+\wt(\bs\varpi)-\sum_{k=1}^l \alpha_{i_k}
	\end{equation*}
	and, hence, we must have $i_k\in J$ for all $1\le k\le l$, which implies the first claim.	 
	The second claim follows from the first together with  \Cref{sl32sl2}. 
\end{proof}

\subsection{Simple Prime Modules and $q$-Factors}\label{ss:primem}

A finite-dimensional $U_q(\tlie g)$-module $V$ is said to be prime if it is not isomorphic to a tensor product of two non trivial modules. Evidently, any 
finite-dimensional simple module can be written as a tensor product of (simple) prime modules. If a prime module $P$ appears in some factorization of a simple module $S$, we shall say that $P$ is a prime factor of $S$. 

In particular, in light of \eqref{e:multqchar}, in order understand the qcharacters of the simple modules, it suffices to understand those of the simple prime modules. However, the only case the classification of simple prime modules is completely understood is for $\lie g=\lie{sl}_2$. In that case, the classification is given by the following theorem, proved in \cite{cp:qaa}.

\begin{thm}\label{t:primesl2}
	If $\lie g=\lie{sl}_2,\bs\pi\in\mathcal P^+,$ and the $q$-factors of $\bs\pi$ are $\bs\pi^{(j)}, 1\le j\le m$, then
	\begin{equation*}
		L_q(\bs\pi)\cong L_q(\bs\pi^{(1)})\otimes\cdots\otimes L_q(\bs\pi^{(m)}).
	\end{equation*}
	Moreover, up to re-ordering, $L_q(\bs\pi)$ has a unique factorization as tensor product of prime modules.
	In particular, $L_q(\bs\pi)$ is prime if and only if it has a unique $q$-factor.\hfill\qed 
\end{thm}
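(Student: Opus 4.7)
The plan is to establish three assertions in sequence: primality of each $L_q(\bs\pi^{(j)})$, simplicity of the tensor product $\bigotimes_j L_q(\bs\pi^{(j)})$, and uniqueness of the factorization of $L_q(\bs\pi)$ into prime factors. The final ``prime iff unique $q$-factor'' claim is then immediate. The key structural input is that for $\lie g=\lie{sl}_2$ the KR module $L_q(\bs\omega_{a,r})$ is isomorphic, as a $U_q(\lie{sl}_2)$-module, to the simple module $V_q(r\omega_1)$ of dimension $r+1$; up to the pullback $\tau_a$ of \Cref{p:Cartaninv}, this identifies it with the classical evaluation module.

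First I would prove primality of each KR module $L_q(\bs\omega_{a,r})$. Suppose, for contradiction, that $L_q(\bs\omega_{a,r})\cong L_q(\bs\varpi)\otimes L_q(\bs\vartheta)$ with both tensor factors nontrivial. Then $\wt(\bs\varpi)=k$ and $\wt(\bs\vartheta)=\ell$ with $k,\ell\ge 1$ and $k+\ell=r$. Restricting to $U_q(\lie{sl}_2)$ and invoking complete reducibility (\Cref{t:ciuqg}(b)), $V_q(k\omega_1)$ appears as a direct summand of $L_q(\bs\varpi)$ and $V_q(\ell\omega_1)$ as a direct summand of $L_q(\bs\vartheta)$, each generated by the respective highest-$\ell$-weight vector. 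Hence $V_q(k\omega_1)\otimes V_q(\ell\omega_1)$ is a summand of the restriction of $L_q(\bs\omega_{a,r})$ to $U_q(\lie{sl}_2)$, which is the irreducible $V_q(r\omega_1)$. Since Clebsch--Gordan gives $\min(k,\ell)+1\ge 2$ distinct irreducible components in $V_q(k\omega_1)\otimes V_q(\ell\omega_1)$, this is a contradiction.

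Next I would establish the tensor product formula. By \Cref{sinter}, it suffices to show that $\bigotimes_j L_q(\bs\pi^{(j)})$ is simple, and by \Cref{irred} this reduces to pairwise simplicity of $L_q(\bs\omega_{a,r})\otimes L_q(\bs\omega_{b,s})$ whenever $\bs\omega_{a,r}$ and $\bs\omega_{b,s}$ are two of the $q$-factors. The condition \eqref{defqfact} defining a $q$-factorization precisely encodes that the associated $q$-strings of roots do not merge to form a longer string. The classical Chari--Pressley analysis in \cite{cp:qaa}, via the explicit $R$-matrix for evaluation modules of $U_q(\tlie{sl}_2)$, then gives simplicity of the pair tensor product under exactly this ``general position'' hypothesis; combining these via \Cref{irred} yields simplicity of the full product, and then \Cref{sinter} identifies it with $L_q(\bs\pi)$.

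For uniqueness, any factorization $L_q(\bs\pi)\cong L_q(\bs\varpi_1)\otimes\cdots\otimes L_q(\bs\varpi_n)$ into prime simples has, by the first step, each $\bs\varpi_k$ equal to a KR polynomial $\bs\omega_{b_k,s_k}$. The simplicity of the tensor product combined with \Cref{irred} and the pairwise criterion of the previous paragraph forces each pair to satisfy \eqref{defqfact}, so $\{\bs\omega_{b_k,s_k}\}$ is a valid $q$-factorization of $\bs\pi=\prod_k \bs\varpi_k$. Uniqueness of the $q$-factorization (recorded when \eqref{defqfact} was introduced) identifies this multiset with $\{\bs\pi^{(j)}\}$. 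The main technical obstacle is the pairwise simplicity criterion of the second step, whose proof is the classical $R$-matrix computation on two-fold tensor products of evaluation modules of $U_q(\tlie{sl}_2)$.
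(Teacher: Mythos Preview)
The paper does not prove this theorem: it is quoted from \cite{cp:qaa} and closed with a bare \qedsymbol, so there is no in-paper argument to compare against. Your outline therefore supplies what the paper omits, and it is essentially sound, with one slip to correct.

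In the uniqueness paragraph you claim that ``by the first step'' each prime factor $L_q(\bs\varpi_k)$ has $\bs\varpi_k$ a KR polynomial. Step~1 establishes only that KR modules are prime; the converse, that a prime simple must be KR, comes from Step~2, since a Drinfeld polynomial with at least two $q$-factors yields a nontrivial tensor decomposition. Once this attribution is fixed, the rest of Step~3 goes through: \Cref{l:tensV} ensures the factors are simple, the ``only if'' direction of \Cref{irred} forces pairwise simplicity, and the $\lie{sl}_2$ instance of \eqref{defredset} (equivalently \Cref{c:krtpsl2}) then shows the multiset $\{\bs\omega_{b_k,s_k}\}$ satisfies \eqref{defqfact} for every pair, whence it coincides with the canonical $q$-factorization.

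Two remarks on the route you chose. Your reduction to pairwise simplicity rests on \Cref{irred}, Hernandez's 2010 theorem, which postdates \cite{cp:qaa}; this is logically harmless since its proof is independent of \Cref{t:primesl2}, but it means your argument is not the historical one. Second, the ``key structural input'' that $L_q(\bs\omega_{a,r})$ restricts to the irreducible $V_q(r\omega_1)$ is itself part of the evaluation-module analysis in \cite{cp:qaa}, so in Step~1 you are already implicitly invoking that source; the Clebsch--Gordan contradiction you build on top of it is, however, a clean and self-contained way to extract primality.
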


If $\bs\pi\in\mathcal P^+$ has a unique $q$-factor, the module $L_q(\bs\pi)$ is called a Kirillov-Reshetikhin (KR) module. 
It is well-known (see \cite{cha:braid,ohscr:simptens} and references therein) that, given $(i,r),(j,s)\in I\times  \mathbb Z_{>0}$, there exists a finite set $\mathscr R_{i,j}^{r,s} \subseteq \mathbb Z_{>0}$
such that
\begin{equation}\label{defredset}
	L_q(\bs\omega_{i,a,r})\otimes L_q(\bs\omega_{j,b,s}) \text{ is reducible}\qquad\Leftrightarrow\qquad \frac{a}{b} = q^m \text{ with } |m|\in \mathscr R_{i,j}^{r,s}.
\end{equation}
Moreover, in that case,
\begin{equation}\label{e:krhwtp}
	L_q(\bs\omega_{i,a,r})\otimes L_q(\bs\omega_{j,b,s}) \text{ is  highest-$\ell$-weight}\qquad\Leftrightarrow\qquad m>0.
\end{equation}
It follows from \Cref{sinter} and \eqref{e:dualtp} that
\begin{equation}\label{e:redsim}
	\mathscr R_{j,i}^{s,r} = \mathscr R_{i,j}^{r,s} = \mathscr R_{i^*,j^*}^{r,s} .
\end{equation}

\begin{thm}\label{t:krredsets}
	If $\lie g$ is of type $A$  and $i,j\in I, r,s\in\mathbb Z_{>0}$, we have 
	\begin{equation*}
		\mathscr R_{i,j}^{r,s} = \{r+s+d(i,j)-2p: - d([i,j],\partial I)\le p<\min\{r,s\}  \}. 
	\end{equation*}\hfill\qed
\end{thm}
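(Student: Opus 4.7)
The plan is to derive this explicit description of $\mathscr R_{i,j}^{r,s}$ by unwinding the well-developed theory of KR modules in type $A$. For $\lie g$ of type $A_n$, the module $L_q(\bs\omega_{i,a,r})$ is a $q$-deformation of an evaluation module coming from the irreducible $\lie{sl}_{n+1}$-module $V_q(r\omega_i)$, corresponding to a rectangular Young diagram of height $i$ and width $r$. This concreteness is what makes the type $A$ case tractable and allows a direct computation of reducibility points.

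The first step would be to reduce to the fundamental case $r = s = 1$ via fusion. Using the standard presentation of $L_q(\bs\omega_{i,a,r})$ as the unique simple quotient of the tensor product $L_q(\bs\omega_{i,aq^{r-1}}) \otimes \cdots \otimes L_q(\bs\omega_{i,aq^{-(r-1)}})$ of fundamental modules, and combining \Cref{sinter} with \Cref{irred}, one can express the failure of $L_q(\bs\omega_{i,a,r}) \otimes L_q(\bs\omega_{j,b,s})$ to be simple in terms of pairwise failures for the fundamental factors. Inducting on $r+s$, this produces the stated arithmetic progression form $r+s+d(i,j)-2p$ for the elements of $\mathscr R_{i,j}^{r,s}$, up to verification of the range of $p$.

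The second step would be to compute $\mathscr R_{i,j}^{1,1}$ explicitly. In type $A_n$, the fundamental module $L_q(\bs\omega_{i,a})$ is the $q$-deformation of $\Lambda^i \mathbb F^{n+1}$ with an explicit $q$-character given by a sum of $\binom{n+1}{i}$ multiplicity-one monomials. A direct inspection (or invocation of the Chari--Pressley $R$-matrix calculation, or the results of \cite{ohscr:simptens}) shows that the reducibility points are exactly $a/b = q^{\pm(d(i,j)+1+2p')}$ for $0 \le p' \le d([i,j],\partial I)$. Reparametrizing via $p = -p'$ yields the stated formula in this base case, and the general case then follows from the fusion reduction of the first step.

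The main obstacle, in my estimation, is justifying the lower bound $p \ge -d([i,j],\partial I)$. The reducibility points with $p \ge 0$ correspond to the obvious overlap of the two rectangles, but the additional points $-d([i,j],\partial I) \le p < 0$ are more subtle: they arise from the finite-type constraint of the $A_n$ Dynkin diagram and become visible only when $[i,j]$ sits away from the boundary $\partial I$. One efficient way to extract them is to exploit the duality symmetry $\mathscr R_{i,j}^{r,s} = \mathscr R_{i^*,j^*}^{r,s}$ from \eqref{e:redsim} combined with the explicit form of $\bs\pi^*$ in \eqref{e:dualDpolidef}, which identifies each extra reducibility point as the dual of a standard one and thereby matches the count $d([i,j],\partial I)$ exactly.
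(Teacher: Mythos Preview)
The paper does not actually prove this theorem: the statement ends with \qedsymbol\ and the surrounding text simply records that the result ``was essentially proved in \cite{cha:braid} and can be read off the results of \cite{ohscr:simptens}''. So there is no argument in the paper to compare against; what you are proposing is a sketch of an independent proof.

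That sketch has two genuine gaps.

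First, your fusion reduction via \Cref{sinter} and \Cref{irred} does not go through as written. \Cref{irred} says a tensor product $S_1\otimes\cdots\otimes S_m$ of simples is simple iff every pair $S_k\otimes S_l$ is simple; but if you blow each KR module up into its constituent fundamentals, the fundamentals \emph{within} a single KR module already fail pairwise simplicity (that is precisely why the KR module is a proper quotient), so \Cref{irred} tells you nothing about the two-KR tensor. Even if you replace this by the sharper tool \Cref{l:hlwquot}, you only get one inclusion: if every cross-pair of fundamentals is simple then $L_q(\bs\omega_{i,a,r})\otimes L_q(\bs\omega_{j,b,s})$ is simple. The other inclusion --- that reducibility of a single cross-pair forces reducibility of the KR tensor --- is false in general for quotients and is exactly the nontrivial content supplied by \cite{cha:braid} (via explicit highest-$\ell$-weight vectors, equivalently poles of normalized $R$-matrices).

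Second, the duality step does not manufacture the extra points $-d([i,j],\partial I)\le p<0$. The symmetry in \eqref{e:redsim} reads $\mathscr R_{i,j}^{r,s}=\mathscr R_{i^*,j^*}^{r,s}$, and passing to duals via \eqref{e:dualDpolidef} shifts both centers by the same factor $q^{-r^\vee h^\vee}$, so the ratio $a/b$ is unchanged. Since $d(i^*,j^*)=d(i,j)$ and $d([i^*,j^*],\partial I)=d([i,j],\partial I)$ in type $A$, this symmetry is a tautology on the claimed set and cannot move a ``standard'' point $p\ge 0$ to an ``extra'' point $p<0$. The points with $p<0$ genuinely require the boundary combinatorics of the $A_n$ diagram (equivalently, the finiteness of $h^\vee$), and in the cited references they are obtained either from explicit $q$-character computations or from the denominator formula for the $R$-matrix, not from duality alone.
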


The above was essentially proved in \cite{cha:braid} and can be read off the results of \cite{ohscr:simptens}, from where the description for other types can also be extracted (see also \cite{jamo:tpkr}).

Given a connected subdiagram $J$ such that $[i,j]\subseteq J$, let $\mathscr R_{i,j,J}^{r,s}$ be determined by 
\begin{equation*}
	V_q((\bs\omega_{i,a,r})_J)\otimes V_q((\bs\omega_{j,b,s})_J)  \text{ is reducible}\qquad\Leftrightarrow\qquad \frac{a}{b} = q^m \text{ with } |m|\in \mathscr R_{i,j,J}^{r,s}.
\end{equation*}
Note this is not the same set obtained by considering the corresponding module for the algebra $U_{q_J}(\tlie g_J)\cong U_q(\tlie g)_J$. Indeed, if we denote the latter by $\mathscr R_{i,j}^{r,s}[J]$, we have
\begin{equation*}
	m\in \mathscr R_{i,j}^{r,s}[J] \quad\Leftrightarrow\quad d_Jm\in \mathscr R_{i,j,J}^{r,s}.
\end{equation*}
Note also that \Cref{c:sJs} implies
\begin{equation}\label{incredsets}
	\mathscr R_{i,j,J}^{r,s}\subseteq \mathscr R_{i,j,K}^{r,s} \quad\text{if}\quad J\subseteq K.
\end{equation}
Finally, set 
\begin{equation}\label{e:redsetsl2}
	\mathscr R_{i}^{r,s} =  \mathscr R_{i,i,\{i\}}^{r,s}
\end{equation}

\begin{cor}\label{c:krtpsl2}
	For every $i\in I, r,s\in\mathbb Z_{>0}$, $\mathscr R_{i}^{r,s}=\{d_i(r+s-2p):0\le p<\min\{r,s\}\}$.\hfil\qed
\end{cor}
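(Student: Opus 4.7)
The plan is to reduce the statement to the $\lie{sl}_2$ case and then invoke Theorem~\ref{t:primesl2} together with the isomorphism $U_q(\tlie g)_{\{i\}}\cong U_{q_i}(\tlie{sl}_2)$, keeping careful track of the scaling by $d_i$ dictated by the relation between $\mathscr R^{r,s}_{i,j}[J]$ and $\mathscr R^{r,s}_{i,j,J}$.

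First, I would unfold the definitions. By \eqref{e:redsetsl2}, $\mathscr R_i^{r,s}=\mathscr R_{i,i,\{i\}}^{r,s}$, so I need to determine precisely the values of $|m|$ for which
\begin{equation*}
  V_q((\bs\omega_{i,a,r})_{\{i\}})\otimes V_q((\bs\omega_{i,b,s})_{\{i\}})
\end{equation*}
is reducible as a $U_q(\tlie g)_{\{i\}}$-module, given that $a/b=q^m$.  Under the Hopf algebra isomorphism $U_q(\tlie g)_{\{i\}}\cong U_{q_i}(\tlie{sl}_2)$, these are exactly the KR modules $L_{q_i}(\bs\omega_{a,r})$ and $L_{q_i}(\bs\omega_{b,s})$ of the rank-one quantum affine algebra with parameter $q_i=q^{d_i}$.

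Next I would apply Theorem~\ref{t:primesl2}, read in conjunction with \eqref{defqfact}. That theorem asserts that an $\lie{sl}_2$-type simple module factors canonically as the tensor product of the simple modules associated with its $q$-factors, and that $L(\bs\pi)$ is prime iff $\bs\pi$ has a single $q$-factor. Consequently, $L_{q_i}(\bs\omega_{a,r})\otimes L_{q_i}(\bs\omega_{b,s})$ is simple exactly when the pair $(\bs\omega_{a,r},\bs\omega_{b,s})$ already constitutes the $q$-factorization of their product, i.e., precisely when $a/b\ne q_i^{r+s-2p}$ for all $0\le p<\min\{r,s\}$. Therefore, in the parameter $q_i$, the reducibility set is
\begin{equation*}
  \mathscr R_{i,i}^{r,s}[\{i\}]=\{r+s-2p:0\le p<\min\{r,s\}\}.
\end{equation*}

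Finally, I would translate back to the parameter $q$ using the identity $m\in\mathscr R_{i,j}^{r,s}[J]\Leftrightarrow d_Jm\in\mathscr R_{i,j,J}^{r,s}$ recorded just before \eqref{incredsets}, with $J=\{i\}$ and hence $d_J=d_i$. Multiplying the set above by $d_i$ yields $\mathscr R_i^{r,s}=\{d_i(r+s-2p):0\le p<\min\{r,s\}\}$, as desired. There is no real obstacle: the only point requiring some care is making sure that the rescaling by $d_i$ is applied correctly when passing between the $q_i$-language intrinsic to $U_{q_i}(\tlie{sl}_2)$ and the ambient $q$-language in which $\mathscr R_{i,i,\{i\}}^{r,s}$ is defined.
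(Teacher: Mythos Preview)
Your proof is correct and follows the route the paper has in mind: reduce to the rank-one subalgebra via $U_q(\tlie g)_{\{i\}}\cong U_{q_i}(\tlie{sl}_2)$, read off the $\lie{sl}_2$ reducibility set, and rescale by $d_i$ using the identity preceding \eqref{incredsets}. The only cosmetic difference is that the paper, by placing the statement as a corollary immediately after \Cref{t:krredsets}, implicitly invokes the $A_1$ instance of that theorem (where $d(i,j)=0$ and $d([i,j],\partial I)=0$), whereas you instead extract the same set directly from \Cref{t:primesl2} together with \eqref{defqfact}; these are equivalent, since the $A_1$ case of \Cref{t:krredsets} is precisely the two-factor consequence of \Cref{t:primesl2}.
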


\begin{prop}\label{p:qftp}
	If $\bs\pi,\bs\varpi\in\mathcal P^+$ are such that $L_q(\bs\pi)\otimes L_q(\bs\varpi)$ is simple, then they have dissociate $q$-factorizations.
\end{prop}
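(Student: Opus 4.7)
The plan is to reduce to the $\lie{sl}_2$ situation one color at a time. Starting from the hypothesis that $L_q(\bs\pi) \otimes L_q(\bs\varpi)$ is simple (hence isomorphic to $L_q(\bs\pi\bs\varpi)$ by \Cref{sinter}), I would first apply \Cref{c:sJs} with $J = \{i\}$ for each $i \in I$. This yields that $L_q(\bs\pi_i) \otimes L_q(\bs\varpi_i)$ is a simple $U_q(\tlie g)_i \cong U_{q_i}(\widetilde{\lie{sl}_2})$-module and hence, again by \Cref{sinter}, is isomorphic to $L_q(\bs\pi_i \bs\varpi_i)$. All the remaining work then takes place inside these rank-one subalgebras.

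Next I would invoke \Cref{t:primesl2}. Let $\bs\pi_i^{(1)}, \dots, \bs\pi_i^{(m)}$ and $\bs\varpi_i^{(1)}, \dots, \bs\varpi_i^{(n)}$ denote the $q$-factors of $\bs\pi_i$ and $\bs\varpi_i$, respectively; by the color-by-color nature of \eqref{defqfact}, these are precisely the factors of the $q$-factorizations of $\bs\pi$ and $\bs\varpi$ that are supported at $i$. The theorem then gives
$$L_q(\bs\pi_i) \cong \bigotimes_{j=1}^m L_q(\bs\pi_i^{(j)}) \qquad\text{and}\qquad L_q(\bs\varpi_i) \cong \bigotimes_{k=1}^n L_q(\bs\varpi_i^{(k)}),$$
so $L_q(\bs\pi_i\bs\varpi_i)$ decomposes as the tensor product of all $m+n$ of these KR-type modules. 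Comparing this with the decomposition of $L_q(\bs\pi_i\bs\varpi_i)$ obtained from its own $q$-factorization via \Cref{t:primesl2} and invoking the uniqueness clause of that theorem, the two multisets of prime factors must coincide. Hence the $q$-factorization of $\bs\pi_i\bs\varpi_i$ is exactly the disjoint union of the $q$-factorizations of $\bs\pi_i$ and of $\bs\varpi_i$.

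Finally, I would run the conclusion above over every $i \in I$ and observe that two KR-type polynomials supported at different nodes automatically satisfy \eqref{defqfact} vacuously, since \eqref{defqfact} only constrains factors of common color. Combining these facts, the multiset of $q$-factors of $\bs\pi\bs\varpi$ is the disjoint union of those of $\bs\pi$ and those of $\bs\varpi$, which is exactly the assertion that their $q$-factorizations are dissociate. I do not anticipate any real obstacle: the essential input is the reduction via \Cref{c:sJs}, and everything after it is a bookkeeping translation between the rank-one statement and the global one. The only point requiring a moment of care is verifying that the color-$i$ restriction of the global $q$-factorization of $\bs\pi$ truly coincides with the $q$-factorization of $\bs\pi_i$ in the $\lie{sl}_2$ sense, which is immediate from the shape of the defining condition \eqref{defqfact}.
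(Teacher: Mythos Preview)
Your proposal is correct and follows essentially the same route as the paper: reduce to each rank-one subalgebra via \Cref{c:sJs} and then invoke the $\lie{sl}_2$ classification \Cref{t:primesl2}. The only cosmetic difference is that the paper argues by contrapositive (if not dissociate, there exist two $q$-factors at a common color whose tensor product is reducible, forcing $L_q(\bs\pi_i)\otimes L_q(\bs\varpi_i)$ to be reducible), whereas you argue directly using the uniqueness clause of \Cref{t:primesl2}; the content is the same.
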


\begin{proof}
	If the $q$-factorizations are not dissociate, it follows from \Cref{t:primesl2} that there exists $i\in I$ and $q$-factors $\bs\omega$ of $\bs\pi$ and $\bs\omega'$ of $\bs\pi'$, both supported at $i$, such that $L_q(\bs\omega)\otimes L_q(\bs\omega')$ is reducible. Moreover, writing $\bs\pi = \widetilde{\bs\pi}\bs\omega$ and $\bs\pi'= \widetilde{\bs\pi}'\bs\omega'$, it follows that
	\begin{equation*}
		L_q(\bs\pi_i)\otimes L_q(\bs\pi'_i)\cong L_q(\widetilde{\bs\pi}_i)\otimes L_q(\bs\omega_i)\otimes L_q(\bs\omega'_i)\otimes L_q(\widetilde{\bs\pi}'_i)
	\end{equation*}
	which is reducible, yielding a contradiction with \Cref{c:sJs}.	
\end{proof}

\begin{cor}\label{c:qftp}
	Let $\bs\pi\in\mathcal P^+$.  $L_q(\bs\pi)$ is prime if and only if for every decomposition $\bs\pi = \bs\omega\bs\varpi, \bs\omega,\bs\varpi\in\mathcal P^+$, such that $\bs\omega$ and $\bs\varpi$ have dissociate $q$-factorzations, $L_q(\bs\omega)\otimes L_q(\bs\varpi)$ is reducible.
\end{cor}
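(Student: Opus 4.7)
The plan is to derive both implications directly from \Cref{sinter} and \Cref{p:qftp}, which already encode most of the content. The only mild care needed is to rule out the trivial decomposition $\bs\pi = 1\cdot\bs\pi$ (which is always dissociate but gives $L_q(1)\otimes L_q(\bs\pi)\cong L_q(\bs\pi)$, simple); throughout I will interpret the decomposition in the statement as having $\bs\omega,\bs\varpi\ne 1$, equivalently, a nontrivial decomposition.

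\emph{Forward direction.} Suppose $L_q(\bs\pi)$ is prime and let $\bs\pi=\bs\omega\bs\varpi$ be any nontrivial dissociate decomposition. If $L_q(\bs\omega)\otimes L_q(\bs\varpi)$ were simple, then \Cref{sinter} would give
\begin{equation*}
L_q(\bs\omega)\otimes L_q(\bs\varpi)\cong L_q(\bs\omega\bs\varpi)=L_q(\bs\pi),
\end{equation*}
exhibiting a nontrivial tensor factorization of $L_q(\bs\pi)$, contradicting primality. Hence $L_q(\bs\omega)\otimes L_q(\bs\varpi)$ is reducible.

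\emph{Backward direction.} Conversely, assume every nontrivial dissociate decomposition of $\bs\pi$ yields a reducible tensor product, and suppose for contradiction that $L_q(\bs\pi)$ is not prime. Then $L_q(\bs\pi)\cong V_1\otimes V_2$ with $V_1,V_2$ nontrivial; since tensor factors of a simple module are simple, we may write $V_1=L_q(\bs\omega)$ and $V_2=L_q(\bs\varpi)$ for some nontrivial $\bs\omega,\bs\varpi\in\mathcal P^+$. The tensor product $L_q(\bs\omega)\otimes L_q(\bs\varpi)$ is thus simple, so \Cref{sinter} identifies it with $L_q(\bs\omega\bs\varpi)$, forcing $\bs\omega\bs\varpi=\bs\pi$. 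Now \Cref{p:qftp}, applied to this simple tensor product, shows that $\bs\omega$ and $\bs\varpi$ have dissociate $q$-factorizations. This produces a nontrivial dissociate decomposition of $\bs\pi$ whose associated tensor product is simple, contradicting the hypothesis.

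Neither step presents a real obstacle: the forward direction is an immediate consequence of the definition of prime together with the fact (\Cref{sinter}) that a simple tensor product is determined by the product of highest $\ell$-weights, while the backward direction is essentially a repackaging of \Cref{p:qftp}. The only conceptual point worth flagging is that \Cref{p:qftp} supplies the missing ingredient --- without it, one could a priori imagine a nontrivial factorization $\bs\pi=\bs\omega\bs\varpi$ with overlapping $q$-factorizations yet still simple tensor product, which would break the equivalence; \Cref{p:qftp} rules this out.
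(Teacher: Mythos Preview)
Your proof is correct and follows essentially the same approach as the paper: both directions use the definition of primality together with \Cref{p:qftp} (for the backward direction) and the identification of a simple tensor product with $L_q(\bs\pi)$ via \Cref{sinter} (for the forward direction). The paper's version is more terse---it absorbs the appeal to \Cref{sinter} into the phrase ``by definition''---and, like you, implicitly restricts to nontrivial decompositions.
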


\begin{proof}
	If $L_q(\bs\pi)$ is not prime, by definition there exists a nontrivial decomposition $\bs\pi = \bs\omega\bs\varpi$ such that  $L_q(\bs\omega)\otimes L_q(\bs\varpi)$ is simple and \Cref{p:qftp} implies $\bs\omega$ and $\bs\varpi$ have dissociate $q$-factorzations. If $L_q(\bs\pi)$ is prime, by definition,  $L_q(\bs\omega)\otimes L_q(\bs\varpi)$ is reducible for any nontrivial decomposition $\bs\pi = \bs\omega\bs\varpi$.
\end{proof}

Given $\bs\pi\in\mathcal P^+$, consider a nontrivial $2$-set partition of its set of $q$-factors and let $\bs\omega$ and $\bs\varpi$ be the products of the $q$-factors in each of the parts. The above corollary tells us that the task of deciding the primality of $L_q(\bs\pi)$ can be phrased as a task of testing the reducibility of $L_q(\bs\omega)\otimes L_q(\bs\varpi)$ for every such partition. Thus, one can think of organizing the level of complexity of the task by the number of $q$-factors of $\bs\pi$. The answer for the two first levels is given by:

\begin{cor}\label{c:p2qf}
	Every KR module is prime. Moreover, if $\bs\pi\in\mathcal P^+$ has exactly two $q$-factors, say $\bs\omega_{i,a,r}$ and $\bs\omega_{j,b,s}$, then $V_q(\bs\pi)$ is prime if and only if $\frac{a}{b}=q^m$ with $|m|\in\mathscr R_{i,j}^{r,s}$. \hfil\qed
\end{cor}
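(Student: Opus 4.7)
The plan is to derive both claims as immediate applications of \Cref{c:qftp} combined with the defining property \eqref{defredset} of the sets $\mathscr R_{i,j}^{r,s}$. The key observation in both cases is to identify exactly which decompositions $\bs\pi=\bs\omega\bs\varpi$ are dissociate, so that \Cref{c:qftp} reduces primality to a reducibility check for a single two-fold tensor product of KR modules.

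For the first claim, suppose $\bs\pi=\bs\omega_{i,a,r}$ is of KR type, so its multiset of $q$-factors is the singleton $\{\bs\omega_{i,a,r}\}$. By definition, a dissociate decomposition $\bs\pi=\bs\omega\bs\varpi$ requires the multiset of $q$-factors of $\bs\pi$ to be the union of those of $\bs\omega$ and $\bs\varpi$. Since a singleton can only be written as the disjoint union of itself and the empty multiset, one of $\bs\omega,\bs\varpi$ must be trivial. Thus there are no nontrivial dissociate decompositions, and \Cref{c:qftp} yields primality vacuously.

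For the second claim, with $\bs\pi=\bs\omega_{i,a,r}\bs\omega_{j,b,s}$ having precisely two $q$-factors, the same disjoint-union principle shows that, up to swapping the roles of $\bs\omega$ and $\bs\varpi$, the unique nontrivial dissociate decomposition is $\bs\omega=\bs\omega_{i,a,r}$ and $\bs\varpi=\bs\omega_{j,b,s}$. By \Cref{c:qftp}, primality of $L_q(\bs\pi)$ is then equivalent to reducibility of $L_q(\bs\omega_{i,a,r})\otimes L_q(\bs\omega_{j,b,s})$, which by \eqref{defredset} translates to $a/b=q^m$ with $|m|\in\mathscr R_{i,j}^{r,s}$.

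The one subtle point I would make explicit is the exclusion of ``internal'' refinements of a single $q$-factor: one might imagine splitting $\bs\omega_{i,a,r}$ as a product of two KR-type polynomials at $i$ corresponding to a prefix and suffix of its $q$-string, yielding only a pseudo $q$-factorization. In such a split the product $\bs\omega\bs\varpi=\bs\omega_{i,a,r}$ still has a single $q$-factor, so the union of the $q$-factor multisets of $\bs\omega$ and $\bs\varpi$ (of total cardinality two) does not agree with the $q$-factor multiset of $\bs\pi$ (of cardinality one); hence such refinements are not dissociate and need not be considered. Beyond this bookkeeping remark, I do not anticipate any obstacles.
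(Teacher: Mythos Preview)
Your argument is correct and is precisely the one the paper intends: the corollary is stated with a \qedsymbol\ immediately after the paragraph that explains how \Cref{c:qftp} reduces primality to checking reducibility over all $2$-set partitions of the $q$-factor multiset, and your proof simply spells out the trivial and unique partitions in the one- and two-factor cases together with \eqref{defredset}. There is nothing to add.
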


\subsection{Factorization Graphs}\label{ss:fact}

Recall the definition of the sets $\mathscr R_{i,j}^{r,s}$ in  \eqref{defredset}, as well as \eqref{e:redsetsl2}, and \eqref{e:incpaths}. We shall say that a pre-factorization graph $G$ is a $q$-factorization graph if,
for every $i\in I$, 
\begin{equation}\label{e:areqfact}
	v,v'\in \mathcal V_i, \ \rho\in\mathscr P_{v,v'}  \qquad\Rightarrow\qquad |\epsilon_\rho| \notin \mathscr R_i^{\lambda(v),\lambda(v')}
\end{equation}
and
\begin{equation}\label{e:notherarrow}
	\rho\in\mathscr P_{v,v'}\cap\mathscr P_G^+  \quad\text{with}\quad \epsilon_\rho \in \mathscr R_{c(v),c(v')}^{\lambda(v),\lambda(v')} \qquad\Rightarrow\qquad (v',v)\in\mathcal A.
\end{equation}
Condition \eqref{e:areqfact} ensures that the factors in the right-hand side of \eqref{e:polytograph} are the $q$-factors of $\bs\pi$. On the other hand, \eqref{e:notherarrow} guarantees that no pre-factorzation graph can be obtained by adding an arrow to $G$.
We will refer to a pre-factorization graph satisfying \eqref{e:notherarrow} as a pseudo $q$-factorization graph.

We shall now see that any pseudo $q$-factorization  of a Drinfeld polynomial gives rise  to a pseudo $q$-factorization graph which is a $q$-factorization graph if and only if it is the $q$-factorization.   
Thus, fix a Drinfeld polynomial $\bs\pi$ and let $\mathcal V$ be the corresponding multiset of pseudo $q$-factors. For $i\in I$, let
\begin{equation*}
	\mathcal V_i = \{\bs\omega\in \mathcal V: \supp(\bs\omega)=\{i\}\}.
\end{equation*} 
This gives rise to a coloring $c:\mathcal V\to I$ defined by declaring $\mathcal V_i=c^{-1}(\{i\})$. The weight map $\lambda:\mathcal V\to\mathbb Z_{>0}$ is defined by
\begin{equation}
	\lambda(\bs\omega) = \wt(\bs\omega)(h_i) \qquad\text{for all}\qquad \bs\omega\in \mathcal V_i.
\end{equation}
In particular,
\begin{equation}
	\sum_{i\in I}\sum_{\bs\omega\in \mathcal V_i} \lambda(\bs\omega)\omega_i = \wt(\bs\pi).
\end{equation}
The set of arrows $\mathcal A=\mathcal A(\bs\pi)$ is defined as the set of ordered pairs of 
$q$-factors, say $(\bs\omega_{i,a,r},\bs\omega_{j,b,s})$, such that
\begin{equation}\label{e:arrowsdefp}
	a=bq^m \quad\text{for some}\quad m\in \mathscr R_{i,j}^{r,s}.
\end{equation} 
In representation theoretic terms, this is equivalent to saying:
\begin{equation*}
	L_q(\bs\omega_{i,a,r})\otimes L_q(\bs\omega_{j,b,s}) \quad\text{is reducible and highest-$\ell$-weight.}
\end{equation*}
Note that, in the case of the actual $q$-factorization, we necessarily have $m\notin \mathscr R_i^{r,s}$ when $i=j$. The value of the exponent $\epsilon:\mathcal A\to\mathbb Z_{>0}$ at an arrow satisfying \eqref{e:arrowsdefp} is set to be $m$. Quite clearly, $G= (\mathcal V,\mathcal A)$ with the above choice of coloring, weight, and exponent is a pseudo $q$-factorization graph and $\bs\pi_G =\bs\pi$. We refer to $G$ as a pseudo $q$-factorization graph over $\bs\pi$. In the case this construction was performed using the $q$-factorization of $\bs\pi$, then $G$ will be called the  $q$-factorization graph of $\bs\pi$ and we denote it by $G(\bs\pi)$.

It is now natural to seek for the classification of the prime $q$-factorization graphs, i.e., those for which $L_q(\bs\pi_G)$ is prime.
In type $A_1$, this is the case if and only if the $q$-factorization graph of $\bs\pi$ has a single vertex, which is also equivalent to saying that the graph is connected. For higher rank, the story is much more complicated. We still have the following proposition which will be proved in   \Cref{ss:hlwtp}.

\begin{prop}\label{p:primeconect}
	Let $\bs\pi\in\mathcal P^+$. If $G_1, \cdots, G_k$ are the connected components of $G(\bs\pi)$ and $\bs\pi^{(j)}\in\mathcal P^+, 1\le j\le k$, are such that $\bs\pi = \prod_{j=1}^k \bs\pi^{(j)}$ and $G_j = G(\bs\pi^{(j)})$, then
	\begin{equation*}
		L_q(\bs\pi)\cong L_q(\bs\pi^{(1)})\otimes\cdots\otimes L_q(\bs\pi^{(k)}).
	\end{equation*}  
	In particular,  $G(\bs\pi)$ is connected if $L_q(\bs\pi)$ is prime.	
	\endd
\end{prop}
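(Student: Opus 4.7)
The strategy is to prove the displayed tensor-product decomposition first; the primality claim then follows immediately, since if $G(\bs\pi)$ had $k\ge 2$ connected components the isomorphism would realize $L_q(\bs\pi)$ as a nontrivial tensor product, contradicting primality. By \Cref{irred}, simplicity of $L_q(\bs\pi^{(1)})\otimes\cdots\otimes L_q(\bs\pi^{(k)})$ reduces to simplicity of each two-fold product $L_q(\bs\pi^{(l)})\otimes L_q(\bs\pi^{(l')})$ with $l\ne l'$, after which \Cref{sinter} identifies it with $L_q(\bs\pi^{(l)}\bs\pi^{(l')})$ and the pieces compose. I therefore reduce to $k=2$ and set $M_l=L_q(\bs\pi^{(l)})$.

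The cornerstone is a pairwise observation at the KR level: for every $q$-factor $\bs\omega=\bs\omega_{i,a,r}$ of $\bs\pi^{(1)}$ and every $q$-factor $\bs\omega'=\bs\omega_{j,b,s}$ of $\bs\pi^{(2)}$, the tensor product $L_q(\bs\omega)\otimes L_q(\bs\omega')$ is simple. Indeed, if it were reducible, \eqref{defredset} together with \eqref{e:krhwtp} would force exactly one of the two orderings to be highest-$\ell$-weight, whence \eqref{e:arrowsdefp} would produce an arrow in $G(\bs\pi)$ between the vertices representing $\bs\omega$ and $\bs\omega'$; this contradicts the fact that they lie in distinct connected components.

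The main step is then to upgrade this cross-component pairwise simplicity to the statement that $M_1\otimes M_2$ is highest-$\ell$-weight. Write each $M_l$ as the simple quotient of a suitably ordered tensor product of its KR factors and apply iteratively the quotient-level criteria of \Cref{s:hlwc}, notably \Cref{l:hlwquot}, \Cref{p:killhlw} and \Cref{p:killdualhlw}, supported by \Cref{l:subtens}, \Cref{c:nonzeromorph} and the reciprocities \eqref{e:frobrec}--\eqref{e:dualtp}. Running the same argument on the dual modules $M_l^*$, whose $q$-factorization graphs inherit the disconnection pattern (since duals of KR modules are KR and \eqref{e:redsim} preserves the reducibility sets), together with \eqref{e:dualtp}--\eqref{e:dualses}, shows that $M_2^*\otimes M_1^*=(M_1\otimes M_2)^*$ is also highest-$\ell$-weight. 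Consequently $M_1\otimes M_2$ has simple head and simple socle both equal to $L_q(\bs\pi)$. Since any factorization $\bs\omega_1\bs\omega_2=\bs\pi=\bs\pi^{(1)}\bs\pi^{(2)}$ with $\bs\omega_l\le\bs\pi^{(l)}$ forces $\bs\omega_l=\bs\pi^{(l)}$ (as the monoid $\mathcal Q_q^+$ has no nontrivial units), the $\ell$-weight space $(M_1\otimes M_2)_{\bs\pi}$ is one-dimensional, so $L_q(\bs\pi)$ occurs as a composition factor of $M_1\otimes M_2$ with multiplicity exactly one; combined with head $=$ socle $=L_q(\bs\pi)$, this forces $M_1\otimes M_2\cong L_q(\bs\pi)$.

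The principal obstacle is the descent in the third step, namely the controlled propagation of the highest-$\ell$-weight property (and of its dual) from the KR-factor level to the level of $M_l$ using only the cross-component pairwise information. This is precisely the role the criteria of \Cref{s:hlwc} are engineered to play; once they are available the remainder of the argument is a formal comparison of qcharacters via \eqref{e:multqchar} and composition-factor counting.
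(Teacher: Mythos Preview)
Your skeleton matches the paper's: reduce to $k=2$, observe that KR factors from distinct components have simple tensor products, then lift to simplicity of $L_q(\bs\pi^{(1)})\otimes L_q(\bs\pi^{(2)})$. But you make the lift much harder than it is, and your step~3 is not actually a proof.

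The paper's argument is one application of \Cref{l:hlwquot}. Since $G(\bs\pi^{(l)})$ has no oriented cycles (\Cref{l:nocycle}), its $q$-factors admit a linear order for which $L_q(\bs\omega^{(k)})\otimes L_q(\bs\omega^{(l)})$ is highest-$\ell$-weight whenever $k<l$; combined with the fact that every cross-component pair is simple (hence highest-$\ell$-weight in either order), the hypotheses of \Cref{l:hlwquot} are met and $L_q(\bs\pi^{(1)})\otimes L_q(\bs\pi^{(2)})$ is highest-$\ell$-weight. Swapping the roles of $\bs\pi^{(1)}$ and $\bs\pi^{(2)}$ gives the reversed product highest-$\ell$-weight as well, and \Cref{c:vnvstar} yields simplicity. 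Neither \Cref{p:killhlw} nor \Cref{p:killdualhlw} is needed; your phrase ``apply iteratively the quotient-level criteria'' hides precisely the one ingredient---the topological ordering of the $q$-factors within each component---that makes \Cref{l:hlwquot} apply directly, and you should state it explicitly rather than gesture at a toolbox.

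Your steps 4--5 are correct in spirit but redundant. Once $M_1\otimes M_2$ and its dual are both highest-$\ell$-weight, \Cref{vnvstar} already gives simplicity, so the head/socle/multiplicity count is superfluous. More to the point, the detour through duals is avoidable: simplicity of the cross-component KR pairs is symmetric in the two factors, so the argument showing $M_1\otimes M_2$ is highest-$\ell$-weight applies verbatim to $M_2\otimes M_1$, and \Cref{c:vnvstar} finishes immediately.
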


However, even for type $A_2$, the converse is not true and counter examples can be found in \cite{mosi}, for instance.

We also introduce duality notions for pre-factorization graphs. Given a graph $G$, we denote by $G^-$ the graph obtained from $G$ by reversing all the arrows\footnote{The underlying directed graph is usually called the transpose of $G$.} and keeping the rest of structure of (pre)-factorization graph. In light of \eqref{e:redsim}, $G^-$ is a factorization graph as well, which we refer to as the arrow-dual of $G$. Similarly, the graph $G^*$, called the color-dual of $G$, obtained  by changing the coloring according to the rule $i\mapsto i^*$ for all $i\in I$, is a factorization graph. Moreover,
\begin{equation}\label{e:dualgraphs}
	\bs\pi_{G^-,v,a^{-1}} =\bs\pi_{G,v,a}^- \qquad\text{and}\qquad \bs\pi_{G^*,v,aq^{-r^\vee h^\vee}} =\bs\pi_{G,v,a}^*.
\end{equation}

Given $\bs\pi,\bs\pi'\in\mathcal P^+$, the graph $G(\bs\pi\bs\pi')$ may have no relation to $G(\bs\pi)$ and $G(\bs\pi')$. However, if $\bs\pi$ and $\bs\pi'$ have dissociate $q$-factorizations, then $G(\bs\pi)$ and $G(\bs\pi')$ determine a cut of $G(\bs\pi\bs\pi')$. We will consider several times the situation that the corresponding cut-set is a singleton. More generally, given pseudo $q$-factorization graphs $G$ and $G'$, let $G\otimes G'$ be the unique pseudo $q$-factorization graph whose set of vertices is $\mathcal V\sqcup\mathcal V'$ preserving the original coloring and weight map. The following is trivially established.

\begin{lem}\label{tpqfg}
	If $G$ and $G'$ are pseudo $q$-factorization graphs over $\bs\pi$ and $\bs\pi'$, respectively, then $G\otimes G'$ is a pseudo $q$-factorization graph over $\bs\pi\bs\pi'$. Moreover, if $G=G(\bs\pi)$ and $G'=G(\bs\pi')$, then $G\otimes G'=G(\bs\pi\bs\pi')$ if and only if $\bs\pi$ and $\bs\pi'$ have dissociate $q$-factorizations.\hfil\qed
\end{lem}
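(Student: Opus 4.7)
My plan is to prove both assertions by carefully unwinding the definitions of pseudo $q$-factorization graph, the formula \eqref{e:polytograph}, and the arrow rule \eqref{e:arrowsdefp}. For the first assertion, I would concatenate the pseudo $q$-factorizations of $\bs\pi$ and $\bs\pi'$ encoded by $G$ and $G'$ via \eqref{e:polytograph} to produce a pseudo $q$-factorization of $\bs\pi\bs\pi'$ indexed by $\mathcal V\sqcup\mathcal V'$, with colors, weights and centers inherited from $G$ and $G'$. The associated pseudo $q$-factorization graph over $\bs\pi\bs\pi'$ therefore has precisely the vertex, coloring and weight data of $G\otimes G'$. It then remains to verify that arrows and exponents match and that the axiom \eqref{e:notherarrow} is preserved. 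The matching of arrows is essentially automatic, since the arrows internal to $\mathcal V$ (respectively $\mathcal V'$) are exactly those prescribed in $G$ (respectively $G'$) by \eqref{e:arrowsdefp}, while the new ``cross'' arrows between $\mathcal V$ and $\mathcal V'$ are forced by \eqref{e:arrowsdefp} applied to the relative centers $a_v$ and $a_{v'}$. For the axiom, I would use the identity $a_{v'}=a_v q^{\epsilon_\rho}$ built into \eqref{e:polytograph} together with the symmetry $\mathscr R_{i,j}^{r,s}=\mathscr R_{j,i}^{s,r}$ from \eqref{e:redsim} to conclude that any path $\rho\in\mathscr P^+_{v,v'}$ with $\epsilon_\rho\in\mathscr R_{c(v),c(v')}^{\lambda(v),\lambda(v')}$ induces the required arrow $(v',v)\in\mathcal A$ of $G\otimes G'$.

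For the second assertion, when $G=G(\bs\pi)$ and $G'=G(\bs\pi')$, the vertex set $\mathcal V\sqcup\mathcal V'$ of $G\otimes G'$ is the disjoint union of the multisets of $q$-factors of $\bs\pi$ and $\bs\pi'$, whereas the vertex set of $G(\bs\pi\bs\pi')$ is the multiset of $q$-factors of $\bs\pi\bs\pi'$. By the very definition of dissociate $q$-factorizations, these two multisets coincide if and only if $\bs\pi$ and $\bs\pi'$ have dissociate $q$-factorizations. In the dissociate case, the coloring, weight and arrow/exponent data on both graphs are uniquely determined by the same vertex-plus-center data via \eqref{e:arrowsdefp}, so the two graphs coincide. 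In the non-dissociate case, the multisets of vertices themselves differ---some pair of $q$-factors of $\bs\pi$ and $\bs\pi'$ supported at a common node $i\in I$ reshapes into a different collection of $q$-factors of $\bs\pi\bs\pi'$---so $G(\bs\pi\bs\pi')$ and $G\otimes G'$ cannot be equal as decorated graphs.

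Overall the argument is essentially careful bookkeeping of definitions and I do not anticipate a serious obstacle. The most delicate point is the verification of \eqref{e:notherarrow} in $G\otimes G'$, which, as indicated above, rests on the symmetry \eqref{e:redsim} and on the fact that $\epsilon_\rho$ records the $q$-logarithm of the ratio of centers between the endpoints of $\rho$.
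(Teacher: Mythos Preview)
Your proposal is correct and is essentially what the paper does, namely nothing: the paper declares the lemma ``trivially established'' and omits a proof. Your plan is simply the unwinding of definitions that justifies this triviality, and it is accurate; in particular, once $G\otimes G'$ is defined as the pseudo $q$-factorization graph associated to the concatenated pseudo $q$-factorization $\mathcal V\sqcup\mathcal V'$ of $\bs\pi\bs\pi'$, the axiom \eqref{e:notherarrow} and the exponent compatibility come for free from the construction via \eqref{e:arrowsdefp}, so that part of your discussion, while correct, is not really a delicate point.
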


If $G$ and $G'$ are pseudo $q$-factorization graphs over $\bs\pi$ and $\bs\pi'$, respectively, we shall say that the pair $(G,G')$ or, equivalently, that $G\otimes G'$ is simple if so is $L_q(\bs\pi)\otimes L_q(\bs\pi')$. Otherwise we say it is reducible. Evidently, $G\otimes G'=G'\otimes G$, regardless if $L_q(\bs\pi)\otimes L_q(\bs\pi')$ is simple or not. 

\subsection{Main Conjectures and Results}\label{ss:main}
Throughout this section, we let $G=G(\bs\pi)=(\mathcal V,\mathcal A)$ be a $q$-factorization graph. We say $G$ is prime if $L_q(\bs\pi)$ is prime. 
One could hope that the complexity of prime graphs is incremental in the sense that all prime graphs with $N+1$ vertices are obtained by adding a vertex in some particular ways to some prime graph with $N$ vertices. In other words:

\begin{con}\label{p:buildupprimes}
	If $G$ is prime and $\#\mathcal V>1$, there exists $v\in \mathcal V$ such that $G_{\mathcal V\setminus\{v\}}$ is also prime.\endd
\end{con}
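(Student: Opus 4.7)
The plan is by contradiction. Assume $G$ is prime with $\#\mathcal V>1$ and that $G_{\mathcal V\setminus\{v\}}$ is non-prime for every $v\in\mathcal V$. Since \Cref{p:primeconect} requires a prime $q$-factorization graph to be connected, and $G$ itself is connected (being prime), a meaningful candidate $v$ must be a non-cut vertex of $G$: otherwise $G_{\mathcal V\setminus\{v\}}$ is disconnected and automatically non-prime, giving no leverage. Because $G$ has no oriented cycles by \Cref{l:nocycle}, the partial order $\preceq$ always admits sources and sinks, and my first attempt would be to search among non-cut sources and non-cut sinks of $G$.

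Fix such a candidate $v_0$, say a source. Then for every other vertex $v$, the tensor product $L_q(\bs\omega_{v_0})\otimes L_q(\bs\omega_v)$ is either simple (when $v_0,v$ are non-adjacent) or reducible and highest-$\ell$-weight in that order (when adjacent, since no arrow enters $v_0$). Combined with \Cref{irred}, this implies that $L_q(\bs\omega_{v_0})\otimes L_q(\bs\pi/\bs\omega_{v_0})$ is a highest-$\ell$-weight module whose simple head is $L_q(\bs\pi)$. By the standing assumption, $L_q(\bs\pi/\bs\omega_{v_0})\cong L_q(\bs\pi^{(1)})\otimes L_q(\bs\pi^{(2)})$ is simple with both $\bs\pi^{(j)}$ non-trivial and, by \Cref{p:qftp}, having dissociate $q$-factorizations. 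The goal then is to apply the highest-$\ell$-weight tensor product criteria of \Cref{s:hlwc} — particularly \Cref{l:hlwquot}, \Cref{p:killhlw}, and \Cref{p:killdualhlw} — to show that $\bs\omega_{v_0}$ can be absorbed into one of the factors, producing a simple tensor product $L_q(\bs\omega_{v_0}\bs\pi^{(j)})\otimes L_q(\bs\pi^{(3-j)})\cong L_q(\bs\pi)$, which would contradict the primality of $L_q(\bs\pi)$ and complete the reduction. If neither a source nor a sink works as $v_0$, one would repeat the analysis with other non-cut vertices, using \Cref{tpqfg} to localize the combinatorics near the chosen vertex.

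The hardest part, and the reason this is only a conjecture, is that there is no canonical way to transfer a simple tensor decomposition of $L_q(\bs\pi/\bs\omega_v)$ across the removal of $v$: the cuts witnessing non-primality of $G_{\mathcal V\setminus\{v\}}$ vary with $v$ in a way not controlled by any of the combinatorial data currently at our disposal, so the absorption step above need not succeed for a particular source. A reasonable first milestone would be the case where $G$ is a tree: by \Cref{l:elemtree}(a), $\partial G$ contains at least two monovalent vertices, each of which is automatically a non-cut vertex, and \Cref{l:elemtree}(d) together with \Cref{tpqfg} should allow a clean induction on $\#\mathcal V$ by cutting $G$ at the unique edge incident to a chosen boundary vertex, where the source/sink analysis above becomes especially rigid.
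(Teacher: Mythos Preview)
The statement you are addressing is a \emph{conjecture} in the paper, not a proved result; there is no full proof to compare against. What the paper does establish is \Cref{p:buildupprimesmono}: the conclusion holds for every $v\in\partial G$, and hence the conjecture holds whenever $\partial G\ne\emptyset$, in particular for trees. Your proposal correctly identifies this partial result as the accessible milestone and correctly recognizes that the general case remains open.

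Your absorption strategy is exactly the mechanism the paper uses, formalized as \Cref{l:vertexremoval} and \Cref{l:vertexremoval'}: if $\bs\omega$ is an extremal vertex and $L_q(\bs\pi\bs\omega^{-1})\cong L_q(\bs\varpi^{(1)})\otimes L_q(\bs\varpi^{(2)})$, then $\bs\omega$ can be absorbed into $\bs\varpi^{(1)}$ \emph{provided every vertex adjacent to $\bs\omega$ lies in $G(\bs\varpi^{(1)})$}. The point you leave vague (``the source/sink analysis becomes especially rigid'') is precisely this adjacency condition, and it is the reason the paper works with $\partial G$ rather than with arbitrary sources or sinks: a vertex $v\in\partial G$ of a connected prime graph with $\#\mathcal V>1$ is monovalent, so it has a \emph{unique} neighbor, which necessarily sits in one of the two factors. \Cref{l:vertexremoval'} then forces $L_q(\bs\pi\bs\omega^{-1})$ to be prime, with no induction and no further case analysis needed. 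By contrast, a source that is not monovalent may have its neighbors split across the two factors, and then your absorption step genuinely fails --- this is the obstruction you allude to, and it is what keeps the conjecture open when $\partial G=\emptyset$.

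Two minor corrections: the reference for assembling a highest-$\ell$-weight tensor product from pairwise highest-$\ell$-weight pieces is \Cref{cyc} (or \Cref{l:hlwquot}), not \Cref{irred}, which concerns simplicity; and a monovalent vertex of a tree is a non-cut vertex, but \Cref{l:elemtree}(d) and \Cref{tpqfg} are not what drive the argument --- the entire tree case follows from the single-neighbor observation above, with no induction on $\#\mathcal V$ required.
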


\begin{prop}\label{p:buildupprimesmono}
	The conclusion of \Cref{p:buildupprimes} holds for every $v\in \partial G$.
		In particular, the conjecture holds if $G$ is a tree.\endd
\end{prop}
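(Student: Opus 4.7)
The second sentence follows immediately from the first together with \Cref{l:elemtree}(a), which guarantees $\partial G\ne\emptyset$ for every tree with $\#\mathcal V>1$. The remainder of the argument thus focuses on showing that, when $G$ is prime with $\#\mathcal V>1$ and $v\in\partial G$, the subgraph $G_{\mathcal V\setminus\{v\}}$ is also prime. I plan to argue by contraposition, producing a nontrivial tensor factorization of $L_q(\bs\pi_G)$ from one for $L_q(\bs\pi_{G_{\mathcal V\setminus\{v\}}})$.

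Since $G$ is prime, \Cref{p:primeconect} forces $G$ to be connected, so the boundary vertex $v$ cannot be isolated; it is therefore monovalent, and I denote by $v'$ its unique neighbor. Assume that $G_{\mathcal V\setminus\{v\}}$ is not prime, and by \Cref{c:qftp} pick a nontrivial decomposition $\bs\pi_G/\bs\omega_v=\bs\omega\bs\varpi$ with dissociate $q$-factorizations for which $L_q(\bs\omega)\otimes L_q(\bs\varpi)$ is simple; after possibly swapping the two factors, assume $v'$ belongs to the vertex set of $G(\bs\omega)$. Since $v'$ is the only neighbor of $v$ in $G$, the $q$-factor $\bs\omega_v$ has no arrow in $G$ to any vertex of $G(\bs\varpi)$, and the containment of the combining relation inside the arrow relation (a consequence of \eqref{incredsets}) then implies that $\bs\omega_v$ is dissociate from $\bs\varpi$, so that $\bs\omega\bs\omega_v$ and $\bs\varpi$ still have dissociate $q$-factorizations. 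Moreover, $\{v\}$ is an isolated component of $G(\bs\omega_v\bs\varpi)$, whence \Cref{p:primeconect} gives $L_q(\bs\omega_v\bs\varpi)\cong L_q(\bs\omega_v)\otimes L_q(\bs\varpi)$; in particular this two-fold tensor product is simple.

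The remaining and most delicate step is to promote the simplicities of $L_q(\bs\omega)\otimes L_q(\bs\varpi)$ and $L_q(\bs\omega_v)\otimes L_q(\bs\varpi)$ to that of $L_q(\bs\omega\bs\omega_v)\otimes L_q(\bs\varpi)$, since then \Cref{c:qftp} applied to the decomposition $\bs\pi_G=(\bs\omega\bs\omega_v)\bs\varpi$ supplies the desired contradiction with primeness of $G$. My plan is to obtain this via the highest-$\ell$-weight criteria developed in \Cref{s:hlwc}, following the ``remove a vertex'' theme of \Cref{ss:remove}: specifically, I would invoke \Cref{l:hlwquot} and \Cref{p:killhlw} together with their arrow-dual analogues to conclude that both $L_q(\bs\omega\bs\omega_v)\otimes L_q(\bs\varpi)$ and $L_q(\bs\varpi)\otimes L_q(\bs\omega\bs\omega_v)$ are highest-$\ell$-weight, whence a standard argument forces simplicity. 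The main technical obstacle I foresee lies in controlling the interaction between $\bs\omega_v$ and the $q$-factors of $\bs\omega$ other than $\bs\omega_{v'}$: the monovalence of $v$ is precisely what ensures that each such $q$-factor is dissociate from and non-adjacent to $\bs\omega_v$, so that the cut-set between $\bs\omega\bs\omega_v$ and $\bs\varpi$ in $G$ coincides with that between $\bs\omega$ and $\bs\varpi$, and no new obstruction to highest-$\ell$-weightness is introduced by the passage.
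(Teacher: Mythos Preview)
Your proposal is correct and follows essentially the same route as the paper: the paper packages precisely this contrapositive argument as \Cref{l:vertexremoval} (and its immediate consequence \Cref{l:vertexremoval'}), using the monovalence of $v$ to see that $\bs\omega_v$ is extremal and that $L_q(\bs\omega_v)\otimes L_q(\bs\varpi)$ is simple, then applying \Cref{cyc} (via \Cref{l:hlwquot}) and \Cref{c:vnvstar} to obtain the simplicity of $L_q(\bs\omega_v\bs\omega)\otimes L_q(\bs\varpi)$. Two minor remarks: your detour through \eqref{incredsets} to check dissociateness of $\bs\omega_v$ from $\bs\varpi$ is unnecessary, since $\bs\omega_v$ is already one of the $q$-factors of $\bs\pi$ and the others partition between $\bs\omega$ and $\bs\varpi$; and \Cref{p:killhlw} is not actually needed here---\Cref{l:hlwquot} alone (which rests on \Cref{cyc}) suffices for both orderings once you use that $\bs\omega_v$ is a source or sink.
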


The ``in particular'' part of the proposition follows from its first statement since $\partial G\ne \emptyset$ if $G$ is a tree. The difficulty in \Cref{p:buildupprimes} arises when  $\partial G= \emptyset$. \Cref{p:buildupprimesmono} will be proved as an application of \Cref{l:vertexremoval} below which explores the role of sinks and sources.  Together with general combinatorial properties of trees  (\Cref{l:elemtree}(c)).  \Cref{p:buildupprimesmono} implies:

\begin{cor}\label{c:subpt=>p}
	If $G$ is a prime tree, every of its proper connected subgraphs are  prime.\hfill\qed
\end{cor}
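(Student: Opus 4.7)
The plan is a straightforward induction on $n := \#\mathcal V_G - \#\mathcal V_H$, bootstrapped from \Cref{p:buildupprimesmono} via the combinatorial fact \Cref{l:elemtree}(c). The base case $n = 0$ is vacuous (or trivial, as then $H = G$, which is prime by hypothesis), and the case $n = 1$ is exactly \Cref{p:buildupprimesmono}, since then $H = G \setminus \{v\}$ for some $v \in \partial G$.

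For the inductive step, let $H$ be a proper connected subgraph with $n \geq 2$. By \Cref{l:elemtree}(c), there exist vertices $v_1,\dots,v_n \in \mathcal V_G$ such that, setting $G_j = G \setminus \{v_i : i < j\}$, we have $v_j \in \partial G_j$ for all $1 \le j \le n$ and $H = G \setminus \{v_1,\dots,v_n\}$. In particular, $G_2 = G \setminus \{v_1\}$ with $v_1 \in \partial G$. By \Cref{p:buildupprimesmono}, $G_2$ is prime, and by \Cref{l:elemtree}(b), $G_2$ is again a tree. Moreover, $H$ is a proper connected subgraph of $G_2$ with $\#\mathcal V_{G_2} - \#\mathcal V_H = n-1$, so the inductive hypothesis applied to the prime tree $G_2$ yields that $H$ is prime.

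Essentially no obstacle is expected: the argument is purely a peeling-off of boundary vertices, with \Cref{p:buildupprimesmono} doing all the representation-theoretic work at each step, and \Cref{l:elemtree}(b)--(c) guaranteeing that the tree structure and the supply of boundary vertices are preserved throughout the process. The one subtle point worth noting is that we really do need $H$ to be connected to invoke \Cref{l:elemtree}(c); without connectedness one could not arrange to remove only boundary vertices at each stage, and the induction would break down. This same limitation is what motivates the counterexample (referenced in the introduction) showing that the ``connected subgraph'' hypothesis cannot be dropped once $G$ is allowed to contain nonoriented cycles.
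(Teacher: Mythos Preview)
Your proof is correct and matches the paper's intended argument exactly: the corollary is stated with an immediate \qedsymbol, having just been announced as a consequence of \Cref{p:buildupprimesmono} together with \Cref{l:elemtree}(c), and your induction on $\#\mathcal V_G - \#\mathcal V_H$ is precisely the way to unpack that. One minor remark: your closing comment about the counterexample slightly misreads the paper---the counterexample in \cite{mosi} shows that \Cref{c:subpt=>p} itself can fail when $G$ is not a tree (i.e., a connected subgraph of a prime non-tree need not be prime), rather than addressing what happens when the connectedness hypothesis on $H$ is dropped.
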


Trees are the simplest kind of directed graphs and, among them, totally ordered lines are the simplest. We have:

\begin{thm}\label{gencp}
	If $G$ is a totally ordered line, then $G$ is prime.\endd
\end{thm}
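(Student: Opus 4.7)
I would proceed by induction on $N=\#\mathcal V$, with the base cases $N=1$ (a single KR module, prime by \Cref{c:p2qf}) and $N=2$ (also by \Cref{c:p2qf}) being immediate. Throughout, write the monotonic line as $v_1\to v_2\to\cdots\to v_N$ and, by \Cref{c:qftp}, it suffices to show that for every nontrivial dissociate decomposition $\bs\pi=\bs\omega\bs\varpi$, the tensor product $L_q(\bs\omega)\otimes L_q(\bs\varpi)$ is reducible. Assume for contradiction such a decomposition exists with $L_q(\bs\omega)\otimes L_q(\bs\varpi)$ simple; this yields a nontrivial partition $\mathcal V=\mathcal V'\sqcup\mathcal V''$ such that, by \Cref{tpqfg}, $G_{\mathcal V'}=G(\bs\omega)$ and $G_{\mathcal V''}=G(\bs\varpi)$.

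\textbf{Reduction to adjacent prime sub-lines.} Since $G$ is totally ordered, $\mathcal V'$ and $\mathcal V''$ decompose uniquely as unions of maximal monotonic intervals $\mathcal V' = I_1\sqcup\cdots\sqcup I_s$ and $\mathcal V'' = J_1\sqcup\cdots\sqcup J_t$, alternating along the chain $v_1\prec\cdots\prec v_N$. Each $G_{I_p}$ and $G_{J_q}$ is a totally ordered line with strictly fewer vertices, hence prime by the induction hypothesis. Each $G_{I_p}$ is a connected component of $G_{\mathcal V'}$, so \Cref{p:primeconect} gives $L_q(\bs\omega)\cong\bigotimes_p L_q(\bs\pi_{I_p})$ and similarly for $\bs\varpi$. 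Combining with \Cref{irred}, the assumed simplicity of $L_q(\bs\omega)\otimes L_q(\bs\varpi)$ forces $L_q(\bs\pi_{I_p})\otimes L_q(\bs\pi_{J_q})$ to be simple for every pair $(p,q)$. Choose indices $p,q$ with $I_p$ and $J_q$ adjacent along the chain; then the unique arrow $v\to v'$ of $G$ joining the last vertex of one to the first of the other (which exists because $G$ is a line and the partition alternates) crosses the cut.

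\textbf{Obtaining the contradiction.} It remains to rule out simplicity of $L_q(\bs\pi_{I_p})\otimes L_q(\bs\pi_{J_q})$ whenever two prime totally ordered lines are joined by a single crossing arrow $v\to v'$. Up to swapping roles assume $v\in I_p$ and $v'\in J_q$, so that $v$ is the maximum of $I_p$ and $v'$ is the minimum of $J_q$ (in the induced orders). I would then apply the highest-$\ell$-weight criteria of \Cref{s:hlwc} --- specifically the ``killing'' criteria (\Cref{l:hlwquot}, \Cref{p:killhlw}, \Cref{p:killdualhlw}) --- to the opposite ordering $L_q(\bs\pi_{J_q})\otimes L_q(\bs\pi_{I_p})$. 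The basic two-vertex building block $L_q(\bs\omega_{v'})\otimes L_q(\bs\omega_v)$ is reducible but \emph{not} highest-$\ell$-weight by \eqref{defredset}--\eqref{e:krhwtp}, since $v\to v'$ (not the reverse) is the arrow in $G$. The killing criteria are designed precisely to propagate such a failure from one pair of vertices to the full tensor product by successively eliminating the other vertices of $I_p$ and $J_q$ from consideration; the monotonicity of $I_p$ and $J_q$ (together with $v$ being the sink of $I_p$ and $v'$ the source of $J_q$ in the induced order) is exactly the input these criteria need. Hence $L_q(\bs\pi_{J_q})\otimes L_q(\bs\pi_{I_p})$ is not highest-$\ell$-weight and, via \Cref{sinter}, $L_q(\bs\pi_{I_p})\otimes L_q(\bs\pi_{J_q})$ is not simple, contradicting the conclusion of the previous paragraph and completing the induction.

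\textbf{Main obstacle.} The only nontrivial step is the last one: lifting the failure of the highest-$\ell$-weight property from the two KR factors associated to the crossing arrow to the full tensor product of the two prime lines. Everything else is combinatorial bookkeeping in the totally ordered graph together with the inductive decomposition via \Cref{p:primeconect} and \Cref{irred}. This is where the criteria collected in \Cref{s:hlwc} do the real work, and the reason the proof works for all $\lie g$ (not only type $A$) is that one only invokes the \emph{existence} of the arrow $v\to v'$ in $G$, never the precise description of $\mathscr R_{i,j}^{r,s}$.
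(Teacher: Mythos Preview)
Your proposal is correct and follows essentially the same route as the paper: reduce to a cut by two adjacent connected intervals of the line, then use that the crossing arrow $(v,v')$ has $v$ equal to the sink of one interval and $v'$ equal to the source of the other to show the corresponding tensor product is not highest-$\ell$-weight. Two remarks: your induction hypothesis (primality of the sub-intervals) is never actually used and can be dropped; and the ``propagation'' step you leave informal is precisely \Cref{c:source<-sink}, the corollary of \Cref{p:killhlw} the paper isolates for this purpose, so neither \Cref{p:killdualhlw} nor any inductive input is needed there.
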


For $\lie g$ of type $A_2$, this was the main result of \cite{cp:fact}. It will be proved here for general $\lie g$, by a  different argument, as a Corollary of \Cref{p:killhlw}. In particular, differently than the proof in \cite{cp:fact}, our proof does not use any information about the elements belonging to the sets $\mathscr R_{i,j}^{r,s}$. For $\lie g$ of type $A$, we also prove the following generalization, which is the main result of the present paper.
\begin{thm}\label{t:toto}
	If $\lie g$ is of type $A$, every totally ordered $q$-factorization graph is prime. \endd
\end{thm}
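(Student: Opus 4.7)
The plan is to proceed by induction on $n = \#\mathcal V$. The base case $n = 1$ is immediate, since a one-vertex $q$-factorization graph corresponds to a KR module, which is prime by \Cref{c:p2qf}. For the inductive step, by \Cref{c:qftp} it suffices to show that for every cut $(G', G'')$ of $G$ with both parts nonempty, the tensor product $L_q(\bs\pi_{G'})\otimes L_q(\bs\pi_{G''})$ is reducible; equivalently, in view of \Cref{sinter}, it is enough to exhibit, for each such cut, one ordering of this tensor product which fails to be highest-$\ell$-weight. The main tools will be the criteria \Cref{l:hlwquot}, \Cref{p:killhlw}, and \Cref{p:killdualhlw}, together with the explicit description of $\mathscr R_{i,j}^{r,s}$ provided by \Cref{t:krredsets}.

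By \Cref{l:totop}, a totally ordered graph has a unique source $v_+$ and a unique sink $v_-$, and removing any extremal vertex yields a smaller totally ordered graph to which the induction hypothesis applies. Fix a cut $(G',G'')$. After swapping the two sides (if needed) via the arrow-dual $G^-$, which preserves totality of order, one may assume $v_+ \in G'$. The first class of cuts I would handle is the one in which all crossing arrows respect the total order in the same direction: here a truncation argument using the vertex-removal criteria from \Cref{ss:removeA} to "absorb" a well-chosen extremal vertex on one side reduces the problem to a tensor product of strictly smaller $q$-factorization graphs, at least one of which is totally ordered (and hence prime by induction). The reducibility then propagates through \Cref{p:killhlw} or \Cref{p:killdualhlw}, which translate the primality of the smaller pieces into the failure of being highest-$\ell$-weight for the full tensor product in one of the two orders.

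The main obstacle is the case in which the cut-set contains pairs of crossing arrows whose orientations are opposite relative to the total order on $\mathcal V$. The key technical input in this case is \Cref{l:totordopparrow}, which analyses precisely such an opposing-pair configuration: the exponent compatibility \eqref{e:expcomp}, combined with the arithmetic relations among the elements of $\mathscr R_{i,j}^{r,s}$ collected in \Cref{ss:arith}, forces rigid relations between the exponents of the opposing crossing arrows and of neighboring arrows inside $G'$ and $G''$. These relations produce either an explicit singular vector witnessing reducibility or an obstruction to the highest-$\ell$-weight property detected by \Cref{p:killhlw}/\Cref{p:killdualhlw}. The restriction to type $A$ enters essentially at this stage, through the explicit formula in \Cref{t:krredsets}; for other types, the arithmetic of \Cref{ss:arith} would have to be reworked case-by-case, which is why those types are deferred to future work.
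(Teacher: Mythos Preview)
Your outline names the right ingredients---induction on $\#\mathcal V$, \Cref{c:qftp}/\Cref{c:vnvstar}, the criteria \Cref{p:killhlw} and \Cref{p:killdualhlw}, and the key \Cref{l:totordopparrow}---but the way you assemble them does not match the paper, and as written it has a genuine gap.

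First, your two-class case split (``all crossing arrows in the same direction'' versus ``opposing pairs'') is not how the argument in \Cref{ss:ptoto} is organised, and your treatment of the first class is not an argument: reducing to a smaller tensor product ``at least one of whose factors is totally ordered and hence prime'' does not by itself yield reducibility of the tensor product. What the paper actually does is uniform. Assuming by contradiction that both $U=L_q(\bs\pi')\otimes L_q(\bs\pi'')$ and $V=L_q(\bs\pi'')\otimes L_q(\bs\pi')$ are highest-$\ell$-weight, one removes the source $\bs\omega=\bs\omega^{(N)}$ (taken in $G''$). \Cref{c:killsinksource} forces $\tilde U=L_q(\bs\pi')\otimes L_q(\bs\pi''\bs\omega^{-1})$ to remain highest-$\ell$-weight, and the inductive primality of $G(\bs\pi\bs\omega^{-1})$ together with \Cref{c:vnvstar} then forces $\tilde V=L_q(\bs\pi''\bs\omega^{-1})\otimes L_q(\bs\pi')$ \emph{not} to be highest-$\ell$-weight. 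This non-highest-$\ell$-weight statement, combined with \Cref{p:killfactormid}, is what produces the seed pair $(j',j'')$ with $j'<j''$, $\bs\omega^{(j')}\in\mathcal V_{G'}$, $\bs\omega^{(j'')}\in\mathcal V_{G''}$, and $m_{j''}-m_{j'}\in\mathscr R_{i_{j'},i_{j''},J}^{r_{j'},r_{j''}}$ for $J=[i_{j'},i_{j''}]$. Nothing like a case analysis on orientation of crossing arrows occurs.

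Second, and more importantly, you mis-state the content of \Cref{l:totordopparrow} and omit the mechanism that actually yields the contradiction. The lemma does not ``produce an explicit singular vector'' or directly obstruct the highest-$\ell$-weight property. Its conclusion is a \emph{descent step}: given that $U$ is highest-$\ell$-weight and a pair $(j',j'')$ as above with interval $J$, it manufactures a new pair $(k',k'')$ with the roles of $G'$ and $G''$ exchanged and with strictly smaller interval $K=[i_{k'},i_{k''}]\subsetneqq J$. The proof then feeds this new pair back into \Cref{l:totordopparrow}, now using that $V$ is highest-$\ell$-weight, to get $J_2\subsetneqq K$, and so on. The contradiction is the resulting infinite strictly decreasing chain $J\supsetneqq J_1\supsetneqq J_2\supsetneqq\cdots$ of connected subdiagrams of the finite set $I$. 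This alternating infinite-descent on subintervals of $I$ is the heart of the proof, and it is absent from your sketch.
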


In particular, a $q$-factorization graph afforded by a tournament is prime.
After \Cref{t:toto}, it becomes natural the purely combinatorial problem of classifying all $q$-factorization graphs which are totally ordered  since this leads to the explicit construction of a family of Drinfeld polynomials whose corresponding simple modules are prime.  
We shall not pursue a general answer for this combinatorial problem here. However, for illustrative purposes, we do present two results in this direction.  One is \Cref{ex:tour} below, where we describe an infinite family of examples of $q$-factorization graphs afforded by tournaments. The other is the following proposition whose proof, given in \Cref{ss:totol}, is essentially a byproduct of some technical lemmas extracted from the proof of \Cref{t:toto} in \Cref{ss:arith}. In particular, it solves this combinatorial problem for type $A_2$ since, in that case, $I=\partial I$.

\begin{prop}\label{p:totopartial}
	Suppose $\lie g$ is of type $A$ and $\bs\pi\in\mathcal P^+$ is such that $G=G(\bs\pi)$ is totally ordered. If $c(\mathcal V_G)\subseteq \partial I$, then $G$ is a line whose vertices are alternately colored.  
\end{prop}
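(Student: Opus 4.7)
The plan is to use the specific structure of $\mathscr R^{r,s}_{i,j}$ in type $A$, given by \Cref{t:krredsets}, to incrementally constrain $G$. The crucial first observation is: for any $i\in\partial I$, the interval $[i,i]=\{i\}$ already lies in $\partial I$, so $d([i,i],\partial I)=0$; hence \Cref{t:krredsets} combined with \Cref{c:krtpsl2} gives $\mathscr R^{r,s}_{i,i}=\{r+s-2p:0\le p<\min\{r,s\}\}=\mathscr R^{r,s}_i$. Condition \eqref{e:areqfact} therefore forbids any arrow between two vertices of the same color in $\partial I$: the exponent of such an arrow would have to lie in $\mathscr R^{r,s}_{i,i}=\mathscr R^{r,s}_i$, contradicting the $q$-factorization requirement.

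Next I would observe that consecutive vertices in the total order must be arrow-adjacent. Indeed, if $v\succ v'$ with no vertex strictly between them, then, since $\preceq$ is the transitive extension of the relation $h_a\prec t_a$, the strict inequality $v\succ v'$ must be realized by a single arrow $v\to v'$---otherwise an intermediate vertex would be forced. Writing $v_1\succ v_2\succ\cdots\succ v_N$ for the vertices in the total order, the spine $v_1\to v_2\to\cdots\to v_N$ is therefore a subgraph of $G$, and by the first paragraph two consecutive vertices cannot share a color in $\partial I=\{1,n\}$, so colors strictly alternate between $1$ and $n$ along the spine. (In the degenerate case $n=1$, there is only one color, which forces $N=1$ and the proposition is trivial.)

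The remaining task is to rule out every ``shortcut'' arrow $v_k\to v_l$ with $l-k\ge 2$. If $l-k$ is even, then $c(v_k)=c(v_l)\in\partial I$ by alternation, so the first paragraph forbids the arrow. If $l-k$ is odd, consider the same-colored pair $(v_{k+1},v_l)$, which sits at even spine-distance $l-k-1\ge 2$. By \Cref{t:krredsets}, each spine exponent satisfies $m_{p,p+1}\ge|\lambda(v_p)-\lambda(v_{p+1})|+n+1$, so the spine path from $v_{k+1}$ to $v_l$ has $|\epsilon_\rho|=\sum_{p=k+1}^{l-1}m_{p,p+1}\ge|\lambda(v_{k+1})-\lambda(v_l)|+(l-k-1)(n+1)>|\lambda(v_{k+1})-\lambda(v_l)|+2$; condition \eqref{e:areqfact} therefore forces $\sum_{p=k+1}^{l-1}m_{p,p+1}>\lambda(v_{k+1})+\lambda(v_l)$, as this is the only way to escape the forbidden set $\mathscr R^{\lambda(v_{k+1}),\lambda(v_l)}_{c(v_{k+1})}$.

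Combining this inequality with the compatibility equality $m_{k,l}=m_{k,k+1}+\sum_{p=k+1}^{l-1}m_{p,p+1}$, the lower bound $m_{k,k+1}\ge|\lambda(v_k)-\lambda(v_{k+1})|+n+1$, and the upper bound $m_{k,l}\le\lambda(v_k)+\lambda(v_l)+n-1$ (the maximum of $\mathscr R^{\lambda(v_k),\lambda(v_l)}_{1,n}$), a short case analysis on the sign of $\lambda(v_k)-\lambda(v_{k+1})$ closes the argument: if $\lambda(v_k)\ge\lambda(v_{k+1})$, the inequalities combine to $m_{k,l}>\lambda(v_k)+\lambda(v_l)+n+1$, exceeding the allowed maximum; if $\lambda(v_k)<\lambda(v_{k+1})$, one instead derives $\lambda(v_{k+1})<\lambda(v_k)-1$, again contradicting the case assumption. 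The main technical hurdle is isolating precisely this arithmetic inequality and showing it applies uniformly for every odd $l-k\ge 3$; this is the type of bookkeeping abstracted by the lemmas of \Cref{ss:arith}. Once all shortcut arrows are excluded, $G$ coincides with its alternately-colored spine, establishing the claim.
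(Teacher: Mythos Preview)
Your argument is essentially correct and takes a genuinely different route from the paper's.  Both proofs begin with the same observation---that no arrow can join two vertices of the same colour when that colour lies in $\partial I$---but then diverge.  The paper argues by contradiction: if $G$ were not a line it would contain a cycle, and by choosing the $\prec$-maximal vertex $v$ lying on a cycle one obtains a directed cycle to which \Cref{l:positiveppath}(b) applies.  That lemma forces the subgraph on those vertices to be a tournament with pairwise distinct colours, which is impossible since $\#\partial I=2$ while a cycle has at least three vertices.  So the paper's proof is short and structural, leaning on the machinery of \Cref{ss:arith} rather than on explicit exponent bounds.  Your proof, by contrast, is entirely hands-on: you exhibit the monotonic spine, read off the alternating colours, and then exclude every possible shortcut arrow by direct inequalities.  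This buys you a self-contained argument that does not invoke \Cref{l:lineps} or \Cref{l:positiveppath}, at the cost of some bookkeeping.

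There is, however, one genuine gap in your penultimate paragraph.  You assert that the inequality $\sum_{p=k+1}^{l-1}m_{p,p+1}>\lambda(v_{k+1})+\lambda(v_l)$ is ``the only way to escape'' the forbidden set $\mathscr R^{\lambda(v_{k+1}),\lambda(v_l)}_{c(v_{k+1})}$, given your lower bound.  That is not literally true: one could also escape by having the wrong parity.  You need the extra one-line observation that each spine exponent $m_{p,p+1}$ has the parity of $r_p+r_{p+1}+(n-1)$, so that summing over the even number $l-k-1$ of steps yields an integer of the same parity as $r_{k+1}+r_l$; hence parity is \emph{not} an escape, and your conclusion $\sum>r_{k+1}+r_l$ is forced.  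With this addition the case analysis in your final paragraph goes through exactly as you sketched.
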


We have the following rephrasing of \Cref{c:qftp} in the language  of cuts: $G$ is prime if and only if every nontrivial cut of $G$ is reducible. 
Several partial results in the direction of proving our main results provide criteria for checking the reducibility of certain special  cuts, which we deem to be interesting results in their own right. One of these, which is an immediate consequence of \Cref{c:source<-sink}, is stated here in the language of cuts:

\begin{thm}\label{critredcutext}
	Let $(G',G'')$ be a cut of $G$, and suppose there exist vertices $v'$ of $G'$ and $v''$ of $G''$ satisfying the following conditions:
	\begin{enumerate}[(i)]
		\item $v'$ and $v''$ are adjacent in $G$;
		\item $v'$ and $v''$ are extremal in $G'$ and $G''$, respectively;
		\item $v'$ is extremal in $G$ only if $v'$ is an isolated vertex of $G'$ and similarly for $v''$.
	\end{enumerate}
		Then, $(G',G'')$ is reducible. \endd  
\end{thm}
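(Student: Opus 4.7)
The plan is to deduce the statement directly from \Cref{c:source<-sink}. In the language of $q$-factorization graphs, the cut $(G',G'')$ corresponds by \Cref{tpqfg} to a factorization $\bs\pi=\bs\pi'\bs\pi''$ with dissociate $q$-factorizations, and reducibility of the cut means reducibility of $L_q(\bs\pi')\otimes L_q(\bs\pi'')$. Since the hypotheses (i)--(iii) are symmetric under the swap $(G',v')\leftrightarrow(G'',v'')$, I would first orient the crossing arrow $a$ connecting $v'$ and $v''$ as $v'\to v''$. This is the orientation to which \Cref{c:source<-sink} is meant to apply, its name strongly suggesting a ``source of $G'$ feeding a sink of $G''$'' configuration.

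After this normalisation, the key step is to check that hypothesis (iii) forces $v'$ to be a source of $G'$ and $v''$ to be a sink of $G''$. For $v'$: by (ii) it is either a source or a sink of $G'$. Suppose, for contradiction, it were a sink of $G'$; then every arrow of $G'$ incident with $v'$ enters $v'$, so $v'\to v''$ is the only outgoing arrow of $v'$ in $G$ unless some cut-arrow points out of $v'$ to a vertex of $\mathcal V''\setminus\{v''\}$. In the first subcase, if moreover no cut-arrow points into $v'$, then $v'$ is a source of $G$, hence extremal in $G$; by (iii), $v'$ is isolated in $G'$ and is then trivially a source of $G'$. In the remaining subcases some further cut-arrow is incident to $v'$, and I would transfer $v'$ across the cut, forming $(G'\setminus\{v'\},G''\cup\{v'\})$, and iterate the analysis; the vertex-removal criteria \Cref{p:killhlw} and \Cref{p:killdualhlw} guarantee that reducibility of the new cut forces reducibility of the original. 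A mirror argument, relying on the dual criterion, pins down $v''$ as a sink of $G''$. Once we are in the source-to-sink configuration on the arrow $a$, \Cref{c:source<-sink} applies directly and yields the result.

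The main obstacle is to carry out the vertex-transfer step cleanly: moving $v'$ (or $v''$) across the cut alters the extremality pattern and may create new crossing arrows that did not exist before, so the verification of hypotheses (ii) and (iii) must be redone at each stage. Here condition (iii) is essential, as it encodes precisely that ``internal'' extremality in $G'$ has no external counterpart in $G$ unless the vertex is already trivially extremal in $G'$; this is what allows the transfer procedure to terminate in finitely many steps without reintroducing the configurations that the hypothesis is designed to exclude. I expect that the bookkeeping required to see this termination, together with the compatibility between the reducibility criteria \Cref{p:killhlw}, \Cref{p:killdualhlw} and \Cref{c:source<-sink}, will be the most delicate part of the write-up.
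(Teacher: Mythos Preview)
Your proposal has a concrete gap at the very first step: you have misread the configuration required by \Cref{c:source<-sink}. That corollary needs the arrow to point \emph{from} the sink of $G''$ \emph{to} the source of $G'$; in symbols, $(\bs\omega'',\bs\omega')\in\mathcal A_G$ with $\bs\omega'$ a source of $G'$ and $\bs\omega''$ a sink of $G''$. Under your chosen normalisation $v'\to v''$ together with your target ``$v'$ a source of $G'$ and $v''$ a sink of $G''$'', the arrow runs from the source to the sink, which is precisely the direction that does \emph{not} obstruct highest-$\ell$-weight (compare \eqref{e:krhwtp} and the proof of \Cref{c:source<-sink}). With the arrow oriented as $v'\to v''$, what one actually needs is $v'$ a \emph{sink} of $G'$ and $v''$ a \emph{source} of $G''$, so that \Cref{c:source<-sink} applies to $L_q(\bs\pi'')\otimes L_q(\bs\pi')$; equivalently, one should normalise to $(v'',v')\in\mathcal A_G$ and then aim for $v'$ a source and $v''$ a sink.

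Even after this correction, the ``vertex transfer'' part of your plan does not do what you claim. Moving $v'$ across the cut produces a \emph{different} factorisation $\bs\pi=\tilde{\bs\pi}'\tilde{\bs\pi}''$, and reducibility of $L_q(\tilde{\bs\pi}')\otimes L_q(\tilde{\bs\pi}'')$ says nothing about reducibility of the original $L_q(\bs\pi')\otimes L_q(\bs\pi'')$. The criteria you cite, \Cref{p:killhlw} and \Cref{p:killdualhlw}, go in the opposite direction: they give sufficient conditions for a tensor product to \emph{be} highest-$\ell$-weight, so their contrapositives cannot push reducibility back from the transferred cut to the original one. The paper's argument is much shorter: after swapping $(G',G'')$ if necessary so that $(v'',v')\in\mathcal A_G$, one reads off directly from (ii) and (iii) that $v'$ is a source of $G'$ and $v''$ a sink of $G''$, and then \Cref{c:source<-sink} applies in one stroke; no iterative transfer is involved.
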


The following corollary about triangles is immediate.

\begin{cor}\label{c:redtrcut}
	Suppose $G$ is a triangle and let $(G',G'')$ be a cut such that $G'$ is a singleton containing an extremal vertex of $G$. Then, $(G',G'')$ is reducible.\hfil\qed  
\end{cor}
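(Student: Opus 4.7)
The plan is to reduce \Cref{c:redtrcut} to a direct application of \Cref{critredcutext} by making a careful choice of the vertex $v''$ inside $G''$.

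First, I would observe that $G$ being a triangle has exactly three vertices, all pairwise adjacent, so $\#\mathcal{V}_G = 3$ and $\#\mathcal{A}_G = 3$. Since every $q$-factorization graph (and more generally every pre-factorization graph) has no oriented cycles by \Cref{l:nocycle}, the induced partial order $\preceq$ on $\mathcal{V}_G$ must actually be a total order (any two vertices are comparable via the direct arrow between them, and one of the six possible cyclic orientations is ruled out). Therefore $G$ fits the hypothesis of \Cref{l:totop}: it has a unique source $v_+$, a unique sink $v_-$, and a unique third vertex $v_0$ which, having both an incoming and an outgoing arrow, is neither a sink nor a source, and in particular is not extremal in $G$. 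The extremal vertices of $G$ are exactly $v_+$ and $v_-$.

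Next, since $G' = \{v\}$ with $v$ extremal in $G$, we have $v \in \{v_+, v_-\}$, and $G''$ is the subgraph on the remaining two vertices, one of which is $v_0$. I would set $v' = v$ and $v'' = v_0$ and verify the three hypotheses of \Cref{critredcutext}: (i) $v'$ and $v''$ are adjacent in $G$ because $G$ is a triangle; (ii) $v' = v$ is vacuously extremal in $G'$ (it is in fact isolated, the only vertex), while $v'' = v_0$ is extremal in $G''$ because $G''$ has only two vertices (every vertex of a two-vertex graph with a single arrow is either a source or a sink); (iii) $v'$ is extremal in $G$ by hypothesis but is isolated in $G'$, so the condition holds for $v'$, and $v''$ is not extremal in $G$, so the conditional holds vacuously for $v''$. \Cref{critredcutext} then gives that $(G', G'')$ is reducible.

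There is no real obstacle here: the whole content of the corollary is in the combinatorial observation that a triangle on three totally ordered vertices possesses a distinguished \emph{middle} vertex that is simultaneously adjacent to the extremal vertex $v$ (because $G$ is complete) and non-extremal in $G$ (so that the delicate third condition of \Cref{critredcutext} is automatic). The only subtlety to flag is that one must \emph{not} choose $v''$ to be the other extremal vertex of $G$, since that choice would violate hypothesis (iii) of \Cref{critredcutext}: that vertex is extremal in $G$ but not isolated in $G''$.
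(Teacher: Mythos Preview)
Your argument is correct and is exactly the intended one: the paper states that \Cref{c:redtrcut} is immediate from \Cref{critredcutext}, and your proof spells out precisely why, via the choice $v'' = v_0$ (the unique non-extremal vertex of the triangle). Your remark that choosing the other extremal vertex for $v''$ would fail condition (iii) is a nice clarification of the only point requiring care.
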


This corollary implies there is only one cut for a triangle which may be simple: the one whose singleton contains the vertex which is not extremal. \Cref{t:toto} implies this is not so for type $A$. However, the present proof  utilizes the precise description of the sets $\mathscr R_{i,j}^{r,s}$ and, hence, in order to extend it to other types, it requires a case by case analysis, which will appear elsewhere.

We also have the following general criterion for primality.

\begin{prop}\label{p:critprimedual}
	$G$ is prime if, for any cut $(G',G'')$ of $G$, there exist $\bs{\varpi}'\in\mathcal{V}_{G'},\;\bs{\varpi}''\in\mathcal{V}_{G''}$ such that one of the following two conditions holds:
	\begin{enumerate}[(i)]
		\item $(\bs{\varpi}'',\bs{\varpi}')\in\mathcal{A}_G$  and $L_q(\bs{\omega}')\otimes L_q(\bs{\omega}'')^*$ is simple for all $(\bs{\omega}',\bs{\omega}'')\in\mathcal{N}^+_{G'}(\bs{\varpi}')\times\mathcal{N}^-_{G''}(\bs{\varpi}'')\setminus\{(\bs{\varpi}',\bs{\varpi}'')\}$.
		\item  $(\bs{\varpi}',\bs{\varpi}'')\in\mathcal{A}_G$ and $L_q(\bs{\omega}')^*\otimes L_q(\bs{\omega}'')$ is simple for all $(\bs{\omega}',\bs{\omega}'')\in\mathcal{N}^-_{G'}(\bs{\varpi}')\times\mathcal{N}^+_{G''}(\bs{\varpi}'')\setminus\{(\bs{\varpi}',\bs{\varpi}'')\}$. \endd
	\end{enumerate}
\end{prop}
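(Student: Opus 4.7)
The plan is to prove $G$ is prime by invoking \Cref{c:qftp}, which reduces the task to showing that every nontrivial cut $(G', G'')$ of $G$ is reducible, i.e., $L_q(\bs{\pi}_{G'})\otimes L_q(\bs{\pi}_{G''})$ is reducible. Fix such a cut and let $\bs{\varpi}',\bs{\varpi}''$ be vertices provided by the hypothesis. Condition (ii) is obtained from condition (i) by passing to the arrow-dual graph $G^-$, which is itself a $q$-factorization graph by \eqref{e:redsim}, and satisfies the duality \eqref{e:dualgraphs}; since primality is preserved under this duality (via the Hopf-algebra automorphism $\kappa$ of \Cref{p:Cartaninv}(c) together with the relation $V^\kappa\otimes W^\kappa\cong (W\otimes V)^\kappa$), it is enough to treat case (i).

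Proceed by contradiction: assume $L_q(\bs{\pi}_{G'})\otimes L_q(\bs{\pi}_{G''})$ is simple. Then it is isomorphic to $L_q(\bs{\pi})$ and, by Frobenius reciprocity \eqref{e:frobrec}, the canonical surjection gives a nonzero morphism
\[
\varphi\colon L_q(\bs{\pi}_{G'})\longrightarrow L_q(\bs{\pi})\otimes L_q(\bs{\pi}_{G''})^*.
\]
The next step is to realize both $L_q(\bs{\pi}_{G'})$ and $L_q(\bs{\pi}_{G''})^*$ as highest-$\ell$-weight quotients of ordered tensor products of the KR factors $L_q(\bs{\omega}')$, $\bs{\omega}'\in\mathcal{V}_{G'}$, and $L_q(\bs{\omega}'')^*$, $\bs{\omega}''\in\mathcal{V}_{G''}$, choosing the orderings so that vertices belonging to the neighborhoods $\mathcal{N}^+_{G'}(\bs{\varpi}')$ and $\mathcal{N}^-_{G''}(\bs{\varpi}'')$ sit on the side adjacent to the prescribed pair; such orderings exist because \Cref{l:nocycle} rules out oriented cycles. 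The central combinatorial input is then \Cref{c:nonzeromorph}, applied iteratively: the assumed simplicity of $L_q(\bs{\omega}')\otimes L_q(\bs{\omega}'')^*$ for every pair $(\bs{\omega}',\bs{\omega}'')\in\mathcal{N}^+_{G'}(\bs{\varpi}')\times\mathcal{N}^-_{G''}(\bs{\varpi}'')$ other than $(\bs{\varpi}',\bs{\varpi}'')$ lets us slide each such dual factor past the matching $L_q(\bs{\omega}')$ while preserving nonvanishing at every step; pairs involving vertices outside these neighborhoods do not interact across the distinguished position because of the partial-order structure.

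Carrying out all allowed slides, the resulting composition collapses to a nonzero morphism forcing $L_q(\bs{\varpi}')\otimes L_q(\bs{\varpi}'')$ to be simple. This contradicts the arrow $(\bs{\varpi}'',\bs{\varpi}')\in\mathcal{A}_G$, which by the construction in \Cref{ss:fact} and by \eqref{defredset}--\eqref{e:krhwtp} encodes precisely the reducibility of $L_q(\bs{\varpi}')\otimes L_q(\bs{\varpi}'')$. The main obstacle will be the combinatorial and homological bookkeeping of the sliding process: one must verify that in the chosen orderings the pair $(\bs{\varpi}',\bs{\varpi}'')$ is indeed the unique obstruction that cannot be dissolved and that all other pieces collapse to their expected simple subquotients, which is where the role of $\mathcal{N}^{\pm}$ (as opposed to the full $\mathcal{V}_{G'}$ and $\mathcal{V}_{G''}$) becomes decisive, together with an auxiliary application of \Cref{l:subtens} to control the submodules intervening in the intermediate morphisms.
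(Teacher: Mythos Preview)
What you have written is a plan, not a proof, and you say so yourself in the last paragraph (``The main obstacle will be the combinatorial and homological bookkeeping of the sliding process: one must verify that\ldots''). That sentence is precisely the content of the proposition: the whole point is to show that the hypothesis on the pairs in $\mathcal N^+_{G'}(\bs\varpi')\times\mathcal N^-_{G''}(\bs\varpi'')$ suffices to reduce everything to the single pair $(\bs\varpi',\bs\varpi'')$. Your sketch never carries this out. In particular, after introducing the map $\varphi\colon L_q(\bs\pi_{G'})\to L_q(\bs\pi)\otimes L_q(\bs\pi_{G''})^*$, you propose to ``slide'' dual KR factors through, but the middle factor $L_q(\bs\pi)$ is the full simple module, not a tensor product of KR modules, so \Cref{c:nonzeromorph} does not apply to it in the way you describe. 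It is also unclear how the vertices \emph{outside} $\mathcal N^+_{G'}(\bs\varpi')$ and $\mathcal N^-_{G''}(\bs\varpi'')$ are handled; ``the partial-order structure'' is not an argument.

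The paper's route is more direct and avoids the map $\varphi$ entirely. It first isolates the relevant neighborhoods: if $L_q(\bs\pi')\otimes L_q(\bs\pi'')$ were highest-$\ell$-weight, then \Cref{c:killpi-pi'+} (which follows from \Cref{l:hlwlpm} and \Cref{p:killhlw}) shows that $L_q(\bs\pi_+)\otimes L_q(\bs\pi''_-)$ is also highest-$\ell$-weight, where $\bs\pi_+$ and $\bs\pi''_-$ are the products over $\mathcal N^+_{G'}(\bs\varpi')$ and $\mathcal N^-_{G''}(\bs\varpi'')$, respectively. This is the step that disposes of all vertices outside the neighborhoods in one stroke. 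From there, two applications of \Cref{p:killdualhlw} (once with $i=2$ to strip $\bs\pi''_-$ down to $\bs\varpi''$, once with $i=1$ to strip $\bs\pi_+$ down to $\bs\varpi'$) use the simplicity hypothesis on the remaining pairs to conclude that $L_q(\bs\varpi')\otimes L_q(\bs\varpi'')$ is highest-$\ell$-weight, contradicting $(\bs\varpi'',\bs\varpi')\in\mathcal A_G$. This is packaged as \Cref{p:neighdualcrit}. Finally, your reduction of (ii) to (i) via arrow-duality and $\kappa$ is correct in spirit but needlessly heavy: condition (ii) for the cut $(G',G'')$ is literally condition (i) for the cut $(G'',G')$, using only that simplicity of a tensor product is symmetric (\Cref{sinter}).
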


\Cref{p:critprimedual} will be proved as a corollary to \Cref{p:neighdualcrit}.

\subsection{Examples}\label{ss:ex}
The first example provides a family of tournaments for type $A$ and, hence, a family of simple prime modules.

\begin{ex}\label{ex:tour}
	Given $N>1$, let $\lie g$ be of type $A_n, n\ge {3N-4}$, identify $I$ with the integer interval $[1,n]$ as usual, and consider
	\begin{equation*}
		\bs\pi = \prod\limits_{i=1}^{N}\bs{\omega}_{i+N-2, q^{3(i-1)}}.
	\end{equation*} 
	Checking that $G(\bs\pi)$ is a tournament with $N$ vertices amounts to showing that
	\begin{equation*}
		3(j-i)\in\mathscr R_{i+N-2, j+N-2}^{1,1} \quad\text{for all}\quad 1\le i<j\le N,
	\end{equation*}
	which, by \Cref{t:krredsets}, is equivalent to
	\begin{equation*}
		3(j-i) = 2 + d(i+N-2, j+N-2) -2p \quad\text{with}\quad  -d([i+N-2,j+N-2], \partial I)\le p\le 0.
	\end{equation*}
	Since $d(i+N-2, j+N-2)=j-i$ and $3(j-i)=2 + (j-i) -2(1-(j-i))$, we need to check
	\begin{equation*}
		-d([i+N-2,j+N-2], \partial I)\le 1-(j-i)\le 0.
	\end{equation*}
	The second inequality is immediate from  $1\leq i<j\leq N$. On the other hand,
	\begin{equation*}
		d([i+N-2,j+N-2], \partial I) = \min\{ (i+N-2)-1, n-(j+N-2) \}\ge N-2,
	\end{equation*}
	from where the first inequality easily follows. 
	
	Although the above family affords triangles only for rank at least $5$, it is easy to build $q$-factorization graphs which are triangles for $n\ge 3$. For instance:	
	\begin{equation*}
		 \begin{tikzcd}[column sep=small,row sep=tiny]
			& \arrow[swap,dl,"6"]  \stackrel{3}{3} \arrow[dr,"3"]& \\  
			\stackrel{3}{1}  & & \arrow[swap,ll,"3"]\stackrel{3}{2} 
		\end{tikzcd} \qquad\text{and}\qquad 
	\begin{tikzcd}[column sep=small,row sep=tiny]
		& \arrow[swap,dl,"7"]  \stackrel{3}{2} \arrow[dr,"3"]& \\  
		\stackrel{3}{1}  & & \arrow[swap,ll,"4"]\stackrel{3}{3} 
	\end{tikzcd}
	\end{equation*}	
	\endd
\end{ex}

\begin{ex}\label{ex:snake}
	We now examine prime snake modules for type $A$ from the perspective of $q$-factorization graphs. The notion of snake and snake modules was introduced in \cite{muyou:path} while that of prime snakes was introduced in \cite{muyou:tsystem} and revised in \cite{dll:clusna}. We now rephrase these definitions in terms of the sets $\mathscr R_{i,j}^{r,s}$. A (type $A$) snake of length $k$ is a sequence $(i_j,m_j)\in\mathcal I\times \mathbb Z, 1\le j\le k$, such that
	\begin{equation*}
		m_{j+1}-m_j = 2+d(i_j,i_{j+1})-2p_j \quad\text{for some}\quad p_j\in\mathbb Z_{\le 0} \quad\text{and all}\quad 1\le j<k.
	\end{equation*}
	The snake is said to be prime if $-d([i_j,i_{j+1}],\partial I)\le p_j$ for all $1\le j<k$. 	In other words, the snake is prime if and only if
	\begin{equation*}
		m_{j+1}-m_j \in \mathscr R_{i_j,i_{j+1}}^{1,1} \quad\text{for all}\quad  1\le j<k.
	\end{equation*}
	Given a snake and $a\in\mathbb F^\times$, the associated snake module is $L_q(\bs\pi)$ with 
	\begin{equation}\label{e:snakepi}
		\bs\pi = \prod_{j=1}^k \bs\omega_{i_j,aq^{m_j}}.
	\end{equation}
	For a general snake, $G(\bs\pi)$ may be disconnected and, hence, not prime. However, if the snake is prime and we regard the definition of $\bs\pi$ as a pseudo $q$-factorization, the associated   pseudo $q$-factorization graph is totally ordered. It is then easy to see that $G(\bs\pi)$, the actual $q$-factorization graph, is also totally ordered and, hence, prime by \Cref{t:toto}. Thus, \Cref{t:toto}, together with \Cref{p:primeconect},  recovers \cite[Proposition 3.1]{muyou:tsystem}.
	
	The following is the $q$-factorization graph arising from the following prime snake for type $A_5$: $(4,-2),    (3,1), (2,4), (3,7)$.
	\begin{equation*}
		\xymatrix@R-1pc @C-0.5pc{
			&  \stackrel{1}{3} \ar[dl]_3\ar[dr]^7\\ \stackrel{1}{2} \ar[rr]^{3} & & \stackrel{1}{3} \ar[rr]^{3} & & \stackrel{1}{4}	}
	\end{equation*}
	\endd
\end{ex}

\begin{ex}\label{ex:skew}
	Snake modules also arise in the study of the so called skew representations associated to skew tableaux $\lambda\backslash\mu$ \cite[Section 4]{homo}, which we now review. Fix $m\in\mathbb Z_{\ge 0}$, as well as $\lambda = (\lambda_1,\lambda_2,\dots,\lambda_{m+n+1})\in\mathbb Z^{m+n+1}$ and $\mu = (\mu_1,\dots,\mu_m)\in\mathbb Z^m$ such that
	\begin{equation*}
		\lambda_i\ge \lambda_{i+1}, \quad \mu_l\ge \mu_{l+1}, \quad\text{and}\quad \lambda_k\ge \mu_k \ge\lambda_{k+n+1}
	\end{equation*} 
	for all $1\le i\le m+n, 1\le l<m, 1\le k\le  m$. Set $\mu_0 = +\infty, \mu_{m+1}= -\infty$, and, for $1\le i\le n+1$ and $1\le l\le m+1$, let $\nu_{i,l}$ be the middle value among $\mu_{l-1},\mu_l$, and $\lambda_{i+l-1}$. The skew module of $U_q(\tlie g)$ associated to the skew tableaux $\lambda\setminus\mu$ is the simple module $L_q(\bs\pi^{\lambda,\mu})$, where
	\begin{equation}\label{homopolc}
		\bs\pi^{\lambda,\mu}_i(u) = 
		\prod_{l=1}^{m+1} \bs\omega_{i,q^{\nu_{i,l}+\nu_{i+1,l}-2l+1-i},\nu_{i,l}-\nu_{i+1,l}}.
	\end{equation}  
	For $\bs\mu=\emptyset$, this is the Drinfeld polynomial of an evaluation module. With a little patience, one can check each factor of the above definition is a $q$-factor of $\bs\pi^{\lambda,\mu}$. Moreover, each connected component of  $G(\bs\pi^{\lambda,\mu})$ is totally ordered and, hence, corresponds to a prime simple module by \Cref{t:toto}.
	
	In order to explore specific examples, let us organize the table:
	\begin{table}[h!]
		\centering
		\begin{tabular}{ |c |c|c|c|c|  }
			\hline
			\multicolumn{5}{|c|}{Values of $\nu_{i,l}$} \\
			\hline
			$l\backslash i$ & 1 & 2 & $\cdots$ & $n$+1\\ 	\hline
			1   & $\nu_{1,1}$    & $\nu_{2,1}$ &  $\cdots$ & $\nu_{n+1,1}$\\ \hline
			$\vdots$ &     &    & & $\vdots$ \\ 		\hline
			$m+1$ & $\nu_{1,m+1}$ & $\cdots$ & $\cdots$ & $\nu_{n+1,m+1}$\\ \hline
		\end{tabular}
	\end{table}\\
	Plugging this information in \eqref{homopolc}, each row will produce at most one $q$-factor for each $i\in I$. Moreover the centers of the associated $q$-strings are of the form $q^k$ for some exponent $k\in\mathbb Z$. We can then form a table with the corresponding exponents and lengths:
	\begin{table}[h!]
		\centering
		\begin{tabular}{ |c |c|c|c|c|  }
			\hline
			\multicolumn{5}{|c|}{Exponents and Lengths} \\ 			\hline
			$l\backslash i$ & 1 & 2 & $\cdots$ & $n$\\ 	\hline
			1   & $k_{1,1}\ |\ r_{1,1}$    &  $k_{2,1}\ |\ r_{2,1}$  &  $\cdots$ &   $k_{n,1}\ |\ r_{n,1}$ \\ \hline
			$\vdots$ &     &    & & $\vdots$ \\ 		\hline
			$m+1$ &  $k_{1,m+1}\ |\ r_{1,m+11}$  & $\cdots$ & $\cdots$ &  $k_{n,m+1}\ |\ r_{n,m+11}$ \\ \hline
		\end{tabular}
	\end{table}\\
	For instance, if $\lambda = (20,16,10,7,2,0)$ and $\mu = (17,5)$, so $m=2$ and $n=3$, we have
	\begin{table}[h!]
		\begin{tabular}{ |c |c|c|c|c|  }
			\hline
			\multicolumn{5}{|c|}{Values of $\nu_{i,l}$} \\
			\hline
			$l\backslash i$ & 1 & 2 & 3 & 4\\ 	\hline
			1 & 20  & 17 & 17 & 17 \\ \hline
			2 & 16  & 10 & 7  & 5  \\ \hline
			3 & 5 & 5 & 2 & 0\\ \hline
		\end{tabular} \qquad
		\begin{tabular}{ |c|c|c|c|  }
			\hline
			\multicolumn{4}{|c|}{Exponents and Lengths} \\ 			\hline
			$l\backslash i$ & 1 & 2 & 3 \\ 	\hline
			1   & $35\ |\ 3$    &  $31\ |\ 0$  &   $30\ |\ 0$ \\ \hline
			2 & $22\ |\ 6$    & $12\ |\ 3$   & $6\ |\ 2$ \\ 	\hline
			3 &  $4\ |\ 0$  & $0\ |\ 3$ &  $-6\ |\ 2$ \\ \hline
		\end{tabular}
	\end{table}\\
	Organizing the vertices of  $G(\bs\pi^{\lambda,\mu})$ following the rows of the last table, we get:
	\begin{equation*}
		\xymatrix@R-2pc{
			\stackrel{3}{1}& &  \\  \stackrel{6}{1} \ar[r]^{10}& \stackrel{3}{2}\ar[r]^{6} & \stackrel{2}{3}\ar[ddl]_{6} \\ \\ & \stackrel{3}{2} \ar[r]^{6} & \stackrel{2}{3}	}
	\end{equation*}
	Thus, this example leads to a graph with two connected components: a singleton and an oriented line.
	If $\lambda = (6,6,6,4,2,1,1)$ and $\mu = (5)$, so $m=1$ and $n=5$, then 
	\begin{table}[h!]
		\begin{tabular}{ |c |c|c|c|c|c|c|  }
			\hline
			\multicolumn{7}{|c|}{Values of $\nu_{i,l}$} \\
			\hline
			$l\backslash i$ & 1 & 2 & 3 & 4 & 5 & 6\\ 	\hline
			1 & 6 & 6  & 6 & 5 & 5 & 5\\ \hline
			2 & 5  & 5 & 4  & 3 & 1 & 1 \\ \hline
		\end{tabular} \qquad
		\begin{tabular}{ |c|c|c|c|c|c|  }
			\hline
			\multicolumn{6}{|c|}{Exponents and Lengths} \\ 			\hline
			$l\backslash i$ & 1 & 2 & 3 & 4 &5 \\ 	\hline
			1   & $10\ |\ 0$    &  $9\ |\ 0$  &   $7\ |\ 1$ &   $5\ |\ 0$ &   $4\ |\ 0$\\ \hline
			2 & $6\ |\ 0$    & $4\ |\ 1$   & $0\ |\ 2$ & $-3\ |\ 2$ & $-7\ |\ 0$\\ 	\hline
		\end{tabular}
	\end{table}\\
	and  $G(\bs\pi^{\lambda,\mu})$ is connected:
	\begin{equation*}
		\xymatrix@R-1pc@C-0.5pc{
			&  \stackrel{1}{3} \ar[dl]_3\ar[dr]^7\\ \stackrel{1}{2} \ar[rr]^{4} & & \stackrel{2}{3} \ar[rr]^{3} & & \stackrel{2}{4}	}
	\end{equation*}
	Although the underlying directed graph is the same as the one in \Cref{ex:snake}, $L_q(\bs\pi^{\lambda,\mu})$ is not a snake module. Indeed, $\bs\pi^{\lambda,\mu}$ can be constructed as in \eqref{e:snakepi} by using the sequence: $$(4,-4), (4,-2), (3,-1), (3,1), (2,4),  (3,7).$$ 
	However, this is not a snake because $(i_2,m_2)=(4,-2), (i_3,m_3) = (3,-1)$, and $m_3-m_2 = 1 \notin\mathscr R_{4,3}^{1,1}$. 
	One can easily check no reordering of this sequence is a snake.	\endd
\end{ex}

\section{Highest-$\ell$-weight Criteria}\label{s:hlwc}

In this section we prove several criteria for deciding whether a tensor product of simple module is highest-$\ell$-weight or not. In particular, the results proved here can be regarded as the backbone of the arguments in the proof of \Cref{t:toto}. Moreover, they will also be prominently used to prove the main results of \cite{mosi}.

\subsection{Background on Highest-$\ell$-weight Tensor Products}\label{ss:hlwtp}

The following is easily established.

\begin{lem}\label{l:tensV}
	Let $m\in\mathbb Z_{>0}$ and $V_k\in\ \wcal C, 1\le k\le m$ . Then, $V_1\otimes\cdots\otimes V_m$ is highest-$\ell$-weight (resp. simple) only if $V_k$ is highest-$\ell$-weight (resp. simple) for all $1\le k\le m$. \qed
\end{lem}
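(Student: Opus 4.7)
The plan is to induct on $m$ to reduce to the two-fold case. For $m=1$ there is nothing to prove, and for $m \geq 2$ one writes $V_1 \otimes \cdots \otimes V_m = (V_1 \otimes \cdots \otimes V_{m-1}) \otimes V_m$ and applies the two-fold statement, followed by induction on the first factor. The simple case with $m=2$ is then immediate: if $W_k \subsetneq V_k$ is a proper nonzero submodule of either factor, then (because the coproduct of $U_q(\tlie g)$ makes tensor products of submodules into submodules of the tensor product) one obtains a proper nonzero submodule of $V_1 \otimes V_2$, contradicting simplicity.

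For the highest-$\ell$-weight part with $m=2$, I would assume $V_1 \otimes V_2$ is highest-$\ell$-weight with highest $\ell$-weight $\bs\pi$. The multiplicativity of qcharacters \eqref{e:multqchar} gives
\[
\dim(V_1\otimes V_2)_{\bs\pi} \;=\; \sum_{\bs\pi=\bs\pi_1\bs\pi_2}\dim(V_1)_{\bs\pi_1}\dim(V_2)_{\bs\pi_2},
\]
and since $V_1 \otimes V_2$ is highest-$\ell$-weight the left-hand side equals $1$. Hence there is a unique factorization $\bs\pi = \bs\pi_1\bs\pi_2$ with $\bs\pi_k$ an $\ell$-weight of $V_k$ and $(V_k)_{\bs\pi_k}$ one-dimensional; on such a one-dimensional generalized eigenspace the commuting family $U_q(\tlie h)$ necessarily acts by honest scalars, so any spanning $v_k \in (V_k)_{\bs\pi_k}$ is an honest $\ell$-weight eigenvector of $\ell$-weight $\bs\pi_k$ and $v_1 \otimes v_2$ spans the highest $\ell$-weight space of $V_1 \otimes V_2$. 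To see that $v_k$ is a highest-$\ell$-weight vector of $V_k$, I would note that all weights of $V_1 \otimes V_2$ lie $\le \wt(\bs\pi) = \wt(\bs\pi_1)+\wt(\bs\pi_2)$ while every weight of $V_k$ pairs with $\wt(\bs\pi_{3-k})$ to produce a weight of the tensor product, so $\wt(\bs\pi_k)$ is maximal among the weights of $V_k$; consequently each $x_{i,r}^+ v_k$ has strictly higher weight and must vanish. Finally, letting $U_k \subseteq V_k$ be the submodule generated by $v_k$, the submodule $U_1 \otimes U_2 \subseteq V_1 \otimes V_2$ contains $v_1 \otimes v_2$ and hence all of $U_q(\tlie g)\cdot(v_1 \otimes v_2) = V_1 \otimes V_2$; a dimension count then forces $U_k = V_k$.

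The main subtlety is the passage from the generalized $\ell$-weight space (on which $U_q(\tlie h)$ may a priori act by non-semisimple operators) to the honest eigenvectors demanded by the strict definition of highest-$\ell$-weight vector. This resolves itself essentially for free once one observes that the one-dimensionality of $(V_k)_{\bs\pi_k}$, forced by the tensor product being highest-$\ell$-weight combined with multiplicativity of qcharacters, makes the $U_q(\tlie h)$-action on $(V_k)_{\bs\pi_k}$ automatically diagonal.
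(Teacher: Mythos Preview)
The paper does not actually prove this lemma; it is introduced with ``The following is easily established'' and the statement is immediately followed by the end-of-proof symbol. Your argument is correct and is the standard one for filling in these details.

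One small refinement worth noting: the claim that $v_1\otimes v_2$ spans the highest-$\ell$-weight space of $V_1\otimes V_2$ is most cleanly justified at the level of ordinary weight spaces rather than $\ell$-weight spaces. Once you have shown $\wt(\bs\pi_k)$ is the unique maximal weight of $V_k$, the top weight space $(V_1\otimes V_2)_{\wt(\bs\pi)}$ equals $(V_1)_{\wt(\bs\pi_1)}\otimes (V_2)_{\wt(\bs\pi_2)}$ directly from the comultiplication of the $k_i$, and this one-dimensional space automatically coincides with $(V_1\otimes V_2)_{\bs\pi}$. Going through $\ell$-weight spaces directly is more delicate because the comultiplication on the $h_{i,s}$ is not primitive, so \eqref{e:multqchar} gives only a dimension identity, not an identification of subspaces. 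Your argument already implicitly routes through weights when establishing maximality, so this is a matter of presentation rather than a gap.
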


%

\begin{lem}\label{l:subsandq}
	Let $\bs\pi,\bs\pi'\in\mathcal P^+$, $V=L_q(\bs\pi)\otimes L_q(\bs\pi')$, and $W=L_q(\bs\pi')\otimes L_q(\bs\pi)$.
	\begin{enumerate}[(a)]
		\item $V$ contains a submodule isomorphic to $L_q(\bs\varpi), \bs\varpi\in\mathcal P^+$, if and only there exists an epimorphism $W\to L_q(\bs\varpi)$.
		\item If $W$ is highest-$\ell$-weight, the submodule of $V$ generated by its top weight space is simple.
		\item If $V$ is not highest-$\ell$-weight, there exists an epimorphism $V\to L_q(\bs\varpi)$ for some $\bs\varpi\in\mathcal P^+$ such that $\bs\varpi<\bs\pi\bs\pi'$. 
	\end{enumerate}
\end{lem}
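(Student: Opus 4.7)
My plan is to handle (c) first (it is direct), then (a) (the technical heart), and finally (b) as a short consequence of (a). Throughout, fix highest-$\ell$-weight vectors $v\in L_q(\bs\pi)$ and $w\in L_q(\bs\pi')$, set $V_0=U_q(\tlie g)\cdot(v\otimes w)\subseteq V$, and recall that $V_0$ is highest-$\ell$-weight with highest $\ell$-weight $\bs\pi\bs\pi'$. For (c), if $V$ is not highest-$\ell$-weight then $V_0\subsetneq V$ and the nonzero quotient $V/V_0$ admits some simple quotient $L_q(\bs\varpi)$, producing the required epimorphism $V\twoheadrightarrow L_q(\bs\varpi)$. By \eqref{e:multqchar}, every $\ell$-weight of $V$ has the form $\bs\mu\bs\mu'$ with $\bs\mu\leq\bs\pi$ and $\bs\mu'\leq\bs\pi'$, hence is $\leq\bs\pi\bs\pi'$; moreover $\bs\pi\bs\pi'$ occurs in $V$ with multiplicity one, realized only by $v\otimes w\in V_0$. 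Since the epimorphism kills $V_0$, it kills the $\bs\pi\bs\pi'$-$\ell$-weight space and so $\bs\varpi\neq\bs\pi\bs\pi'$, whence $\bs\varpi<\bs\pi\bs\pi'$.

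For (a), the two implications are exchanged under the contravariant right-dual functor: by \eqref{e:dualses}, \eqref{e:dualtp}, and \eqref{e:dualDpoli}, an embedding $L_q(\bs\varpi)\hookrightarrow V$ dualizes to an epimorphism $L_q(\bs\pi'^*)\otimes L_q(\bs\pi^*)\twoheadrightarrow L_q(\bs\varpi^*)$, while an epimorphism $W\twoheadrightarrow L_q(\bs\varpi)$ dualizes to an embedding $L_q(\bs\varpi^*)\hookrightarrow L_q(\bs\pi^*)\otimes L_q(\bs\pi'^*)$; either direction of (a) for $(\bs\pi,\bs\pi',\bs\varpi)$ thus becomes the opposite direction for the starred triple, so it suffices to prove a single direction uniformly. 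For the direction ``embedding implies epimorphism'', I would convert a nonzero $\phi:L_q(\bs\varpi)\to L_q(\bs\pi)\otimes L_q(\bs\pi')$ via the second Frobenius isomorphism in \eqref{e:frobrec} into a nonzero morphism $L_q(\bs\pi)^*\otimes L_q(\bs\varpi)\to L_q(\bs\pi')$, and then assemble a nonzero composition $L_q(\bs\pi')\otimes L_q(\bs\pi)\to L_q(\bs\varpi)$ by interlacing coevaluation/evaluation maps with repeated applications of \Cref{c:nonzeromorph}, with simplicity of $L_q(\bs\pi)$ and $L_q(\bs\pi')$ as the crucial nonvanishing input; simplicity of $L_q(\bs\varpi)$ then upgrades ``nonzero'' to ``surjective''.

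For (b), assume $W$ is highest-$\ell$-weight, so its unique simple quotient is $L_q(\bs\pi\bs\pi')$. If $V_0$ were not simple it would contain a nonzero proper simple submodule $L_q(\bs\varpi)\subsetneq V_0\subseteq V$, and (a) would supply an epimorphism $W\twoheadrightarrow L_q(\bs\varpi)$, forcing $\bs\varpi=\bs\pi\bs\pi'$; but an embedding $L_q(\bs\pi\bs\pi')\hookrightarrow V_0$ must meet the one-dimensional top $\ell$-weight space of $V_0$ and therefore generate all of $V_0$, yielding $V_0\cong L_q(\bs\pi\bs\pi')$, contrary to the properness of $L_q(\bs\varpi)$. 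The main obstacle lies in the construction step of (a): the duality reduction halves the work cleanly, but producing the concrete morphism $W\to L_q(\bs\varpi)$ from the Frobenius datum requires a delicate manipulation, and a fallback strategy would invoke an R-matrix-type intertwiner $W\to V$ (whose image equals $V_0$) to transport the submodule $L_q(\bs\varpi)\subseteq V$ back to a matching quotient of $W$.
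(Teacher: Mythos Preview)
Your arguments for (c) and (b) are correct and match the paper's. The gap is in (a).

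Your dualization step is right and is in fact the first half of the paper's own argument: from an embedding $L_q(\bs\varpi)\hookrightarrow L_q(\bs\pi)\otimes L_q(\bs\pi')$, passing to duals via \eqref{e:dualses} and \eqref{e:dualtp} yields an epimorphism $L_q(\bs\pi'^*)\otimes L_q(\bs\pi^*)\twoheadrightarrow L_q(\bs\varpi^*)$. What you are then missing is a direct way to turn this into an epimorphism $L_q(\bs\pi')\otimes L_q(\bs\pi)\twoheadrightarrow L_q(\bs\varpi)$. Your proposed route through \eqref{e:frobrec} and \Cref{c:nonzeromorph} does not assemble into the map you want: Frobenius gives you a nonzero map $L_q(\bs\pi)^*\otimes L_q(\bs\varpi)\to L_q(\bs\pi')$, which is on the wrong side to feed into either form of \Cref{c:nonzeromorph} in a way that produces a morphism $W\to L_q(\bs\varpi)$; and you yourself flag this as ``the main obstacle'' and retreat to an R-matrix fallback that the paper neither has nor needs. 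Likewise, your reduction ``it suffices to prove one direction uniformly'' only postpones the same difficulty.

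The paper closes the gap in one stroke: it pulls back the epimorphism $L_q(\bs\pi'^*)\otimes L_q(\bs\pi^*)\twoheadrightarrow L_q(\bs\varpi^*)$ by the Hopf algebra automorphism $\psi=\sigma\circ\tau_{q^{r^\vee h^\vee}}$ of \Cref{p:Cartaninv}. Because $\psi$ is a Hopf automorphism, \eqref{e:tppb} and \eqref{e:cinvses} say the pullback preserves both tensor products and short exact sequences, and \eqref{e:dualDpolidef}, \eqref{e:dualDpoli} give $\bigl(L_q(\bs\mu)^*\bigr)^\psi\cong L_q(\bs\mu)$ for every $\bs\mu\in\mathcal P^+$. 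Applying $\psi$ therefore converts the starred epimorphism directly into $L_q(\bs\pi')\otimes L_q(\bs\pi)\twoheadrightarrow L_q(\bs\varpi)$, and reversing the whole chain gives the converse. Inserting this $\psi$-pullback in place of your Frobenius manoeuvre completes your proof.
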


\begin{proof}
	Assume we have a monomprphism $L_q(\bs\varpi)\to V$. It follows from \eqref{e:dualses} and \eqref{e:dualtp} that we have an epimorphism
	\begin{equation*}
		L_q(\bs\pi')^*\otimes L_q(\bs\pi)^* \to L_q(\bs\varpi)^*.
	\end{equation*}
	In particular, letting $\psi = \sigma\circ \tau_{q^{r^\vee h^\vee}}$ and using   \eqref{e:tppb} and \eqref{e:cinvses}, we get an epimorphism
	\begin{equation*}
		(L_q(\bs\pi')^*)^\psi\otimes (L_q(\bs\pi)^*)^\psi \to (L_q(\bs\varpi)^*)^\psi.
	\end{equation*}
	One easily checks using  \eqref{e:tau_a},\eqref{e:dualDpoli}, and \eqref{e:kappapb} that the domain of the latter epimorphism is isomorphic to $W$ and $ (L_q(\bs\varpi)^*)^\psi\cong L_q(\bs\varpi)$. The converse in part (a) is proved by reversing this argument. Part (b) is immediate from (a) since the assumption on $W$ implies we have an epimorphism $W\to L_q(\bs\pi\bs\pi')$ and the top weight space of $V$ is one-dimensional and equal to $V_{\bs\pi\bs\pi'}$. 
	
	For proving (c), let $V'$ be the submodule of $V$ generated by its top weight space. The assumption is equivalent to saying that $V'$ is a proper submodule. Since $V$ is finite-dimensional, the set of proper submodules of $V$ containing $V'$ is nonempty and contains a maximal element, say $U$, which is necessarily also maximal in the set of all proper submodules of $V$. Hence, $V/U\cong L_q(\bs\varpi)$ for some $\bs\varpi\in\mathcal P^+$ and, since $V_{\bs\pi\bs\pi'}\subseteq U$, we have $\bs\varpi\ne \bs\pi\bs\pi'$. Since $\bs\varpi$ is an $\ell$-weight of $V$ and  every $\ell$-weight of $V$  is smaller than $\bs\pi\bs\pi'$, we have $\bs\varpi<\bs\pi\bs\pi'$. 
\end{proof}

The following fact is well-known (a proof can be found in \cite{Moura}).

\begin{prop}\label{vnvstar}
	Let $V$ be finite-dimensional $U_q(\tlie g)$-module. Then, $V$ is simple if and only if $V$ and $V^*$ are highest-$\ell$-weight.
\end{prop}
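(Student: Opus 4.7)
My plan is to handle the two directions separately, the forward one being essentially immediate and the backward one being the main content. If $V$ is simple, then $V$ is highest-$\ell$-weight by the Chari–Pressley classification already recorded, and $V^*$ is simple (hence also highest-$\ell$-weight) because the antipode of $U_q(\tlie g)$ is invertible and the short exact sequence in \eqref{e:dualses} turns any proper submodule of $V^*$ into a proper quotient of $V$. For the converse I will assume both $V$ and $V^*$ are highest-$\ell$-weight, let $\bs\pi$ denote the highest $\ell$-weight of $V$ and set $\lambda=\wt(\bs\pi)$, and show that the simple submodule of $V^*$ coming from the dualized quotient map $V\twoheadrightarrow L_q(\bs\pi)$ must actually exhaust $V^*$.

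The first step is to produce an embedding $L_q(\bs\pi^*)\hookrightarrow V^*$ by dualizing that quotient map via \eqref{e:dualses} together with \eqref{e:dualDpoli}; its top weight vector then lives in $(V^*)_{-w_0\lambda}$, giving the lower bound $\mu\ge -w_0\lambda$ for the top weight $\mu$ of $V^*$. The crucial second step is to identify $-w_0\lambda$ as the \emph{top} weight of $V^*$. For this I will decompose $V$ as a $U_q(\lie g)$-module using \Cref{t:ciuqg}(b): since $V$ is highest-$\ell$-weight, $\dim V_\lambda=1$ forces the summand $V_q(\lambda)$ to appear with multiplicity one and every other summand $V_q(\nu)$ to satisfy $\nu<\lambda$. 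Using that $-w_0$ preserves $Q^+$, the inequality $\nu\le\lambda$ yields $w_0\nu\ge w_0\lambda$, so all weights of $V$ lie in the union of intervals $[w_0\nu,\nu]\subseteq[w_0\lambda,\lambda]$. Dualizing, all weights of $V^*$ lie in $[-\lambda,-w_0\lambda]$, giving the matching upper bound $\mu\le-w_0\lambda$. Together these force $\mu=-w_0\lambda$, and the $\cal W$-invariance of \Cref{t:ciuqg}(a) gives $\dim(V^*)_{-w_0\lambda}=\dim V_{w_0\lambda}=\dim V_\lambda=1$.

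With the top weight space of $V^*$ known to be one-dimensional, the highest-$\ell$-weight vector of $V^*$ (which generates $V^*$ by assumption) must coincide, up to scalar, with the top weight vector of the embedded copy of $L_q(\bs\pi^*)$. Hence the submodule $L_q(\bs\pi^*)$ contains a generator of $V^*$, so $L_q(\bs\pi^*)=V^*$ and $V^*$ is simple; the opening paragraph's short exact sequence argument then forces $V$ to be simple as well. The main obstacle is the weight-theoretic bookkeeping needed to pin down the top weight of $V^*$ as exactly $-w_0\lambda$; once this is in place, the rest reduces to the familiar ``uniqueness of a top weight vector'' argument for highest-weight modules.
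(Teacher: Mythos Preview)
Your argument is correct. The paper does not actually prove \Cref{vnvstar}; it simply cites \cite{Moura} for a proof, so there is no in-paper argument to compare against. Your approach---dualizing the canonical quotient $V\twoheadrightarrow L_q(\bs\pi)$ to embed $L_q(\bs\pi^*)$ into $V^*$, then using the $\cal W$-invariance of weight multiplicities from \Cref{t:ciuqg}(a) to show $\dim(V^*)_{-w_0\lambda}=\dim V_{w_0\lambda}=\dim V_\lambda=1$ so that the highest-$\ell$-weight generator of $V^*$ must lie in the embedded simple---is the standard one and almost certainly matches the cited reference. One small remark: in the forward direction you could say more directly that duality induces an order-reversing bijection between the submodule lattices of $V$ and $V^*$, which makes ``$V$ simple $\Leftrightarrow$ $V^*$ simple'' immediate without passing through \eqref{e:dualses}.
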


\begin{cor}\label{c:vnvstar}
	Let $\bs{\pi}, \bs\varpi\in\mathcal{P}^+$. Then, $L_q(\bs{\pi})\otimes L_q(\bs\varpi)$ is simple if and only if both $L_q(\bs{\pi})\otimes L_q(\bs\varpi)$ and $L_q(\bs\varpi)\otimes L_q(\bs{\pi})$ are highest-$\ell$-weight.
\end{cor}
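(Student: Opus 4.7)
The plan is to derive the corollary as a direct consequence of Lemma \ref{l:subsandq}, which already packages the nontrivial representation-theoretic input needed.

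The forward implication is immediate: if $V := L_q(\bs\pi)\otimes L_q(\bs\varpi)$ is simple, then $V\cong L_q(\bs\pi\bs\varpi)$, hence $V$ is highest-$\ell$-weight; and by Proposition \ref{sinter}, $W := L_q(\bs\varpi)\otimes L_q(\bs\pi)$ is also simple, so it too is highest-$\ell$-weight.

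For the converse, I would invoke Lemma \ref{l:subsandq}(b) applied to the pair $(V,W)$. The key preliminary observation is that the top weight of $V$ is $\wt(\bs\pi)+\wt(\bs\varpi)$ and the corresponding weight space is one-dimensional, spanned by the tensor product of the highest-$\ell$-weight vectors of the two factors; in particular, this space coincides with the $\ell$-weight space $V_{\bs\pi\bs\varpi}$. Assuming $V$ is highest-$\ell$-weight, it is generated by its highest-$\ell$-weight vector, which lies in this top weight space, so the submodule of $V$ generated by its top weight space is all of $V$. On the other hand, Lemma \ref{l:subsandq}(b) asserts that if $W$ is highest-$\ell$-weight, then the submodule of $V$ generated by its top weight space is simple. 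Putting these two facts together yields that $V$ is simple.

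There is no real obstacle here beyond correctly identifying the top weight space with the $\ell$-weight space of the product highest-$\ell$-weight and matching the roles of $V$ and $W$ between the statement and Lemma \ref{l:subsandq}; the substantive content (the duality argument via $\psi=\sigma\circ\tau_{q^{r^\vee h^\vee}}$ relating submodules of $V$ to quotients of $W$) has already been carried out in the proof of that lemma.
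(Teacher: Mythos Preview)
Your proof is correct and takes a genuinely different route from the paper's. The paper argues the converse via Proposition~\ref{vnvstar}: since $U=L_q(\bs\pi)\otimes L_q(\bs\varpi)$ is highest-$\ell$-weight, it suffices to show $U^*\cong L_q(\bs\varpi^*)\otimes L_q(\bs\pi^*)$ is highest-$\ell$-weight, and this is obtained from the hypothesis on $W=L_q(\bs\varpi)\otimes L_q(\bs\pi)$ by explicitly pulling back along $\sigma$ and $\tau_{q^{-r^\vee h^\vee}}$. You instead invoke Lemma~\ref{l:subsandq}(b) directly: $W$ highest-$\ell$-weight forces the submodule of $V$ generated by its top weight space to be simple, and $V$ highest-$\ell$-weight says that submodule is all of $V$. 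Your route is cleaner in that it avoids rerunning the pullback computation, which has already been absorbed into the proof of Lemma~\ref{l:subsandq}(a); the paper's route has the minor advantage of making the role of the dual $U^*$ explicit, which is sometimes useful when one wants to track exactly which tensor order is responsible for which half of the simplicity criterion.
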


\begin{proof}
	If $U:=L_q(\bs{\pi})\otimes L_q(\bs\varpi)$ is simple, \Cref{sinter} implies  $U\cong W:=L_q(\bs{\varpi})\otimes L_q(\bs\pi)$. In particular, $U$ and $W$ are both highest-$\ell$-weight. Conversely, assume  $U$ and $W$ are both highest-$\ell$-weight. Since $U$ is highest-$\ell$-weight, \Cref{vnvstar}, \eqref{e:dualtp}, and \eqref{e:dualDpoli}, imply that it suffices to show
	\begin{equation}\label{e:vnvstar}
		L_q(\bs\varpi^*)\otimes L_q(\bs\pi^*) \quad\text{is highest-$\ell$-weight.}
	\end{equation}
	Using \eqref{e:cinvses} with $V_2=W$, it follows from \eqref{e:tppb} and \eqref{e:kappapb} that $W^\sigma\cong L_q(\bs\varpi^\sigma)\otimes L_q(\bs\pi^\sigma)$ is highest-$\ell$-weight. Setting $a= q^{-r^\vee h^\vee}$, \eqref{e:kappapb} implies $\bs\pi^*=(\bs\pi^\sigma)^{\tau_a}$ and similarly for $\bs\varpi$. The proof of \eqref{e:vnvstar}  is then completed by using  \eqref{e:tppb}, \eqref{e:tau_a} and \eqref{e:cinvses} with $V_2 = W^\sigma$ and $f=\tau_a$.
\end{proof}

\Cref{irred} admits the following analogue.

\begin{thm}\label{cyc}
	Let $S_1,\cdots, S_m$ be simple $U_q(\tlie g)$-modules. Then, $S_1\otimes\cdots\otimes S_m$ is highest-$\ell$-weight if and only if  $S_i\otimes S_j$ is highest-$\ell$-weight for all $1\leq i<j\leq m$. 
\end{thm}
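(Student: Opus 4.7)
The strategy is to proceed by induction on $m$, with the case $m=2$ being tautological. For $m>2$, write $V_m := S_1 \otimes \cdots \otimes S_m$ and let $\bs{\pi}_k$ denote the highest $\ell$-weight of $S_k$. Both directions rest on the exactness of the tensor product and the characterization of highest-$\ell$-weightness via the existence of $L_q(\bs{\pi}_1\cdots\bs{\pi}_m)$ as a top simple quotient (Lemma \ref{l:subsandq}).

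For the forward direction, first show that every \emph{contiguous} subtensor $S_k \otimes \cdots \otimes S_l$ is highest-$\ell$-weight: if it were not, Lemma \ref{l:subsandq}(c) would produce a proper simple quotient $L_q(\bs{\varpi})$ with $\bs{\varpi} < \bs{\pi}_k \cdots \bs{\pi}_l$ that kills the top weight vector, and tensoring on the left and right with the remaining factors (an exact operation) would yield a nonzero quotient of $V_m$ annihilating $v_1 \otimes \cdots \otimes v_m$, contradicting that $V_m$ is highest-$\ell$-weight. Applying the inductive hypothesis to each contiguous subtensor of length strictly less than $m$ then gives $S_i \otimes S_j$ highest-$\ell$-weight for every pair $(i,j) \neq (1,m)$. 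For the remaining non-adjacent pair $(1,m)$, the plan is to collapse the middle block $S_2 \otimes \cdots \otimes S_{m-1}$ to its top simple quotient $L':= L_q(\bs{\pi}_2 \cdots \bs{\pi}_{m-1})$, obtaining a highest-$\ell$-weight quotient $V_m \twoheadrightarrow S_1 \otimes L' \otimes S_m$, and then to apply Lemma \ref{c:nonzeromorph} with $V_2 = L'$ (simple) to the surjections $S_1 \otimes L' \twoheadrightarrow L_q(\bs{\pi}_1 \bs{\pi}_2 \cdots \bs{\pi}_{m-1})$ and $L' \otimes S_m \twoheadrightarrow L_q(\bs{\pi}_2 \cdots \bs{\pi}_m)$ furnished by the contiguous step; the resulting nonzero composition, combined with weight analysis pinning the unique top composition factor, forces the existence of a nonzero morphism $S_1 \otimes S_m \to L_q(\bs{\pi}_1 \bs{\pi}_m)$.

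For the backward direction, the inductive hypothesis applied to $T := S_2 \otimes \cdots \otimes S_m$ shows $T$ is highest-$\ell$-weight with top quotient $L'' := L_q(\bs{\pi}_2 \cdots \bs{\pi}_m)$, so $V_m \twoheadrightarrow S_1 \otimes L''$ is a top-preserving epimorphism and it suffices to establish that $S_1 \otimes L''$ is highest-$\ell$-weight. I plan to prove this by descending induction over the partial products: starting from $S_1 \otimes S_m$ (highest-$\ell$-weight by hypothesis), propagate highest-$\ell$-weightness from $S_1 \otimes L_q(\bs{\pi}_{k+1}\cdots\bs{\pi}_m)$ to $S_1 \otimes L_q(\bs{\pi}_k \cdots \bs{\pi}_m)$ using the short exact sequence $0 \to K \to S_k \otimes \cdots \otimes S_m \to L_q(\bs{\pi}_k\cdots\bs{\pi}_m) \to 0$, the pairwise hypothesis $S_1 \otimes S_k$ highest-$\ell$-weight, and a weight-induction based on $\Delta(x^-_{i,r}) = x^-_{i,r} \otimes 1 + k_i^{-1} \otimes x^-_{i,r} + \cdots$ applied to the top weight vector $v_1 \otimes v_T$ to ensure $v_1 \otimes K$ lies in the $U_q(\tlie g)$-orbit of $v_1 \otimes v_T$.

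The main obstacle is bridging non-adjacent factors — the pair $(1,m)$ in the forward direction and the composite simple module $L''$ in the backward direction — since in both cases one must control data about two simples that are separated by other simple (or composite) factors. My proposed route through Lemma \ref{c:nonzeromorph} and the weight-filtration bookkeeping is delicate: one must verify that the composed morphism indeed targets the correct simple quotient $L_q(\bs{\pi}_1\bs{\pi}_m)$ and not merely a larger subquotient. A possibly cleaner alternative, worth exploring if the direct route stalls, is to combine the forward direction with Theorem \ref{irred} and Corollary \ref{c:vnvstar}, exploiting that $V_m^* \cong S_m^* \otimes \cdots \otimes S_1^*$ and the duality "$U \otimes V$ highest-$\ell$-weight iff $U^* \otimes V^*$ highest-$\ell$-weight" (implicit in the proof of Corollary \ref{c:vnvstar}) to argue that $V_m$ highest-$\ell$-weight captures exactly the "$i<j$" half of the pairwise simplicity characterization, with $V_m^*$ capturing the "$i>j$" half.
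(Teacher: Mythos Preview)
Your forward direction is essentially the paper's argument: show contiguous subtensors are highest-$\ell$-weight, apply induction to get all pairs except $(1,m)$, then collapse the middle block to a simple $L'$ and reduce to the length-$3$ case $S_1\otimes L'\otimes S_m$. However, your final step is not set up correctly for \Cref{c:nonzeromorph}: you propose feeding it two \emph{epimorphisms} $S_1\otimes L'\twoheadrightarrow L_q(\bs\pi_1\cdots\bs\pi_{m-1})$ and $L'\otimes S_m\twoheadrightarrow L_q(\bs\pi_2\cdots\bs\pi_m)$, but neither form of the lemma accepts that shape (each requires one map \emph{into} $V_1\otimes V_2$ or $V_2\otimes V_3$). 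The paper repairs this by a contradiction argument: in the $m=3$ situation, since $S_1\otimes S_2$ is highest-$\ell$-weight, \Cref{l:subsandq}(b) gives a monomorphism $L_q(\bs\pi_1\bs\pi_2)\hookrightarrow S_2\otimes S_1$; if $S_1\otimes S_3$ were not highest-$\ell$-weight, \Cref{l:subsandq}(c) gives an epimorphism $S_1\otimes S_3\twoheadrightarrow N$ with strictly lower highest weight. Now the first form of \Cref{c:nonzeromorph} applies (one mono, one epi) and produces a nonzero map $L_q(\bs\pi_1\bs\pi_2)\otimes S_3\to S_2\otimes N$, whose domain is highest-$\ell$-weight (as a quotient of $S_1\otimes S_2\otimes S_3$), contradicting the weight inequality. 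This is the shape you need.

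The backward direction is where your proposal has a genuine gap. The paper does not prove this implication; it cites \cite{Hernandez2} for it. Your proposed descending induction, propagating highest-$\ell$-weightness from $S_1\otimes L_q(\bs\pi_{k+1}\cdots\bs\pi_m)$ to $S_1\otimes L_q(\bs\pi_k\cdots\bs\pi_m)$ via a ``weight-induction based on $\Delta(x^-_{i,r})=x^-_{i,r}\otimes 1+k_i^{-1}\otimes x^-_{i,r}+\cdots$'', is precisely the hard part: the comultiplication of the Drinfeld loop generators is not given by that clean formula---the omitted terms are not lower-order corrections one can ignore, and controlling them is essentially the content of Hernandez's cyclicity theorem (which uses $R$-matrix techniques). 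Your sketch does not indicate how to handle them, and the alternative you mention via duality and \Cref{c:vnvstar} is circular: that corollary already consumes both directions of the pairwise criterion. For the purposes of this paper you should simply cite \cite{Hernandez2} for the ``if'' direction, as the paper does.
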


\begin{proof}
	The ``if'' part  is the main result of \cite{Hernandez2}. We now prove the converse by induction on $m$. Thus, suppose $S_1\otimes\cdots\otimes S_m$ is highest-$\ell$-weight and note there is nothing to prove if $m\le 2$. Assume $m>2$ and note \Cref{l:tensV} implies 
	$S_i\otimes\cdots\otimes S_j$ is highest-$\ell$-weight for all $1\le i\le j\le m$. Together with the induction hypothesis, this implies $S_i\otimes S_j$ is highest-$\ell$-weight for all $1\le i<j\le m$ except if $(i,j)=(1,m)$. 
	
	To prove that $S_1\otimes S_m$ is also highest-$\ell$-weight, thus completing the proof, let $S$ be a simple quotient of $T:=S_2\otimes\cdots\otimes S_{m-1}$ and consider the associated epimorphism $\pi:T\to S$. This implies we have an epimorphism
	\begin{equation*}
		S_1\otimes T\otimes S_m\xrightarrow{\quad\operatorname{id}_{S_1}\otimes\pi\otimes\operatorname{id}_{S_m}\quad}S_1\otimes S\otimes S_m
	\end{equation*}
	which, together with the assumption that $S_1\otimes\cdots\otimes S_m$ is highest-$\ell$-weight, implies $S_1\otimes S\otimes S_m$ is highest-$\ell$-weight as well. The inductive argument is completed if $m>3$. 
	
	If $m=3$, let $\lambda_i\in P^+$ be the highest weight of $S_i, 1\le i\le 3$. Since $S_1\otimes S_2$ is highest-$\ell$-weight, \Cref{l:subsandq}(b) implies the submodule $M$ generated by the top weight space of $S_2\otimes S_1$ is simple and we have a monomorphism
	\begin{equation*}
		M\otimes S_3 \to S_2\otimes S_1\otimes S_3
	\end{equation*} 
	If $S_1\otimes S_3$ were not highest-$\ell$-weight, an application of \Cref{l:subsandq}(c) would gives us an epimorphism
	\begin{equation*}
		S_1\otimes S_3 \to N
	\end{equation*}
	where $N$ is a simple module whose highest weight $\lambda$ satisfies $\lambda<\lambda_1+\lambda_3$.  \Cref{c:nonzeromorph} says these maps can be used to obtain a nonzero map
	\begin{equation*}
		M\otimes S_3 \to S_2\otimes N.
	\end{equation*}
	The highest weight of $M\otimes S_3$ is $\lambda_1+\lambda_2+\lambda_3$ while that of $S_2\otimes N$ is $\lambda_2+\lambda$. To reach a contradiction, it then suffices to show $M\otimes S_3$ is highest-$\ell$-weight.
	
	Indeed, we have an epimorphism $S_1\otimes S_2\to M$ and, hence, an epimorphism
	\begin{equation*}
		S_1\otimes S_2\otimes S_3 \to M\otimes S_3.
	\end{equation*}
	The assumption that $S_1\otimes S_2\otimes S_3$ is highest-$\ell$-weight then implies that so is  $M\otimes S_3$, as desired. 
\end{proof}

The next corollary will be used often during the main proofs.

\begin{cor}\label{l:hlwquot}
	Given $\bs{\pi},\widetilde{\bs{\pi}}\in\mathcal{P}^+$, $L_q(\bs{\pi})\otimes L_q(\widetilde{\bs{\pi}})$ is highest-$\ell$-weight if there exist 
	$\bs\pi^{(k)}\in\mathcal P^+, 1\le k\le m$, $\widetilde{\bs\pi}^{(k)}\in\mathcal P^+, 1\le k\le \tilde m$, such that  
	\begin{equation*}
		\bs\pi = \prod_{k=1}^m \bs\pi^{(k)}, \quad  \widetilde{\bs\pi} = \prod_{k=1}^{\tilde m} \widetilde{\bs\pi}^{(k)}, 
	\end{equation*}
	and the following tensor products are highest-$\ell$-weight:
	\begin{equation*}
		L_q(\bs\pi^{(k)})\otimes L_q(\bs\pi^{(l)}), \quad L_q(\widetilde{\bs\pi}^{(k)})\otimes L_q(\widetilde{\bs\pi}^{(l)}), \quad\text{for } k<l,\text{ and}\quad L_q(\bs\pi^{(k)})\otimes L_q(\widetilde{\bs\pi}^{(l)}) \quad\text{for all }k,l.
	\end{equation*}
	Moreover, if all these tensor products are irreducible, then so is $L_q(\bs{\pi})\otimes L_q(\widetilde{\bs{\pi}})$.
\end{cor}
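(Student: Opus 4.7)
The plan is to recognize this as a direct application of \Cref{cyc} (and \Cref{irred} for the ``moreover'' statement), combined with a standard epimorphism argument for passing from a long tensor product down to a two-fold one.

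First, I would concatenate the two factorizations into a single ordered list of simple modules: set $S_k = L_q(\bs\pi^{(k)})$ for $1\le k\le m$ and $S_{m+k} = L_q(\widetilde{\bs\pi}^{(k)})$ for $1\le k\le \tilde m$. The three bulleted hypotheses then say exactly that $S_i\otimes S_j$ is highest-$\ell$-weight for every pair $1\le i<j\le m+\tilde m$: the case $j\le m$ is the first bullet, the case $i>m$ is the second (reindexed), and the case $i\le m<j$ is the third. By \Cref{cyc}, it follows that the long tensor product $S_1\otimes\cdots\otimes S_{m+\tilde m}$ is highest-$\ell$-weight, with highest $\ell$-weight $\bs\pi\widetilde{\bs\pi}$.

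Next, I would descend from this long product to the desired two-fold product. Applying \Cref{cyc} to the sublists $S_1,\dots,S_m$ and $S_{m+1},\dots,S_{m+\tilde m}$, each of the tensor products $L_q(\bs\pi^{(1)})\otimes\cdots\otimes L_q(\bs\pi^{(m)})$ and $L_q(\widetilde{\bs\pi}^{(1)})\otimes\cdots\otimes L_q(\widetilde{\bs\pi}^{(\tilde m)})$ is highest-$\ell$-weight, with highest $\ell$-weights $\bs\pi$ and $\widetilde{\bs\pi}$ respectively. Hence each admits $L_q(\bs\pi)$ (resp.\ $L_q(\widetilde{\bs\pi})$) as its unique simple quotient. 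Tensoring the two resulting epimorphisms yields a surjection
\begin{equation*}
    S_1\otimes\cdots\otimes S_{m+\tilde m}\ \twoheadrightarrow\ L_q(\bs\pi)\otimes L_q(\widetilde{\bs\pi}).
\end{equation*}
Since the domain is highest-$\ell$-weight, so is any nonzero quotient of it, and the first assertion follows.

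For the ``moreover'' clause, the additional hypothesis together with \Cref{irred} implies that $S_1\otimes\cdots\otimes S_{m+\tilde m}$ is simple. The epimorphism above is then necessarily an isomorphism, so $L_q(\bs\pi)\otimes L_q(\widetilde{\bs\pi})$ is simple as well. There is no substantive obstacle: the whole statement is essentially a bookkeeping reduction to \Cref{cyc} and \Cref{irred}, with the only small observation being that one must verify the three hypothesis bullets cover all pairs $i<j$ in the concatenated list.
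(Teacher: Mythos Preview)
Your proof is correct and follows essentially the same approach as the paper: apply \Cref{cyc} to the concatenated list of factors to see that $W\otimes\tilde W$ is highest-$\ell$-weight, then descend via the epimorphisms $W\to L_q(\bs\pi)$ and $\tilde W\to L_q(\widetilde{\bs\pi})$. For the ``moreover'' clause the paper instead observes that the reversed order $L_q(\widetilde{\bs\pi})\otimes L_q(\bs\pi)$ is also highest-$\ell$-weight and concludes simplicity via \Cref{c:vnvstar}, whereas your direct appeal to \Cref{irred} is equally valid and arguably more immediate.
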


\begin{proof}
	It follows from \Cref{cyc} that
	\begin{equation*}
		W:= L_q(\bs\pi^{(1)})\otimes \cdots\otimes L_q(\bs\pi^{(m)}), \quad \tilde W:= L_q(\widetilde{\bs\pi}^{(1)})\otimes \cdots\otimes L_q(\widetilde{\bs\pi}^{(\tilde m)}), \quad\text{and}\quad W\otimes \tilde W
	\end{equation*}
	are highest-$\ell$-weight. Therefore, we have epimorphisms $p:W\to L_q(\bs\pi), \tilde p:\tilde W\to L_q(\widetilde{\bs\pi})$, and, hence, $p\otimes \tilde p:W\otimes \tilde W\to   L_q(\bs{\pi})\otimes L_q(\widetilde{\bs\pi})$. For the last claim, the extra assumption implies we reach the same conclusion with the tensor products above in reversed order. Hence, we are done by \Cref{c:vnvstar}.
\end{proof}

We are now able to prove  \Cref{p:primeconect}.

\begin{proof}[Proof of  \Cref{p:primeconect}]
	Let $G'$ and $G''$ be non empty unions of distinct connected components of $G(\bs\pi)$.  Let also $\bs\pi',\bs\pi''\in\mathcal P^+$ be such that $G'=G(\bs\pi')$ and $G''=G(\bs\pi'')$. If $\bs\omega'$ is a vertex of $G'$ and $\bs\omega''$ is a vertex of $G''$, they belong to different connected components of $G$ and, hence, $L_q(\bs\omega')\otimes L_q(\bs\omega'')$ is simple. It then follows from  \Cref{l:hlwquot} that $L_q(\bs\pi')\otimes L_q(\bs\pi'')$ is simple. An obvious inductive argument proves  \Cref{p:primeconect}.
\end{proof}

\subsection{Proof of \Cref{p:buildupprimesmono}}

\begin{lem}\label{l:vertexremoval}
	Let $\bs\pi\in\mathcal P^+$  and suppose $\bs\omega$ is an extremal vertex in $G(\bs\pi)$. Let $\bs\varpi = \bs\pi\bs\omega^{-1}$ and assume there exists a non-trivial factorization $\bs\varpi= \bs{\varpi}^{(1)}\bs{\varpi}^{(2)}$ such that
	\begin{equation*}
		L_q(\bs{\varpi})\cong L_q(\bs{\varpi}^{(1)})\otimes L_q(\bs{\varpi}^{(2)})
	\end{equation*}
	and  every $q$-factor of $\bs\pi$ adjacent to $\bs\omega$  in $G(\bs\pi)$ lies in $G(\bs\varpi^{(1)})$. Then,
	\begin{equation*}
		L_q(\bs{\pi})\cong L_q(\bs{\omega}\bs{\varpi}^{(1)})\otimes L_q(\bs{\varpi}^{(2)}).
	\end{equation*}
\end{lem}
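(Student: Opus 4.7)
The strategy is to show that $L_q(\bs{\omega}\bs{\varpi}^{(1)})\otimes L_q(\bs{\varpi}^{(2)})$ is simple; since its highest $\ell$-weight equals $\bs{\pi}$, it will then be isomorphic to $L_q(\bs{\pi})$. By \Cref{c:vnvstar}, simplicity reduces to verifying that both this tensor product and its opposite $L_q(\bs{\varpi}^{(2)})\otimes L_q(\bs{\omega}\bs{\varpi}^{(1)})$ are highest-$\ell$-weight. I will assume $\bs{\omega}$ is a source of $G(\bs{\pi})$; the sink case is dual and is handled by placing $\bs{\omega}$ on the right of each tensor product throughout.

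The central intermediate step is to prove that $L_q(\bs{\omega})\otimes L_q(\bs{\varpi}^{(1)})$ is highest-$\ell$-weight. The source assumption together with the adjacency hypothesis guarantees that for every $q$-factor $\bs{\omega}'$ of $\bs{\varpi}^{(1)}$ (adjacent to $\bs{\omega}$ or not), the product $L_q(\bs{\omega})\otimes L_q(\bs{\omega}')$ is highest-$\ell$-weight. I label the $q$-factors of $\bs{\varpi}^{(1)}$ as $\bs{\omega}'_1,\ldots,\bs{\omega}'_l$ along a linear extension of the partial order $\preceq$ on $G(\bs{\varpi}^{(1)})$, which exists thanks to \Cref{l:nocycle}. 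Then $L_q(\bs{\omega}'_a)\otimes L_q(\bs{\omega}'_b)$ is highest-$\ell$-weight for all $a<b$ (either by the definition of arrow or by simplicity in the non-adjacent case), so \Cref{cyc} makes $L_q(\bs{\omega})\otimes L_q(\bs{\omega}'_1)\otimes\cdots\otimes L_q(\bs{\omega}'_l)$ highest-$\ell$-weight. Tensoring the canonical surjection $L_q(\bs{\omega}'_1)\otimes\cdots\otimes L_q(\bs{\omega}'_l)\twoheadrightarrow L_q(\bs{\varpi}^{(1)})$ on the left by $L_q(\bs{\omega})$ transfers this property to $L_q(\bs{\omega})\otimes L_q(\bs{\varpi}^{(1)})$. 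The same device, using that each $q$-factor of $\bs{\varpi}^{(2)}$ is non-adjacent to $\bs{\omega}$, shows further that $L_q(\bs{\omega})\otimes L_q(\bs{\varpi}^{(2)})$ is simple.

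Two applications of \Cref{l:hlwquot} then finish the argument. First, applied with $\bs{\omega}\bs{\varpi}^{(1)}$ factored as $\bs{\omega}\cdot\bs{\varpi}^{(1)}$ and $\bs{\varpi}^{(2)}$ as a single factor, the three required highest-$\ell$-weight conditions are supplied by the previous paragraph and by the hypothesis $L_q(\bs{\varpi}^{(1)})\otimes L_q(\bs{\varpi}^{(2)})\cong L_q(\bs{\varpi})$, so $L_q(\bs{\omega}\bs{\varpi}^{(1)})\otimes L_q(\bs{\varpi}^{(2)})$ is highest-$\ell$-weight. Second, applied with roles swapped---$\bs{\varpi}^{(2)}$ as first argument and $\bs{\omega}\bs{\varpi}^{(1)}=\bs{\omega}\cdot\bs{\varpi}^{(1)}$ as second---the internal pair is the same as before, while the cross pairs $L_q(\bs{\varpi}^{(2)})\otimes L_q(\bs{\omega})$ and $L_q(\bs{\varpi}^{(2)})\otimes L_q(\bs{\varpi}^{(1)})$ are simple by non-adjacency and by \Cref{sinter} applied to the hypothesis. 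The main subtlety is the asymmetry forced by extremality: in the source case $L_q(\bs{\omega})\otimes L_q(\bs{\varpi}^{(1)})$ is highest-$\ell$-weight while the reverse order need not be, so one must keep $\bs{\omega}$ on the correct side of each tensor product and invoke \Cref{l:hlwquot} with the factorization oriented accordingly.
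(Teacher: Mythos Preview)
Your argument is correct and follows essentially the same route as the paper's: reduce via \Cref{c:vnvstar} to showing both orderings are highest-$\ell$-weight, establish that $L_q(\bs\omega)\otimes L_q(\bs\varpi^{(1)})$ is highest-$\ell$-weight and $L_q(\bs\omega)\otimes L_q(\bs\varpi^{(2)})$ is simple, and then conclude via \Cref{cyc}/\Cref{l:hlwquot}. The paper packages the final step slightly differently (it applies \Cref{cyc} directly to the triple $L_q(\bs\omega)\otimes L_q(\bs\varpi^{(1)})\otimes L_q(\bs\varpi^{(2)})$ and to its cyclic permutation, then passes to the quotient), but this is the same mechanism as your two invocations of \Cref{l:hlwquot}.

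One small slip to fix: with the convention $h_a\prec t_a$, an arrow $(\bs\omega'_b,\bs\omega'_a)\in\mathcal A$ gives $\bs\omega'_a\prec\bs\omega'_b$, while it is $L_q(\bs\omega'_b)\otimes L_q(\bs\omega'_a)$ that is highest-$\ell$-weight. Hence, if you label the $q$-factors \emph{increasingly} along a linear extension of $\preceq$, the claim ``$L_q(\bs\omega'_a)\otimes L_q(\bs\omega'_b)$ is highest-$\ell$-weight for $a<b$'' is the wrong way round. You should instead list them in decreasing order (sources first, sinks last); with that adjustment everything goes through exactly as you wrote.
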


\begin{proof}
	Up to arrow dualization, we can assume $\bs\omega$ is a source and, hence,
	\begin{equation*}
		L_q(\bs\omega)\otimes L_q(\bs\omega') \quad\text{is highest-$\ell$-weight for every other $q$-factor $\bs\omega'$ of $\bs\pi$.}
	\end{equation*}
	By  \Cref{c:vnvstar}, it suffices to prove that both 
	\begin{equation*}
		L_q(\bs{\omega}\bs{\varpi}^{(1)})\otimes L_q(\bs{\varpi}^{(2)}) \qquad\text{and}\qquad L_q(\bs{\varpi}^{(2)})\otimes L_q(\bs{\omega}\bs{\varpi}^{(1)})  
	\end{equation*}
	are highest-$\ell$-weight. 	
	Consider the tensor product
	\begin{equation*}
		W_1=L_q(\bs{\omega})\otimes L_q(\bs{\varpi}^{(1)})\otimes L_q(\bs{\varpi}^{(2)}).
	\end{equation*}
	We claim that $W_1$ is highest-$\ell$-weight. Indeed, we are assuming that $L_q(\bs{\varpi}^{(1)})\otimes L_q(\bs{\varpi}^{(2)})$ is simple and the hypothesis on $G(\bs\varpi^{(1)})$ together with  \Cref{l:hlwquot} implies $L_q(\bs{\omega})\otimes L_q(\bs{\varpi}^{(2)})$ is also simple. 
	By condition (i) and  \Cref{l:hlwquot},   $L_q(\bs{\omega})\otimes L_q(\bs{\varpi}^{(1)})$ is  is highest-$\ell$-weight. Hence,  \Cref{cyc} implies $W_1$ is highest-$\ell$-weight, as well as its quotient $L_q(\bs{\omega}\bs{\varpi}^{(1)})\otimes L_q(\bs{\varpi}^{(2)})$. These facts also imply $L_q(\bs{\varpi}^{(2)})\otimes L_q(\bs{\omega}) \otimes L_q(\bs{\varpi}^{(1)})$ is highest-$\ell$-weight, showing that $L_q(\bs{\varpi}^{(2)})\otimes L_q(\bs{\omega}\bs{\varpi}^{(1)})$ is highest-$\ell$-weight.
\end{proof}

\begin{lem}\label{l:vertexremoval'}
	Suppose $L_q(\bs\pi)$ is prime and that $\bs\omega$ is an extremal vertex in $G(\bs\pi)$. 
	Let $\bs\varpi = \bs\pi\bs\omega^{-1}$. Then, either $L_q(\bs\varpi)$ is prime or there exists a non-trivial factorization $\bs\varpi= \bs{\varpi}^{(1)}\bs{\varpi}^{(2)}$ such that
	\begin{equation*}
		L_q(\bs{\varpi})\cong L_q(\bs{\varpi}^{(1)})\otimes L_q(\bs{\varpi}^{(2)})
	\end{equation*}
	and both $\bs{\varpi}^{(1)}$ and $\bs{\varpi}^{(2)}$ contain $q$-factors of $\bs\pi$ adjacent to $\bs\omega$ in $G(\bs\pi)$.
\end{lem}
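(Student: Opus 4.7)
The plan is to argue by contradiction, using Lemma~\ref{l:vertexremoval} to convert a hypothetical ``one-sided'' factorization of $\bs\varpi$ into a non-trivial factorization of $\bs\pi$, thereby violating primality.

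First, assume $L_q(\bs\varpi)$ is not prime. By definition this produces at least one non-trivial factorization $\bs\varpi=\bs\varpi^{(1)}\bs\varpi^{(2)}$ with $L_q(\bs\varpi)\cong L_q(\bs\varpi^{(1)})\otimes L_q(\bs\varpi^{(2)})$. I would then argue that at least one such factorization has both $\bs\varpi^{(1)}$ and $\bs\varpi^{(2)}$ containing $q$-factors of $\bs\pi$ adjacent to $\bs\omega$ in $G(\bs\pi)$. Pick any non-trivial factorization as above; if the conclusion already holds for it, we are done. Otherwise one of the two factors has no $q$-factor adjacent to $\bs\omega$, and after relabelling (the product is symmetric in the two factors) we may assume $\bs\varpi^{(2)}$ is that one. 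Then every $q$-factor of $\bs\pi$ adjacent to $\bs\omega$ must lie in $G(\bs\varpi^{(1)})$, which is exactly the hypothesis required by Lemma~\ref{l:vertexremoval}.

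Applying Lemma~\ref{l:vertexremoval} therefore yields
\begin{equation*}
L_q(\bs\pi)\cong L_q(\bs\omega\bs\varpi^{(1)})\otimes L_q(\bs\varpi^{(2)}).
\end{equation*}
Since $\bs\varpi^{(2)}$ is non-trivial by the choice of the original factorization, and $\bs\omega\bs\varpi^{(1)}$ is non-trivial because it contains the $q$-factor $\bs\omega$, this is a non-trivial factorization of $L_q(\bs\pi)$, contradicting the assumption that $L_q(\bs\pi)$ is prime. Hence a factorization of $\bs\varpi$ with the asserted property must exist, completing the proof.

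I do not foresee a serious obstacle: everything reduces to invoking Lemma~\ref{l:vertexremoval} once we have arranged (via a harmless relabelling of the two factors) that all neighbours of $\bs\omega$ land on the same side. The only subtlety worth highlighting is verifying that the resulting factorization of $\bs\pi$ is genuinely non-trivial, which is immediate because $\bs\omega$ is absorbed into one factor while the other, $\bs\varpi^{(2)}$, was already non-trivial.
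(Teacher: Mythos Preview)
Your proof is correct and is exactly the argument the paper has in mind: the paper's proof reads simply ``Immediate from \Cref{l:vertexremoval},'' and you have spelled out precisely that immediacy. The only implicit step worth noting (which you use but do not state) is that, by \Cref{p:qftp}, the factorization $\bs\varpi=\bs\varpi^{(1)}\bs\varpi^{(2)}$ has dissociate $q$-factorizations, so every $q$-factor of $\bs\varpi$ adjacent to $\bs\omega$ really does lie in exactly one of $G(\bs\varpi^{(1)})$ or $G(\bs\varpi^{(2)})$.
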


\begin{proof}
	Immediate from  \Cref{l:vertexremoval}.
\end{proof}

We can now prove \Cref{p:buildupprimesmono}. 	Write $G=G(\bs\pi)$, let $\bs\omega=v$, and $\bs\varpi=\bs\pi\bs\omega^{-1}$. In particular, $G_{\mathcal V\setminus\{v\}}=G(\bs\varpi)$. 	
Since $G$ is connected by \Cref{p:primeconect}, $v$ must be monovalent and, hence, there exists a unique $q$-factor $\bs\omega'$ of $\bs\varpi$ such that $L_q(\bs\omega)\otimes L_q(\bs\omega')$ is reducible. In particular,  $\bs\omega$ is extremal in $G$. The proposition then follows immediately from  \Cref{l:vertexremoval'}.

\subsection{A Key Highest-$\ell$-weight Criterion and \Cref{gencp}} We now establish a criterion for a tensor product to be highest-$\ell$-weight which is the heart of the proof of \Cref{gencp} and will also be used to deduce further criteria which will be used in the proof of \Cref{t:toto}.

\begin{prop}\label{p:killhlw}
	Let $\bs{\lambda},\bs{\nu}\in\mathcal{P}^+$ and  $V=L_q(\bs{\lambda})\otimes L_q(\bs{\nu})$. Then, $V$ is highest-$\ell$-weight provided there exists  $\bs{\mu}\in\mathcal{P}^+$ such that one of the following conditions holds:
	\begin{enumerate}[(i)]
		\item $L_q(\bs{\lambda}\bs{\mu})\otimes L_q(\bs{\nu})$ and $L_q(\bs{\lambda})\otimes L_q(\bs{\mu})$ are both highest-$\ell$-weight;
		\item $L_q(\bs{\lambda})\otimes L_q(\bs{\mu}\bs{\nu})$ and $L_q(\bs{\mu})\otimes L_q(\bs{\nu})$ are both highest-$\ell$-weight. 	
	\end{enumerate}
\end{prop}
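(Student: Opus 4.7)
The plan is a contradiction argument combining \Cref{l:subsandq}, \Cref{c:nonzeromorph}, and the standard fact (already implicit in the proof of \Cref{l:subsandq}(c)) that every $\ell$-weight of a highest-$\ell$-weight module with top $\bs\pi$ is $\le \bs\pi$. I focus on condition (i); condition (ii) will be handled symmetrically.

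Suppose, toward a contradiction, that $V=L_q(\bs{\lambda})\otimes L_q(\bs{\nu})$ is not highest-$\ell$-weight. By \Cref{l:subsandq}(c), there is a nonzero homomorphism
\[
\psi\colon L_q(\bs{\lambda})\otimes L_q(\bs{\nu})\to L_q(\bs{\varpi}),\qquad \text{for some } \bs{\varpi}<\bs{\lambda}\bs{\nu}.
\]
Independently, since $L_q(\bs{\lambda})\otimes L_q(\bs{\mu})$ is highest-$\ell$-weight, there is an epimorphism onto $L_q(\bs{\lambda}\bs{\mu})$, and \Cref{l:subsandq}(a) converts this to a nonzero monomorphism $\iota\colon L_q(\bs{\lambda}\bs{\mu})\hookrightarrow L_q(\bs{\mu})\otimes L_q(\bs{\lambda})$. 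Applying the first version of \Cref{c:nonzeromorph} with $V_1=L_q(\bs{\mu})$, $V_2=L_q(\bs{\lambda})$ (simple), $V_3=L_q(\bs{\nu})$, $L_1=L_q(\bs{\lambda}\bs{\mu})$, $L_2=L_q(\bs{\varpi})$, $\varphi_1=\iota$, and $\varphi_2=\psi$, I obtain a nonzero homomorphism
\[
L_q(\bs{\lambda}\bs{\mu})\otimes L_q(\bs{\nu})\longrightarrow L_q(\bs{\mu})\otimes L_q(\bs{\varpi}).
\]

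By hypothesis (i) the source is highest-$\ell$-weight, so its nonzero image in the target is also highest-$\ell$-weight with top $\ell$-weight $\bs{\lambda}\bs{\mu}\bs{\nu}$; in particular, $\bs{\lambda}\bs{\mu}\bs{\nu}$ occurs as an $\ell$-weight of $L_q(\bs{\mu})\otimes L_q(\bs{\varpi})$. Since both factors of this tensor product are highest-$\ell$-weight, every $\ell$-weight that appears in it is $\le \bs{\mu}\bs{\varpi}$, yielding $\bs{\varpi}(\bs{\lambda}\bs{\nu})^{-1}\in \cal Q_q^+$. Combined with $\bs{\varpi}<\bs{\lambda}\bs{\nu}$ (so $\bs{\lambda}\bs{\nu}\bs{\varpi}^{-1}\in\cal Q_q^+$), and the fact that any $\bs{\beta}\in\cal Q_q^+$ with $\bs{\beta}^{-1}\in\cal Q_q^+$ satisfies $\wt(\bs{\beta})\in Q^+\cap(-Q^+)=\{0\}$ and is therefore equal to $1$ (since $\cal Q_q^+$ is freely generated by the $\bs{\alpha}_{i,a}$, each of weight $\alpha_i$), I conclude $\bs{\varpi}=\bs{\lambda}\bs{\nu}$, contradicting $\bs{\varpi}\ne\bs{\lambda}\bs{\nu}$.

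For condition (ii), I would take $V_1=L_q(\bs{\lambda})$, $V_2=L_q(\bs{\nu})$ (simple), $V_3=L_q(\bs{\mu})$ and apply the second version of \Cref{c:nonzeromorph} with $\varphi_1=\psi\colon V_1\otimes V_2\to L_q(\bs{\varpi})$ and $\varphi_2\colon L_q(\bs{\mu}\bs{\nu})\hookrightarrow L_q(\bs{\nu})\otimes L_q(\bs{\mu})$ produced, via \Cref{l:subsandq}(a), from the fact that $L_q(\bs{\mu})\otimes L_q(\bs{\nu})$ is highest-$\ell$-weight. This yields a nonzero map $L_q(\bs{\lambda})\otimes L_q(\bs{\mu}\bs{\nu})\to L_q(\bs{\varpi})\otimes L_q(\bs{\mu})$ whose source is highest-$\ell$-weight with top $\bs{\lambda}\bs{\mu}\bs{\nu}$ by (ii), and the identical $\ell$-weight estimate forces $\bs{\varpi}=\bs{\lambda}\bs{\nu}$. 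The only real subtlety is bookkeeping: picking the correct variant of \Cref{c:nonzeromorph} and keeping track of which side each embedding or epimorphism sits on. Once this is arranged, the contradiction via $\ell$-weight comparison in $\cal Q_q^+$ is routine and requires no case analysis.
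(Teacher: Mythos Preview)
Your proof is correct and follows essentially the same route as the paper's: assume $V$ is not highest-$\ell$-weight, extract an epimorphism onto some $L_q(\bs\varpi)$ with $\bs\varpi<\bs\lambda\bs\nu$ via \Cref{l:subsandq}(c), embed $L_q(\bs\lambda\bs\mu)$ into $L_q(\bs\mu)\otimes L_q(\bs\lambda)$ via \Cref{l:subsandq}(a), and then combine them through \Cref{c:nonzeromorph} to contradict the highest-$\ell$-weight hypothesis on $L_q(\bs\lambda\bs\mu)\otimes L_q(\bs\nu)$. The only cosmetic difference is that the paper reaches the contradiction directly by noting the target has no $\ell$-weight equal to $\bs\lambda\bs\mu\bs\nu$, whereas you phrase it as $\bs\lambda\bs\nu\le\bs\varpi$ and invoke antisymmetry of $\le$ on $\mathcal P$; both are immediate.
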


\begin{proof} We write the details only in case (i) holds since the other case is similar. So, assume that $V$ is not highest-$\ell$-weight. In particular, there exists $\bs{\xi}\in\mathcal{P}^+$ such that $\bs{\xi}<\bs{\lambda}\bs{\nu}$ together with an epimorphism $V\xrightarrow{f} L_q(\bs{\xi})$. Therefore,there also exists an epimorphism
	\begin{equation*}
		L_q(\bs{\mu})\otimes V=L_q(\bs{\mu})\otimes L_q(\bs{\lambda})\otimes L_q(\bs{\nu})\xrightarrow{\operatorname{id}_{L_q(\bs{\mu})}\otimes f}L_q(\bs{\mu})\otimes L_q(\bs{\xi}).
	\end{equation*}
	On the other hand, since $L_q(\bs{\lambda})\otimes L_q(\bs{\mu})$ is highest-$\ell$-weight, there exist monomorphisms
	\begin{equation*}
		L_q(\bs{\lambda}\bs{\mu})\xrightarrow{g} L_q(\bs{\mu})\otimes L_q(\bs{\lambda})
		\qquad\text{and}\qquad 
		L_q(\bs{\lambda}\bs{\mu})\otimes L_q(\bs{\nu})\xrightarrow{g\otimes\operatorname{id}_{L_q(\bs{\nu})}} L_q(\bs{\mu})\otimes L_q(\bs{\lambda})\otimes L_q(\bs{\nu}).
	\end{equation*}
	\Cref{c:nonzeromorph} implies the composition
	\begin{equation*}
		L_q(\bs{\lambda}\bs{\mu})\otimes L_q(\bs{\nu})\xrightarrow{g\otimes\operatorname{id}_{L_q(\bs{\nu})}}L_q(\bs{\mu})\otimes L_q(\bs{\lambda})\otimes L_q(\bs{\nu})\xrightarrow{\operatorname{id}_{L_q(\bs{\mu})}\otimes f}L_q(\bs{\mu})\otimes L_q(\bs{\xi})
	\end{equation*}
	is non-zero. Then, since $L_q(\bs{\lambda}\bs{\mu})\otimes L_q(\bs{\nu})$ is highest-$\ell$-weight, the image of its highest-$\ell$-weight vector under this composition must be a non-zero vector with $\ell$-weight $\bs{\lambda}\bs{\mu}\bs{\nu}$. However, since $\bs{\xi}<\bs{\lambda}\bs{\nu}$, we also have  $\bs{\xi}\bs{\mu}<\bs{\lambda}\bs{\mu}\bs{\nu}$ and, hence, there is no vector in $L_q(\bs{\mu})\otimes L_q(\bs{\xi})$ with $\ell$-weight $\bs{\lambda}\bs{\mu}\bs{\nu}$, yielding a contradiction. 	
\end{proof}

One of the applications of the above result gives a partial answer to the following question. Suppose $\bs\pi,\bs\pi',\bs\varpi\in\mathcal P^+$ are such that $L_q(\bs\pi)\otimes L_q(\bs\pi')$ is highest-$\ell$-weight and $\bs\varpi$ divides $\bs\pi$. Under which further assumptions $L_q(\bs\pi\bs\varpi^{-1})\otimes L_q(\bs\pi')$ is also highest-$\ell$-weight? A similar question can be made in the case $\bs\varpi$ divides $\bs\pi'$.

\begin{cor}\label{c:killsinksource}
	Suppose $G$ and $G'$ are pseudo-$q$-factorization graphs over $\bs{\pi},\bs{\pi}'\in\mathcal{P}^+$, respectively. Assume $L_q(\bs{\pi})\otimes L_q(\bs{\pi}')$ is highest-$\ell$-weight and let $\bs{\omega}\in\mathcal{V}_G$ and $\bs{\omega}'\in\mathcal{V}_{G'}$. Then,
	\begin{enumerate}[(a)]
		\item If $\bs{\omega}$ is a sink in $\mathcal{V}_G$, $L_q(\bs{\pi}\bs{\omega}^{-1})\otimes L_q(\bs{\pi}')$ is highest-$\ell$-weight.
		\item If $\bs{\omega}'$ is a source in $\mathcal{V}_{G'}$, $L_q(\bs{\pi})\otimes L_q(\bs{\pi}'(\bs{\omega}')^{-1})$ is highest-$\ell$-weight.
	\end{enumerate}
\end{cor}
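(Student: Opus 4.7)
The plan is to invoke \Cref{p:killhlw}. For part (a), I would apply condition (i) of that proposition with $\bs{\lambda}=\bs{\pi}\bs{\omega}^{-1}$, $\bs{\mu}=\bs{\omega}$, and $\bs{\nu}=\bs{\pi}'$. Then $L_q(\bs{\lambda}\bs{\mu})\otimes L_q(\bs{\nu})=L_q(\bs{\pi})\otimes L_q(\bs{\pi}')$ is highest-$\ell$-weight by hypothesis, so the only nontrivial task is to check that $L_q(\bs{\pi}\bs{\omega}^{-1})\otimes L_q(\bs{\omega})$ is highest-$\ell$-weight. Part (b) would then follow symmetrically from condition (ii) applied with $\bs{\lambda}=\bs{\pi}$, $\bs{\mu}=\bs{\omega}'$, $\bs{\nu}=\bs{\pi}'(\bs{\omega}')^{-1}$, once one verifies that $L_q(\bs{\omega}')\otimes L_q(\bs{\pi}'(\bs{\omega}')^{-1})$ is highest-$\ell$-weight.

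To finish (a), I would use \Cref{l:hlwquot}. Enumerate $\mathcal V_G\setminus\{\bs{\omega}\}$ as $\bs{\omega}_1,\ldots,\bs{\omega}_k$, a linear extension of the partial order $\preceq$ on $\mathcal V_G$ that reads tails before heads; such an extension exists because \Cref{l:nocycle} precludes oriented cycles in $G$. For any $i<j$, if $\bs{\omega}_i$ and $\bs{\omega}_j$ are adjacent in $G$ then the choice of ordering forces the arrow between them to be $(\bs{\omega}_i,\bs{\omega}_j)$, so the arrow-reducibility dictionary for a pseudo $q$-factorization graph over $\bs{\pi}$ makes $L_q(\bs{\omega}_i)\otimes L_q(\bs{\omega}_j)$ reducible and highest-$\ell$-weight; if they are not adjacent, the same dictionary forces this tensor product to be simple, hence still highest-$\ell$-weight. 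For the cross terms, the hypothesis that $\bs{\omega}$ is a sink ensures that any arrow between $\bs{\omega}_i$ and $\bs{\omega}$ heads into $\bs{\omega}$, i.e., is oriented in the highest-$\ell$-weight direction, while non-adjacency again yields simplicity; hence $L_q(\bs{\omega}_i)\otimes L_q(\bs{\omega})$ is highest-$\ell$-weight for every $i$. Applying \Cref{l:hlwquot} with $\bs{\pi}^{(i)}=\bs{\omega}_i$ and $\widetilde{\bs{\pi}}^{(1)}=\bs{\omega}$ then finishes (a). Part (b) is the mirror of this: placing the source $\bs{\omega}'$ at the beginning of a compatible linear extension of $\preceq$ on $\mathcal V_{G'}$ ensures that every arrow adjacent to $\bs{\omega}'$ emanates from it, which is exactly the direction \Cref{l:hlwquot} requires.

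The argument is a near-mechanical assembly of \Cref{p:killhlw}, \Cref{l:hlwquot}, and the arrow-reducibility dictionary for pseudo $q$-factorization graphs, so no step is truly delicate. The only subtlety is that the chosen linear extension must simultaneously respect $\preceq$ and place the distinguished sink (respectively source) at the extremal end of the ordering; this is precisely what the sink/source hypothesis on $\bs{\omega}$ (respectively $\bs{\omega}'$) is designed to permit.
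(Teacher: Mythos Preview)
Your proposal is correct and matches the paper's own proof essentially verbatim: set $\bs\lambda=\bs\pi\bs\omega^{-1}$, $\bs\mu=\bs\omega$, $\bs\nu=\bs\pi'$, use \Cref{l:hlwquot} and the sink hypothesis to verify that $L_q(\bs\lambda)\otimes L_q(\bs\mu)$ is highest-$\ell$-weight, and then apply \Cref{p:killhlw}(i) (with (b) handled symmetrically via (ii)). The only cosmetic remark is that the enumeration you want is a linear extension of the \emph{reverse} of $\preceq$ (since in the paper $h_a\prec t_a$), but you already clarified this by specifying ``tails before heads,'' which is exactly the orientation \Cref{l:hlwquot} requires.
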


\begin{proof} If $\bs{\omega}$ is a sink in $\mathcal{V}_G$, \Cref{l:hlwquot} implies that $L_q(\bs{\pi}\bs{\omega}^{-1})\otimes L_q(\bs{\omega})$ is highest-$\ell$-weight, showing that (i) of \Cref{p:killhlw} holds with $\bs{\lambda}=\bs{\pi}\bs{\omega}^{-1}$, $\bs{\mu}=\bs{\omega}$ and $\bs{\nu}=\bs{\pi}'$. Part (b) follows similarly. 	
\end{proof}

\begin{cor}\label{c:source<-sink} Let $\bs{\pi}',\bs{\pi}''\in\mathcal{P}^+$ with dissociate factorizations and $\bs{\pi}=\bs{\pi}'\bs{\pi}''$. Let also 
	$G=G(\bs{\pi}), G'=G(\bs{\pi}'), G''=G(\bs{\pi}'')$, and suppose $\bs{\omega}',\bs{\omega}''\in\mathcal{P}^+$ satisfy 
	\begin{equation*}
		\bs{\omega}' \text{ is a source in } G', \quad \bs{\omega}'' \text{ is a sink in } G'', \quad\text{and}\quad (\bs{\omega}'',\bs{\omega}') \in \mathcal{A}_G.
	\end{equation*}
	Then, $L_q(\bs{\pi}')\otimes L_q(\bs{\pi}'')$ is not highest-$\ell$-weight.
\end{cor}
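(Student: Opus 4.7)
The plan is to argue by contradiction: assume $L_q(\bs{\pi}')\otimes L_q(\bs{\pi}'')$ is highest-$\ell$-weight, and use \Cref{c:killsinksource} iteratively to prune all vertices of $G'$ other than $\bs{\omega}'$, and all vertices of $G''$ other than $\bs{\omega}''$, while preserving highest-$\ell$-weightness at each step. This will reduce the situation to the two-vertex tensor product $L_q(\bs{\omega}')\otimes L_q(\bs{\omega}'')$, whose highest-$\ell$-weightness will then clash with the hypothesis $(\bs{\omega}'',\bs{\omega}')\in\mathcal{A}_G$.

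For the pruning of $G'$: at each stage the current graph $H'$ is the $q$-factorization graph of some divisor of $\bs{\pi}'$ containing $\bs{\omega}'$ as a source (removing vertices never creates new incoming arrows to $\bs{\omega}'$). Provided $\mathcal{V}_{H'}$ has more than one element, there must exist a sink of $H'$ distinct from $\bs{\omega}'$: the subgraph of $H'$ on $\mathcal{V}_{H'}\setminus\{\bs{\omega}'\}$ is nonempty and still has no oriented cycles by \Cref{l:nocycle}, so it contains a sink $v$; since no arrow enters $\bs{\omega}'$ in $H'$, the vertex $v$ has no outgoing arrow to $\bs{\omega}'$ either, hence is a sink of $H'$. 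Part (a) of \Cref{c:killsinksource} then allows us to replace $H'$ by $H'\setminus\{v\}$ while keeping the tensor product with $L_q(\bs{\pi}'')$ highest-$\ell$-weight. After finitely many iterations we arrive at $L_q(\bs{\omega}')\otimes L_q(\bs{\pi}'')$ being highest-$\ell$-weight.

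A symmetric argument pruning $G''$ via \Cref{c:killsinksource}(b), this time removing sources of the evolving graph distinct from the sink $\bs{\omega}''$, yields that $L_q(\bs{\omega}')\otimes L_q(\bs{\omega}'')$ is highest-$\ell$-weight. To close, the hypothesis $(\bs{\omega}'',\bs{\omega}')\in\mathcal{A}_G$ forces $L_q(\bs{\omega}'')\otimes L_q(\bs{\omega}')$ to be reducible and highest-$\ell$-weight, so by \Cref{sinter} the tensor product $L_q(\bs{\omega}')\otimes L_q(\bs{\omega}'')$ is also reducible. Combined with the just-derived highest-$\ell$-weightness, this would place $(\bs{\omega}',\bs{\omega}'')$ in $\mathcal{A}_G$, contradicting the no-double-arrows axiom recalled in \Cref{ss:graph}.

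The only delicate point in the argument is the implicit assertion that each pruning step genuinely yields the $q$-factorization graph of the reduced Drinfeld polynomial, so that the next invocation of \Cref{c:killsinksource} is legitimate; but this is routine, since removing a vertex of $G(\bs{\pi})$ amounts to dividing $\bs{\pi}$ by the associated $q$-factor, and the remaining factors still satisfy \eqref{defqfact}.
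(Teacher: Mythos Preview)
Your argument is correct. Both your proof and the paper's reduce the question to the two-vertex tensor product $L_q(\bs\omega')\otimes L_q(\bs\omega'')$, which fails to be highest-$\ell$-weight because of the arrow $(\bs\omega'',\bs\omega')$. The difference is in how the reduction is carried out. You peel off one vertex at a time via repeated use of \Cref{c:killsinksource}, each step requiring you to locate a sink (resp.\ source) distinct from $\bs\omega'$ (resp.\ $\bs\omega''$); this is fine, but it forces you to track that the intermediate graphs are still the $q$-factorization graphs of the corresponding quotients. The paper instead dispatches each side in a single application of \Cref{p:killhlw}: since $\bs\omega'$ is a source in $G'$, \Cref{l:hlwquot} gives at once that $L_q(\bs\omega')\otimes L_q(\bs\pi'(\bs\omega')^{-1})$ is highest-$\ell$-weight, and then the contrapositive of \Cref{p:killhlw}(i) with $\bs\lambda=\bs\omega',\ \bs\mu=\bs\pi'(\bs\omega')^{-1},\ \bs\nu=\bs\omega''$ shows $L_q(\bs\pi')\otimes L_q(\bs\omega'')$ is not highest-$\ell$-weight; a second contrapositive application of \Cref{p:killhlw}(ii) finishes. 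Your final contradiction via the no-double-arrows axiom is valid but slightly roundabout: \eqref{e:krhwtp} already says directly that $L_q(\bs\omega')\otimes L_q(\bs\omega'')$ is not highest-$\ell$-weight once $(\bs\omega'',\bs\omega')\in\mathcal A_G$.
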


\begin{proof}
	The first two assumptions, together with \Cref{l:hlwquot}, imply that
	\begin{equation*}
		L_q(\bs{\omega}')\otimes L_q(\bs{\pi}'(\bs{\omega}')^{-1})\quad\textrm{and}\quad L_q(\bs{\pi}''(\bs{\omega}'')^{-1})\otimes L_q(\bs{\omega}'') \quad\text{are highest-$\ell$-weight.}
	\end{equation*}
	On the other hand, the last assumption implies that $L_q(\bs{\omega}')\otimes L_q(\bs{\omega}'')$ is not highest-$\ell$-weight. Letting $\bs\lambda=\bs\omega',\bs\mu=\bs\pi'(\bs\omega')^{-1}$ and $\bs\nu=\bs\omega''$,  \Cref{p:killhlw}(a) implies  $L_q(\bs{\pi}')\otimes L_q(\bs{\omega}'')$
	is not highest-$\ell$-weight. Then, an application of \Cref{p:killhlw}(b) with $\bs\mu = \bs{\pi}''(\bs{\omega}'')^{-1}, \bs\nu = \bs{\omega}''$, and $\bs\lambda=\bs\pi'$ completes the proof. 
\end{proof}

\Cref{critredcutext} is easily deduced from \Cref{c:source<-sink}. Moreover, we can also give the:

\begin{proof}[Proof of \Cref{gencp}]
	Let $\bs\pi\in\mathcal P^+$ and assume $G(\bs\pi)$ is a totally ordered line. By \Cref{p:qftp}, we need to show that $L_q(\bs\pi')\otimes L_q(\bs\pi'')$ is reducible for any nontrivial decomposition $\bs\pi=\bs\pi'\bs\pi''$ such that $G=G(\bs\pi')$ and $G'=G(\bs\pi'')$ are connected and, hence, also totally ordered line. Without loss of generality, assume $G'$ contains the sink of $G$. Then, if $\bs\omega'$ is the source of $G'$ and $\bs\omega''$ is the sink of $G''$, the fact that $G, G'$, and $G''$ are totally ordered lines implies that $(\bs\omega'',\bs\omega') \in\mathcal A_G$ and, hence $L_q(\bs\pi')\otimes L_q(\bs\pi'')$ is not simple by  \Cref{c:source<-sink}. 
\end{proof}

\subsection{Highest-$\ell$-weight Criteria via Monotonic Paths}
Recall \eqref{e:Neibor}.

\begin{lem}\label{l:hlwlpm}
	Let $\bs\pi\in\mathcal{P}^+$ and suppose $G$ is a pseudo $q$-factorization graph over $\bs\pi$. Given $\bs\varpi\in\mathcal{V}_G$, consider
	\begin{equation*}
		\bs\pi_\pm=\prod\limits_{\bs{\omega}\in\mathcal{N}_G^\pm(\bs\varpi)}\bs{\omega}'.
	\end{equation*}	
	Then, the following tensor products are highest-$\ell$-weight: 
	\begin{equation*}
		L_q(\bs\pi_+)\otimes L_q(\bs\pi\bs\pi_+^{-1})\quad\textrm{and}\quad L_q(\bs\pi\bs\pi_-^{-1})\otimes L_q(\bs\pi_-).
	\end{equation*}
\end{lem}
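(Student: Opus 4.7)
The plan is to apply \Cref{l:hlwquot} to the factorizations $\bs\pi_+ = \bs\omega^{(1)}\cdots\bs\omega^{(m)}$ indexed by $\mathcal N_G^+(\bs\varpi)$ and $\bs\pi\bs\pi_+^{-1} = \widetilde{\bs\omega}^{(1)}\cdots\widetilde{\bs\omega}^{(\widetilde m)}$ indexed by $\mathcal V_G\setminus\mathcal N_G^+(\bs\varpi)$, in each case enumerated along a linear extension of $\succeq$ so that larger vertices receive smaller indices. The second claim is the mirror image of the first and will follow by the same argument with sinks in place of sources (or, equivalently, by applying the first claim to the arrow-dual graph $G^-$), so I focus on the first: $L_q(\bs\pi_+)\otimes L_q(\bs\pi\bs\pi_+^{-1})$ is highest-$\ell$-weight.

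The key structural fact is that, by the very definition of a pseudo $q$-factorization graph over $\bs\pi$, the arrow set $\mathcal A$ contains every ordered pair of vertices whose tensor product is reducible and highest-$\ell$-weight. Consequently, two non-adjacent vertices automatically have a simple, and thus highest-$\ell$-weight, tensor product. So my task reduces to checking that, for each ordered pair $(\bs\omega,\bs\omega')$ that appears in the hypotheses of \Cref{l:hlwquot}, if $\bs\omega$ and $\bs\omega'$ are adjacent then the joining arrow is $(\bs\omega,\bs\omega')\in\mathcal A$, whence $L_q(\bs\omega)\otimes L_q(\bs\omega')$ is highest-$\ell$-weight by the defining property of arrows.

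For pairs internal to either $\bs\pi_+$ or $\bs\pi\bs\pi_+^{-1}$, the linear extension of $\succeq$ guarantees $\bs\omega\succeq\bs\omega'$ whenever $\bs\omega$ precedes $\bs\omega'$; since the head of any arrow is strictly less than its tail, the arrow between two adjacent such vertices must be $(\bs\omega,\bs\omega')$. For cross pairs with $\bs\omega\in\mathcal N_G^+(\bs\varpi)$ and $\bs\omega'\notin\mathcal N_G^+(\bs\varpi)$, the main observation is transitivity of $\preceq$: if the joining arrow were instead $(\bs\omega',\bs\omega)$, then $\bs\omega\prec\bs\omega'$ combined with $\bs\varpi\prec\bs\omega$ would force $\bs\varpi\prec\bs\omega'$, contradicting $\bs\omega'\notin\mathcal N_G^+(\bs\varpi)$. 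With these verifications, \Cref{l:hlwquot} yields the claim.

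I expect no serious obstacle: the substantive content is packaged in \Cref{l:hlwquot}, and the verification is a clean combination of the completeness of the arrow set of a pseudo $q$-factorization graph and the transitivity of the partial order $\preceq$. The only delicate point is aligning the enumerations so that adjacency actually forces the arrow orientation we need in each of the required ordered pairs.
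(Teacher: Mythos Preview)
Your proposal is correct and follows essentially the same approach as the paper: both invoke \Cref{l:hlwquot} on the vertex-by-vertex factorization and reduce to checking, for the cross pairs $\bs\omega\in\mathcal N_G^+(\bs\varpi)$ and $\bs\omega'\notin\mathcal N_G^+(\bs\varpi)$, that an arrow $(\bs\omega',\bs\omega)$ would force $\bs\varpi\prec\bs\omega'$ by transitivity (the paper phrases this as composing a monotonic path $\rho\in\mathscr P^+_{\bs\varpi,\bs\omega}$ with the arrow). You are in fact slightly more careful than the paper, which silently treats the internal orderings within each factor as obvious, whereas you spell out that a linear extension of $\succeq$ handles them.
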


\begin{proof}
	For the first tensor product, \Cref{l:hlwquot}. implies it suffices to shows $L_q(\bs{\omega})\otimes L_q(\bs{\omega}')$ is highest-$\ell$-weight  for any $\bs{\omega}\in\mathcal{N}_G^+(\bs\varpi)$ and  any $\bs{\omega}'\in\mathcal{V}_G\setminus\mathcal{N}_G^+(\bs\varpi)$. Indeed, if this failed for some choice of such $\bs\omega$ and $\bs\omega'$, it would follow that $a:=(\bs{\omega}',\bs{\omega})\in\mathcal{A}_G$. Since $\bs{\omega}\in\mathcal{N}_G^+(\bs\varpi)$, we can chose $\rho\in\mathscr P^+_{\bs\varpi,\bs\omega}$ and it would follow that $\rho*a\in \mathscr P^+_{\bs\varpi,\bs\omega'}$, contradicting the assumption $\bs\omega'\notin \mathcal{N}_G^+(\bs\varpi)$. The second tensor product is treated similarly. 
\end{proof}

The next lemma will play a role in the proofs of \Cref{p:neighdualcrit} and \Cref{t:toto}.

\begin{lem}\label{c:killpi-pi'+}
	Let $\bs{\pi},\bs{\pi'}\in\mathcal{P}^+$ and suppose $G$ and $G'$ are  pseudo $q$-factorization graphs over  $\bs{\pi}$ and $\bs{\pi'}$, respectively. Let $\bs{\varpi}\in\mathcal{V}_G$ and $\bs{\varpi}'\in\mathcal{V}_{G'}$ and consider
	\begin{equation*}
		\bs{\pi}_+=\prod\limits_{\bs{\omega}\in\mathcal{N}^+_G(\bs{\varpi})}\bs{\omega}\quad\textrm{and}\quad\bs{\pi}'_-=\prod\limits_{\bs{\omega}\in\mathcal{N}^-_{G'}(\bs{\varpi}')}\bs{\omega}.
	\end{equation*}
	If $V=L_q(\bs{\pi})\otimes L_q(\bs{\pi}')$ is highest-$\ell$-weight, so are the following tensor products:
	\begin{equation*}
		L_q(\bs{\pi}_+)\otimes L_q(\bs{\pi}'),\quad L_q(\bs{\pi})\otimes L_q(\bs{\pi}'_-),\quad\textrm{and}\quad L_q(\bs{\pi}_+)\otimes L_q(\bs{\pi}'_-).
	\end{equation*}
\end{lem}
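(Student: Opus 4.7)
The plan is to derive each of the three highest-$\ell$-weight statements by a single application of \Cref{p:killhlw}, using \Cref{l:hlwlpm} to supply the auxiliary highest-$\ell$-weight tensor products that \Cref{p:killhlw} requires. No new idea appears necessary: the hypothesis that $L_q(\bs\pi)\otimes L_q(\bs\pi')$ is highest-$\ell$-weight plays the role of ``one of the two needed products is highest-$\ell$-weight'' in \Cref{p:killhlw}, while the ``other'' comes from the monotonic-neighborhood factorizations supplied by \Cref{l:hlwlpm} applied, in turn, to $G$ at $\bs\varpi$ and to $G'$ at $\bs\varpi'$.

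For the first tensor product $L_q(\bs\pi_+)\otimes L_q(\bs\pi')$, \Cref{l:hlwlpm} applied to $G$ at $\bs\varpi$ yields that $L_q(\bs\pi_+)\otimes L_q(\bs\pi\bs\pi_+^{-1})$ is highest-$\ell$-weight. I then invoke \Cref{p:killhlw}(i) with $\bs\lambda=\bs\pi_+$, $\bs\mu=\bs\pi\bs\pi_+^{-1}$, $\bs\nu=\bs\pi'$: the two needed hypotheses are $L_q(\bs\lambda\bs\mu)\otimes L_q(\bs\nu)=L_q(\bs\pi)\otimes L_q(\bs\pi')$ (highest-$\ell$-weight by assumption) and $L_q(\bs\lambda)\otimes L_q(\bs\mu)=L_q(\bs\pi_+)\otimes L_q(\bs\pi\bs\pi_+^{-1})$ (highest-$\ell$-weight by \Cref{l:hlwlpm}), so the conclusion $L_q(\bs\pi_+)\otimes L_q(\bs\pi')$ is highest-$\ell$-weight follows. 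The second tensor product $L_q(\bs\pi)\otimes L_q(\bs\pi'_-)$ is handled symmetrically on the $G'$ side: \Cref{l:hlwlpm} applied to $G'$ at $\bs\varpi'$ gives that $L_q(\bs\pi'(\bs\pi'_-)^{-1})\otimes L_q(\bs\pi'_-)$ is highest-$\ell$-weight, and \Cref{p:killhlw}(ii) with $\bs\lambda=\bs\pi$, $\bs\mu=\bs\pi'(\bs\pi'_-)^{-1}$, $\bs\nu=\bs\pi'_-$ then produces $L_q(\bs\pi)\otimes L_q(\bs\pi'_-)$ highest-$\ell$-weight.

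Finally, the third claim $L_q(\bs\pi_+)\otimes L_q(\bs\pi'_-)$ comes for free by iterating. Having just upgraded the hypothesis to ``$L_q(\bs\pi_+)\otimes L_q(\bs\pi')$ is highest-$\ell$-weight'', the very argument used for the second claim applies verbatim with $\bs\pi$ replaced by $\bs\pi_+$ throughout, yielding $L_q(\bs\pi_+)\otimes L_q(\bs\pi'_-)$ highest-$\ell$-weight. There is no genuine obstacle: the only care required is bookkeeping with the tensor-factor orderings, namely that the factorizations delivered by \Cref{l:hlwlpm} appear with $\bs\pi_+$ on the \emph{left} and $\bs\pi'_-$ on the \emph{right}, which is precisely the pattern matching the two parts of \Cref{p:killhlw}.
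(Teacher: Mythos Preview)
Your proof is correct and follows exactly the same approach as the paper: apply \Cref{p:killhlw} with the auxiliary highest-$\ell$-weight input supplied by \Cref{l:hlwlpm}, first on the $G$ side, then on the $G'$ side, and iterate for the third claim. The paper only spells out the first case and leaves the other two as ``similarly'', so your write-up is in fact a faithful expansion of theirs.
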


\begin{proof}
	Let $\bs\lambda=\bs\pi_+, \bs\mu=\bs\pi\bs\pi_+^{-1}$, and $\bs\nu = \bs\pi'$. By assumption, $L_q(\bs\lambda\bs\mu)\otimes L_q(\bs\nu)$ is highest-$\ell$-weight, while \Cref{l:hlwlpm} implies that so is  $L_q(\bs\lambda)\otimes L_q(\bs\mu)$. The claim about the first tensor product then follows from \Cref{p:killhlw}. The other two cases are treated similarly.
\end{proof}

\subsection{A Highest-$\ell$-weight Criterion from Duality} We now deduce the main technical part behind  the proof of \Cref{p:critprimedual} which will also be used for proving \Cref{t:toto}.

\begin{prop}\label{p:killdualhlw}
	Let $\bs{\lambda},\bs{\mu},\bs{\nu}\in\mathcal{P}^+$. Let also $V = L_q(\bs{\lambda})\otimes L_q(\bs{\nu})^*$,
	\begin{equation*}
		T_1=L_q(\bs{\lambda}\bs{\mu})\otimes L_q(\bs{\nu}),\quad U_1=L_q(\bs{\lambda})\otimes L_q(\bs{\mu}), \quad W_1=L_q(\bs{\mu})\otimes L_q(\bs{\nu}),
	\end{equation*}	
	\begin{equation*}
		T_2=L_q(\bs{\lambda})\otimes L_q(\bs{\mu}\bs{\nu}),\quad U_2=L_q(\bs{\mu})\otimes L_q(\bs{\nu}),\quad W_2=L_q(\bs{\lambda})\otimes L_q(\bs{\mu}).
	\end{equation*}
	Then, $W_i$ is highest-$\ell$-weight provided $T_i$ and $U_i$ are highest-$\ell$-weight,  $i\in\{1,2\}$, and $V$ is simple.
\end{prop}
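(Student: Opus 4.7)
The proof is by contradiction, following the strategy of \Cref{p:killhlw}. I describe case $i=1$ in detail; case $i=2$ is obtained by the analogous argument, exchanging the roles of $T$ and $W$ and using that $V^*=L_q(\bs\nu)\otimes L_q(\bs\lambda^*)$ is simple whenever $V$ is.

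Assume $W_1 = L_q(\bs\mu)\otimes L_q(\bs\nu)$ is not highest-$\ell$-weight. By \Cref{l:subsandq}(c), there is a nonzero epimorphism $\varphi\colon L_q(\bs\mu)\otimes L_q(\bs\nu)\to L_q(\bs\xi)$ with $\bs\xi<\bs\mu\bs\nu$. Convert $\varphi$ via the first Frobenius reciprocity of \eqref{e:frobrec}, using $L_q(\bs\nu)^*\cong L_q(\bs\nu^*)$ from \eqref{e:dualDpoli}, into a nonzero morphism $\tilde\varphi\colon L_q(\bs\mu)\to L_q(\bs\xi)\otimes L_q(\bs\nu^*)$. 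Simplicity of $V$ together with \Cref{sinter} supplies an isomorphism $\psi\colon L_q(\bs\nu^*)\otimes L_q(\bs\lambda)\xrightarrow{\sim}L_q(\bs\lambda\bs\nu^*)$, and composing $\tilde\varphi\otimes\operatorname{id}_{L_q(\bs\lambda)}$ with $\operatorname{id}\otimes\psi$ produces a morphism
\begin{equation*}
\hat\varphi\colon L_q(\bs\mu)\otimes L_q(\bs\lambda)\to L_q(\bs\xi)\otimes L_q(\bs\lambda\bs\nu^*),
\end{equation*}
which is nonzero by \Cref{c:nonzeromorph} with $L_q(\bs\nu^*)$ as the simple middle factor.

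From $U_1$ highest-$\ell$-weight and \Cref{l:subsandq}(a), one has an embedding $g\colon L_q(\bs\lambda\bs\mu)\hookrightarrow L_q(\bs\mu)\otimes L_q(\bs\lambda)$ sending the top weight vector to $v_{\bs\mu}\otimes v_{\bs\lambda}$. Since $\tilde\varphi(v_{\bs\mu})\neq 0$ (as $v_{\bs\mu}$ generates $L_q(\bs\mu)$) and $\operatorname{id}\otimes\psi$ is an isomorphism, the composition $\hat\varphi\circ g$ is nonzero on the top weight vector. Tensoring with $\operatorname{id}_{L_q(\bs\nu)}$ yields a nonzero morphism
\begin{equation*}
T_1 = L_q(\bs\lambda\bs\mu)\otimes L_q(\bs\nu)\longrightarrow L_q(\bs\xi)\otimes L_q(\bs\lambda\bs\nu^*)\otimes L_q(\bs\nu),
\end{equation*}
and, since $T_1$ is highest-$\ell$-weight, its image is a highest-$\ell$-weight submodule of the target with top $\ell$-weight $\bs\lambda\bs\mu\bs\nu$.

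The contradiction is then extracted by composing further with the isomorphism $L_q(\bs\lambda\bs\nu^*)\cong L_q(\bs\lambda)\otimes L_q(\bs\nu^*)$ (once again from $V$ simple) and the evaluation $\operatorname{ev}\colon L_q(\bs\nu^*)\otimes L_q(\bs\nu)\to\mathbb F$ applied to the last two factors, producing a morphism $T_1\to L_q(\bs\xi)\otimes L_q(\bs\lambda)$. The strict inequality $\bs\xi<\bs\mu\bs\nu$ forces $\lambda+\mu+\nu$ to exceed the top weight $\xi+\lambda$ of this new target, so the composition must vanish on the top weight vector of $T_1$ and hence, since $T_1$ is highest-$\ell$-weight, identically. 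The hard part—and the main obstacle—is then to make the final weight bookkeeping precise: one must carefully track the $\ell$-weight multiplicity of $\bs\lambda\bs\mu\bs\nu$ in the intermediate four-fold tensor product $L_q(\bs\xi)\otimes L_q(\bs\lambda)\otimes L_q(\bs\nu^*)\otimes L_q(\bs\nu)$ and show, leveraging the simplicity of $V$ and \Cref{l:subtens}, that the nonzero image of the top weight vector cannot survive this composition, contradicting what was just established.
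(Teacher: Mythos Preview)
Your approach is essentially the same as the paper's, and the argument up through the nonzero map
\[
L_q(\bs\lambda\bs\mu)\hookrightarrow L_q(\bs\xi)\otimes L_q(\bs\lambda)\otimes L_q(\bs\nu)^*
\]
is correct and matches the paper (the paper does not collapse $L_q(\bs\lambda)\otimes L_q(\bs\nu)^*$ to $L_q(\bs\lambda\bs\nu^*)$ and back, but that detour is harmless).

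Where you go astray is the final paragraph. There is no ``hard part'' and no obstacle: the passage from a nonzero map $L_q(\bs\lambda\bs\mu)\to L_q(\bs\xi)\otimes L_q(\bs\lambda)\otimes L_q(\bs\nu)^*$ to a nonzero map $T_1=L_q(\bs\lambda\bs\mu)\otimes L_q(\bs\nu)\to L_q(\bs\xi)\otimes L_q(\bs\lambda)$ is precisely the inverse of the Frobenius isomorphism in \eqref{e:frobrec} (written out explicitly just below that equation as $g\mapsto \gamma'_{V_3}\circ(\operatorname{id}_{V_3}\otimes\operatorname{ev}_{V_2})\circ(g\otimes\operatorname{id}_{V_2})$). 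Since \eqref{e:frobrec} is a bijection of Hom spaces, the resulting map is automatically nonzero --- no $\ell$-weight bookkeeping, no appeal to \Cref{l:subtens}, no tracking of multiplicities in a four-fold tensor product is needed. This is exactly how the paper finishes: one line invoking \eqref{e:frobrec}, then the weight comparison $\bs\lambda\bs\xi<\bs\lambda\bs\mu\bs\nu$ gives the contradiction with $T_1$ being highest-$\ell$-weight.

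So your proof is complete once you delete the last three sentences and replace them with: ``by \eqref{e:frobrec} this yields a nonzero map $T_1\to L_q(\bs\xi)\otimes L_q(\bs\lambda)$, contradicting that $T_1$ is highest-$\ell$-weight since $\bs\lambda\bs\xi<\bs\lambda\bs\mu\bs\nu$.''
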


\begin{proof}
	We write the details for $i=1$ only. Since $V$ is simple, we have $V\cong L_q(\bs{\nu})^*\otimes L_q(\bs{\lambda})$. 	
	If $W_1$ were not highest-$\ell$-weight, there would exist $\bs{\xi}\in\mathcal{P}^+$ such that $\bs{\xi}<\bs{\mu}\bs{\nu}$, together with an epimorphism $W_1\xrightarrow{f} L_q(\bs{\xi})$. Then, \eqref{e:frobrec} implies there would also exist monomorphisms
	\begin{equation*}
		L_q(\bs{\mu})\xrightarrow{g} L_q(\bs{\xi})\otimes L_q(\bs{\nu})^*	\qquad\text{and}\qquad
		L_q(\bs{\mu})\otimes L_q(\bs{\lambda})\xrightarrow{g\otimes\operatorname{id}_{L_q(\bs{\lambda})}} L_q(\bs{\xi})\otimes L_q(\bs{\nu})^*\otimes L_q(\bs{\lambda})\cong L_q(\bs{\xi})\otimes	V.
	\end{equation*}
	On the other hand, since $U_1$ is highest-$\ell$-weight, there exists a monomorphism
	\begin{equation*}
		L_q(\bs{\lambda}\bs{\mu})\xrightarrow{h} L_q(\bs{\mu})\otimes L_q(\bs{\lambda}).
	\end{equation*}
	Therefore, the following composition would also be injective
	\begin{equation*}
		L_q(\bs{\lambda}\bs{\mu})\xrightarrow{h} L_q(\bs{\mu})\otimes L_q(\bs{\lambda})\xrightarrow{g\otimes\operatorname{id}_{L_q(\bs{\lambda})}}L_q(\bs{\xi})\otimes V,
	\end{equation*}
	yielding a monomorphism
	\begin{equation*}
		L_q(\bs{\lambda}\bs{\mu})\hookrightarrow L_q(\bs{\xi})\otimes L_q(\bs{\lambda})\otimes L_q(\bs{\nu})^*.
	\end{equation*} 
	Finally, by \eqref{e:frobrec}, this implies there would exist a non-zero homomorphism
	\begin{equation*}
		T_1=L_q(\bs{\lambda}\bs{\mu})\otimes L_q(\bs{\nu})\rightarrow L_q(\bs{\xi})\otimes L_q(\bs{\lambda}).
	\end{equation*}
	Since $\bs{\xi}<\bs{\mu}\bs{\nu}$ and, therefore, $\bs{\lambda}\bs{\xi}<\bs{\lambda}\bs{\mu}\bs{\nu}$, this yields a contradiction with the assumption that $T_1$ is highest-$\ell$-weight. 	
\end{proof}

\begin{cor}\label{c:killsinksourcedual}
	Suppose $G$ and $G'$ are pseudo-$q$-factorization graphs over $\bs{\pi},\bs{\pi}'\in\mathcal{P}^+$, respectively, and assume $L_q(\bs{\pi})\otimes L_q(\bs{\pi}')$ is highest-$\ell$-weight. 	
	\begin{enumerate}[(a)]
		\item If $\bs{\omega}\in\mathcal{V}_G$ is a source in $G$ such that $L_q(\bs{\omega}')^*\otimes L_q(\bs{\omega})$ is simple  for any $\bs{\omega}'\in\mathcal{V}_{G'}$, then  $L_q(\bs{\pi}\bs{\omega}^{-1})\otimes L_q(\bs{\pi}')$ 	is highest-$\ell$-weight. 
		\item  If $\bs{\omega}'\in\mathcal{V}_{G'}$ is a sink in $G'$ such that $L_q(\bs{\omega}')\otimes{} ^*L_q(\bs{\omega})$ is simple for any $\bs{\omega}\in\mathcal{V}_G$, then $L_q(\bs{\pi})\otimes L_q(\bs{\pi}'(\bs{\omega}')^{-1})$ is highest-$\ell$-weight.
	\end{enumerate} 
\end{cor}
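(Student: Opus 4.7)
The plan is to invoke \Cref{p:killdualhlw} in direct analogy with the invocation of \Cref{p:killhlw} in the proof of \Cref{c:killsinksource}. For part (a), I would apply the $i=1$ branch with $\bs{\lambda}=\bs{\omega}$, $\bs{\mu}=\bs{\pi}\bs{\omega}^{-1}$, $\bs{\nu}=\bs{\pi}'$. Then $T_1 = L_q(\bs{\pi})\otimes L_q(\bs{\pi}')$ is highest-$\ell$-weight by the standing hypothesis, and $U_1 = L_q(\bs{\omega})\otimes L_q(\bs{\pi}\bs{\omega}^{-1})$ is highest-$\ell$-weight by \Cref{l:hlwquot}: the source hypothesis on $\bs{\omega}$ guarantees that, for every other $\bs{\omega}''\in \mathcal{V}_G$, the product $L_q(\bs{\omega})\otimes L_q(\bs{\omega}'')$ is highest-$\ell$-weight, either by the presence of an arrow $\bs{\omega}\to\bs{\omega}''$ in $G$ or by simplicity of the pair (\Cref{c:p2qf}). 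The conclusion $W_1 = L_q(\bs{\pi}\bs{\omega}^{-1})\otimes L_q(\bs{\pi}')$ being highest-$\ell$-weight thus reduces to verifying simplicity of $V = L_q(\bs{\omega})\otimes L_q(\bs{\pi}')^*$.

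For this step, I would use \Cref{c:vnvstar} together with $L_q(\bs{\pi}')^*\cong L_q((\bs{\pi}')^*)$ from \eqref{e:dualDpoli} to reduce simplicity to the assertion that both $L_q(\bs{\omega})\otimes L_q((\bs{\pi}')^*)$ and $L_q((\bs{\pi}')^*)\otimes L_q(\bs{\omega})$ are highest-$\ell$-weight. Both are consequences of \Cref{l:hlwquot} applied to the pseudo $q$-factorization $(\bs{\pi}')^* = \prod_k (\bs{\omega}'_k)^*$ inherited from the color-dual $(G')^*$. Indeed, a topological sort of $G'$ (available via \Cref{l:nocycle}) orders the factors so that each ordered pair $L_q((\bs{\omega}'_k)^*)\otimes L_q((\bs{\omega}'_l)^*)$ with $k<l$ is highest-$\ell$-weight---adjacent vertices produce an arrow in the appropriate direction (the arrow structure of $(G')^*$ coincides with that of $G'$, by the symmetry $\mathscr R_{i,j}^{r,s}=\mathscr R_{i^*,j^*}^{r,s}$ of \eqref{e:redsim}), while nonadjacent vertices produce simple tensor products by \Cref{c:p2qf}. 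The cross-terms $L_q(\bs{\omega})\otimes L_q((\bs{\omega}'_k)^*)$ are simple by the standing hypothesis combined with \Cref{sinter}, hence highest-$\ell$-weight in both orders.

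Part (b) is the mirror of part (a): apply \Cref{p:killdualhlw} with $i=2$, $\bs{\lambda}=\bs{\pi}$, $\bs{\mu}=\bs{\pi}'(\bs{\omega}')^{-1}$, $\bs{\nu}=\bs{\omega}'$, so that the sink property of $\bs{\omega}'$ places $U_2 = L_q(\bs{\pi}'(\bs{\omega}')^{-1})\otimes L_q(\bs{\omega}')$ in highest-$\ell$-weight form; the simplicity of $V = L_q(\bs{\pi})\otimes L_q(\bs{\omega}')^*$ is then treated by the same duality strategy, now decomposing via a topological sort of $G$. The one bookkeeping subtlety---which I view as the main obstacle to watch out for---is that the hypothesis in (b) is phrased in terms of the left dual ${}^*L_q(\bs{\omega})$; right-dualizing and applying \Cref{sinter} shows that this is equivalent to $L_q(\bs{\omega})\otimes L_q(\bs{\omega}')^*$ being simple for all $\bs{\omega}\in\mathcal{V}_G$, which is precisely the input needed to run the argument.
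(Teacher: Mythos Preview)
Your proposal is correct and follows essentially the same route as the paper: apply \Cref{p:killdualhlw} with $\bs\lambda=\bs\omega$, $\bs\mu=\bs\pi\bs\omega^{-1}$, $\bs\nu=\bs\pi'$ (and the mirror choices for (b)), using the source/sink hypothesis together with \Cref{l:hlwquot} to verify $U_i$, and the simplicity hypothesis together with \Cref{l:hlwquot} to verify $V$. Your treatment of the simplicity of $V$ is more explicit than the paper's---you spell out the passage through \Cref{c:vnvstar}, the topological sort of $G'$, and the arrow-preservation under color-duality via \eqref{e:redsim}---whereas the paper compresses this to a single citation of \Cref{l:hlwquot}; but the underlying argument is identical.
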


\begin{proof}
	If $\bs{\omega}\in\mathcal{V}_G$ is a source in $G$,  \Cref{l:hlwquot} implies $L_q(\bs{\omega})\otimes L_q(\bs{\pi}\bs{\omega}^{-1})$ is highest-$\ell$-weight. In its turn, since $L_q(\bs{\omega}')^*\otimes L_q(\bs{\omega})$ is simple for any $\bs{\omega}'\in\mathcal{V}_{G'}$,  \Cref{l:hlwquot} implies  $L_q(\bs{\pi}')^*\otimes L_q(\bs{\omega})$ is simple. Part (a) then follows from  \Cref{p:killdualhlw} with $\bs{\lambda}=\bs{\omega}$, $\bs{\mu}=\bs{\pi}\bs{\omega}^{-1}, \bs{\nu}=\bs{\pi}'$, and $i=1$. Part (b) is proved similarly.
\end{proof}

 The latter criteria leads to the following criterion for proving that a tensor product is not highest-$\ell$-weight.
\begin{prop}\label{p:neighdualcrit}
	Assume $\bs{\pi},\bs{\pi}'\in\mathcal{P}^+$ have dissociate $q$-factorizations and let $G=G(\bs{\pi}), G'=G(\bs{\pi}'), G''=G(\bs{\pi}\bs{\pi}')$. Suppose that there exist $\bs{\varpi}\in\mathcal{V}_G,\;\bs{\varpi}'\in\mathcal{V}_{G'}$ such that $(\bs{\varpi}',\bs{\varpi})\in\mathcal{A}_{G''}$ and
	\begin{equation}\label{e:neighdualcrit}
		L_q(\bs{\omega})\otimes L_q(\bs{\omega}')^* \quad\text{is simple}\quad \forall\  (\bs{\omega},\bs{\omega}')\in\mathcal{N}^+_G(\bs{\varpi})\times\mathcal{N}^-_{G'}(\bs{\varpi}')\setminus\{(\bs{\varpi},\bs{\varpi}')\}.
	\end{equation}
	Then, $L_q(\bs{\pi})\otimes L_q(\bs{\pi}')$ is not highest-$\ell$-weight. 
\end{prop}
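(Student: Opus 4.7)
The plan is to argue by contradiction: I will assume $V=L_q(\bs{\pi})\otimes L_q(\bs{\pi}')$ is highest-$\ell$-weight and deduce that $L_q(\bs{\varpi})\otimes L_q(\bs{\varpi}')$ is also highest-$\ell$-weight. This is incompatible with the hypothesis $(\bs{\varpi}',\bs{\varpi})\in\mathcal{A}_{G''}$: the arrow makes $L_q(\bs{\varpi}')\otimes L_q(\bs{\varpi})$ reducible and highest-$\ell$-weight, and \Cref{c:vnvstar} would then force $L_q(\bs{\varpi})\otimes L_q(\bs{\varpi}')$ to be simple, contradicting its reducibility. Throughout, I read $\mathcal{N}^+_G(\bs{\varpi})$ and $\mathcal{N}^-_{G'}(\bs{\varpi}')$ in \eqref{e:neighdualcrit} as including their base vertex, so that the exclusion of $(\bs{\varpi},\bs{\varpi}')$ is effective. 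Write $\widetilde{\bs{\pi}}_+=\prod_{\bs{\omega}\in\mathcal{N}^+_G(\bs{\varpi})}\bs{\omega}$, $\widetilde{\bs{\pi}}'_-=\prod_{\bs{\omega}'\in\mathcal{N}^-_{G'}(\bs{\varpi}')}\bs{\omega}'$, and $\bs{\pi}=\widetilde{\bs{\pi}}_+\bs{\mu}$, $\bs{\pi}'=\widetilde{\bs{\pi}}'_-\bs{\mu}'$.

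The first reduction trims $V$ to $L_q(\widetilde{\bs{\pi}}_+)\otimes L_q(\widetilde{\bs{\pi}}'_-)$ by two invocations of \Cref{p:killhlw}. An argument mirroring the proof of \Cref{l:hlwlpm} shows that for every $\bs{\omega}_1\in\mathcal{N}^+_G(\bs{\varpi})$ and $\bs{\omega}_2\in\mathcal{V}_G\setminus\mathcal{N}^+_G(\bs{\varpi})$ the tensor product $L_q(\bs{\omega}_1)\otimes L_q(\bs{\omega}_2)$ is highest-$\ell$-weight; indeed, otherwise the reverse arrow $(\bs{\omega}_2,\bs{\omega}_1)$ would lie in $\mathcal{A}_G$, producing the chain $\bs{\varpi}\preceq\bs{\omega}_1\prec\bs{\omega}_2$ and hence $\bs{\omega}_2\in\mathcal{N}^+_G(\bs{\varpi})$. \Cref{l:hlwquot} then gives that $L_q(\widetilde{\bs{\pi}}_+)\otimes L_q(\bs{\mu})$ is highest-$\ell$-weight, and \Cref{p:killhlw}(i), applied with $\bs{\lambda}=\widetilde{\bs{\pi}}_+$, the above $\bs{\mu}$, and $\bs{\nu}=\bs{\pi}'$, promotes this to $L_q(\widetilde{\bs{\pi}}_+)\otimes L_q(\bs{\pi}')$ being highest-$\ell$-weight. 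The symmetric argument on the $G'$ side together with \Cref{p:killhlw}(ii) then yields that $L_q(\widetilde{\bs{\pi}}_+)\otimes L_q(\widetilde{\bs{\pi}}'_-)$ is highest-$\ell$-weight.

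The second reduction strips the two factors down to $L_q(\bs{\varpi})$ and $L_q(\bs{\varpi}')$ by iterated use of \Cref{c:killsinksourcedual}. View $G_+:=G_{\mathcal{N}^+_G(\bs{\varpi})}$ as a pseudo $q$-factorization graph over $\widetilde{\bs{\pi}}_+$ and $G'_-:=G'_{\mathcal{N}^-_{G'}(\bs{\varpi}')}$ as one over $\widetilde{\bs{\pi}}'_-$; in these, $\bs{\varpi}$ is the unique sink of $G_+$ and $\bs{\varpi}'$ is the unique source of $G'_-$. So long as the current left graph properly contains $\bs{\varpi}$, there is a source $\bs{\omega}\ne\bs{\varpi}$; the hypothesis (with the exclusion inactive since $\bs{\omega}\ne\bs{\varpi}$) provides $L_q(\bs{\omega})\otimes L_q(\bs{\omega}'_1)^*$ simple for every vertex $\bs{\omega}'_1$ of $G'_-$, equivalent by \Cref{sinter} to the simplicity of $L_q(\bs{\omega}'_1)^*\otimes L_q(\bs{\omega})$, so \Cref{c:killsinksourcedual}(a) deletes $\bs{\omega}$ while preserving the highest-$\ell$-weight property. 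Iterating collapses the left factor to $L_q(\bs{\varpi})$. A symmetric iteration of \Cref{c:killsinksourcedual}(b) then removes sinks $\bs{\omega}'\ne\bs{\varpi}'$ from the right factor: the required simplicity of $L_q(\bs{\omega}')\otimes{}^*L_q(\bs{\varpi})$ is equivalent---via ${}^*(V_1\otimes V_2)\cong{}^*V_2\otimes{}^*V_1$ and ${}^*(V^*)\cong V$ together with \Cref{sinter}---to the simplicity of $L_q(\bs{\varpi})\otimes L_q(\bs{\omega}')^*$, which is again supplied by the hypothesis. After both iterations $L_q(\bs{\varpi})\otimes L_q(\bs{\varpi}')$ is highest-$\ell$-weight, delivering the contradiction.

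The main technical point to monitor is that the hypotheses of \Cref{c:killsinksourcedual} stay available throughout both iterations. The saving observation is that during the left reduction the right factor never changes (it equals $L_q(\widetilde{\bs{\pi}}'_-)$ at every step), and during the right reduction the left factor is just $L_q(\bs{\varpi})$; consequently every simplicity check that arises involves only a pair $(\bs{\omega},\bs{\omega}')\in\mathcal{N}^+_G(\bs{\varpi})\times\mathcal{N}^-_{G'}(\bs{\varpi}')\setminus\{(\bs{\varpi},\bs{\varpi}')\}$, which is exactly what \eqref{e:neighdualcrit} supplies.
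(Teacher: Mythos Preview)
Your proof is correct and follows essentially the same route as the paper's. Your first reduction to $L_q(\widetilde{\bs\pi}_+)\otimes L_q(\widetilde{\bs\pi}'_-)$ is precisely \Cref{c:killpi-pi'+}, and your second reduction reaches $L_q(\bs\varpi)\otimes L_q(\bs\varpi')$ by the same duality mechanism (\Cref{p:killdualhlw}). The only difference is cosmetic: the paper applies \Cref{p:killdualhlw} twice in bulk (once to collapse the right factor to $\bs\varpi'$, once to collapse the left factor to $\bs\varpi$), while you iterate its single-vertex corollary \Cref{c:killsinksourcedual}, peeling off one source or sink at a time. Both arguments use exactly the same simplicity inputs from \eqref{e:neighdualcrit}, and your observation that the base vertices must be read as belonging to $\mathcal N^\pm$ matches the paper's implicit convention.
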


\begin{proof}
	Suppose $
	L_q(\bs{\pi})\otimes L_q(\bs{\pi}')$ is highest-$\ell$-weight and let $\bs\pi_+,\bs\pi_-'$ be defined as in \Cref{c:killpi-pi'+}. In particular, $L_q(\bs{\pi}_+)\otimes L_q(\bs{\pi}'_-)$ is also highest-$\ell$-weight. On the other hand, it follows from \eqref{e:neighdualcrit} and \Cref{l:hlwquot} that
	\begin{equation*}
		L_q(\bs{\pi}_+)\otimes L_q(\bs{\pi}'_-(\bs{\varpi}')^{-1})^* \quad\text{is simple.}
	\end{equation*}
	\Cref{p:killdualhlw} with  $\bs{\lambda}=\bs{\pi}_+, \bs{\mu}=\bs{\varpi}',  \bs{\nu}=\bs{\pi}'_-(\bs{\varpi}')^{-1}$ and $i=2$ then implies that
	\begin{equation*}
		L_q(\bs{\pi}_+)\otimes L_q(\bs{\varpi}')  \quad\text{is highest-$\ell$-weight.}
	\end{equation*}
	Assumption \eqref{e:neighdualcrit} also implies $L_q(\bs{\omega})\otimes L_q(\bs{\varpi}')^*$ is simple for all $\bs{\omega}\in\mathcal{N}^+_G(\bs{\varpi})\setminus\{\bs{\omega}\}$ and, hence, it follows from \Cref{l:hlwquot}  that 
	\begin{equation*}
		L_q(\bs{\pi}_+\bs{\varpi}^{-1})\otimes L_q(\bs{\varpi}')^* \quad\text{is simple.}
	\end{equation*}
	Therefore, Proposition \ref{p:killdualhlw} with $\bs{\lambda}=\bs{\pi}_+\bs{\varpi}^{-1}, \bs{\mu}=\bs{\varpi}, \bs{\nu}=\bs{\varpi}'$, and $i=1$ implies that
	\begin{equation*}
		L_q(\bs{\varpi})\otimes L_q(\bs{\varpi}') \quad\text{is highest-$\ell$-weight,}
	\end{equation*}
	which contradicts the assumption $(\bs{\varpi}',\bs{\varpi})\in\mathcal{A}_{G''}$. 
\end{proof}

We are ready for:

\begin{proof}[Proof of \Cref{p:critprimedual}]
	Assume that $G$ is not prime, so we have a nontrivial factorization 
	\begin{equation}\label{eq:pipi'}
		L_q(\bs{\pi})\cong L_q(\bs{\pi}')\otimes L_q(\bs{\pi}'').
	\end{equation}
	By \Cref{p:qftp}, $\bs{\pi}'$ and $\bs{\pi}''$ have dissociate $q$-factorizations and, hence, if $G'=G(\bs\pi')$ and $G''=G(\bs\pi'')$, $(G',G'')$ is a cut of $G$. Therefore, by assumption there exist $\bs{\varpi}'\in\mathcal{V}_{G'}$ and $\bs{\varpi}''\in\mathcal{V}_{G''}$ such that either (i) or (ii) holds. If it is (i), \Cref{p:neighdualcrit} implies that $L_q(\bs{\pi}')\otimes L_q(\bs{\pi}'')$ is not highest-$\ell$-weight, yielding a contradiction. If it is (ii), the same conclusion is reached by interchanging the roles of $\bs\pi'$ and $\bs\pi''$.   
\end{proof}

\subsection{Preservation of Highest-$\ell$-weight Property  under Vertex Removal}\label{ss:remove}

We now present a couple of  criteria, in terms of the sets $\mathscr R_{i,j}^{r,s}$, for removing a vertex from a pseudo $q$-factorization graph in a way that its tensor product with another graph remains associated to a highest-$\ell$-weight tensor product.    They will play a role in the proof of \Cref{p:killfactormid} which, in turn, will play a small role in the proof of \Cref{t:toto}.

\begin{lem}\label{p:killfactorp}
	Let $\bs\pi,\bs\pi'\in\mathcal P^+$ be such that $L_q(\bs\pi)\otimes L_q(\bs\pi')$ is highest-$\ell$-weight and let $G$ and $G^{\prime}$ pseudo $q$-factorization graphs over $\bs\pi$ and $\bs\pi'$, respectively. If $\bs\omega_{i,a,r}$ is a vertex in $G$, then  $L_q(\bs\pi\bs\omega_{i,a,r}^{-1})\otimes L_q(\bs\pi')$ is also highest-$\ell$-weight if there exists $s\in\mathbb{Z}_{>0}$ such that, for each other vertex $\bs\omega_{i',aq^m,r'}$ of $G\otimes G'$, we have
	\begin{equation}\label{e:killfactorp}
		m-d_i(r+s)\notin\mathscr{R}_{i,i'}^{s,r'} \quad\text{as well as}\quad |m-d_is|\notin\mathscr{R}_{i,i'}^{r+s,r'} \quad\text{if}\quad\bs\omega_{i',aq^m,r'}\in\mathcal{V}_G.
	\end{equation}
	Similarly, if $\bs\omega_{i,a,r}$ is a vertex in $G'$, then  $L_q(\bs\pi)\otimes L_q(\bs\pi'\bs\omega_{i,a,r}^{-1})$ is also highest-$\ell$-weight if there exists $s\in\mathbb{Z}_{>0}$ such that, for each other vertex $\bs\omega_{i',aq^m,r'}$ of $G\otimes G'$, we have
	\begin{equation*}
		-d_i(r+s)-m\notin\mathscr{R}_{i,i'}^{s,r'} \quad\text{as well as}\quad 
		|m+d_is|\notin\mathscr{R}_{i,i'}^{r+s,r'} \quad\textrm{if}\quad\bs\omega_{i',aq^m,r'}\in\mathcal{V}_{G^{\prime}}.
	\end{equation*}
\end{lem}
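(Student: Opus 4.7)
The plan is to apply \Cref{p:killhlw}(i) with $\bs{\lambda}=\bs{\pi}\bs{\omega}_{i,a,r}^{-1}$, $\bs{\nu}=\bs{\pi}'$, and $\bs{\mu}=\bs{\omega}_{i,aq_i^s,r+s}$. The key algebraic identity, which I would verify by comparing the centers of the underlying $q$-strings, is
\begin{equation*}
\bs{\omega}_{i,aq_i^s,r+s}=\bs{\omega}_{i,a,r}\cdot\bs{\omega}_{i,aq_i^{r+s},s}.
\end{equation*}
This gives $\bs{\lambda}\bs{\mu}=\bs{\pi}\cdot\bs{\omega}_{i,aq_i^{r+s},s}$, so the two hypotheses of \Cref{p:killhlw}(i) reduce to showing (A) $L_q(\bs{\pi}\cdot\bs{\omega}_{i,aq_i^{r+s},s})\otimes L_q(\bs{\pi}')$ is highest-$\ell$-weight, and (B) $L_q(\bs{\pi}\bs{\omega}_{i,a,r}^{-1})\otimes L_q(\bs{\omega}_{i,aq_i^s,r+s})$ is highest-$\ell$-weight. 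Broadly speaking, the first displayed condition of the statement will handle (A) and the second will handle (B).

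For (A), I would split $\bs{\pi}\cdot\bs{\omega}_{i,aq_i^{r+s},s}$ as $\bs{\omega}_{i,aq_i^{r+s},s}\cdot\bs{\pi}$ and apply \Cref{l:hlwquot}. Three pairwise highest-$\ell$-weight conditions emerge: the pair $L_q(\bs{\pi})\otimes L_q(\bs{\pi}')$ is given, and the remaining two, $L_q(\bs{\omega}_{i,aq_i^{r+s},s})\otimes L_q(\bs{\pi})$ and $L_q(\bs{\omega}_{i,aq_i^{r+s},s})\otimes L_q(\bs{\pi}')$, both reduce to one sub-argument. By \Cref{l:nocycle} I would fix a topological order $\bs{\omega}^{(1)},\ldots,\bs{\omega}^{(m)}$ of $\mathcal{V}_G$ in which every arrow goes from an earlier to a later vertex; by the very definition of $\mathcal{A}_G$, this makes $L_q(\bs{\omega}^{(k)})\otimes L_q(\bs{\omega}^{(l)})$ highest-$\ell$-weight for all $k<l$. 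The first condition of the statement, read through \eqref{e:krhwtp} and the symmetry \eqref{e:redsim}, ensures the same for $L_q(\bs{\omega}_{i,aq_i^{r+s},s})\otimes L_q(\bs{\omega}^{(k)})$ at every vertex other than $\bs{\omega}_{i,a,r}$, while at $\bs{\omega}_{i,a,r}$ one instead checks directly from the identity above that the tensor product is reducible and highest-$\ell$-weight. Then \Cref{cyc} gives that the tensor product of all these factors with $\bs{\omega}_{i,aq_i^{r+s},s}$ placed first is highest-$\ell$-weight; since it surjects onto $L_q(\bs{\omega}_{i,aq_i^{r+s},s})\otimes L_q(\bs{\pi})$, the latter is highest-$\ell$-weight too. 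The same argument applied to $G'$ handles the third pair.

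For (B), I would apply \Cref{l:hlwquot} directly with the $q$-factorization of $\bs{\pi}\bs{\omega}_{i,a,r}^{-1}$ arranged in a topological order of $G\setminus\{\bs{\omega}_{i,a,r}\}$ on the left and the single factor $\bs{\omega}_{i,aq_i^s,r+s}$ on the right. The pairwise conditions among the left-hand factors hold by the topological ordering as before, and the cross conditions $L_q(\bs{\omega}^{(k)})\otimes L_q(\bs{\omega}_{i,aq_i^s,r+s})$ are supplied by the second condition of the statement, which through \eqref{defredset} forces each such tensor product to be simple and hence highest-$\ell$-weight. The symmetric ``similarly'' claim for $\bs{\omega}_{i,a,r}\in\mathcal{V}_{G'}$ follows by the same strategy after replacing \Cref{p:killhlw}(i) by \Cref{p:killhlw}(ii), taking $\bs{\mu}=\bs{\omega}_{i,aq_i^{-s},r+s}$, and using the mirror identity $\bs{\omega}_{i,aq_i^{-s},r+s}=\bs{\omega}_{i,a,r}\cdot\bs{\omega}_{i,aq_i^{-r-s},s}$, which is precisely what accounts for the sign reversals in the hypotheses.

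The main obstacle I anticipate is the bookkeeping: keeping straight the two auxiliary KR polynomials $\bs{\omega}_{i,aq_i^s,r+s}$ and $\bs{\omega}_{i,aq_i^{r+s},s}$ produced by the identity, aligning each with the correct condition of the statement, and confirming that all signs in \eqref{e:krhwtp} come out right once the three centers $aq^m$, $aq_i^s$, and $aq_i^{r+s}$ are compared.
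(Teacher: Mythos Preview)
Your proposal is correct and follows essentially the same route as the paper: both hinge on the identity $\bs\omega_{i,aq_i^s,r+s}=\bs\omega_{i,a,r}\,\bs\omega_{i,aq_i^{r+s},s}$, use the first condition in \eqref{e:killfactorp} to show that adjoining $\bs\omega_{i,aq_i^{r+s},s}$ on the left preserves the highest-$\ell$-weight property, and use the second condition to split $L_q(\bs\omega_{i,aq_i^s,r+s}\,\bs\pi\bs\omega_{i,a,r}^{-1})$ as a simple tensor product. The only cosmetic difference is that you package the final step through \Cref{p:killhlw}(i), whereas the paper carries out the equivalent epimorphism argument directly and finishes with the ``only if'' direction of \Cref{cyc}.
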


\begin{proof}
	We write down the details for the first statement only, as the second is similar. Given $s\in\mathbb Z_{\ge 0}$, consider 
	\begin{equation*}
		\bs\omega_{i,aq_i^s,r+s} = \bs\omega_{i,a,r}\bs\omega_{i,aq_i^{r+s},s}.
	\end{equation*}
	In particular,
	\begin{equation}\label{killfactorpoly}
		\bs\omega_{i,aq_i^{r+s},s}\bs\pi= \bs\omega_{i,aq_i^s,r+s}\bs\pi\bs\omega_{i,a,r}^{-1}.
	\end{equation}
	\Cref{cyc}, the assumption that  $L_q(\bs\pi)\otimes L_q(\bs\pi')$ is highest-$\ell$-weight, and the first condition in \eqref{e:killfactorp} imply
	\begin{equation*}
		L_q( \bs\omega_{i,aq_i^{r+s},s})\otimes L_q(\bs\pi)\otimes L_q(\bs\pi')
	\end{equation*}
	is highest-$\ell$-weight and, moreover, in light of \eqref{killfactorpoly}, we have an epimorphism
	\begin{equation}\label{killfactorepi}
		L_q( \bs\omega_{i,aq_i^{r+s},s})\otimes L_q(\bs\pi)\otimes L_q(\bs\pi') \to L_q( \bs\omega_{i,aq_i^s,r+s}(\bs\pi\bs\omega_{i,a,r}^{-1}))\otimes L_q(\bs\pi').
	\end{equation}
	The second condition in \eqref{e:killfactorp}  together with \Cref{l:hlwquot} implies 
	\begin{equation}\label{killfactorsimple}
		L_q( \bs\omega_{i,aq_i^s,r+s}(\bs\pi\bs\omega_{i,a,r}^{-1})) \cong  L_q( \bs\omega_{i,aq_i^s,r+s})\otimes L_q(\bs\pi\bs\omega_{i,a,r}^{-1}),
	\end{equation}
	from where the lemma follows.  Indeed, if $L_q(\bs\pi\bs\omega_{i,a,r}^{-1})\otimes L_q(\bs\pi')$ were not highest-$\ell$-weight, \eqref{killfactorsimple} would imply that neither would be the right-hand side of \eqref{killfactorepi}, yielding a contradiction.
\end{proof}

\begin{lem}\label{p:killfactormidp}
	Let $\bs\pi,\bs\pi'\in\mathcal P^+$ be such that $L_q(\bs\pi)\otimes L_q(\bs\pi')$ is highest-$\ell$-weight and let $G$ and $G^{\prime}$ pseudo $q$-factorization graphs over $\bs\pi$ and $\bs\pi'$, respectively. If $\bs\omega_{i,a,r}$ is a vertex in $G$, then  $L_q(\bs\pi\bs\omega_{i,a,r}^{-1})\otimes L_q(\bs\pi')$ is also highest-$\ell$-weight if there exist $s,s'\in\mathbb{Z}_{>0}$ such that, for each other vertex $\bs\omega_{i',aq^m,r'}$ of $G\otimes G'$, we have
	\begin{equation}\label{p:killfactormidpGG'}
		m-d_i(r+s')\notin\mathscr{R}_{i,i'}^{s',r'}, 
	\end{equation}
	\begin{equation}\label{p:killfactormidpG}
		-d_i(r+s)-m\notin\mathscr{R}_{i,i'}^{s,r'}, 
		\quad\textrm{and}\quad  |m+d_i(s-s')|\notin \mathscr R_{i,i'}^{r+s+s',r'}, 		
		\quad\textrm{if}\quad\bs\omega_{i',aq^m,r'}\in\mathcal{V}_G,
	\end{equation}
	as well as
	\begin{equation}\label{p:killfactormidpG'}
		m+d_i(r+s)\notin\mathscr{R}_{i,i'}^{s,r'},\quad\textrm{if}\quad\bs\omega_{i',aq^m,r'}\in\mathcal{V}_{G'}.
	\end{equation}
	Similarly,if $\bs\omega_{i,a,r}$ is a vertex in $G'$, then  $L_q(\bs\pi)\otimes L_q(\bs\pi'\bs\omega_{i,a,r}^{-1})$ is also highest-$\ell$-weight if there exists $s,s'\in\mathbb{Z}_{>0}$ such that, for each other vertex $\bs\omega_{i',aq^m,r'}$ of $G\otimes G'$, we have
	\begin{equation*}
		-m-d_i(r+s')\notin\mathscr{R}_{i,i'}^{s',r'}, 
	\end{equation*}
	\begin{equation*}
		m-d_i(r+s)\notin\mathscr{R}_{i,i'}^{s,r'}, 
		\quad\textrm{and}\quad  |m+d_i(s'-s )|\notin \mathscr R_{i,i'}^{r+s+s',r'}, 		
		\quad\textrm{if}\quad\bs\omega_{i',aq^m,r'}\in\mathcal{V}_{G'},
	\end{equation*}
	as well as
	\begin{equation*}
		d_i(r+s)-m\notin\mathscr{R}_{i,i'}^{s,r'},\quad\textrm{if}\quad\bs\omega_{i',aq^m,r'}\in\mathcal{V}_{G}.
	\end{equation*}
\end{lem}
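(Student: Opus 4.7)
My plan is to extend the strategy of \Cref{p:killfactorp} by inserting two auxiliary KR polynomials that \emph{sandwich} $\bs{\omega}_{i,a,r}$. Set $\bs{\mu}=\bs{\omega}_{i,aq_i^{r+s'},s'}$ and $\bs{\nu}=\bs{\omega}_{i,aq_i^{-(r+s)},s}$. A direct inspection of $q$-strings yields the fusion identity
\[
\bs{\nu}\cdot\bs{\omega}_{i,a,r}\cdot\bs{\mu}=\bs{\omega}_{i,aq_i^{s'-s},r+s+s'},
\]
so that $\bs{\mu}\bs{\nu}\bs{\pi}=\bs{\omega}_{i,aq_i^{s'-s},r+s+s'}\cdot\bs{\pi}\bs{\omega}_{i,a,r}^{-1}$. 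The first step will be to prove that
\[
W:=L_q(\bs{\mu})\otimes L_q(\bs{\pi})\otimes L_q(\bs{\nu})\otimes L_q(\bs{\pi}')
\]
is highest-$\ell$-weight by verifying all six pairwise conditions of \Cref{cyc}. The pair $L_q(\bs{\pi})\otimes L_q(\bs{\pi}')$ is highest-$\ell$-weight by hypothesis, and $L_q(\bs{\mu})\otimes L_q(\bs{\nu})$ is automatic since the ratio of their centers is the positive power $q^{d_i(2r+s+s')}$.

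For the remaining four pairs I will invoke \Cref{l:hlwquot}, factoring $\bs{\pi}$ and $\bs{\pi}'$ into their $q$-factors and ordering them by any linear extension of the arrow-induced partial order on $G$ (resp. $G'$). Such extensions exist because the graphs contain no oriented cycles (\Cref{l:nocycle}); in that order every within-factorization pair is highest-$\ell$-weight, because each arrow of a pseudo $q$-factorization graph encodes a reducible highest-$\ell$-weight pair in its prescribed direction, while non-adjacent $q$-factors yield simple pairs. Translating the center ratios, the three hypotheses deliver exactly the cross-pair data needed: \eqref{p:killfactormidpGG'} gives highest-$\ell$-weight of $L_q(\bs{\mu})\otimes L_q(\bs{\omega}_{i',aq^m,r'})$ for every other vertex (handling $L_q(\bs{\mu})\otimes L_q(\bs{\pi})$ and $L_q(\bs{\mu})\otimes L_q(\bs{\pi}')$); the first inequality of \eqref{p:killfactormidpG} gives highest-$\ell$-weight of $L_q(\bs{\omega}_{i',aq^m,r'})\otimes L_q(\bs{\nu})$ for $\bs{\omega}_{i',aq^m,r'}\in\mathcal{V}_G$ (handling $L_q(\bs{\pi})\otimes L_q(\bs{\nu})$); and \eqref{p:killfactormidpG'} gives highest-$\ell$-weight of $L_q(\bs{\nu})\otimes L_q(\bs{\omega}_{i',aq^m,r'})$ for $\bs{\omega}_{i',aq^m,r'}\in\mathcal{V}_{G'}$ (handling $L_q(\bs{\nu})\otimes L_q(\bs{\pi}')$). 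Thus $W$ will be highest-$\ell$-weight.

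The second step will be to show that $L_q(\bs{\omega}_{i,aq_i^{s'-s},r+s+s'})\otimes L_q(\bs{\pi}\bs{\omega}_{i,a,r}^{-1})$ is simple. The remaining inequality $|m+d_i(s-s')|\notin\mathscr{R}_{i,i'}^{r+s+s',r'}$ in \eqref{p:killfactormidpG} says exactly that the cross-pair $L_q(\bs{\omega}_{i,aq_i^{s'-s},r+s+s'})\otimes L_q(\bs{\omega}_{i',aq^m,r'})$ is simple for each $\bs{\omega}_{i',aq^m,r'}\in\mathcal{V}_G\setminus\{\bs{\omega}_{i,a,r}\}$. Applying \Cref{l:hlwquot} in both orderings, with $\bs{\pi}\bs{\omega}_{i,a,r}^{-1}$ decomposed along the restriction of our linear extension, will show both orderings highest-$\ell$-weight, whence \Cref{c:vnvstar} delivers simplicity and the isomorphism
\[
L_q\bigl(\bs{\omega}_{i,aq_i^{s'-s},r+s+s'}\cdot\bs{\pi}\bs{\omega}_{i,a,r}^{-1}\bigr)\cong L_q(\bs{\omega}_{i,aq_i^{s'-s},r+s+s'})\otimes L_q(\bs{\pi}\bs{\omega}_{i,a,r}^{-1}).
\]

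To finish, I will observe that the 3-fold subtensor $L_q(\bs{\mu})\otimes L_q(\bs{\pi})\otimes L_q(\bs{\nu})$ is highest-$\ell$-weight (its pairs are among the six verified in Step~1) and therefore surjects onto $L_q(\bs{\mu}\bs{\pi}\bs{\nu})$. Tensoring this surjection with $L_q(\bs{\pi}')$ and substituting the isomorphism from Step~2 produces a surjection
\[
W\twoheadrightarrow L_q(\bs{\omega}_{i,aq_i^{s'-s},r+s+s'})\otimes L_q(\bs{\pi}\bs{\omega}_{i,a,r}^{-1})\otimes L_q(\bs{\pi}'),
\]
whose target must be highest-$\ell$-weight; a final application of \Cref{cyc} extracts the pair $L_q(\bs{\pi}\bs{\omega}_{i,a,r}^{-1})\otimes L_q(\bs{\pi}')$ as highest-$\ell$-weight, settling the first half of the lemma. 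The second half will follow by the mirror-image argument, interchanging the roles of $G$ and $G'$ and swapping the tensor positions on which $\bs{\mu}$ and $\bs{\nu}$ are inserted. The hardest part will be the bookkeeping: matching each of the four inequalities to the correctly-ordered cross-pair, and ensuring the linear extensions behave consistently after deleting $\bs{\omega}_{i,a,r}$.
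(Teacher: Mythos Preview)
Your proof is correct and uses the same underlying idea as the paper. The only difference is organizational: the paper fuses on one side first (introducing only $\bs\nu=\bs\omega_{i,aq_i^{-(r+s)},s}$), obtains that $L_q(\tilde{\bs\pi})\otimes L_q(\bs\pi')$ is highest-$\ell$-weight with $\tilde{\bs\pi}=\bs\omega_{i,aq_i^{-s},r+s}\,\bs\pi\bs\omega_{i,a,r}^{-1}$, and then invokes \Cref{p:killfactorp} (which supplies the second fusion with $s'$) as a black box to remove the enlarged factor. You instead introduce $\bs\mu$ and $\bs\nu$ simultaneously and peel off the fully fused factor $\bs\omega_{i,aq_i^{s'-s},r+s+s'}$ in one shot via \Cref{cyc}. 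The hypotheses \eqref{p:killfactormidpGG'}--\eqref{p:killfactormidpG'} are used in exactly the same places in both arguments; the paper's route is more modular (it leverages the previous lemma), while yours is self-contained but re-does that lemma's content inline.
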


\begin{proof}
	We write down the details for the first part only as the second is similar. Given $s\in\mathbb Z_{\ge 0}$, consider 
	\begin{equation*}
		\bs\omega_{i,aq_i^{-s},r+s} = \bs\omega_{i,a,r}\bs\omega_{i,aq_i^{-r-s},s}.
	\end{equation*}
	In particular,
	\begin{equation}\label{killfactormidpoly}
		\bs\omega_{i,aq_i^{-r-s},s}\bs\pi= \bs\omega_{i,aq_i^{-s},r+s}\bs\pi\bs\omega_{i,a,r}^{-1}.
	\end{equation}	
	\Cref{l:hlwquot} and the the first assumption in \eqref{p:killfactormidpG} imply $L_q(\bs\pi)\otimes L_q(\bs\omega_{i,aq_i^{-r-s},s})$ is highest-$\ell$-weight while \eqref{p:killfactormidpG'} implies the same holds for $L_q(\bs\omega_{i,aq_i^{-r-s},s})\otimes L_q(\bs\pi')$. Since $L_q(\bs\pi)\otimes L_q(\bs\pi')$  is also highest-$\ell$-weight, \Cref{cyc} implies that so is
	\begin{equation*}
		L_q(\bs{\pi})\otimes L_q(\bs{\omega}_{i,aq_i^{-r-s},s})\otimes L_q(\bs\pi')
	\end{equation*}
	and, moreover,  we have an epimorphism
	\begin{equation*}
		L_q(\bs{\pi})\otimes L_q(\bs{\omega}_{i,aq_i^{-r-s},s})\otimes L_q(\bs\pi') \to L_q(\bs{\pi}\bs{\omega}_{i,aq_i^{-r-s},s})\otimes L_q(\bs\pi').
	\end{equation*}
	Hence, in light of \eqref{killfactormidpoly}, 
	\begin{equation*}
		L_q(\bs\omega_{i,aq_i^{-s},r+s}\bs\pi\bs\omega_{i,a,r}^{-1})\otimes L_q(\bs\pi') \quad\text{is highest-$\ell$-weight.}
	\end{equation*}
	Letting $\bs{\tilde\pi} = \bs\omega_{i,aq_i^{-s},r+s}\bs\pi\bs\omega_{i,a,r}^{-1}$, we have just proved that $L_q(\bs{\tilde\pi})\otimes L_q(\bs\pi')$ is highest-$\ell$-weight and we want to show 
	\begin{equation*}
		L_q(\tilde{\bs\pi}\bs\omega_{i,aq_i^{-s},r+s}^{-1})\otimes L_q(\bs\pi') \quad\text{is highest-$\ell$-weight.}
	\end{equation*}
	This follows from \Cref{p:killfactorp} if the pseudo $q$-factor $\bs\omega_{i,aq_i^{-s},r+s}$ of $\tilde{\bs\pi}$ satisfies the corresponding conditions in \eqref{e:killfactorp}. This is exactly what \eqref{p:killfactormidpGG'} and the second condition in \eqref{p:killfactormidpG} guarantee.	
\end{proof}

\section{Totally Ordered Graphs}\label{ss:toto}
In this section we prove  \Cref{t:toto} and, hence, assume $\lie g$ is of type $A$.

\subsection{Further Vertex Removal Criteria}\label{ss:removeA}
We continue the line of investigation initiated in \Cref{ss:remove} and obtain further criteria with the same spirit, but specific for type $A$.

\begin{prop}\label{p:killfactorA}
	Let $\bs\pi,\bs\pi'\in\mathcal P^+$ be such that $L_q(\bs\pi)\otimes L_q(\bs\pi')$ is highest-$\ell$-weight and let $G$ be a pseudo $q$-factorization graph over $\bs\pi$. If $\bs\omega_{i,a,r}$ is a vertex in $G$, then  $L_q(\bs\pi\bs\omega_{i,a,r}^{-1})\otimes L_q(\bs\pi')$ is also highest-$\ell$-weight if, for each other vertex $\bs\omega_{i',aq^m,r'}$ of $G$, we have
	\begin{equation}\label{killfactoropt}
		-m>\max\mathscr R_{i,i'}^{r,r'} \qquad\text{or}\qquad r + d(i,i')\ge r'-m.
	\end{equation}
	Similarly, if $G'$ is a pseudo $q$-factorization graph over $\bs\pi'$ and $\bs\omega_{i,a,r}$ is a vertex in $G'$, then  $L_q(\bs\pi)\otimes L_q(\bs\pi'\bs\omega_{i,a,r}^{-1})$ is also highest-$\ell$-weight if, for each other vertex $\bs\omega_{i',aq^m,r'}$ of $G'$,  we have
	\begin{equation}
		m>\max\mathscr R_{i,i'}^{r,r'} \qquad\text{or}\qquad r+ d(i,i')\ge r'+m.
	\end{equation}
\end{prop}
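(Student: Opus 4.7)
The plan is to apply \Cref{p:killfactorp} with $s$ chosen sufficiently large. Since $\lie g$ is of type $A$, $d_i=1$ for all $i$, and by \Cref{t:krredsets}, once $s>r'$ and $r+s>r'$, the relevant sets satisfy
\begin{align*}
\mathscr R_{i,i'}^{s,r'}&\subseteq [\,s-r'+d(i,i')+2,\ s+r'+d(i,i')+2d([i,i'],\partial I)\,],\\
\mathscr R_{i,i'}^{r+s,r'}&\subseteq [\,(r+s)-r'+d(i,i')+2,\ (r+s)+r'+d(i,i')+2d([i,i'],\partial I)\,].
\end{align*}
The first observation is that the first condition of \Cref{p:killfactorp}, namely $m-(r+s)\notin\mathscr R_{i,i'}^{s,r'}$ for \emph{every} other vertex of $G\otimes G'$, becomes automatic for $s$ large: as $s$ grows, the lower endpoint above grows linearly in $s$ while $m-(r+s)$ decreases linearly, so eventually $m-(r+s)$ falls strictly below the interval. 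Since $G\otimes G'$ has finitely many vertices, a single large $s$ suffices for all of them; this explains why the statement imposes no hypothesis on vertices of $G'$.

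It thus remains to verify the second condition $|m-s|\notin\mathscr R_{i,i'}^{r+s,r'}$ for each other vertex $\bs\omega_{i',aq^m,r'}$ of $G$. Taking $s>m$ so that $|m-s|=s-m$, the key arithmetic observation is that $s-m$ exceeds the upper endpoint of the interval displayed above precisely when $-m>r+r'+d(i,i')+2d([i,i'],\partial I)=\max\mathscr R_{i,i'}^{r,r'}$, which is condition (i); and $s-m$ lies strictly below the lower endpoint precisely when $-m<r-r'+d(i,i')+2$, i.e., $m>r'-r-d(i,i')-2$, which is implied by $m\ge r'-r-d(i,i')$, i.e., by condition (ii). Consequently, under the hypothesis \eqref{killfactoropt}, $s-m$ lies outside the interval containing $\mathscr R_{i,i'}^{r+s,r'}$, verifying the second condition of \Cref{p:killfactorp} and hence the first claim of the proposition.

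For the ``similarly'' clause I would apply the analogous strategy to the second version of \Cref{p:killfactorp}: the condition $-(r+s)-m\notin\mathscr R_{i,i'}^{s,r'}$ is again automatic for $s$ large because its left-hand side tends to $-\infty$, while the two alternative hypotheses $m>\max\mathscr R_{i,i'}^{r,r'}$ and $r+d(i,i')\ge r'+m$ translate respectively into $m+s$ being above the upper endpoint or below the lower endpoint of the interval containing $\mathscr R_{i,i'}^{r+s,r'}$. Since everything reduces to elementary arithmetic with the explicit description from \Cref{t:krredsets}, I do not expect any serious obstacle; the only conceptual subtlety is recognizing that taking $s$ large uniformly trivializes the requirements on vertices of the ``other'' graph, so no extra hypothesis on them is needed.
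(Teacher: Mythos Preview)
Your proposal is correct and follows essentially the same approach as the paper's proof: both reduce to \Cref{p:killfactorp} by choosing $s$ large enough that the first condition in \eqref{e:killfactorp} becomes vacuous, and then use the explicit type $A$ description of $\mathscr R_{i,i'}^{r+s,r'}$ from \Cref{t:krredsets} to verify the second condition under hypothesis \eqref{killfactoropt}. The paper phrases the last step by writing $s-m=(r+s)+r'+d(i,i')-2p$ and deriving a contradiction on the range of $p$, whereas you phrase it via the endpoints of the interval containing $\mathscr R_{i,i'}^{r+s,r'}$; these are the same computation.
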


\begin{proof}
	We write down the details for the first statement only, as the second is similar. We check that \eqref{killfactoropt} implies there exists $s$ satisfying \eqref{e:killfactorp} for any pseudo $q$-factorization graph $G'$ over $\bs\pi'$ and the proposition then follows from \Cref{p:killfactorp}. Indeed, letting
	\begin{equation*}
		M = \max\{m\in\mathbb Z:\exists\ i'\in I, r'\in\mathbb Z_{>0} \text{ s.t. } \bs\omega_{i',aq^m,r'}\in\mathcal V_{G\otimes G'}\},
	\end{equation*}
	the first condition in \eqref{e:killfactorp} is satisfied provided $s\ge M-r$ (this does not depend on $\lie g$ being of type $A$).

	Let $\bs\omega_{i',aq^m,r'}$ be a vertex of $G$ distinct from $\bs\omega_{i,a,r}$. By choosing $s$ sufficiently large, we can also assume $s>m$ and $r+s>r'$.  In particular, $|s-m|=s-m$. If the second condition in \eqref{e:killfactorp} were not satisfied, we would have
	\begin{equation*}
		s-m =  s+r+r'+d(i,i')- 2p \quad\text{for some}\quad -d([i,i'],\partial I)\le p< r'=\min\{r',s+r\},
	\end{equation*}
	and, hence,
	\begin{equation*}
		-m =r+r'+d(i,i')- 2p.
	\end{equation*}
	The condition $-d([i,i'],\partial I)\le p$ contradicts the first option in \eqref{killfactoropt} while the condition $p< r'$ contradicts the second.
\end{proof}

\begin{prop}\label{p:killfactormid}
	Let $\bs\pi,\bs\pi'\in\mathcal P^+$ be such that $L_q(\bs\pi)\otimes L_q(\bs\pi')$ is highest-$\ell$-weight and let $G$ and $G'$ be pseudo $q$-factorization graphs over $\bs\pi$ and $\bs\pi'$, respectively. Suppose there exists a vertex $\bs\omega_{i,a,r}$ of $G$ such that for any vertex $\bs\omega_{i',aq^m,r'}$ of $G'$, we have
	\begin{equation}\label{e:killfactormid}
		m = r+r'+d(i,i')-2p \quad\text{with either}\quad p<r-d([i,i'],\partial I) \quad\text{or}\quad p\ge r+r' .  
	\end{equation} 
	Then, $L_q(\bs\pi\bs\omega_{i,a,r}^{-1})\otimes L_q(\bs\pi')$ is also highest-$\ell$-weight. 
	Similarly, if  there exists a vertex $\bs\omega_{i,a,r}$ of $G'$ such that, for any vertex $\bs\omega_{i',aq^m,r'}$ of $G$, we have
	\begin{equation*}
		m = r+r'+d(i,i')-2p \quad\text{with either}\quad p<r-d([i,i'],\partial I) \quad\text{or}\quad p\ge r+r',
	\end{equation*}
	then $L_q(\bs\pi)\otimes L_q(\bs\pi'\bs\omega_{i,a,r}^{-1})$ is also highest-$\ell$-weight.
\end{prop}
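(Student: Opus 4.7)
The strategy is to reduce to \Cref{p:killfactormidp} by choosing suitable parameters $s,s'\in\mathbb Z_{>0}$. Since $\lie g$ is of type $A$, $d_i=1$ for every $i\in I$, and \Cref{t:krredsets} provides the explicit description
$\mathscr R_{i,j}^{r,s}=\{r+s+d(i,j)-2p : -d([i,j],\partial I)\le p<\min\{r,s\}\}$.

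For the first statement, I will take $s=s'$ large and verify the four conditions of the first part of \Cref{p:killfactormidp} for every other vertex $\bs\omega_{i',aq^m,r'}\in\mathcal V_{G\otimes G'}$. The condition \eqref{p:killfactormidpGG'}, the first inequality of \eqref{p:killfactormidpG}, and the second inequality of \eqref{p:killfactormidpG} (which for $s=s'$ reduces to $|m|\notin\mathscr R_{i,i'}^{r+2s,r'}$) all hold automatically once $s$ exceeds a constant depending only on the finitely many vertex data: the elements of $\mathscr R_{i,i'}^{s,r'}$ are positive integers whose smallest element grows linearly in $s$, while the corresponding left-hand sides $m-(r+s)$ and $-(r+s)-m$ become negative, and the smallest element of $\mathscr R_{i,i'}^{r+2s,r'}$, namely $r+2s-r'+d(i,i')+2$, eventually exceeds $|m|$.

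The critical condition is \eqref{p:killfactormidpG'}, namely $m+(r+s)\notin\mathscr R_{i,i'}^{s,r'}$ for each $\bs\omega_{i',aq^m,r'}\in\mathcal V_{G'}$, and this is exactly where the hypothesis is invoked. Taking $s>r'$ so that $\min\{s,r'\}=r'$, the equation $m+(r+s)=s+r'+d(i,i')-2p'$ combined with $m=r+r'+d(i,i')-2p$ simplifies to $p'=p-r$. The two cases of the hypothesis translate directly: if $p<r-d([i,i'],\partial I)$ then $p'<-d([i,i'],\partial I)$, and if $p\ge r+r'$ then $p'\ge r'$, so in both cases $p'$ lies outside the admissible range $[-d([i,i'],\partial I),r')$. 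Since $G\otimes G'$ has finitely many vertices, we can choose $s=s'$ large enough so that all four conditions are satisfied simultaneously, and \Cref{p:killfactormidp} yields the conclusion.

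The second statement is established by an analogous application of the second part of \Cref{p:killfactormidp}. The main obstacle lies in the critical condition $(r+s)-m\notin\mathscr R_{i,i'}^{s,r'}$ for $G$-vertices: the analogous substitution yields $p'=r'+d(i,i')-p$, and the two cases of the hypothesis correspond to complementary regimes depending on whether $r$ is smaller or larger than $d(i,i')+d([i,i'],\partial I)$; reconciling them requires choosing $s$ so that $\min\{s,r'\}$ excludes $p'$ whenever $p'$ would otherwise fall in the admissible range, while keeping $s'$ sufficiently large for the remaining conditions to hold, after which \Cref{p:killfactormidp} applies.
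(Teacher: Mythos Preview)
Your treatment of the first statement is correct and follows the same route as the paper: both arguments reduce to \Cref{p:killfactormidp} by choosing the auxiliary parameters large, and both identify \eqref{p:killfactormidpG'} as the place where the hypothesis enters via the substitution $p'=p-r$. The only cosmetic difference is that you verify the remaining conditions of \Cref{p:killfactormidp} directly with $s=s'$, whereas the paper handles \eqref{p:killfactormidpGG'} and the second half of \eqref{p:killfactormidpG} by appealing to \Cref{p:killfactorA}.

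Your treatment of the second statement, however, has a gap. The substitution $p'=r'+d(i,i')-p$ is correct, but your ``complementary regimes'' claim does not close the argument: the hypothesis allows \emph{either} alternative $p<r-d([i,i'],\partial I)$ or $p\ge r+r'$ for each individual $G$-vertex, independently of whether $r$ lies above or below the threshold $d(i,i')+d([i,i'],\partial I)$ for that vertex. Concretely, if $r\le d(i,i')+d([i,i'],\partial I)$ and the hypothesis happens to hold via $p\ge r+r'$, one only gets $p'\le d(i,i')-r$, which need not fall below $-d([i,i'],\partial I)$. Your proposed fix of shrinking $s$ so that $\min\{s,r'\}$ excludes $p'$ is not justified: a single $s$ must serve all $G$-vertices simultaneously, and making $s$ small conflicts with the $G'$-vertex condition $m-(r+s)\notin\mathscr R_{i,i'}^{s,r'}$, which you need to be automatic. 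The paper itself only writes that the second part is ``similar'' and gives no details, so you are not falling short of what is written there; but what you wrote is not a proof of the second statement, and the sentence beginning ``reconciling them requires\ldots'' is a description of a hoped-for strategy rather than an argument.
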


\begin{proof}
	We write down the details for the first part only as the second is similar. We check there exists $s$ such that \eqref{p:killfactormidpG} and \eqref{p:killfactormidpG'} are satisfied, so the proposition follows from \Cref{p:killfactormidp}. It suffices to choose $s$ so that
	\begin{align}\label{eq:boundminus}
		s&\geq\max\{-r-m: \exists\ i'\in I, r'\in\mathbb Z_{>0} \text{ s.t. } \bs\omega_{i',aq^m,r'}\in\mathcal V_G\},\\ \label{eq:boundminus'}
		s&\geq\max\{r': \exists\ i'\in I, m\in\mathbb Z \text{ s.t. } \bs\omega_{i',aq^m,r'}\in\mathcal V_{G'}\}, \quad\text{and}\\ \label{eq:minusf}
		2s&\geq\max\{-r-m+r': \exists\ i'\in I \text{ s.t. } \bs\omega_{i',aq^m,r'}\in\mathcal V_G\}.
	\end{align}
	Indeed,  \eqref{eq:boundminus} implies that, if $\bs\omega_{i',aq^m,r'}\in\mathcal{V}_G$, then $-r-s-m\le 0$ and, hence, the first condition in \eqref{p:killfactormidpG} is satisfied. In its turn, \eqref{eq:boundminus'} implies \eqref{p:killfactormidpG'}. Indeed, if \eqref{p:killfactormidpG'} did not hold, there would exist $\bs\omega_{i',aq^m,r'}\in\mathcal{V}_{G'}$ such that
	\begin{equation*}
		m+r+s = r'+s+d(i,i')-2p' \quad\text{with}\quad -d([i,i'],\partial I)\le p'<\min\{r',s\}=r'.
	\end{equation*}
	But this would imply 
	\begin{equation*}
		m = r+r'+d(i,i') - 2p \quad\text{with}\quad p=r+p',
	\end{equation*}
	yielding a contradiction with \eqref{e:killfactormid}.
	
	Let $\tilde{\bs\pi}$ be as in the proof of \Cref{p:killfactormidp} with this choice of $s$. As seen in that proof, \eqref{p:killfactormidpGG'} and the second condition in \eqref{p:killfactormidpG} are equivalent to requesting that the pseudo $q$-factor $\bs\omega_{i,aq^{-s},r+s}$ of $\tilde{\bs\pi}$ satisfies the corresponding conditions in \eqref{e:killfactorp}. In light of \Cref{p:killfactorA}, this follows if
	\begin{equation*}
		(r+s)+d(i,i')\ge r'-(m+s)
	\end{equation*}	
	for every pseudo $q$-factor $\bs\omega_{i',aq^m,r'}$ of $\bs\pi$ different than $\bs\omega_{i,a,r}$. This is guaranteed by \eqref{eq:minusf}.
\end{proof}

\subsection{Some Combinatorics}\label{ss:arith} 
In this section, we deduce a few technical lemmas concerned with arithmetic relations among the elements of $\mathscr R_{i,j}^{r,s}$. In particular, they are useful for detecting whether a pseudo $q$-factorization graph is a tournament.  

\begin{lem}\label{l:simptpdual}
	If $i,j\in I, r,s\in\mathbb Z_{>0}, m\in\mathscr R_{i,j}^{r,s}\setminus \mathscr R_{i,j,[i,j]}^{r,s}$, and $a\in\mathbb F^\times$, then 
	\begin{equation*}
		L_q(\bs\omega_{i,aq^m,r})\otimes L_q(\bs\omega_{j,a,s})^*\quad\text{is simple}.
	\end{equation*}
\end{lem}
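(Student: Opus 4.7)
The plan is to identify the dual $L_q(\bs\omega_{j,a,s})^*$ explicitly using the formulas of \Cref{ss:tpdu} and then reduce simplicity of the tensor product to a single arithmetic inequality coming from \Cref{t:krredsets}. Since $\lie g$ is of type $A_n$, we have $r^\vee = 1$ and $h^\vee = n+1$, so \eqref{e:dualDpolidef} gives $\bs\omega_{j,a,s}^* = \bs\omega_{j^*, aq^{-(n+1)}, s}$, and hence by \eqref{e:dualDpoli}
\begin{equation*}
L_q(\bs\omega_{i,aq^m,r}) \otimes L_q(\bs\omega_{j,a,s})^* \cong L_q(\bs\omega_{i,aq^m,r}) \otimes L_q(\bs\omega_{j^*, aq^{-(n+1)}, s}).
\end{equation*}
The ratio of centers is $q^{m+n+1}$, and since $m \in \mathscr R_{i,j}^{r,s}$ forces $m>0$, the value $m+n+1$ is positive; by \eqref{defredset}, simplicity of this tensor product is thus equivalent to $m + n + 1 \notin \mathscr R_{i,j^*}^{r,s}$.

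Next, I would translate the hypothesis $m \in \mathscr R_{i,j}^{r,s} \setminus \mathscr R_{i,j,[i,j]}^{r,s}$ via \Cref{t:krredsets}: it forces $m = r+s+d(i,j)-2p$ for some $-d([i,j],\partial I) \le p \le -1$, giving the lower bound $m \ge r+s+d(i,j)+2$. On the other hand, $\max \mathscr R_{i,j^*}^{r,s} = r+s+d(i,j^*)+2\,d([i,j^*], \partial I)$, so it suffices to establish the inequality
\begin{equation*}
d(i,j) + n + 3 \;>\; d(i,j^*) + 2\, d([i,j^*], \partial I),
\end{equation*}
which forces $m+n+1$ to lie strictly above every element of $\mathscr R_{i,j^*}^{r,s}$.

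Assume without loss of generality $i \le j$, and split according to the sign of $i+j-(n+1)$. If $i+j \le n+1$, then $j^* \ge i$, giving $d(i,j^*) = n+1-i-j$ and $[i,j^*] = \{i,\dots,n+1-j\}$, so $d([i,j^*], \partial I) = \min(i-1, j-1) = i-1$. If $i+j > n+1$, then $j^* < i$, giving $d(i,j^*) = i+j-n-1$ and $[i,j^*] = \{n+1-j,\dots,i\}$, so $d([i,j^*], \partial I) = \min(n-j, n-i) = n-j$. In both cases one computes $d(i,j^*) + 2\,d([i,j^*],\partial I) = n - 1 + i - j$, so the displayed inequality reduces to $(j-i) + n + 3 > n - 1 + i - j$, i.e.\ $2(j-i)+4 > 0$, which is immediate.

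I do not foresee any substantive obstacle; the proof is essentially a bookkeeping exercise once the duality formula for KR modules in type $A$ and the description of $\mathscr R_{i,j^*}^{r,s}$ are on the table. The one mildly delicate point is that both case-distinctions (sign of $i+j-n-1$ and which of the two candidates for $d([i,j^*], \partial I)$ is smaller) collapse to the single expression $n-1+i-j$, making the final inequality uniform.
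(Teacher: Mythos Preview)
Your proof is correct and follows essentially the same route as the paper: both reduce simplicity to showing $m + h^\vee \notin \mathscr R_{i,j^*}^{r,s}$, and both establish this by proving $m+h^\vee$ exceeds $\max \mathscr R_{i,j^*}^{r,s}$ using only the bound $p\le -1$. Your case split (on the sign of $i+j-(n+1)$) yields the unified formula $d(i,j^*)+2\,d([i,j^*],\partial I)=n-1+i-j$ and is somewhat tidier than the paper's split on which endpoint of $[i,j]$ realizes $d([i,j],\partial I)$, but the substance is identical.
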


\begin{proof}
	The assumptions imply $m=r+s+d(i,j)-2p$ for some $-d([i,j],\partial I)\le p<0$, while the claim follows if we show that $		m+h^\vee \notin\mathscr R_{i,j^*}^{r,s}$. Since
	\begin{equation*}
		0<m+h^\vee = r+s+d(i,j^*) - 2p' \quad\text{with}\quad p' = p + \frac{d(i,j^*)-d(i,j)-h^\vee}{2},
	\end{equation*}
	it suffices to show $p'< -d([i,j^*],\partial I)$.
	
	Without loss of generality, assume $I$ has been identified with $\{1,\dots,n\}$ so that $i\le j$ and recall that $j^*=n+1-j$.  Suppose first that
	$d([i,j],\partial I)=d(i,\partial I)$. It follows that $d([i,j^*],\partial I)=d(i,\partial I)$ and either $d(i,\partial I)=i-1$ or $i=j$ and $d(i,\partial I)=n-i$. In the former case, we have $j^*\ge i$ and
	\begin{equation*}
		d(i,j^*)-d(i,j)-h^\vee = (n+1-j-i)-(j-i)-(n+1) = -2j. 
	\end{equation*}
	Therefore, since $p\le -1$, we see that $$p'\le -1 -j\le -1-i = -1-(d([i,j^*],\partial I)+1) = -d([i,j^*],\partial I)-2,$$
	thus completing the proof in this case.
	In the latter case, $j^*\le i=j$ and we have
	\begin{equation*}
		d(i,j^*)-d(i,j)-h^\vee = (i-(n+1-i))-(n+1) = -2(n+1-i) = -2d([i,j^*],\partial I)-2, 
	\end{equation*}
	which also completes the proof.
	
	It remains to consider the case $d([i,j],\partial I)=d(j,\partial I)=n-j$, which implies $j^*\le i$ and $d([i,j^*],\partial I)=d(j^*,\partial I)=n-j$. Hence,
	\begin{equation*}
		d(i,j^*)-d(i,j)-h^\vee = (i-(n+1-j))-(j-i)-(n+1) = -2(n+1-i),
	\end{equation*}
	and we get,
	\begin{equation*}
		p'\le -1 - (d([i,j^*],\partial I)+1+(j-i))< - d([i,j^*],\partial I),
	\end{equation*}
	as desired.
\end{proof}

\begin{lem}\label{l:lineps}
	Let $N\in\mathbb Z_{> 0}$ and $(m_k,r_k,i_k)\in \mathbb Z_{\ge 0}\times \mathbb Z_{> 0}\times I, 1\le k\le N$. Suppose 
	\begin{equation}\label{e:lineps}
		|m_k-m_{k-1}|\in\mathscr{R}^{r_{k-1},r_k}_{i_{k-1},i_k} \qquad\text{for all}\qquad 1< k\leq N.
	\end{equation}
	\begin{enumerate}[(a)]
		\item For all $1\le k,l\le N$, there exists $p_{l,k}\in\mathbb Z$ such that $m_l-m_k=r_l+r_k+d(i_l,i_k)-2p_{l,k}$. 
		\item If $m_k>m_{k-1}$ for all $1< k\leq N$, then $p_{N,1}<\operatorname{min}\{r_1,r_N\}$, and 
		\begin{equation*}
			p_{N,1}<p_{l,k}<\operatorname{min}\{r_k,r_l\},\quad\textrm{for all}\quad 1\leq k<l\leq N,\quad\textrm{with}\quad (k,l)\neq(1,N).
		\end{equation*}
		Similarly, if  $m_k<m_{k-1}$ for all $1< k\leq N$, then $p_{1,N}<\operatorname{min}\{r_1,r_N\}$, and 
		\begin{equation*}
			p_{1,N}<p_{k,l}<\operatorname{min}\{r_k,r_l\},\quad\textrm{for all}\quad 1\leq k<l\leq N,\quad\textrm{with}\quad (k,l)\neq(1,N).
		\end{equation*}
	\end{enumerate}
\end{lem}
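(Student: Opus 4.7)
Throughout, I will use Theorem \ref{t:krredsets} to record, for each $1 < k \le N$, an integer $p_{k,k-1}$ with
\[ |m_k - m_{k-1}| = r_{k-1} + r_k + d(i_{k-1}, i_k) - 2p_{k,k-1}, \qquad -d([i_{k-1},i_k], \partial I) \le p_{k,k-1} < \min\{r_{k-1}, r_k\}. \]
For part (a), telescoping $m_l - m_k = \sum_{j=k+1}^{l}(m_j - m_{j-1})$ modulo $2$ gives $m_l - m_k \equiv \sum_{j=k+1}^l (r_{j-1} + r_j) + \sum_{j=k+1}^l d(i_{j-1}, i_j) \pmod 2$. The first sum collapses to $r_k + r_l \pmod 2$, and since $\lie g$ is of type $A$, $d(i,j) = |i - j| \equiv i - j \pmod 2$, so the second sum collapses to $i_l - i_k \equiv d(i_l, i_k) \pmod 2$. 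Hence $m_l - m_k - r_l - r_k - d(i_l, i_k)$ is even, producing the required $p_{l,k}$.

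\textbf{Part (b).} The decreasing case will reduce to the increasing one by replacing $m_k$ with $-m_k$, which swaps the roles of $p_{l,k}$ and $p_{k,l}$, so I focus on the case $m_k > m_{k-1}$. Telescoping now without absolute values and using $\sum_{j=k+1}^{l}(r_{j-1}+r_j) = r_k + r_l + 2\sum_{j=k+1}^{l-1} r_j$ against the defining equation for $p_{l,k}$ yields the key identity
\begin{equation*}
p_{l,k} = \sum_{j=k+1}^{l} p_{j,j-1} - \sum_{j=k+1}^{l-1} r_j - \tfrac{1}{2} D_{k,l}, \qquad D_{k,l} := \sum_{j=k+1}^{l} d(i_{j-1}, i_j) - d(i_l, i_k) \ge 0,
\end{equation*}
the nonnegativity being the triangle inequality in $I$ (and $D_{k,l}$ is even by part (a)). Substituting $p_{j,j-1} \le r_{j-1} - 1$ into the right-hand side and telescoping gives $p_{l,k} \le r_k - (l-k) - \tfrac12 D_{k,l} \le r_k - 1$; using instead $p_{j,j-1} \le r_j - 1$ gives $p_{l,k} \le r_l - 1$. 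Thus $p_{l,k} < \min\{r_k, r_l\}$ for every $1 \le k < l \le N$, which already covers the case $(k,l) = (1,N)$.

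\textbf{The inequality $p_{N,1} < p_{l,k}$.} Subtracting the identity for $p_{N,1}$ from that for $p_{l,k}$ and regrouping into the complementary ranges $\{2,\ldots,k\}$ and $\{l+1,\ldots,N\}$ rewrites
\begin{equation*}
p_{l,k} - p_{N,1} = \sum_{j=2}^{k} (r_j - p_{j,j-1}) + \sum_{j=l+1}^{N} (r_{j-1} - p_{j,j-1}) + \tfrac{1}{2}(D_{1,N} - D_{k,l}).
\end{equation*}
Applying $p_{j,j-1} \le r_j - 1$ to the first sum and $p_{j,j-1} \le r_{j-1} - 1$ to the second bounds these sums below by $k-1$ and $N-l$ respectively. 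Two applications of the triangle inequality, namely $\sum_{j=2}^{k} d(i_{j-1},i_j) \ge d(i_1,i_k)$ and $\sum_{j=l+1}^{N} d(i_{j-1},i_j) \ge d(i_l,i_N)$, combined with $d(i_1,i_k) + d(i_k,i_l) + d(i_l,i_N) \ge d(i_1,i_N)$, yield $D_{1,N} - D_{k,l} \ge 0$. For $(k,l) \ne (1,N)$ we have $(k-1) + (N-l) \ge 1$, so $p_{l,k} - p_{N,1} \ge 1$. The main technical obstacle will be arranging the telescoping bookkeeping so that the $r_j$-sums cancel cleanly in the displayed expression for $p_{l,k} - p_{N,1}$; once that identity is isolated, the conclusion rests only on the metric axioms for $d$ and on elementary arithmetic, with no further information about the sets $\mathscr R_{i,j}^{r,s}$ required.
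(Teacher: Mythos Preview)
Your proof is correct and takes a genuinely different route from the paper's. The paper establishes part (a) by first writing down the cocycle-type relations
\[
p_{k,l}+p_{j,k} = p_{j,l} + r_k +  d^{i_k}_{i_j,i_l}, \qquad p_{k,l} + p_{l,k} = r_k + r_l +d(i_k,i_l),
\]
where $d^{i_k}_{i_j,i_l}=\tfrac12(d(i_j,i_k)+d(i_k,i_l)-d(i_j,i_l))$, and then inducting on $|k-l|$ to propagate integrality from the consecutive case. For part (b) the paper inducts on $N$, feeding the relation $p_{N,1}=p_{N,N-1}+p_{N-1,1}-r_{N-1}-d^{i_{N-1}}_{i_1,i_N}$ (and its analogue at the other end) into the inductive hypothesis. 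Your argument instead telescopes once and for all to obtain the closed formula
\[
p_{l,k} = \sum_{j=k+1}^{l} p_{j,j-1} - \sum_{j=k+1}^{l-1} r_j - \tfrac{1}{2} D_{k,l},
\]
from which both inequalities drop out directly using only $p_{j,j-1}\le \min\{r_{j-1},r_j\}-1$ and the triangle inequality for $d$. This is cleaner and entirely non-inductive; it also makes transparent exactly how much slack there is in the inequality $p_{N,1}<p_{l,k}$ (namely at least $(k-1)+(N-l)$). The paper's recursive relations, on the other hand, are reused later in \Cref{ss:totol} and package the same information in a form suited to step-by-step arguments. One small omission: your telescoping in part (a) is written for $k<l$, but the remaining cases follow trivially from $p_{k,l}+p_{l,k}=r_k+r_l+d(i_k,i_l)$ and $p_{k,k}=r_k$.
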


\begin{proof}
	The equality $m_k-m_l=r_k+r_l+d(i_k,i_l)-2p_{k,l}$ clearly defines $p_{k,l}\in\mathbb Q$ and, moreover, one can easily check that
	\begin{equation}\label{e:strangeprel}
		p_{k,l}+p_{j,k} = p_{j,l} + r_k +  d^{i_k}_{i_j,i_l} \quad\text{for all}\quad 1\le j,k,l\le N, 
	\end{equation}
	and
	\begin{equation}\label{e:pklplk}
		p_{k,l} + p_{l,k} = r_k + r_l +d(i_k,i_l) \quad\text{for all}\quad 1\le k,l\le N.
	\end{equation}
	We will use these to show $p_{k,l}\in\mathbb Z$ by induction on $|k-l|\ge 1$ (note we also have $p_{k,k}=r_k$). If $|k-l|=1$, \eqref{e:lineps} implies either $p_{k,l}$ or $p_{l,k}$ is an integer. Then, \eqref{e:pklplk} implies the same is true for the other one. If $|k-l|>1$, the inductive step easily follows from \eqref{e:strangeprel} by choosing $j$ in between $k$ and $l$. 
	
	We prove (b) in the case $m_k>m_{k-1}$ by induction on $N> 1$ (the other case is similar). For $N=2$, we have $p_{N,1}=p_{2,1}$ and, hence, the first claim follows from \eqref{e:lineps}, while the second claim is vacuous. Note  the first claim is a consequence of the second for $N>2$, in which case,  the inductive hypothesis implies
	\begin{equation*}
		p_{N-1,1}<p_{l,k}<\operatorname{min}\{r_k,r_l\},\quad\textrm{for all}\quad 1\leq k<l\leq N-1,\quad\textrm{with}\quad (k,l)\neq(1,N-1).
	\end{equation*}
	as well as $p_{N-1,1}<\operatorname{min}\{r_1,r_{N-1}\}$. By \eqref{e:strangeprel}, we have
	\begin{equation}\label{eq:hypind}
		p_{N,1}=p_{N,N-1}+p_{N-1,1}-r_{N-1}-d_{i_1,i_N}^{i_{N-1}} \quad\text{and}\quad p_{N,1}=p_{N,2}+p_{2,1}-r_{2}-d_{i_1,i_N}^{i_{1}}.
	\end{equation}
	Moreover, 
	\begin{equation}\label{eq:hypind'}
		p_{N,N-1}=p_N<\operatorname{min}\{r_N,r_{N-1}\}\leq r_{N-1} \quad\text{and}\quad p_{2,1}<\operatorname{min}\{r_2,r_1\}\leq r_2.
	\end{equation}
	Therefore,
	\begin{equation*}
		p_{N,1}<p_{N-1,1}-d_{i_1,i_N}^{i_{N-1}}\leq p_{N-1,1}<p_{l,k}<\operatorname{min}\{r_k,r_l\},
	\end{equation*} 
	for all $1\leq k<l\leq N-1$ with $(k,l)\neq(1,N-1)$. 	
	Thus, it remains to show that
	\begin{equation*}
		p_{N,1}<p_{N,k}<\operatorname{min}\{r_k,r_N\} \quad\text{for all}\quad 1< k<N.
	\end{equation*}
	Applying the inductive hypothesis to the sequence  $(m_k,r_k,i_k), 1< k\le N$, we have
	\begin{equation*}
		p_{N,1}<p_{l,k}<\operatorname{min}\{r_k,r_l\} \quad\text{for all}\quad 1< k<l<N, \ (k,l)\ne (2,N).
	\end{equation*}
	The second claims in \eqref{eq:hypind} and \eqref{eq:hypind'} imply $p_{N,1}<p_{N,2}$, thus completing the proof.
\end{proof}

%

\begin{lem}\label{l:positiveppath}
	Assume $m_k>m_{k-1}$ for all $1< k\leq N$ in \Cref{l:lineps}. 
	\begin{enumerate}[(a)]
		\item If $p_{N,1}\ge -d([i_k,i_l],\partial I)-1$ for some  $1\leq k<l\leq N, (l,k)\ne (1,N)$, then $m_l-m_k\in\mathscr R^{r_k,r_l}_{i_k,i_l}$. In particular, this is the case if $m_N-m_1\in\mathscr R^{r_1,r_N}_{i_1,i_N}$ and $d([i_k,i_l],\partial I)\ge d([i_1,i_N],\partial I)$.
		\item If $m_N-m_1\in\mathscr R^{r_1,r_N}_{i_1,i_N,[i_1,i_N]}$, then $m_l-m_k\in\mathscr R^{r_k,r_l}_{i_k,i_l,[i_k,i_l]}$	for all $1\leq k<l\leq N$.
	\end{enumerate}
\end{lem}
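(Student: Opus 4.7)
My plan is to observe that both parts reduce almost immediately to Lemma~\ref{l:lineps}(b) together with the explicit parametrization of $\mathscr R_{i,j}^{r,s}$ afforded by Theorem~\ref{t:krredsets}. The key translation is that, in terms of the integers $p_{l,k}$ defined by $m_l-m_k=r_l+r_k+d(i_l,i_k)-2p_{l,k}$ (which are integers by Lemma~\ref{l:lineps}(a)), we have $m_l-m_k\in\mathscr R^{r_k,r_l}_{i_k,i_l}$ iff $-d([i_k,i_l],\partial I)\le p_{l,k}<\min\{r_k,r_l\}$, while $m_l-m_k\in\mathscr R^{r_k,r_l}_{i_k,i_l,[i_k,i_l]}$ iff $0\le p_{l,k}<\min\{r_k,r_l\}$ (since when $J=[i_k,i_l]$, $d([i_k,i_l],\partial J)=0$).

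Since all $m_k-m_{k-1}>0$, Lemma~\ref{l:lineps}(b) applies, giving $p_{N,1}<p_{l,k}<\min\{r_k,r_l\}$ for every $1\le k<l\le N$ with $(k,l)\ne(1,N)$. In particular, the upper bound $p_{l,k}<\min\{r_k,r_l\}$ is automatic, so for each of the two statements we only need to verify the corresponding lower bound on $p_{l,k}$.

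For part (a): assuming $p_{N,1}\ge -d([i_k,i_l],\partial I)-1$ and $(k,l)\ne(1,N)$, integrality of $p_{l,k}$ gives $p_{l,k}\ge p_{N,1}+1\ge -d([i_k,i_l],\partial I)$, so $m_l-m_k\in\mathscr R^{r_k,r_l}_{i_k,i_l}$ by the translation above. For the ``in particular'' clause, the assumption $m_N-m_1\in\mathscr R^{r_1,r_N}_{i_1,i_N}$ forces $-d([i_1,i_N],\partial I)\le p_{N,1}$; combined with $d([i_k,i_l],\partial I)\ge d([i_1,i_N],\partial I)$ this yields $p_{N,1}\ge -d([i_k,i_l],\partial I)\ge -d([i_k,i_l],\partial I)-1$, so the main hypothesis of (a) applies when $(k,l)\ne(1,N)$, while for $(k,l)=(1,N)$ the conclusion is the hypothesis itself.

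For part (b): $m_N-m_1\in\mathscr R^{r_1,r_N}_{i_1,i_N,[i_1,i_N]}$ is equivalent to $0\le p_{N,1}<\min\{r_1,r_N\}$, which is exactly the conclusion for $(k,l)=(1,N)$. For $(k,l)\ne(1,N)$, Lemma~\ref{l:lineps}(b) yields $p_{l,k}>p_{N,1}\ge 0$, hence $p_{l,k}\ge 1>0$, and combined with the upper bound already noted this gives $m_l-m_k\in\mathscr R^{r_k,r_l}_{i_k,i_l,[i_k,i_l]}$.

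There is no substantial obstacle in the argument beyond correctly setting up the dictionary between membership in $\mathscr R^{r,s}_{i,j}$ (resp.\ $\mathscr R^{r,s}_{i,j,[i,j]}$) and the location of $p_{l,k}$ within the appropriate integer interval; once this is in place, the statement is a direct corollary of the quantitative bounds already established in Lemma~\ref{l:lineps}(b).
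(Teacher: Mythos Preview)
Your proof is correct and follows essentially the same route as the paper's: both translate membership in $\mathscr R^{r_k,r_l}_{i_k,i_l}$ (resp.\ $\mathscr R^{r_k,r_l}_{i_k,i_l,[i_k,i_l]}$) into the interval condition on $p_{l,k}$ via \Cref{t:krredsets}, and then read off the required lower bound from the strict inequality $p_{N,1}<p_{l,k}$ supplied by \Cref{l:lineps}(b). Your write-up is slightly more explicit about the dictionary and about handling the boundary case $(k,l)=(1,N)$ in the ``in particular'' clause, but there is no substantive difference in method.
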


\begin{proof}
	The initial assumption in (a), together with \Cref{l:lineps}(b), implies $-d([i_k,i_l],\partial I)-1\le p_{N,1}<p_{l,k}$. A second application of \Cref{l:lineps}(b) then implies $-d([i_k,i_l],\partial I)\le p_{l,k}<\operatorname{min}\{r_k,r_l\}$, thus proving the first claim in (a). The assumptions in the second part of (a) imply $p_{N,1}\ge - d([i_1,i_N],\partial I)\ge -d([i_k,i_l],\partial I)$, showing the second part follows from the first.
	
	The assumption in (b) implies $0\le p_{N,1}<\min\{r_1,r_N\}$ and we want to show $0\le p_{l,k}<\min\{r_k,r_l\}$, which follows from \Cref{l:lineps}(b).	
\end{proof}

 Assume, for instance, that the assumption in (b) of the last lemma holds. Then, if $J\subseteq I$ is connected and 
\begin{equation*}
	\bs\pi = \prod_{k:i_k\in J} \bs\omega_{i_k,aq^{m_k},r_k},
\end{equation*}
the pseudo $q$-factorization graph $G$ for $U_q(\tlie g)_J$ associated to this pseudo $q$-factorization of $\bs\pi$ is a tournament.

\subsection{The Main Lemma}\label{ss:totol}
Fix $\bs{\pi}\in\mathcal{P}^+$ such that its $q$-factorization graph $G=G(\bs{\pi})=(\mathcal V, \mathcal A)$ is totally ordered
and let $N=\#\mathcal V$. Let $\bs\omega_{i,a,r}$ be the sink and let $m_l,r_l\in\mathbb Z_{\ge 0}, 1\le l\le N$, be such that
$0=m_1<m_2<\cdots<m_N$ and 
\begin{equation*}
	\mathcal V = \{\bs\omega_{i_l,aq^{m_l},r_l}: 1\le l\le N\}.
\end{equation*}
To shorten notation, set $\bs\omega^{(l)}=\bs\omega_{i_l,aq^{m_l},r_l}, 1\le l\le N$. Note 
\begin{equation*}
	\mathcal{A}\subseteq \{(\bs\omega^{(l)},\bs\omega^{(k)}): 1\leq k<l\leq N\},   
\end{equation*}
and 
\begin{equation*}
	(\bs\omega^{(l)},\bs\omega^{(k)}) \in\mathcal A \quad\Rightarrow\quad m_l-m_k\in\mathscr{R}_{i_k,i_l}^{r_k,r_l}.
\end{equation*}
Since $\lie g$ is of type $A$, the latter is equivalent to
\begin{equation*}
	m_l-m_k=r_k+r_l+d(i_k,i_l)-2p_{l,k},\quad\textrm{for some}\quad -d([i_k,i_l],\partial I)\leq p_{l,k}<\operatorname{min}\{r_k,r_l\}.
\end{equation*}
\Cref{l:lineps} 
 implies such an expression exists for $m_l-m_k$ for all $1\le k<l\le N$ for some  $p_{l,k}\in\mathbb Z$ and, moreover, 
\begin{equation}\label{e:plkub}
	p_{l,k}<\operatorname{min}\{r_k,r_l\} \quad\text{for all}\quad 1\le k<l\le N.
\end{equation}	
Furthermore,  \Cref{t:krredsets} and \eqref{e:areqfact} imply
\begin{equation}\label{e:p>0intne}
	p_{l,k}\geq 0 \quad\Leftrightarrow\quad m_l-m_k \in\mathscr R_{i_k,i_l,[i_k,i_l]}^{r_k,r_l} \quad\Rightarrow\quad i_l\neq i_k.
\end{equation}

Let us make a brief interlude and use the setup we have just fixed to give the:

\begin{proof}[Proof of \Cref{p:totopartial}]
	In light of \eqref{e:p>0intne}, 
	the assumption  $c(\mathcal V)\subseteq \partial I$  implies 
	\begin{equation}\label{e:altcol}
		(\bs\omega^{(l)},\bs\omega^{(k)}) \in\mathcal A \quad\text{only if}\quad i_k\ne i_l.
	\end{equation}
	Note the claim about the vertices being alternately colored is immediate from this. 	
	Since a totally ordered tree is a line, if $G$ were not a line, it would contain a cycle. In that case, let $v$ be the maximal element of $\mathcal V$ which is part of a cycle. Suppose $a = (v,w)$ is the first arrow of this cycle and $a'=(v,w')$ is the last:
	\begin{equation*}
		\begin{tikzcd}
			\cdots w & \arrow[swap,l,"a"] v \arrow[r,"a'"] & w' \cdots
		\end{tikzcd} 
	\end{equation*}
	Set $e=\pi(a)$ and $e'=\pi(a')$. Since $G$ is totally ordered, we must have either $w\prec w'$ or $w\succ w'$. Without loss of generality, we assume it is the latter. This means there exists a (simple) monotonic path $\rho\in\mathscr P_{w,w'}$  and, moreover, $\rho*e\mathscr\in\mathscr P_{v,w'}$ is a monotonic path. Furthermore, $e'*\rho*e$ is a cycle based on $v$ and, by construction, the vertices in this cycle satisfy all the assumptions in \Cref{l:positiveppath}(b). As commented after that lemma, this implies the subgraph determined by this subset of vertices is a tournament which, by \eqref{e:altcol}, have all of its vertices differently colored. This yields a contradiction since $\#\partial I=2$ and there are no cycles with less than three vertices.	
\end{proof}

The next lemma is the heart of the proof of \Cref{t:toto}.

\begin{lem}\label{l:totordopparrow}
	Let $\bs{\pi}',\bs{\pi}''\in\mathcal{P}^+$ have dissociate $q$-factorizations and assume $\bs\pi=\bs\pi'\bs\pi''$. Let also $G'=G(\bs{\pi}')$ and $G''=G(\bs{\pi}'')$. Assume 
	\begin{equation}\label{e:pmcomp}
		L_q(\bs{\pi}')\otimes L_q(\bs{\pi}'') \quad\text{is highest-$\ell$-weight}
	\end{equation}
	and that  $1\leq j'< j''\leq N$ are such that $\bs{\omega}^{(j')}\in\mathcal{V}_{G'}$, $\bs{\omega}^{(j'')}\in\mathcal{V}_{G''}$, and
	\begin{equation}\label{eq:newsubd}
		m_{j''}-m_{j'}\in{\mathscr{R}_{i_{j'},i_{j''},J}^{r_{j'},r_{j''}}},\quad\textrm{where}\quad J=[i_{j'},i_{j''}].
	\end{equation}
	Then, there exist $1\leq k''< k'\leq N$ such that $K:=[i_{k'},i_{k''}]\subsetneqq J$,  $\bs{\omega}^{(k')}\in\mathcal{V}_{G'}$, $\bs{\omega}^{(k'')}\in\mathcal{V}_{G''}$, and $m_{k'}-m_{k''}\in{\mathscr{R}_{i_{k'},i_{k''},K}^{r_{k'},r_{k''}}}$.
\end{lem}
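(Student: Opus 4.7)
My plan is to argue by contraposition via \Cref{p:neighdualcrit}. Since $m_{j''}-m_{j'} \in \mathscr R^{r_{j'},r_{j''}}_{i_{j'},i_{j''},J} \subseteq \mathscr R^{r_{j'},r_{j''}}_{i_{j'},i_{j''}}$, the pair $(\bs\omega^{(j'')},\bs\omega^{(j')})$ is an arrow of $G$ crossing the cut $(G',G'')$ from $G''$ to $G'$. Applying \Cref{p:neighdualcrit} with its $(\bs\pi,\bs\pi')$ playing the role of our $(\bs\pi',\bs\pi'')$ and $(\bs\varpi,\bs\varpi') = (\bs\omega^{(j')},\bs\omega^{(j'')})$ would imply that $L_q(\bs\pi')\otimes L_q(\bs\pi'')$ is not highest-$\ell$-weight, contradicting our hypothesis. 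Hence the hypothesis \eqref{e:neighdualcrit} of that proposition must fail, producing vertices $\bs\omega^{(k')} \in \mathcal N^+_{G'}(\bs\omega^{(j')})$ and $\bs\omega^{(k'')} \in \mathcal N^-_{G''}(\bs\omega^{(j'')})$ with $L_q(\bs\omega^{(k')})\otimes L_q(\bs\omega^{(k'')})^*$ \emph{not} simple. In particular, $\bs\omega^{(k')} \in \mathcal V_{G'}$ with $m_{k'} > m_{j'}$ and $\bs\omega^{(k'')} \in \mathcal V_{G''}$ with $m_{k''} < m_{j''}$.

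Next I would convert the non-simplicity into arithmetic. Using \eqref{e:dualDpolidef}--\eqref{e:dualDpoli} together with $r^\vee = 1$ and $h^\vee = n+1$ in type $A_n$, we have $L_q(\bs\omega^{(k'')})^* \cong L_q(\bs\omega_{i_{k''}^*,\,aq^{m_{k''}-h^\vee},\,r_{k''}})$. By \Cref{t:krredsets}, non-simplicity is equivalent to $|m_{k'}-m_{k''}+h^\vee| \in \mathscr R^{r_{k'},r_{k''}}_{i_{k'},i_{k''}^*}$. Running the computation underlying the proof of \Cref{l:simptpdual} in reverse, a case analysis on the sign of $m_{k'}-m_{k''}+h^\vee$ forces $m_{k'}-m_{k''} \in \mathscr R^{r_{k'},r_{k''}}_{i_{k'},i_{k''},[i_{k'},i_{k''}]}$. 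This simultaneously yields $m_{k'} > m_{k''}$ (so $k'' < k'$, as required) and the desired restricted-$\mathscr R$ condition with $K := [i_{k'},i_{k''}]$.

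Finally, to establish $K \subsetneq J$, I would argue that equality $K = J$ would force $\{i_{k'},i_{k''}\} = \{i_{j'},i_{j''}\}$. Combining the arithmetic relations \eqref{e:strangeprel} and \eqref{e:pklplk} from \Cref{l:lineps} applied to the sequence of vertices $\bs\omega^{(j')},\bs\omega^{(k'')},\bs\omega^{(k')},\bs\omega^{(j'')}$ (reordered by $m$-value and padded by intermediate vertices of a directed path in $G$ if needed), with \Cref{l:positiveppath}(b) and the $q$-factorization axiom \eqref{e:areqfact}, one derives two $q$-factors of $\bs\pi$ at a common color whose $m$-separation lies in $\mathscr R^{r,s}_i$, contradicting \eqref{e:areqfact}. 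The main obstacle is exactly this strict-inclusion step: the arithmetic bookkeeping is delicate and a careful case analysis on the colors $i_{k'}, i_{k''}$ relative to $\partial J$ is needed to rule out $K = J$.
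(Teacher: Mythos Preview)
Your approach has a genuine gap in the third step, where you claim that the non-simplicity of $L_q(\bs\omega^{(k')})\otimes L_q(\bs\omega^{(k'')})^*$ forces $m_{k'}-m_{k''}\in\mathscr R^{r_{k'},r_{k''}}_{i_{k'},i_{k''},K}$ and in particular $k''<k'$. This implication is false in general.  \Cref{l:simptpdual} only asserts that if $m\in\mathscr R_{i,j}^{r,s}\setminus\mathscr R_{i,j,[i,j]}^{r,s}$ then the dual tensor product is simple; its contrapositive says nothing when $m\notin\mathscr R_{i,j}^{r,s}$ at all, i.e., when $p_{k',k''}<-d([i_{k'},i_{k''}],\partial I)$, and that case is not ruled out by the ambient data.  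More concretely, in type $A_3$ with $i_{k'}=1,\ i_{k''}=2,\ r_{k'}=r_{k''}=1$ one has $i_{k''}^{*}=2$ and $\mathscr R_{1,2}^{1,1}=\{3\}$, so non-simplicity of $L_q(\bs\omega^{(k')})\otimes L_q(\bs\omega^{(k'')})^*$ means $|m_{k'}-m_{k''}+4|=3$, i.e., $m_{k'}-m_{k''}\in\{-1,-7\}$; both are negative, so $k'<k''$, and certainly $m_{k'}-m_{k''}\notin\mathscr R_K$.  The single pair produced by the contrapositive of \Cref{p:neighdualcrit} may therefore be useless, and you have no mechanism to produce a better one.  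Your own fourth step, the strict inclusion $K\subsetneqq J$, is also only sketched and does not obviously go through.

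The paper's proof does not attempt to locate a specific pair $(k',k'')$.  Instead it assumes no such pair exists and derives a contradiction.  After passing via \Cref{c:killpi-pi'+} to $\bs\pi_+,\bs\pi_-$, it isolates the vertices $l\in\mathcal I_{G'}^{++}$ with $l>j''$ and $p_{l,j''}<0$ (respectively $l\in\mathcal I_{G''}^{--}$) --- precisely those for which \Cref{l:simptpdual} \emph{does} guarantee dual-simplicity --- and strips them off by two applications of \Cref{p:killdualhlw}.  The resulting module $M$, restricted to $J$, is still highest-$\ell$-weight by \Cref{c:sJs}.  The contradiction hypothesis is then used repeatedly to constrain the remaining index sets $\mathcal J_{G'},\mathcal J_{G''}$ down to $\{j',j^+\}$ and $\{j'',j^-\}$, at which point the structure of $M_J$ visibly contradicts \eqref{eq:newsubd}.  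The essential difference is that the paper exploits \Cref{l:simptpdual} only in the forward direction, on the vertices where its hypothesis is known to hold, rather than trying to invert it.
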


\begin{proof}
	Assume, by contradiction, that there does not exist such pair $(k',k'')$ and consider: 
	\begin{equation*}
		\mathcal{I}_{G'}^+=\{1\leq l\leq N:\bs{\omega}^{(l)}\in\mathcal{N}^+_{G'}(\bs{\omega}^{(j')})\}=\{1\leq l\leq N:\bs{\omega}^{(l)}\in\mathcal{V}_{G'},\; l\geq j'\},
	\end{equation*}
	\begin{equation*}
		\mathcal{I}_{G''}^-=\{1\leq l\leq N:\bs{\omega}^{(l)}\in\mathcal{N}^-_{G''}(\bs{\omega}^{(j'')})\}=\{1\leq l\leq N:\bs{\omega}^{(l)}\in\mathcal{V}_{G''},\; l\leq j''\}.
	\end{equation*}
	\Cref{c:killpi-pi'+}, together with \eqref{e:pmcomp}, implies 
	\begin{equation}\label{e:Mhlw}
		\begin{aligned}
			L_q(\bs\pi_+)\otimes L_q(\bs\pi_-) \quad\text{is highest-$\ell$-weight, where}\\
			\bs\pi_+:=\prod_{l\in\mathcal{I}^+_{G'}} \bs{\omega}^{(l)}\quad\textrm{and}\quad\bs\pi_-:=\prod\limits_{l\in\mathcal{I}^-_{G''}}
			\bs{\omega}^{(l)}.	
		\end{aligned}
	\end{equation}
	If $\mathcal{I}^+_{G'}=\{j'\}$ and $\mathcal{I}^-_{G''}=\{j''\}$, then $\bs\pi_+=\bs{\omega}^{(j')}, \bs\pi_-=\bs{\omega}^{(j'')}$, and \eqref{e:Mhlw}  contradicts \eqref{eq:newsubd}. Thus, henceforth assume that 
	\begin{equation*}
		\text{either}\quad \#\mathcal{I}^+_{G'}>1 \quad\text{or}\quad \#\mathcal{I}^-_{G''}>1.
	\end{equation*}
	
	Set
	\begin{equation*}
		\mathcal{I}_{G'}^{++}=\{l\in\mathcal{I}_{G'}^+:l>j'', p_{l,j''}<0\}, \qquad\mathcal{I}_{G''}^{--}=\{l\in\mathcal{I}_{G''}^-: l<j', p_{j',l}<0\},
	\end{equation*}
	\begin{equation*}
		\bs\pi_{++}=\prod\limits_{l\in\mathcal{I}_{G''}^{++}}\bs{\omega}^{(l)}\quad\textrm{and}\quad\bs\pi_{--}=\prod\limits_{l\in\mathcal{I}_{G''}^{--}}\bs{\omega}^{(l)}.
	\end{equation*}
	\Cref{l:lineps} 
	implies
	\begin{equation}\label{eq:negH}
		p_{l',l''}<p_{l',j''}<0,\quad\textrm{for all}\quad l'\in\mathcal{I}_{G'}^{++},\;l''\in\mathcal{I}_{G''}^-\setminus\{j''\}
	\end{equation}
	and
	\begin{equation}\label{eq:negK}
		p_{l',l''}<p_{j',l''}<0,\quad\textrm{for all}\quad l''\in\mathcal{I}_{G''}^{--},\;l'\in\mathcal{I}_{G'}^+\setminus\{j'\}.
	\end{equation}
	Moreover, \Cref{l:lineps} 
	also implies
	\begin{equation}\label{eq:Hpm}
		l'\in\mathcal{I}_{G'}^+\setminus\mathcal{I}_{G'}^{++}\quad\Rightarrow\quad l'<l\quad\textrm{for all}\quad l\in\mathcal{I}_{G'}^{++}.
	\end{equation}
	Indeed, if it could be $l<l'$ for some $l\in\mathcal{I}_{G'}^{++}$, it would follow from \Cref{l:lineps} 
	 that
	\begin{equation*}
		p_{l',j''} < p_{l,j''} <0
	\end{equation*}
	which contradicts the assumption $l'\notin\mathcal{I}_{G'}^{++}$. Similarly,
	\begin{equation}\label{eq:Kpm}
		l''\in\mathcal{I}_{G''}^-\setminus\mathcal{I}_{G''}^{--}\quad\Rightarrow\quad l''>l\quad\textrm{for all}\quad l\in\mathcal{I}_G''^{--}.
	\end{equation}

	Note that  \eqref{eq:negH},  together with \Cref{l:simptpdual}, implies 
	\begin{equation}\label{e:doubred1}
		L_q(\bs{\omega}^{(l')})\otimes L_q(\bs{\omega}^{(l'')})^*\quad\text{is simple for all}\quad l'\in\mathcal{I}_{G'}^{++},\;l''\in\mathcal{I}_{G''}^-
	\end{equation}
	and, similarly, \eqref{eq:negK} implies 
	\begin{equation}\label{e:doubred2}
		L_q(\bs{\omega}^{(l')})\otimes L_q(\bs{\omega}^{(l'')})^* \quad\text{is simple for all}\quad l''\in\mathcal{I}_{G''}^{--},\;l'\in\mathcal{I}_{G'}^+.
	\end{equation}
	In their turn, \eqref{eq:Hpm} and \eqref{eq:Kpm} imply
	\begin{equation}\label{e:doubred3}
		L_q(\bs{\omega}^{(l)})\otimes L_q(\bs{\omega}^{(l')})\quad\textrm{is highest-$\ell$-weight for all}\quad l'\in\mathcal{I}_{G'}^+\setminus\mathcal{I}_{G'}^{++},\;l\in\mathcal{I}_{G'}^{++}
	\end{equation}
	and
	\begin{equation}\label{e:doubred4}
		L_q(\bs{\omega}^{(l')})\otimes L_q(\bs{\omega}^{(l)}) \quad\textrm{is highest-$\ell$-weight for all}\quad l'\in\mathcal{I}_{G'' }^-\setminus\mathcal{I}_{G''}^{--},\;l\in\mathcal{I}_{G''}^{--}.
	\end{equation}
	We will check that these facts, together with \eqref{e:Mhlw},  \Cref{l:hlwquot},  and \Cref{p:killdualhlw}, imply 
	\begin{equation}\label{e:doubred}
		\begin{aligned}
			M=L_q(\bs{\varpi}')\otimes L_q(\bs{\varpi}'') \quad\text{is highest-$\ell$-weight, where}\\
			\bs{\varpi}'=\bs\pi_+(\bs\pi_{++})^{-1}\quad\textrm{and}\quad\bs{\varpi}''=\bs\pi_-(\bs\pi_{--})^{-1}.
		\end{aligned}
	\end{equation}
	Moreover, \Cref{c:sJs} implies that $M_J=L_q(\bs{\varpi}'_J)\otimes L_q(\bs{\varpi}''_J)$ is also highest-$\ell$-weight. Using the initial assumption of the proof, we will see that this contradicts \eqref{eq:newsubd}, thus completing the proof.
	
	To check \eqref{e:doubred}, we first use \Cref{p:killdualhlw} with $i=1$, $\bs\lambda=\bs\pi_{++},\bs\mu = \bs\varpi'$, and $\bs\nu = \bs\pi_-$. In the terminology of  \Cref{p:killdualhlw}, \eqref{e:Mhlw} means $T_1$ is highest-$\ell$-weight, \eqref{e:doubred1} and \Cref{l:hlwquot} imply $V$ is simple, while  \eqref{e:doubred3} and \Cref{l:hlwquot} imply $U_1$ is highest-$\ell$-weight. Hence, $W_1 = L_q(\bs\varpi')\otimes L_q(\bs\pi_-)$ is  highest-$\ell$-weight. A second application of \Cref{p:killdualhlw} with $i=2$, $\bs\lambda=\bs\varpi',\bs\mu = \bs\varpi''$, and $\bs\nu = \bs\pi_{--}$, together with \eqref{e:doubred2}, \eqref{e:doubred4}, \Cref{l:hlwquot}, and \Cref{l:hlwquot} gives \eqref{e:doubred}.

	Consider the following sets:
	\begin{equation*}
		\mathcal{J}_{G'}=(\mathcal{I}^+_{G'}\setminus\mathcal{I}^{++}_{G'})\cap\{1\leq l\leq N: i_l\in  J\} \quad\textrm{and}\quad \mathcal{J}_{G''}=(\mathcal{I}^-_{G''}\setminus\mathcal{I}^{--}_{G''})\cap\{1\leq l\leq N: i_l\in J\}.
	\end{equation*}
	Note 
	\begin{equation}
		\bs\varpi'_J = \prod_{l\in\mathcal{J}_{G'}}\bs\omega^{(l)}_J \quad\text{and}\quad \bs\varpi''_J = \prod_{l\in\mathcal{J}_{G''}}\bs\omega^{(l)}_J. 
	\end{equation}
	If $\mathcal{J}_{G'}=\{j'\}$ and $\mathcal{J}_{G'}=\{j''\}$, then $\bs{\varpi}'_J=\bs{\omega}^{(j')}_J$, $\bs{\varpi}''_J=\bs{\omega}^{(j'')}_J$ and $M_J=L_q(\bs{\omega}^{(j')}_J)\otimes L_q(\bs{\omega}^{(j'')}_J)$, yielding a contradiction between \eqref{e:doubred} and \eqref{eq:newsubd}. Thus, we must have
	\begin{equation*}
		\text{either} \quad \#\mathcal{J}_{G'}>1 \quad\text{or}\quad \#\mathcal{J}_{G''}>1.
	\end{equation*}
	Consider also 
	\begin{equation*}
		\mathcal{J}^+_{G'}:=\{l\in\mathcal J_{G'}: l>j''\},\qquad\mathcal{J}^-_{G'}:=\{l\in\mathcal J_{G'}: l<j''\},
	\end{equation*}
	\begin{equation*}
		\mathcal{J}^+_{G''}:=\{l\in\mathcal J_{G''}: l>j'\},\qquad\mathcal{J}^-_{G''}:=\{l\in\mathcal J_{G''}: l<j'\}.
	\end{equation*}
	Obviously, $j'\in\mathcal{J}^-_{G'}, j''\in\mathcal{J}^+_{G''}$, $\mathcal{J}_{G'}$ is the disjoint union of $\mathcal{J}^\pm_{G'}$, and similarly for $\mathcal{J}_{G''}$.  We claim 
	\begin{equation*}
		\#\mathcal{J}^+_{G'}\leq 1\quad\textrm{and}\quad\#\mathcal{J}^-_{G''}\leq 1.
	\end{equation*}
	Indeed, by definition of let $\mathcal{J}^+_{G'}$, we have
	\begin{equation}\label{e:J+G'}
		l\in\mathcal{J}^+_{G'} \quad\Rightarrow\quad i_l\in J,\quad l>j'',\quad\textrm{and}\quad p_{l,j''}\geq 0.
	\end{equation}
	In particular, together with \eqref{e:plkub} and \eqref{e:p>0intne}, this implies  
	\begin{equation*}
		m_l-m_{j''}\in{\mathscr{R}_{i_{j''},i_l}^{r_{j''},r_l}}_{[i_{j''},i_l]}, \quad i_{j''}\ne i_l, \quad\text{and}\quad [i_{j''},i_l]\subseteq J.
	\end{equation*}
	If it were $[i_{j''},i_l]\subsetneqq J$, then $k'=l$ and $k''=j''$ would be a pair of indices satisfying the conclusion of the lemma, contradicting the initial assumption in the proof. Hence, we must have 
	\begin{equation*}
		i_l=i_{j'} \quad\text{for all}\quad l\in\mathcal{J}^+_{G'}.
	\end{equation*}
	If it were $\#\mathcal{J}^+_{G'}> 1$, let $l,l'\in \mathcal{J}^+_{G'}$ with $l>l'$. Then, since $i_{l'}=i_l=i_{j'}$ and $G$ is a $q$-factorization graph, we must have $p_{l,l'}<0$. However, \Cref{l:lineps} 
	implies that
	\begin{equation*}
		p_{l,j''}<p_{l,l'}<0,
	\end{equation*} 
	contradicting \eqref{e:J+G'}. Similar arguments can be used to show that $\#\mathcal{J}^-_{G''}\leq 1$ and that $i_l=i_{j''}$ if $l\in\mathcal{J}^-_{G''}$. Henceforth, let $j^+$ denote the unique element of $\mathcal{J}^+_{G'}$, if it exists, and let $j^-$ be  the unique element of $\mathcal{J}^-_{G''}$, if it exists. In particular,
	\begin{equation}
		\begin{aligned}
			\mathcal{J}^-_{G'}=&\ \mathcal{J}_{G'}\setminus\{j^+\},\quad \mathcal{J}^+_{G''}=\mathcal{J}_{G''}\setminus\{j^-\}, \quad i_{j^+}=i_{j'}, \quad i_{j^-} = i_{j''},\\
			&\text{and}\quad j^-<j'\le l\le j''<j^+ \quad\textrm{for all}\quad l\in\mathcal{J}^-_{G'}\cup\mathcal{J}^+_{G''}.
		\end{aligned}
	\end{equation}
	Moreover, since $p_{j'',j'}\geq 0$ by \eqref{eq:newsubd},  \Cref{l:lineps}, 
	and \eqref{e:p>0intne} imply that 
	\begin{equation}\label{e:existarrow}
		0\leq p_{l,l'}<\operatorname{min}\{r_l,r_{l'}\} \quad\text{and}\quad i_l\ne i_{l'} \quad\textrm{for all}\quad l,l'\in\mathcal{J}^-_{G'}\cup\mathcal{J}^+_{G''},\quad l>l'.
	\end{equation}
	It follows that a pair $(k',k'')$ such that $k'\in\mathcal{J}^-_{G'}, k''\in\mathcal{J}^+_{G''}$, and $k'>k''$ satisfies the conclusion of the lemma and, hence, does not exist by the initial assumption of the proof. Thus, we must have
	\begin{equation*}
		l<l'\quad\textrm{for all}\quad l\in\mathcal{J}^-_{G'},\;l'\in\mathcal{J}^+_{G''}. 
	\end{equation*}
	Note also that
	\begin{equation}
		\bs\varpi'_J = \bs\omega^{(j^+)}\prod_{l\in\mathcal{J}_{G'}^-}\bs\omega^{(l)}_J \quad\text{and}\quad \bs\varpi''_J = \bs\omega^{(j^-)} \prod_{l\in\mathcal{J}_{G''}^+}\bs\omega^{(l)}_J,
	\end{equation}
	where we set $\bs\omega^{(j^\pm)}=1$ if $j^\pm$ does not exist. 	
	Let us check that
	\begin{equation}\label{e:moresing}
		\mathcal{J}^-_{G'}=\{j'\}\quad\textrm{and}\quad\mathcal{J}^+_{G''}=\{j''\}.
	\end{equation}
	Indeed, assume $\mathcal{J}^-_{G'}\setminus\{j'\}\ne\emptyset$, choose $l'\in\mathcal{J}^-_{G'}\setminus\{j'\}$ and $l''\in\mathcal{J}^+_{G''}$
	such that $d(i_{l'},i_{l''})$ is minimal and let $$\overline{J}=[i_{l'},i_{l''}]\subsetneqq J.$$
	The choice of $(l',l'')$ implies $\bs\varpi'_{\overline J}=\bs\omega^{(l')}_{\overline J}$, while 
	\begin{equation*}
		\bs\varpi''_{\overline J} = 
		\begin{cases}
			\bs\omega^{(l'')}_{\overline{J}}, & \text{if } l''\ne j'',\\
			\bs\omega^{(j'')}_{\overline{J}}\bs\omega^{(j^-)}_{\overline{J}}, & \text{if } l''= j''.
		\end{cases}
	\end{equation*}
	As commented after \eqref{e:doubred}, $M_J$ is highest-$\ell$-weight and, hence, so is
	\begin{equation*}
		M_{\overline{J}}:=L_q(\bs{\varpi}'_{\overline{J}})\otimes L_q(\bs{\varpi}''_{\overline{J}})=L_q(\bs{\omega}^{(l')}_{\overline{J}})\otimes L_q(\bs{\varpi}''_{\overline{J}}).
	\end{equation*}
	If $l''\ne j''$, we have 
	\begin{equation*}
		M_{\overline{J}} = L_q(\bs{\omega}^{(l')}_{\overline{J}})\otimes L_q(\bs\omega^{(l'')_{\overline{J}}}),
	\end{equation*} 
	which is not highest-$\ell$-weight by \eqref{e:existarrow}, yielding a contradiction. If $l''=j''$ (so $i_{l''}=i_{j''}$),  \eqref{e:plkub} and \eqref{e:p>0intne} imply $p_{j'',j^-}<0$ and, hence,
	\begin{equation*}
		L_q(\bs\varpi''_{\overline J})\cong L_q(\bs\omega^{(j'')}_{\overline{J}})\otimes L_q(\bs\omega^{(j^-)}_{\overline{J}}).
	\end{equation*}
	Therefore,
	\begin{equation*}
		M_{\overline{J}}\cong L_q(\bs{\omega}^{(l')}_{\overline{J}})\otimes L_q(\bs{\omega}^{(j'')}_{\overline{J}})\otimes L_q(\bs{\omega}^{(j^-)}_{\overline{J}}),
	\end{equation*}
	yielding a contradiction with  \eqref{e:existarrow} again.  This proves the first claim in \eqref{e:moresing} and the second is proved similarly.

	We have shown $\mathcal{J}_{G'}=\{j',j^+\}$ and $\mathcal{J}_{G''}=\{j'',j^-\}$, where we understand $j^\pm$ has not being listed if it does not exist. In particular, 
	\begin{equation*}
		J\cap\supp(\bs\varpi') = \{i_{j'}\} \quad\text{and}\quad J\cap\supp(\bs\varpi') = \{i_{j''}\},
	\end{equation*}
	which implies
	\begin{equation*}
		M_J=L_q((\bs{\omega}^{(j')}\bs{\omega}^{(j^+)})_J)\otimes L_q((\bs{\omega}^{(j'')}\bs{\omega}^{(j^-)})_J).
	\end{equation*}
	Since $p_{j^+,j'}<0$ and $p_{j'',j^-}<0$, it follows that
	\begin{equation*}
		M_J\cong L_q(\bs{\omega}^{(j^+)}_J) \otimes L_q(\bs{\omega}^{(j')}_J)\otimes L_q(\bs{\omega}^{(j'')}_J)\otimes L_q(\bs{\omega}^{(j^-)}_J).
	\end{equation*}
	However, $L_q(\bs{\omega}^{(j')}_J)\otimes L_q(\bs{\omega}^{(j'')}_J)$ is not highest-$\ell$-weight by \eqref{eq:newsubd}, yielding the promised contradiction. 
\end{proof}

\subsection{Proof of \Cref{t:toto}}\label{ss:ptoto}
Let $\bs\pi',\bs\pi''\in\mathcal{P}^+\setminus\{\bs 1\}$ be such that $\bs{\pi}=\bs\pi'\bs\pi''$ and set
\begin{equation*}
	U=L_q(\bs\pi')\otimes L_q(\bs\pi'')\quad\textrm{and}\quad V=L_q(\bs\pi'')\otimes L_q(\bs\pi')
\end{equation*}
In light of \Cref{c:vnvstar}, \Cref{t:toto} follows if we show that either $U$ or $V$ is not highest-$\ell$-weight. Moreover, by Corollary \ref{c:sJs}, we can assume $\bs\pi'$ and $\bs\pi''$ have dissociate $q$-factorizations. The case $N=1$ is obvious, while the case $N=2$ follows from the definition of $q$-factorization graph and \eqref{e:krhwtp}, since $G$ is connected. Thus, henceforth, $N\ge 3$. 	We shall assume $U$ and $V$ are highest-$\ell$-weight and reach a contradiction.

We will use the notation fixed before \Cref{l:totordopparrow}. Let also $G'=G(\bs\pi')=(\mathcal V',\mathcal A')$ and $G''=G(\bs\pi'')=(\mathcal V'',\mathcal A'')$.
Without loss of generality, assume $\bs\omega:=\bs\omega^{(N)}\in\mathcal V''$ ($\bs\omega$ is the source of $G$). We claim 
\begin{equation}\label{e:V''>1}
	\#\mathcal V''>1 \quad\text{and, hence,}\quad  \bs\pi''\bs\omega^{-1}\ne\bs 1.
\end{equation}
Indeed, if this were not the case, it would follow that $\bs\nu:=\bs\pi''\in\mathcal V$ and $\bs\pi'=\bs\pi\bs\nu^{-1}$. \Cref{l:totop} then implies $G'$ is also totally ordered and, letting $\bs\lambda=\bs\omega^{(N-1)}$ be the source of $G'$, it would follow that
\begin{equation}\label{e:fakearrow}
	(\bs\nu,\bs\lambda)\in\mathcal A.
\end{equation}	
Set also $\bs\mu=\bs\pi'\bs\lambda^{-1}$ and note $\bs\mu\in\mathcal P^+\setminus\{\bf 1\}$ since $N\ge 3$ and we are assuming $\#\mathcal V''=1$. By assumption, $L_q(\bs\lambda\bs\mu)\otimes L_q(\bs\nu)=U$ is highest-$\ell$-weight. On the other hand,  \Cref{l:hlwquot} implies $L_q(\bs\lambda)\otimes L_q(\bs\mu)$ is also highest-$\ell$-weight. Together with  \Cref{p:killhlw}, this implies $L_q(\bs\lambda)\otimes L_q(\bs\nu)$ is highest-$\ell$-weight as well, yielding a contradiction with \eqref{e:fakearrow} and \eqref{e:krhwtp}.

Note also that \Cref{c:killsinksource} implies that
\begin{equation*}
	\tilde U:=L_q(\bs\pi')\otimes L_q(\bs\pi''\bs\omega^{-1}) \quad\text{is highest-$\ell$-weight.}
\end{equation*}
Since $G(\bs\pi\bs\omega^{-1})$ is totally ordered by \Cref{l:totop}, an inductive argument on $N$ then implies 
\begin{equation}\label{e:V'nothlw}
	\tilde V:=L_q(\bs\pi''\bs\omega^{-1})\otimes L_q(\bs\pi') \quad\text{is not highest-$\ell$-weight.}
\end{equation}
Let
\begin{equation*}
	\mathcal I' = \{j: \bs\omega^{(j)}\in\mathcal V' \},\qquad  \mathcal I'' = \{j: \bs\omega^{(j)}\in\mathcal V'' \},
\end{equation*}
and 
\begin{equation*}
	\mathcal{I}'_>=\{j\in\mathcal I':  p_{N,j}\geq 0\}.
\end{equation*}
Let us show $\mathcal I'_>\ne \emptyset$. Set $p_{j,k}=r_j+r_k+d(i_j,i_k)-p_{k,j}$ for $j<k$, so we can write
\begin{equation*}
	m_j-m_k=r_j+r_k+d(i_j,i_k)-2p_{j,k}.
\end{equation*}
If it were  $\mathcal I'_>=\emptyset$, then $p_{j,N}=r_j+r_N+d(i_j,i_N)-p_{N,j}>r_j+r_N$ for all $1\le j<N$. Together with \Cref{p:killfactormid}, this contradicts \eqref{e:V'nothlw}.

If  $j\in\mathcal I'_>$ and $k>j$, it follows from  \Cref{l:lineps} 
that $p_{k,j}>p_{N,j}\geq 0$ and, hence, $i_j\neq i_k$ by \eqref{e:p>0intne}. This shows
\begin{equation*}
	i_j\ne i_k \quad\text{for all}\quad j,k\in\mathcal I'_>,\ j\ne k,
\end{equation*}
and, therefore, there exists  unique $j'\in\mathcal{I}'_>$ such that
\begin{equation*}
	0<d(i_{j'},i_N)=\operatorname{min}\{d(i_j,i_N): j\in\mathcal{I}'_>\}.
\end{equation*}
Set
\begin{equation*}
	\mathcal{I}''_>=\{j\in\mathcal I'': j'<j \textrm{ and } p_{j,j'}\geq 0\}
\end{equation*}
and note $N\in\mathcal I''_>$. Proceeding as above, one easily checks that
\begin{equation*}
	l\in\mathcal I''_>\ \text{ and }\ j'<k<l \quad\Rightarrow\quad i_k\ne i_l,
\end{equation*}
which implies $i_k\ne i_l$ for all $k,l\in\mathcal I_>'', k\ne l$. Let then $j''\in\mathcal I_>''$ be the unique element such that
\begin{equation*}
	0<d(i_{j'},i_{j''})=\operatorname{min}\{d(i_{j'},i_j): j\in\mathcal{I}''_>\}
\end{equation*}
and set $J=[i_{j'},i_{j''}]$. By construction  \eqref{eq:newsubd} holds. 	Since $U$ is highest-$\ell$-weight, \Cref{l:totordopparrow} then implies there exist $j_1''<j_1'$ such that $J_1:=[i_{j_1'},i_{j_1''}]\subsetneqq J$,  $\bs{\omega}^{(j_1')}\in\mathcal{V}_{G'}$, $\bs{\omega}^{(j_1'')}\in\mathcal{V}_{G''}$, and $m_{j_1'}-m_{j_1''}\in{\mathscr{R}_{i_{j_1'},i_{j_1''},J_1}^{r_{j_1'},r_{j_1''}}}$.

Since  $V$ is also highest-$\ell$-weight, \Cref{l:totordopparrow} with $V$ in place of $U$, $j_1''$ in place of $j'$ and $j_1'$ in place of $j''$, would imply there exist $j_2'<j_2''$ such that $J_2:=[i_{j_2'},i_{j_2''}]\subsetneqq J_1$,  $\bs{\omega}^{(j_2')}\in\mathcal{V}_{G'}$, $\bs{\omega}^{(j_2'')}\in\mathcal{V}_{G''}$, and $m_{j_2''}-m_{j_2'}\in{\mathscr{R}_{i_{j_2'},i_{j_2''},J_2}^{r_{j_2'},r_{j_2''}}}$. The same lemma  with $j_2'$ in place of $j'$ and $j_2''$ in place of $j''$ and so on would give rise to an infinite sequence $J\supsetneqq J_1\supsetneq J_2 \supsetneqq\cdots$ and, hence, the desired contradiction.\qed

\bibliographystyle{amsplain}

\end{document}